\documentclass[11pt]{amsart}

\usepackage[T1]{fontenc}
\usepackage[utf8]{inputenc}
\usepackage{textcomp}
\usepackage{lmodern}
\usepackage{microtype}

\usepackage{amssymb}
\usepackage{amsthm}
\usepackage{amscd}

\usepackage{mathscinet}
\usepackage[breaklinks=true]{hyperref}
\usepackage{url}
\usepackage[shortlabels]{enumitem}

\numberwithin{equation}{section}

\def\Ind#1#2{#1\setbox0=\hbox{$#1x$}\kern\wd0\hbox to 0pt{\hss$#1\mid$\hss}
\lower.9\ht0\hbox to 0pt{\hss$#1\smile$\hss}\kern\wd0}
\def\Notind#1#2{#1\setbox0=\hbox{$#1x$}\kern\wd0\hbox to 0pt{\mathchardef
\nn="3236\hss$#1\nn$\kern1.4\wd0\hss}\hbox to 0pt{\hss$#1\mid$\hss}\lower.9\ht0
\hbox to 0pt{\hss$#1\smile$\hss}\kern\wd0}

\makeatletter
\newtheorem*{rep@theorem}{\rep@title}
\newcommand{\newreptheorem}[2]{%
\newenvironment{rep#1}[1]{%
 \def\rep@title{#2 \ref{##1}}%
 \begin{rep@theorem}}%
 {\end{rep@theorem}}}
\makeatother
\newtheorem{theorem}{Theorem}[section]

\newtheorem{lemma}[theorem]{Lemma}
\newtheorem{prop}[theorem]{Proposition}
\newreptheorem{prop}{Proposition}

\newtheorem{cor}[theorem]{Corollary}
\newtheorem{fac}[theorem]{Fact}

\newtheorem{problem}[theorem]{Problem}
\newtheorem{thmx}{Theorem}

\theoremstyle{definition}
\newtheorem{definition}[theorem]{Definition}

\newtheorem{remark}[theorem]{Remark}

\newtheorem{example}[theorem]{Example}

\newtheorem{claim}[theorem]{Claim}
\newtheorem*{claim*}{Claim}

 \newenvironment{claimproof}{\begin{proof}}{\end{proof}}




  \def \B{\operatorname{\mathcal{B}}}

   \def \Br{\operatorname{Br}}

  \def \VC{\operatorname{VC}}
    \def \Perf{\operatorname{Perf}}
    \def \ima{\operatorname{Im}}

        \def \Tree{\operatorname{Tree}}
                \def \supp{\operatorname{supp}}

        \def \lex{\operatorname{lex}}
        
                \def \Lad{\operatorname{Lad}}

        \def \vdisc{\operatorname{vdisc}}
  
    \def \cU{\operatorname{\mathcal{U}}}





\allowdisplaybreaks[2]


\begin{document}

\title[Perfect stable regularity and slice-wise stable hypergraphs]{Perfect stable regularity lemma and slice-wise stable hypergraphs}
\author{Artem Chernikov and Henry Towsner}
\date{\today}

\begin{abstract}
We investigate various forms of (model-theoretic) stability for hypergraphs and their corresponding strengthenings of the hypergraph regularity lemma with respect to partitions of vertices. On the one hand, we provide a complete classification of the various possibilities in the ternary case.
On the other hand, we provide an example of a family of slice-wise stable 3-hypergraphs so that for no partition of the vertices, any triple of parts has density close to 0 or 1.
In particular, this addresses some questions and conjectures of Terry and Wolf. We work in the general measure theoretic context of graded probability spaces, so all our results apply both to measures in ultraproducts of finite graphs, leading to the aforementioned combinatorial applications, and to commuting definable Keisler measures, leading to applications in model theory.
 \end{abstract}

\maketitle

\tableofcontents

\section{Introduction}
\subsection{Variants of stable regularity for hypergraphs}
This paper contributes to the emerging subject of tame hypergraph regularity and its connections to model theoretic classification.
Several recent results demonstrate that better bounds and stronger forms of regular partitions 
can be obtained for families of hypergraphs satisfying certain combinatorial restrictions.

Recall that a graph $G = (V,E)$ is $d$-stable if it omits an induced copy of the bipartite half-graph with both sides of size $d$ (i.e.~there are no $a_i, b_i \in V, 1 \leq i \leq d$ so that $(a_i,b_j) \in E \iff i \leq j$). In the case of stable graphs, it is demonstrated in \cite{malliaris2014regularity} that for every $d\in\mathbb{N}$ and $\varepsilon>0$, every finite $d$-stable graph $G=(V,E)$ satisfies \emph{stable regularity}: there is a regular partition $\mathcal{P}$ of $V$ of size $N = N(d,\varepsilon)$ without any irregular pairs, i.e.~the density between any pair of sets in $\mathcal{P}$ is in $[0,\varepsilon) \cup (1 - \varepsilon, 1]$ (this corresponds to Definition \ref{def: Intro stable reg versions}(3) with $k=2$ below). Some superficially stronger variations on this result have since appeared in the literature. One variant, which we call \emph{strong stable regularity}, analogous to the strong regularity lemma for general graphs \cite{alon2007efficient}, was considered recently in \cite{terry2021irregular, chavarria2021continuous}. In this version, the value $\varepsilon$ is allowed to depend on the size of the partition---that is, instead of fixing a value $\varepsilon$ in advance, we fix a function $f: \mathbb{N} \to (0,1]$, and the density between any pair of sets in $\mathcal{P}$ is restricted to $[0,f(N))\cup (1-f(N),1]$, where $N$ is the size of $\mathcal{P}$. (The precise definition of this variant appears as Definition \ref{def: Intro stable reg versions}(2) with $k=2$ below.) Also a measure-theoretic variant in the context of finitely additive measures in the ultraproduct of a family of finite graphs was considered in \cite{malliaris2016stable, chernikov2016definable, coregliano2022countable}. We consider this here in a general countably additive setting, and provide a finitary characterization as Definition \ref{def: Intro stable reg versions}(1) with $k=2$, which we call \emph{approximately perfect stable regularity}. No distinction was made in the literature between these three versions of stable regularity for graphs, which is a posteriori explained by the formal equivalence of all three versions for hereditarily closed families of graphs (see Theorem \ref{thm: Intro stab reg equiv for graphs}) with each other and (when stated in a suitable uniform way) with stability.

It turns out that, when generalized to $3$-hypergraphs, the third notion is strictly stronger, as we will explain below. (Whether the first two remain equivalent for $k$-hypergraphs remains open.) The main goal of this paper is to clarify this distinction and examine the connection between tame hypergraph regularity properties and notions of stable $3$-hypergraphs.

As noted in \cite{chernikov2020hypergraph} and \cite{terry2021irregular}, there are two natural ways to lift a binary property like stability to a $3$-hypergraph. Given a $3$-hypergraph $(V,E)$ with $E \subseteq V^3$, we can say that it is \emph{partition-wise} stable\footnote{This is the most commonly used definition, and is often referred to as simply ``stable''.} if $E$ is stable when viewed as a binary relation on $V^2\times V$, and that $(V,E)$ is \emph{slice-wise} stable if each of the slice graphs $E_v=\{(x,y)\mid (v,x,y)\in E\}$ is stable\footnote{More precisely, the $E_v$ should be uniformly stable; see Definition \ref{def:slice-wise}. This notion is called \emph{weakly stable} in \cite{terry2021irregular}.}.

Applying the arguments for graphs directly to $3$-hypergraphs shows that partition-wise stability implies any of the notions of tame hypergraph regularity described above, and that any of these notions implies slice-wise stability. An example from \cite{terry2021irregular}, which we analyze in detail in Section \ref{sec:example}, satisfies strong stable regularity but fails to be partition-wise stable. In Section \ref{sec: counterexample}, we give a new example which is slice-wise stable but fails to satisfy stable regularity. Putting these results together, we get the following string of implications for hypergraphs:
\begin{gather*}
  \text{slice-wise stable} \Leftarrow\text{Stable regularity}\leftarrow\text{Strong stable regularity}\\
   \Leftarrow\text{partition-wise Stable}=\text{Approximately perfect stable regularity,}
\end{gather*}
where $\Leftarrow$ indicates a strict implication while the implication denoted $\leftarrow$ is not known to be strict (see Section \ref{sec: summary of results} for details).

This leaves open the question of whether there is a combinatorial characterization of stable regularity or strong stable regularity. To make progress towards this question, we consider a slightly more refined setting: we consider tripartite $3$-hypergraphs, $(X,Y,Z,E)$ where $E\subseteq X\times Y\times Z$. Such a $3$-hypergraph might be partition-wise or slice-wise stable in some directions, but not in others. The bulk of this paper analyzes the various combinations of slice-wise and partition-wise stability and identifies corresponding tame regularity properties. In particular, we show that slice-wise stability (in all directions) together with partition-wise stability in \emph{one} direction suffices to give strong stable regularity. (Such an asymmetric property cannot be necessary, however, so a precise combinatorial characterization, if there is one, remains open.)

One current approach to tame hypergraph regularity is to use guesses at the right notion of tame hypergraph regularity as a guide to identifying new combinatorial notions. The results in this paper suggest that (approximately) perfect stable regularity, rather than ordinary stable regularity, may be the notion that leads to more natural combinatorial properties for hypergraphs.

\subsection{History of the problem}
Several recent results demonstrate that better bounds and stronger forms of regular partitions 
can be obtained for families of hypergraphs satisfying certain combinatorial restrictions.
The first results of this kind were obtained for graphs of finite VC-dimension by Alon, Fischer, Newman \cite{alon2007efficient}, Lov\'asz, Szegedy \cite{MR2815610}, and qualitatively in the context of model theory of NIP structures (i.e.~structures in which all definable families of sets have finite VC-dimension) by Hrushovski, Pillay, Simon \cite[Lemma 1.6]{hrushovski2013generically}, demonstrating that for every $d \in \mathbb{N}$ and $\varepsilon > 0$, for every finite graph $G=(V,E)$ with the edge relation of VC-dimension at most $d$, there exists a regular partition $\mathcal{P}$ of $V$ with $|\mathcal{P}| = \left( 1/\varepsilon \right)^{O(d^2)}$ and the density of the edges in $[0, \varepsilon) \cup (1-\varepsilon, 1]$ for all but $\varepsilon$ fraction of the pairs of sets in $\mathcal{P}$.

The two extremal special cases of NIP structures correspond to stable (discussed above) and to distal structures. 
In \cite{fox2012overlap,fox2016polynomial} it is demonstrated that when the edge relation
is semi-algebraic, of bounded description complexity, then additionally $\mathcal{P}$ can be chosen so that all good pairs in $\mathcal{P}$ are  homogeneous, i.e.~ the density is actually \emph{equal} to $0$ or $1$, and the sets in $\mathcal{P}$ can be chosen to be semi-algebraic, of bounded
complexity. In \cite{chernikov2018regularity} these results 
were generalized to graphs definable in arbitrary distal structures, showing moreover that this strong form of regularity characterizes the model theoretic notion  of distality (see also \cite{simon2016note} for a  simplified proof). This applies to graphs definable in the reals, $p$-adics,  or graphs of bounded expansion/twin-width \cite{przybyszewski2023distal}. An analytic version of distal regularity  is established in \cite{anderson2023generically}.
In an orthogonal direction (i.e.~outside of the finite VC-dimension context), polynomial bounds on the size of the partition were obtained by Tao \cite{tao2015expanding} for algebraic
hypergraphs of bounded description complexity in large finite fields of growing characteristic, generalized to fixed characteristic in \cite{pillay2013remarks} and to hypergraphs in \cite{chevalier2022algebraic}.

In the current paper we are focused on generalizations of stability and  corresponding strong regular decompositions of vertices to hypergraphs. 
A systematic study was initiated in \cite{chernikov2016definable} (which was published as \cite{chernikov2021definable}), providing the first generalization from graphs of finite VC-dimension (equivalently, NIP) to hypergraphs. 
It is demonstrated in \cite{chernikov2016definable} that if a $k$-ary (uniform) hypergraph $H = (E;V)$ is \emph{partition-wise NIP}, meaning that viewed as a binary relation between $V$ and $V^{k-1}$ it is NIP (equivalently, the family of subsets of $V$ given by the corresponding fibers with $k-1$ vertices fixed has finite VC-dimension), then there exists a partition $\mathcal{P}$ of $V$ with $|\mathcal{P}| = (1/\varepsilon)^{O((k-1)d^2)}$ and the density of the edges in $[0, \varepsilon) \cup (1-\varepsilon, 1]$ for all but $\varepsilon$ fraction of the $k$-tuples of sets in $\mathcal{P}$. Stronger bounds on the size of such partitions were obtained in \cite{fox2019erdHos}. In \cite{chernikov2020hypergraph} it is demonstrated that in fact such partitions can be obtained under a weaker assumption of \emph{slice-wise NIP} on the hypergraph, namely that for every $(a_1, \ldots, a_{k-2}) \in V^{k-2}$, the corresponding fiber $E_{(a_1, \ldots, a_{k-2})} \subseteq V^2$ has VC-dimension $\leq d$ (and that for a hereditarily closed family of hypergraphs, the converse holds as well).  Related questions on which combinatorial conditions 
 allow to reduce to binary cylinder intersection sets (i.e.~partitions of $V\times V$, rather than unary sets, i.e.~partitions of the vertices $V$) have also attracted attention recently. In \cite{chernikov2020hypergraph} we establish the first instance of tame hypergraph regularity taking into account partitions not only of vertices, but also of hyperedges of various arities. Specifically, we demonstrate that given a $(k+1)$-hypergraph of finite slice-wise $\VC_{k}$-dimension (a generalization of the usual $\VC$-dimension implicit in Shelah, and explicitly considered in \cite{chernikov2019n}), equivalently omitting a fixed finite
$(k + 1)$-partite $(k + 1)$-uniform hypergraph, can be approximated by $k$-ary
cylinder intersection sets.  And in \cite{terry2021irregular}, a generalization of stability to $3$-hypergraphs was proposed and characterized by the existence of certain regular partitions. Improved bounds in the partition-wise VC$_2$ case were given in \cite{MR4662634}. 

\subsection{Summary of the results}\label{sec: summary of results}

Similarly to finite VC-dimension/NIP, stability is a property of binary relations, and there are multiple natural ways to generalize it to hypergraphs. Being focused on partitions of vertices, we will consider the following here (see Section \ref{sec: mu-stability} for a detailed discussion of these and related notions in a measure-theoretic context):
\begin{definition} Given $d \in \mathbb{N}$ and a $k$-partite $k$-hypergraphs of the form $H = (E;X_1, \ldots, X_k)$ with $E \subseteq \prod_{i = 1}^{k} X_i$, we will say:
	\begin{enumerate}
		\item $H$ is \emph{partition-wise $d$-stable} if for every $1 \leq \ell \leq k$, $E$ viewed as a binary relation on $X_{\ell} \times \left(\prod_{i \in [k] \setminus \{\ell\}} X_i \right)$ is $d$-stable.
		\item $H$ is \emph{slice-wise $d$-stable} if for every $1 \leq \ell < m \leq k$ and for every $b = \left(b_i : i \in [k] \setminus \{\ell, m \} \right) \in \prod_{i \in [k] \setminus \{\ell, m\}} X_i$, the binary relation $E_{b} = \left\{ (b_\ell, b_m) \in X_{\ell} \times X_{m} : \left(b_i : i \in [k] \right) \in E \right\} \subseteq X_{\ell} \times X_{m}$ is $d$-stable.
	\end{enumerate}
\end{definition}
These represent the strongest and the weakest symmetric notions with respect to interactions with the sets of vertices, respectively.  Partition-wise stable hypergraphs were considered in \cite{chernikov2016definable, ackerman2017stable}, and slice-wise stable hypergraphs in \cite{terry2021irregular} 
 (under the name of \emph{weak stability}). We will also consider intermediate situations, e.g.~simultaneous partition-wise stability for some partition of the variables, but only slice-wise stability for the other ones.
 
 In the context of partite hypergraphs, the aforementioned three variants of the stable regularity lemma can be formalized  as follows (where (1) implies (2) implies (3), see Section \ref{sec: transfer for stable regularities} for a detailed discussion):
\begin{definition}\label{def: Intro stable reg versions}
Let $\mathcal{H}$ be a  family of finite $k$-partite $k$-hypergraphs of the form $H = (E;X_1, \ldots, X_k)$ with $E \subseteq \prod_{i = 1}^{k} X_i$ and $X_i$ finite. We say that $\mathcal{H}$ satisfies:
	\begin{enumerate}
		
	\item  \emph{approximately perfect stable regularity} if for every $\varepsilon \in \mathbb{R}_{>0}$ and every function $f: \mathbb{N} \to (0,1)$ there exists some $N = N(f,\varepsilon)$ so that: for any $H \in \mathcal{H}$ there exists $N' \leq N$ and partitions $X_i = \bigsqcup_{0 \leq t < N'} A_{i,t}$  so that:
	\begin{enumerate}
	\item $\left \lvert A_{i,0} \right \rvert \leq \varepsilon \left \lvert X_{i} \right \rvert$ for all $1 \leq i \leq k$;
		\item for any $1 \leq i \leq k$ and any $ 1 \leq t < N'$, we have $\frac{\lvert A_{i,t} \cap E_{b} \rvert}{\lvert A_{i,t} \rvert} \in [0, f(N')) \cup (1- f(N'), 1]$ for all but at most an $f(N')$ fraction of tuples $b \in \prod_{j \neq i} X_j$.
	\end{enumerate}

		\item \emph{strong stable regularity} if for every $\varepsilon \in \mathbb{R}_{>0}$ and every function $f: \mathbb{N} \to (0,1]$ there exists some $N = N(f,\varepsilon)$ so that: for any $H \in \mathcal{H}$ there exists $N' \leq N$ and partitions $X_i = \bigsqcup_{0 \leq t < N'} A_{i,t}$  so that:
	\begin{enumerate}
		\item $\left \lvert A_{i,0} \right \rvert \leq \varepsilon \left \lvert A_{i,1} \right \rvert$ for all $1 \leq i \leq k$;
		\item for any $ 1 \leq t_1, \ldots, t_k < N'$ we have 
		\begin{gather*}
			\frac{\left \lvert E \cap \left( A_{1,t_1} \times \ldots \times  A_{k,t_k} \right) \right \rvert }{\left \lvert A_{1,t_1} \times \ldots \times  A_{k,t_k}  \right \rvert } \in [0, f(N')) \cup (1 - f(N'), 1].
		\end{gather*}
	\end{enumerate}
	
	\item \emph{stable regularity} if for every $\varepsilon \in \mathbb{R}_{>0}$ there exists some $N = N(\varepsilon)$ such that: for any $H = (E;X_1, \ldots, X_k) \in \mathcal{H}$ there exists $N' \leq N$ and partitions $X_i = \bigsqcup_{1 \leq t < N'} A_{i,t}$  so that 
	for any $ 1 \leq t_1, \ldots, t_k \leq N'$ we have 
		\begin{gather*}
			\frac{\left \lvert E \cap \left( A_{1,t_1} \times \ldots \times  A_{k,t_k} \right) \right \rvert }{\left \lvert A_{1,t_1} \times \ldots \times  A_{k,t_k}  \right \rvert } \in [0, \varepsilon) \cup (1 - \varepsilon, 1].
		\end{gather*}
\end{enumerate}
\end{definition}

The first generalization of stable regularity to hypergraphs was established in \cite{chernikov2016definable}. Namely, in our terminology here, it is demonstrated that if $\mathcal{H}$ is the family of  partition-wise $d$-stable $k$-hypergraphs for some fixed $d$, then it satisfies stable regularity. Similar result was obtained  in 
 the finitary case in  \cite{ackerman2017stable}. 

As our first result here, we refine and complement this for partition-wise stable hypergraphs, demonstrating that partition-wise stability gives a precise characterization of the existence of perfect $0-1$ density countable partitions in the limit (see Theorem \ref{thm: metastable general equivalences} for a more precise version):

\begin{thmx}\label{thm: Intro stab reg equiv for graphs}
	Let $\mathcal{H}$ be a hereditarily closed family of finite $k$-partite $k$-hypergraphs. Then the following are equivalent:
	\begin{enumerate}
		\item $\mathcal{H}$ satisfies approximately perfect stable regularity;
		\item there exists some $d \in \mathbb{N}$ so that every $H \in \mathcal{H}$ is partition-wise $d$-stable;
		\item  any ultraproduct of $\widetilde{H} = (\widetilde{E}, \widetilde{X}_1, \ldots, \widetilde{X}_k)$ of hypergraphs from $\mathcal{H}$ satisfies \emph{perfect stable regularity}, i.e.~there exist countable partitions $A_{i,t}, t \in \mathbb{N}$ of $X_i$, for each $1 \leq i \leq k$, so that $A_{i,t}$ is in the algebra generated by the internal sets and for any $t_1, \ldots, t_k \in \mathbb{N}$ the density of $\widetilde{E}$ on $A_{1,t_1} \times \ldots \times A_{k,t_k}$ is precisely $0$ or $1$.
	\end{enumerate}
\end{thmx}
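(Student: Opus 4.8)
The plan is to establish the cyclic chain $(2)\Rightarrow(3)\Rightarrow(1)\Rightarrow(2)$, putting essentially all of the content into $(2)\Rightarrow(3)$; the other two implications are transfers between the finitary and the ultraproduct pictures.

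\emph{$(2)\Rightarrow(3)$.} Fix $d$ as in $(2)$ and pass to an ultraproduct $\widetilde H=(\widetilde E;\widetilde X_1,\dots,\widetilde X_k)$, equipped with the Loeb measures $\mu_S$ on $\prod_{i\in S}\widetilde X_i$ arising from normalized counting; these form a graded probability space. By {\L}o\'s, for each $\ell$ the relation $\widetilde E$ remains $d$-stable, hence $\mu$-stable, as a binary relation on $\widetilde X_\ell\times\prod_{i\neq\ell}\widetilde X_i$. I would then prove by induction on $k$ the general claim: a measurable set that is $\mu$-stable in all $k$ directions coincides, modulo a null set, with a grid-rectangular set for suitable countable partitions into sets in the algebra generated by the internal sets. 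The base case $k=2$ is perfect stable regularity for a $\mu$-stable graph: iterating the finitary (Malliaris--Shelah) stable regularity lemma along $\varepsilon\to0$ and passing to the limit produces countable partitions of $\widetilde X_1$ and $\widetilde X_2$ into internal-algebra sets on which every density is \emph{exactly} $0$ or $1$; collecting the $\widetilde X_2$-parts of density $1$ over each $\widetilde X_1$-part $A_{1,t}$ rewrites $\widetilde E=\bigsqcup_t A_{1,t}\times D_t$ modulo null, for measurable $D_t\subseteq\prod_{i\ge2}\widetilde X_i$. For the inductive step I apply this in direction $1$. Each $D_t=\{\bar y:\mu_1(A_{1,t}\cap\widetilde E^{\bar y})=\mu_1(A_{1,t})\}$ is definable from $\widetilde E$ by a measure quantifier over the variable $x_1$, which lies on the parameter side for each of the remaining directions $2,\dots,k$; since $\mu$-stability, with an explicit bound on the rank, is preserved under measure quantifiers, $D_t$ is a partition-wise $d'$-stable $(k-1)$-hypergraph with $d'=d'(d,k)$ independent of $t$. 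By the inductive hypothesis each $D_t$ has a box $0$--$1$ grid partition of $\prod_{i\ge2}\widetilde X_i$; the common refinement over the countably many $t$ of the coordinate partitions is still a countable internal-algebra partition and keeps each $D_t$ box $0$--$1$, so $\{A_{1,t}\}_t$ together with this grid witnesses perfect stable regularity for $\widetilde E$.

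\emph{$(3)\Rightarrow(1)$ and $(1)\Rightarrow(2)$.} For $(3)\Rightarrow(1)$, argue by contraposition: if $(1)$ fails for some $\varepsilon$ and $f$, pick $H_N\in\mathcal H$ with no good partition of size $\le N$ and form an ultraproduct over $N\to\infty$. By $(3)$ it has a perfect countable partition; collapsing into $A_{i,0}$ all but the finitely many largest parts of each $\widetilde X_i$ gives, in each coordinate, finitely many internal-algebra sets with $\mu(A_{i,0})<\varepsilon$ and all box densities exactly $0$ or $1$, hence (by Fubini) slice densities exactly $0$ or $1$ off a null set of tuples. Approximating these finitely many pieces by finite Boolean combinations of internal sets and transferring by {\L}o\'s yields, for $\mathcal U$-almost all $N$, an approximate perfect regular partition of $H_N$ of a fixed bounded size, contradicting the choice of $H_N$ for large $N$. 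For $(1)\Rightarrow(2)$, suppose no uniform $d$ works; by pigeonhole fix a direction, say $1$, and $H_m\in\mathcal H$ carrying a half-graph $(a_i,\bar b_j)_{i,j\le m}$ in direction $1$, and replace $H_m$ by the induced sub-hypergraph on the $m$ left vertices and the coordinatewise projections of the $\bar b_j$ (still in $\mathcal H$, with the $a_i$ now a $1/m$-fraction of $X_1$). When $k=2$ this contradicts $(1)$ directly: in the part $A_{1,t}$ ($t\ge1$) containing a $\ge(1-\varepsilon)/N$-fraction of the $a_i$, which consists only of such $a_i$, the diagonal slices $E_{b_j}$ realize all intermediate densities $|A_{1,t}\cap E_{b_j}|/|A_{1,t}|$, so a positive proportion of the $\le m$ slices $b_j$ fall outside $[0,f(N'))\cup(1-f(N'),1]$; taking $f(n)=c(\varepsilon)/n$ for small $c(\varepsilon)$ makes this exceed the allowed fraction $f(N')|X_2|$.

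\emph{Main obstacle.} The hard part is the general-$k$ instance of $(1)\Rightarrow(2)$. For $k\ge3$ the reduction above dilutes the half-graph: the diagonal tuples $\bar b_j$ occupy only an $O(m^{2-k})$-fraction of $\prod_{i\ge2}X_i$, so they can all be swept into the $f$-exceptional set and no contradiction ensues. One cannot replace $(1)$ by ordinary or strong stable regularity, since those do not imply partition-wise stability (witness the example of Section~\ref{sec:example}), so the argument must genuinely use the slice-wise form of condition $(1)$; carrying this out is exactly what the graded-probability-space analysis behind Theorem~\ref{thm: metastable general equivalences} accomplishes, in effect by extracting from unbounded instability a half-graph spread out in all coordinates at once, equivalently a positive-measure obstruction to $\mu$-stability. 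A secondary technical point is the lemma that $\mu$-stability is preserved, with a uniform rank bound, under measure quantifiers, together with the bookkeeping that keeps all partition pieces inside the algebra generated by the internal sets.
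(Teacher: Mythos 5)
Your proposal contains a gap that you yourself flag: the implication $(1)\Rightarrow(2)$ for $k\ge 3$. You correctly identify the dilution obstacle --- the diagonal parameter tuples $\bar b_j$ of a half-graph in direction $1$ occupy only an $O(m^{2-k})$-fraction of $\prod_{i\ge 2}X_i$, so an exceptional set of measure $f(N')$ can swallow them all --- but you do not resolve it, and your guess at how the paper resolves it (``extracting a positive-measure obstruction to $\mu$-stability'') is not what happens. The actual fix is purely finitary and uses hereditary closedness in an essential way: given a half-graph $(a_i,b_{2,j},\dots,b_{k,j})_{i,j\le n}$ in direction $1$, one chooses a coarse net $j_1<\dots<j_{n'}$ with $n'\le 5N+1$ cutting $[n]$ into intervals of length $\le n/(5N)$, and then \emph{shrinks} the side coordinates, setting $X'_\ell:=\{b_{\ell,j_t}:t\in[n']\}$ for $\ell\ge 2$ while keeping $X'_1=X_1$; the induced sub-hypergraph is still in $\mathcal H$ by hereditary closedness, and now each single tuple $(b_{2,j_t},\dots,b_{k,j_t})$ carries measure at least $(5N+1)^{-(k-1)}$ in $\prod_{\ell\ge 2}X'_\ell$. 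With $f(n):=(5n+1)^{-(k-1)}$ and $\varepsilon=\tfrac12$, any $N$-part partition of $X'_1$ has some $A_{i^*}$ of density $\ge\tfrac1{2N}$, and the pigeonhole on the net forces some $j_{t^*}$ whose slice $\mu$-splits $A_{i^*}$ at scale $\tfrac1{2N}$; since that single $j_{t^*}$-tuple itself has measure $>f(N)$, it cannot be discarded as exceptional, contradicting $f(N)$-decency. This is the one idea your argument is missing, and without it the implication does not go through.

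Your route for $(2)\Rightarrow(3)$ is also different from the paper's and has its own problems. The paper simply applies the one-variable perfect-partition lemma (Lemma~\ref{lem: perf part stable}) independently in each of the $k$ directions and then invokes the equivalence of Lemma~\ref{lem: perfect regularity equivalents} --- proved via the closure of perfect sets under products (Lemma~\ref{lem: prod of perf is perf}) --- to conclude that all box densities are $0$ or $1$. There is no need for the coordinate partitions to cohere. Your induction on $k$ instead produces, for each piece $A_{1,t}$, a separate grid partition of $\prod_{i\ge 2}\widetilde X_i$ and then takes a common refinement over all $t$; the common refinement of countably many countable partitions is only of size at most continuum, so it need not be a countable partition, and your argument does not explain why it stays countable here. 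Your inductive step also leans on an unproved preservation statement (``$\mu$-stability with a uniform rank bound is preserved under measure quantifiers''), which the paper nowhere needs. The paper's non-inductive approach is both simpler and avoids both issues, so I would replace your $(2)\Rightarrow(3)$ with it. Your $(3)\Rightarrow(1)$ transfer, by contrast, is in substance the paper's argument.
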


As mentioned  above, we show that in the case of graphs (i.e.~when $k=2$) all three versions of stable regularity coincide (see Theorem \ref{thm: all versions of reg coincide for stable graphs} for a more precise version):

	\begin{thmx}\label{thm: Intro stable reg equiv for graphs}	Let $\mathcal{H}$ be a hereditarily closed family of finite bipartite graphs. The following are equivalent:
	\begin{enumerate}
		\item any ultraproduct of the graphs in $\mathcal{H}$   satisfies perfect stable regularity;
		\item the family $\mathcal{H}$ satisfies  approximately perfect stable regularity;

		\item the family $\mathcal{H}$ satisfies  strong stable regularity;

		\item the family $\mathcal{H}$ satisfies stable regularity;

		\item there is some $d \in \mathbb{N}$ so that all graphs in $\mathcal{H}$ are $d$-stable.
	\end{enumerate}
\end{thmx}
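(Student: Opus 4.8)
The plan is to observe that most of the cycle of implications is already available and to supply the single missing link. First, for $k=2$ the notion of partition-wise $d$-stability simply means that the bipartite graph $(X_1,X_2,E)$ is $d$-stable: reversing the linear orders of $X_1$ and of $X_2$ carries a half-graph of height $d$ inside $E\subseteq X_1\times X_2$ to one inside the transposed relation on $X_2\times X_1$, and vice versa, so the two ``directions'' in the definition of partition-wise stability say the same thing. Hence Theorem~\ref{thm: Intro stab reg equiv for graphs}, in the precise form of Theorem~\ref{thm: metastable general equivalences} and specialized to $k=2$, already yields $(1)\Leftrightarrow(2)\Leftrightarrow(5)$; and $(2)\Rightarrow(3)\Rightarrow(4)$ are the general implications among the three forms of stable regularity in Definition~\ref{def: Intro stable reg versions} recorded in Section~\ref{sec: transfer for stable regularities}. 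So everything reduces to proving $(4)\Rightarrow(1)$.

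For $(4)\Rightarrow(1)$ I would fix an ultraproduct $\widetilde H=(\widetilde E,\widetilde X_1,\widetilde X_2)$ of graphs from $\mathcal H$, equipped with the associated Loeb measures, and glue stable regularity together over a sequence of shrinking scales. For each $k\ge 2$, stable regularity with $\varepsilon=1/k$ provides, for every $H\in\mathcal H$, partitions of each side into at most $N(1/k)$ parts whose rectangle densities all lie in $[0,1/k)\cup(1-1/k,1]$; taking the ultraproduct of these finite partitions over the index set (padding with empty parts so that the number of parts is exactly $N(1/k)$) produces finite partitions $\mathcal P_k$ of $\widetilde X_1$ and $\mathcal Q_k$ of $\widetilde X_2$ into internal sets, and by {\L}o{\'s}'s theorem together with the construction of Loeb measure the density of $\widetilde E$ on every positive-measure rectangle $P\times Q$ with $P\in\mathcal P_k$ and $Q\in\mathcal Q_k$ lies in $[0,1/k]\cup[1-1/k,1]$. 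Now let $\mathcal R_1=\bigvee_{k\ge 2}\mathcal P_k$ and $\mathcal R_2=\bigvee_{k\ge 2}\mathcal Q_k$ be the common refinements; these are countable partitions (only countably many cells can have positive measure). If $A=\bigcap_k P_k\in\mathcal R_1$ and $B=\bigcap_k Q_k\in\mathcal R_2$ have positive measure, then for every $k$ the density of $\widetilde E$ on $P_k\times Q_k$ is $\le 1/k$ or $\ge 1-1/k$; passing to whichever alternative holds for infinitely many $k$ and using $A\times B\subseteq P_k\times Q_k$, we obtain either $\mu(\widetilde E\cap(A\times B))=0$ or $\mu((A\times B)\setminus\widetilde E)=0$, so the density of $\widetilde E$ on $A\times B$ is exactly $0$ or $1$. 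Thus $\widetilde H$ satisfies perfect stable regularity, which is $(1)$, and the cycle is closed.

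The one delicate point — and the place to write out carefully — is the measure-theoretic bookkeeping in $(4)\Rightarrow(1)$: the cells of the common refinement are countable intersections of internal sets, hence Loeb-measurable but not a priori in the algebra generated by the internal sets as item $(1)$ literally requires; this is absorbed exactly as in the proof of Theorem~\ref{thm: metastable general equivalences}, using $\aleph_1$-saturation of the ultraproduct and the inner and outer approximation of Loeb-measurable sets by internal ones. It is worth noting why the loop must close on the nose: by hereditary closure, the failure of $(5)$ forces every finite half-graph into $\mathcal H$, and the ultraproduct of these half-graphs is the ``generic linear order'' $([0,1],[0,1],\{(x,y):x\le y\})$ with Lebesgue measure; in any partition of $[0,1]$ into positive-measure cells, a density-$1$ rectangle $A\times B$ forces $B$ to lie, up to a null set, above the essential supremum of $A$, and a density-$0$ rectangle forces $B$ below the essential infimum of $A$, so the cells of $\mathcal R_2$ would have to avoid the non-degenerate interval between those two values — which is impossible. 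Hence the generic order has no perfect stable regularity, and the argument above is precisely what promotes this concrete obstruction to the general implication $\lnot(5)\Rightarrow\lnot(1)$.
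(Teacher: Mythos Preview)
Your reduction to proving $(4)\Rightarrow(1)$ is correct: the equivalences $(1)\Leftrightarrow(2)\Leftrightarrow(5)$ come from Theorem~\ref{thm: Intro stab reg equiv for graphs} with $k=2$, and $(2)\Rightarrow(3)\Rightarrow(4)$ are the standing implications. The issue is with your direct argument for $(4)\Rightarrow(1)$.

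The gap is in the sentence ``let $\mathcal R_1=\bigvee_{k\ge 2}\mathcal P_k$ and $\mathcal R_2=\bigvee_{k\ge 2}\mathcal Q_k$ be the common refinements; these are countable partitions.'' The common refinement of countably many finite partitions is indexed by $\prod_k [N(1/k)]$ and is typically uncountable; the parenthetical ``only countably many cells can have positive measure'' does not help, because the union of the positive-measure cells need not have full measure. Concretely, take $H_n=([2^n],[2^n],\emptyset)$: every partition satisfies stable regularity, so one may choose $\mathcal P_k$ to be the partition of the ultraproduct into $2^k$ internal pieces each of Loeb measure $2^{-k}$ (coming from the first $k$ binary digits). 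Then every atom of $\bigvee_k\mathcal P_k$ has measure $\le 2^{-k}$ for all $k$, hence measure $0$, and your ``partition'' covers nothing. Nothing in your argument controls the choice of the $\mathcal P_k$, and $\aleph_1$-saturation only gives nonemptiness of the atoms, not positive measure. The density argument you give for a \emph{fixed} positive-measure box $A\times B$ is fine; what is missing is any reason such boxes cover $\widetilde X\times\widetilde Y$.

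The paper closes the cycle differently, via $(4)\Rightarrow(5)$: if no uniform $d$ works, hereditary closure puts every finite half-graph into $\mathcal H$, and Lemma~\ref{lem: half-graphs fail stab reg} shows that for $\varepsilon=\tfrac14$ no bounded-size partition of a large enough half-graph has all densities outside $(\tfrac14,\tfrac34)$, contradicting $(4)$. Then $(5)\Rightarrow(1)$ is already available. You essentially have this argument in your last paragraph (``the failure of $(5)$ forces every finite half-graph into $\mathcal H$''), but you apply it to deduce $\lnot(1)$ in the ultraproduct rather than $\lnot(4)$ finitarily; since $(1)\Rightarrow(5)$ is already known, that direction adds nothing. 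Re-routing that observation to prove $(4)\Rightarrow(5)$ fixes the proof with no new ideas.
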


  On the other hand, for 3-hypergraphs we demonstrate that stable regularity (this notion is called $\vdisc_3$ error in the terminology of \cite{terry2021irregular}) is strictly weaker than the approximately perfect version:
 \begin{thmx}[see Example \ref{ex: x=y<z stability props}]\label{thm: Intro meta neq strong for 3-hyper}
 	There exists a hereditarily closed family of 3-hypergraphs satisfying strong stable regularity, but not approximately perfect stable regularity.
 \end{thmx}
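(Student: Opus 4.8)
The plan is to take the ``$x=y<z$'' example of \cite{terry2021irregular} and verify the two assertions directly. For $n\in\NN$ let $L_n=(\{1,\dots,n\},<)$ and let $H_n=(E_n;L_n,L_n,L_n)$ be the tripartite $3$-hypergraph with $E_n=\{(a,b,c)\in L_n^3: a=b\text{ and }b<c\}$; let $\mathcal H$ be the hereditary closure of $\{H_n:n\in\NN\}$, so that a typical member of $\mathcal H$ is of the form $(E';A,B,C)$ with $A,B,C\subseteq L_n$ and $E'=\{(a,b,c)\in A\times B\times C: a=b<c\}$. I would first record that $\mathcal H$ is ``tame but not too tame'': every slice of $E'$ is either a partial injection (fixing the third coordinate) or a star (fixing the first or second coordinate, since $a=b$ pins down the remaining vertex), so $\mathcal H$ is slice-wise $2$-stable in all three directions, and for the same reason $\mathcal H$ is partition-wise $2$-stable in the first two directions; but it is \emph{not} partition-wise $d$-stable, in the third direction, for any fixed $d$.

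For the failure of approximately perfect stable regularity the crucial point is this last claim: in $H_{2d}$, setting $c_i:=2d-2i+2$ and $(a_j,b_j):=(2d-2j+1,\,2d-2j+1)$ for $1\le i,j\le d$, one has $(c_i;(a_j,b_j))\in E_{2d}$ iff $b_j<c_i$ iff $i\le j$, so $E_{2d}$, read as a binary relation on $L_{2d}\times(L_{2d}\times L_{2d})$, contains a half-graph of height $d$. Hence no single $d$ makes every member of $\mathcal H$ partition-wise $d$-stable, so clause (2) of Theorem~\ref{thm: Intro stab reg equiv for graphs} fails for $\mathcal H$, and by the equivalence in that theorem $\mathcal H$ does not satisfy approximately perfect stable regularity.

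For strong stable regularity I would argue directly, using the trivial but crucial density bound: for any $P\subseteq A$, $Q\subseteq B$, $R\subseteq C$ one has
\begin{gather*}
\frac{\lvert E'\cap(P\times Q\times R)\rvert}{\lvert P\rvert\,\lvert Q\rvert\,\lvert R\rvert}\;\le\;\frac{\lvert P\cap Q\rvert}{\lvert P\rvert\,\lvert Q\rvert}\;\le\;\frac{1}{\max(\lvert P\rvert,\lvert Q\rvert)},
\end{gather*}
which is moreover $0$ as soon as $P$ and $Q$ lie in disjoint intervals of the ambient order on $L_n$. Given $f:\NN\to(0,1]$ and $\varepsilon>0$, put $N:=\lceil 2/f(2)\rceil+2$. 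For $H=(E';A,B,C)$ with $\lvert A\rvert\ge\lvert B\rvert$ (relabelling, using the symmetry in the first two coordinates): if $\lvert A\rvert\ge 2/f(2)$ take $N':=2$, put all of each $X_i$ into the single part $A_{i,1}$ with $A_{i,0}:=\emptyset$; the only part-triple is $A\times B\times C$, of density $\le 1/\lvert A\rvert<f(2)=f(N')$. If instead $\lvert A\rvert<2/f(2)$, then $\lvert A\rvert,\lvert B\rvert$ are bounded by a constant depending only on $f$; take $N':=N$, split $A$ and $B$ into singletons, split $L_n$ into the $\le\lvert A\cap B\rvert+1\le N'-1$ consecutive intervals obtained by cutting immediately after each element of $A\cap B$, and let the parts of $C$ be its traces on these intervals. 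Then a part-triple $(\{a\},\{b\},R)$ has density $0$ unless $a=b$, in which case $R$ lies entirely on one side of $a$ and the density is exactly $0$ or $1$. In both cases $N'\le N$ and the exceptional part is empty, so clause (a) holds vacuously (empty parts causing no difficulty).

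The main obstacle is precisely this uniformity over the whole hereditary closure: one must reconcile the ``large'' members of $\mathcal H$, where one wants few parts that are each internally large (so that the diagonal $\{a=b\}$ is asymptotically negligible), with the ``small'' or degenerate members, where one instead resolves the hypergraph \emph{exactly} by singletons together with order-adapted cuts --- all with a single bound $N=N(f)$. Alternatively, strong stable regularity for $\mathcal H$ follows from the general principle, established elsewhere in the paper, that slice-wise stability in every direction together with partition-wise stability in one direction already yields strong stable regularity, since $\mathcal H$ is slice-wise $2$-stable in all directions and partition-wise $2$-stable in the first two.
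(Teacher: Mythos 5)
Your proposal is correct and follows essentially the same route as the paper. Your example ($a=b<c$ on $[n]^3$) is, up to permuting the roles of the three coordinates and replacing $\le$ with $<$, the same as the paper's Example~\ref{ex: x=y<z stability props} ($x\leq y=z$ on $[n]^3$), and your structural analysis agrees: slice-wise $2$-stable in all directions, partition-wise $2$-stable in two directions, but a half-graph of unbounded height appears in the third. The appeal to clause (2) of Theorem~\ref{thm: Intro stab reg equiv for graphs} (via the explicit half-graph in $H_{2d}$) for the failure of approximately perfect stable regularity is exactly the paper's argument, and your fallback appeal to the partition-wise-plus-slice-wise principle (Theorem~\ref{thm: Intro partition-wise and slice-wise}, i.e.\ Corollary~\ref{cor: finitary one dir stable other slice-wise}) for strong stable regularity is what the paper does as well. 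The one genuine addition in your write-up is the direct finitary verification of strong stable regularity (the dichotomy between ``$|A|\geq 2/f(2)$, use a trivial $N'=2$ partition with density $\leq 1/|A|$'' and ``$|A|$ bounded, use singletons for $A,B$ and interval-cuts after $A\cap B$ for $C$''). This argument is sound; it buys you a proof that avoids the ultraproduct/transfer machinery behind Corollary~\ref{cor: finitary one dir stable other slice-wise}, at the cost of being tailored to the algebraic structure of this particular example rather than to the abstract stability hypotheses. It also makes transparent why the exceptional parts can always be taken empty for this family, which the general theorem does not reveal.
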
 
 
 Then we consider the case of partition-wise stability in one direction and slice-wise stability in the opposite direction. We show that this assumption is still sufficient to obtain strong stable regularity (but not approximately perfect regularity, since the  example in 
 Theorem \ref{thm: Intro meta neq strong for 3-hyper} satisfies these assumptions).  See Corollary \ref{cor: finitary one dir stable other slice-wise} for a more precise version:
\begin{thmx}\label{thm: Intro partition-wise and slice-wise}
Given $d \in \mathbb{N}$, let $\mathcal{H}$ be the family of all finite $3$-partite $3$-hypergraphs such that for every $H = (E; X,Y,Z) \in \mathcal{H}$, $E$ viewed as a binary relation on $X\times Y$ and $Z$ is $d$-stable, and for every $z \in Z$, the binary slice $E_z \subseteq X \times Y$ is $d$-stable.
	Then $\mathcal{H}$ satisfies strong stable regularity.
\end{thmx}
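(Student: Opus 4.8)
The plan is to work, as throughout the paper, in the measure-theoretic setting of graded probability spaces and then invoke the transfer results of Section~\ref{sec: transfer for stable regularities}: it suffices to prove the corresponding measure-theoretic consequence whenever $\widetilde H = (\widetilde E; \widetilde X, \widetilde Y, \widetilde Z)$ arises as an ultraproduct of hypergraphs from $\mathcal{H}$, so that $\widetilde E$, viewed as a binary relation on $(\widetilde X \times \widetilde Y) \times \widetilde Z$, is $d$-stable, and every slice $\widetilde E_z \subseteq \widetilde X \times \widetilde Y$ is $d$-stable. (Note $\mathcal H$ is automatically hereditarily closed, since passing to an induced subhypergraph preserves both hypotheses.)

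\emph{Step 1: partition $\widetilde Z$ so that the slices cluster.} Since $\widetilde E$ is $d$-stable as a binary relation between the vertex set $\widetilde X \times \widetilde Y$ and $\widetilde Z$, the stable regularity lemma applied in this direction---equivalently, the fact that the $d$-stable set system $\{\widetilde E_z : z \in \widetilde Z\}$ of subsets of $\widetilde X \times \widetilde Y$ has totally bounded closure in the measure algebra---yields, for every $\delta > 0$, a finite partition $\widetilde Z = \bigsqcup_{t \le m} Z_t$ with $m = m(\delta, d)$ such that for each $t$ there is a set $F_t \subseteq \widetilde X \times \widetilde Y$ with $\mu(\widetilde E_z \,\triangle\, F_t) \le \delta$ for all but a $\delta$-fraction of $z \in Z_t$. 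The point for later is that $m$ is bounded, so there are only finitely many ``canonical slices'' $F_t$.

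\emph{Step 2: partition $\widetilde X$ and $\widetilde Y$.} For each $t \le m$ choose a representative $z_t \in Z_t$ with $\mu(\widetilde E_{z_t} \,\triangle\, F_t) \le \delta$. The graph $\widetilde E_{z_t} \subseteq \widetilde X \times \widetilde Y$ is genuinely $d$-stable---this is where the slice-wise hypothesis enters, and it is essential that we apply regularity to the actual stable slice $\widetilde E_{z_t}$ rather than to the merely approximately stable set $F_t$---so by the graph stable regularity lemma it admits, for every $\eta > 0$, partitions $\widetilde X = \bigsqcup_a X_a^{(t)}$, $\widetilde Y = \bigsqcup_b Y_b^{(t)}$ of size $\le N(\eta, d)$ on all of whose cells $\widetilde E_{z_t}$ has density within $\eta$ of $\{0,1\}$. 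Taking the common refinement over the finitely many $t \le m$ produces single partitions $\widetilde X = \bigsqcup_a X_a$, $\widetilde Y = \bigsqcup_b Y_b$ of size $\le N(\eta,d)^m$ on whose cells every $\widetilde E_{z_t}$ is simultaneously $\eta$-homogeneous; discarding into exceptional parts $X_0, Y_0$ those $X_a, Y_b$ whose measure falls below a threshold depending on $\delta$ and the partition sizes keeps $\mu(X_0), \mu(Y_0)$ small and ensures the surviving cells are not too thin.

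\emph{Step 3: verification and the main obstacle.} For a surviving product cell $X_a \times Y_b \times Z_t$, the density of $\widetilde E$ is the average over $z \in Z_t$ of the density of $\widetilde E_z$ on $X_a \times Y_b$; for all but a $\delta$-fraction of such $z$ this is within $O\!\left(\delta/(\mu(X_a)\mu(Y_b))\right)$ of the density of $\widetilde E_{z_t}$ on $X_a \times Y_b$, which by Step 2 is $\eta$-close to $\{0,1\}$, so the cell density is close to $\{0,1\}$ once the threshold is chosen well. The main obstacle is the quantitative bookkeeping: to obtain \emph{strong} stable regularity one must exhibit a finite partition whose cells are homogeneous up to $f(N')$ in terms of the final size $N'$, whereas the error above is controlled by $\delta$ and $\eta$ while $N'$ grows as $\eta$ (and hence $\delta$) shrinks; reconciling the two requires an iterated-refinement / energy-increment argument in the style of the strong regularity lemma of Alon--Fischer--Newman, run a bounded number of times so that at some stage the partition already meets the target $f$. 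It is precisely this finite, non-iterable combining of the $\widetilde Z$-partition with the $\widetilde X$- and $\widetilde Y$-partitions that prevents upgrading the conclusion to approximately perfect regularity: a common refinement over the parts of a \emph{countable} $\widetilde Z$-partition would in general be uncountable, consistently with Theorem~\ref{thm: Intro meta neq strong for 3-hyper}.
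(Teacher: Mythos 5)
Your high-level decomposition is the right one and matches the paper's (Theorem \ref{thm: one dir stab one slice-wise stab} followed by Corollary \ref{cor: finitary one dir stable other slice-wise}): first exploit the $d$-stability of $E$ viewed on $(X\times Y)\times Z$ to cluster the slices $E_z$, then use slice-wise stability to regularize the representative slices on $X\times Y$, then transfer. Your closing remark about why the finite, non-iterable refinement over the $Z$-direction blocks an upgrade to approximately perfect regularity is also a good observation.

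However, there is a genuine gap, and it is exactly where you locate the ``main obstacle.'' You work in the ultraproduct with \emph{approximate} clustering (slices $\delta$-close to $F_t$) and \emph{approximate} homogeneity ($\eta$-close to $\{0,1\}$), which produces an error term of shape $\eta + O(\delta/(\mu(X_a)\mu(Y_b)))$ that must be beaten by $f(N')$, where $N'$ itself depends on $\eta$ (and on $\delta$ through $m$) and the cell measures $\mu(X_a),\mu(Y_b)$ depend on $\eta$ and the threshold. You gesture at breaking this circular dependency by ``an iterated-refinement / energy-increment argument in the style of Alon--Fischer--Newman,'' but this step is precisely where the proof content would have to go and you do not carry it out; it is not obvious it closes without further work.

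What the paper's proof does instead is avoid the circular dependency entirely by getting \emph{exact} results in the ultraproduct. Since $E$ is $\mu$-stable on $(X\times Y)\times Z$, Lemma \ref{lem: perf part stable} gives a countable partition of $Z$ into \emph{perfect} sets $Z_k$; Claim \ref{cla: slice plus stab proof 1} then shows that on each $Z_k$, almost all slices are literally equal up to measure zero (not just $\delta$-close). Choosing one $\mu$-stable representative slice per $Z_k$ and applying Proposition \ref{prop: stable graph reg} gives perfect partitions of $X$ and $Y$ for that slice, yielding boxes $X_i\times Y_j\times Z_k$ whose $E$-density is exactly $0$ or $1$. Passing to a common refinement over a large finite initial segment of the $Z_k$'s (using countable additivity) produces the finite disjoint collections in Theorem \ref{thm: one dir stab one slice-wise stab}. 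Proposition \ref{prop: one way transfer for strong stable reg} is designed precisely to convert such an exact-homogeneity statement into the finitary strong stable regularity conclusion: because the ultraproduct densities are exactly $0$ or $1$, the approximation parameter $\delta$ in the transfer can be chosen \emph{after} the finite family is fixed --- in particular after $N'=\max n_t$ is known --- which is what dissolves the bookkeeping problem you identify. In short, you are missing the perfect-partition machinery (Definition \ref{def: perfect and decent}, Lemma \ref{lem: perf part stable}, Proposition \ref{prop: stable graph reg}) and the pre-built transfer Proposition \ref{prop: one way transfer for strong stable reg}; using them renders the energy-increment step unnecessary.
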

\noindent  These results provide a partial answer to \cite[Problem 2.14(1)]{terry2021irregular}. 
 
Moving on to slice-wise stable hypergraphs, we obtain the following countable partition result with actual 0-1 densities and no bad triads (this is a special case of Theorem \ref{thm: all three parts perfect rects}):
\begin{thmx}\label{thm: Intro slice-wise infinitary}
Let $d \in \mathbb{N}$ and let $\mathcal{H}$ be a family of $3$-partite $3$-hypergraphs all of which are slice-wise $d$-stable. Let a $H = (E; X, Y, Z)$ be an ultraproduct of hypergraphs from $\mathcal{H}$.
 Then there exist countable partitions $X \times Y = \bigsqcup_{i \in \omega} A^i$, $X \times Z = \bigsqcup_{j \in \omega}B^j$ and $Y \times Z = \bigsqcup_{k \in \omega} D^k$ with  $A^i, B^j, D^k$ in the algebra generated by internal sets, so that:
\begin{enumerate}
	\item each $A^i = A^{i,X} \times A^{i,Y}, B^j = B^{j,X} \times B^{j,Z}, D^k = D^{k,Y} \times D^{k,Z}$ is a rectangle with $A^{i,X},B^{j,X}, A^{i,Y}, D^{k,Y}, B^{j,Z}, D^{k,Z}$ all in the algebra generated by internal sets;
	\item \begin{enumerate}
 	\item for each $i,j \in \omega$,  the density of $E$ on  $A^i \land B^j$ is either $0$ or $1$;
 	\item for each $i,k \in \omega$, the density of $E$ on $A^i \land D^k$ is either $0$ or $1$;
 	\item for each $j,k \in \omega$,  the density of $E$ on $B^j \land D^k$ is either $0$ or $1$.

 \end{enumerate} 

\end{enumerate}
 \end{thmx}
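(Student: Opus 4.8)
The plan is to work throughout in the graded probability space attached to the ultraproduct, so that $X,Y,Z$ carry the Loeb-type measures $\mu_X,\mu_Y,\mu_Z$, every internal set is measurable, and the coordinate $\sigma$-algebras $\mathcal{B}_X,\mathcal{B}_Y,\mathcal{B}_Z$ obey Fubini; by transfer $H$ itself is slice-wise $d$-stable, i.e.\ all the slices $E_x\subseteq Y\times Z$, $E_y\subseteq X\times Z$, $E_z\subseteq X\times Y$ are $d$-stable. It is convenient to reformulate the conclusion. If $\mathcal A$ denotes the atomic sub-$\sigma$-algebra of $\mathcal{B}_X\otimes\mathcal{B}_Y$ whose atoms are the rectangles $A^i$ (and similarly $\mathcal B\le\mathcal{B}_X\otimes\mathcal{B}_Z$ with atoms $B^j$, and $\mathcal D\le\mathcal{B}_Y\otimes\mathcal{B}_Z$ with atoms $D^k$), then item~(1) is the requirement that $\mathcal A,\mathcal B,\mathcal D$ be atomic and \emph{generated by rectangles}, while items~(2)(a)--(c) say exactly that the pullback of $\mathbf 1_E$ to $X\times Y\times Z$ is measurable with respect to each of $\mathcal A\vee\mathcal B$, $\mathcal A\vee\mathcal D$, $\mathcal B\vee\mathcal D$ --- equivalently, that $E$ has density $0$ or $1$ on every set $A^i\wedge B^j$, $A^i\wedge D^k$, $B^j\wedge D^k$. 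Since the common refinement of two countable rectangle partitions is again one (with sides in the algebra generated by internal sets), and since $E$ has density $0$ (resp.\ $1$) on any measurable subset of a set on which it has density $0$ (resp.\ $1$), it suffices to prove the one-directional statement: \emph{there exist countably-atomic, rectangle-generated $\mathcal A\le\mathcal{B}_X\otimes\mathcal{B}_Y$ and $\mathcal B\le\mathcal{B}_X\otimes\mathcal{B}_Z$ such that $E$ has density $0$ or $1$ on every $A^i\wedge B^j$.} Running this once for each choice of ``distinguished'' coordinate $X$, $Y$, $Z$ (the three cases are literally symmetric) and then replacing each of $P_{XY},P_{XZ},P_{YZ}$ by the common refinement of the (at most two) partitions produced for it yields (2)(a)--(c). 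This is the way the general Theorem~\ref{thm: all three parts perfect rects} is organized.

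For the one-directional statement the engine is perfect stable regularity for graphs --- the phenomenon underlying Theorems~\ref{thm: Intro stab reg equiv for graphs} and \ref{thm: Intro stable reg equiv for graphs} --- applied to the family $\{E_x : x\in X\}$ of $d$-stable graphs on $Y\times Z$. For a single $d$-stable graph, perfect stable regularity produces a \emph{countably}-atomic partition of each side --- canonically, the partition into local types, which stability forces to be countable rather than atomless --- on whose grid the graph has exact $0$/$1$ densities. Applying this with $x$ as a parameter gives, for each $x$, canonical countably-atomic $\mathcal C_Y^x\le\mathcal{B}_Y$ and $\mathcal C_Z^x\le\mathcal{B}_Z$ with $\mathbf 1_{E_x}$ measurable over $\mathcal C_Y^x\otimes\mathcal C_Z^x$, and the task is to glue the partitions $\{\mathcal C_Y^x\}_{x\in X}$ (and $\{\mathcal C_Z^x\}_{x\in X}$) over varying $x$ into single countable rectangle partitions $P_{XY}$ of $X\times Y$ and $P_{XZ}$ of $X\times Z$. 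This is where the $d$-stability of the \emph{other} slices is used: stability of $E_z\subseteq X\times Y$ controls how $\mathcal C_Y^x$ varies with $x$ --- in effect preventing a half-graph from being assembled along the $X$-direction --- so that only countably many $\mathcal C_Y^x$-atoms occur up to measure and they organize into a rectangle-generated, countably-atomic $\mathcal A\le\mathcal{B}_X\otimes\mathcal{B}_Y$; symmetrically, stability of $E_y$ yields $\mathcal B$. One can also argue by approximation and a limit: for each $\varepsilon$ the slice-wise tame regularity of \cite{chernikov2020hypergraph}, improved slice by slice to the stable-graph ``no bad pairs'' form, gives finite rectangle partitions of $X\times Y$ and $X\times Z$ of complexity bounded in $d$ and $\varepsilon$ with no exceptional sets $A^i\wedge B^j$; iterating over $\varepsilon=2^{-n}$ and common-refining, one has $\mu(A^i\wedge B^j)/\mu(A_n\wedge B_n)\to 1$ along the resulting decreasing chains, which forces the limiting densities of $E$ to be exactly $0$ or $1$, while stability --- as opposed to mere NIP --- is what keeps the limiting $\sigma$-algebras countably atomic.

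The main obstacle is precisely this gluing/coherence: upgrading the slice-wise \emph{NIP} regularity, which is only approximate and a priori yields an atomless rather than countably-atomic limit, to a slice-wise \emph{stable} statement in which the exceptional sets genuinely vanish and the limiting $\sigma$-algebras $\mathcal A,\mathcal B$ are honestly atomic with rectangle atoms. For one $d$-stable graph this is the Malliaris--Shelah ``no irregular pairs'' phenomenon; the new difficulty is that the local-type partitions of $Y$ and $Z$ cut out by $E_x$ move with $x$, so one genuinely needs all three families of slices to be $d$-stable at once in order to keep this motion tame, and soft compactness will not substitute for the quantitative input. I expect this to be the point at which the $\mu$-stability machinery of the earlier sections must be invoked in an essential way --- local ranks of definable families, the canonical sub-$\sigma$-algebra attached to a stable relation and a measure, and the fact that stability (unlike NIP) makes that $\sigma$-algebra countably atomic --- after which the reduction of the first paragraph and the limiting argument are routine.
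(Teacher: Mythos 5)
Your first‐paragraph reduction (prove a "one-directional" statement that only produces compatible rectangle partitions of $X\times Y$ and $X\times Z$, then run it three times with the distinguished coordinate rotated and take common refinements) is logically valid: since the density of $E$ on a positive-measure subset of a set of density $0$ (resp.\ $1$) is again $0$ (resp.\ $1$), and since intersecting rectangles produces rectangles with sides in the same algebra, the three runs common-refine to the full conclusion. However, you misattribute this structure to the paper --- the actual proof of Theorem \ref{thm: all three parts perfect rects} is \emph{iterative}, not three independent runs: one produces partitions of $X\times Z$ and $Y\times Z$ into perfect (but non-rectangular) sets via Proposition \ref{prop: part XY into perf sets}, then applies Proposition \ref{prop: part perf rects} to obtain a rectangle partition of $X\times Y$ compatible with them, then \emph{re-partitions} $X\times Z$ by applying Proposition \ref{prop: part perf rects} to the new $A^i$ and the old $D^k$, then does the same for $Y\times Z$. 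The reason this is iterative rather than parallel is that the one-directional statement you extract is not actually an easier intermediate --- establishing rectangle partitions of \emph{two} of the three pair-sorts already requires two passes of the core machinery --- so the reduction does not genuinely reduce.

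The more serious gap is that you never prove the one-directional statement. You correctly identify the high-level ingredients (slice-wise perfect stable regularity applied to the family $\{E_x\}$, a gluing over $x$ that must be tamed by stability of the other slices, the need for $\mu$-stability rather than pointwise stability), and you correctly name the main obstacle (coherently assembling the fiber-wise local-type partitions of $Y$ and $Z$ as $x$ varies), but the two specific technical devices the paper uses to resolve it are absent from your account. First, the \emph{maximal local symmetrization} lemma (Lemma \ref{lem: loc sym lemma} and Corollary \ref{cor: part of binary into symm sets}): this is the mechanism that refines a partition $X\times Y=\bigsqcup A^i$ into pieces that are both perfect and, when eventually cut to rectangles, stay perfect; without it the proof of Proposition \ref{prop: part XY into perf sets} (and a fortiori Proposition \ref{prop: part perf rects}) does not go through, because a positive-measure perfect set of pairs need not contain a positive-measure perfect rectangle. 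Second, the \emph{forest-of-supports} and tree argument in the proof of Proposition \ref{prop: part perf sets and rects} (culminating in Claim \ref{cla: proof rects 3}): this is where stability of $E_y$ is actually invoked, by showing that the auxiliary relation $F\subseteq T'\times Z'$ between tree nodes and $Z$-points is $\mu$-stable along branches of $T'$, and then using the usual $2$-rank/tree maximality to extract a homogeneous rectangle $Y_{\lambda^*}\times Z^*$ of positive measure. Your phrase "preventing a half-graph from being assembled along the $X$-direction" gestures at this but does not supply the construction, and your proposed alternative via limits of $\varepsilon$-approximate NIP regularity would not substitute: passing from $\varepsilon$-good partitions with decaying exceptional sets to genuinely perfect ($0/1$) density partitions is precisely the hard step, and without a stability-specific argument the limiting $\sigma$-algebras will in general be atomless.
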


Next we demonstrate that existence of such perfect partitions in the ultraproduct implies in particular a stronger form of  regular partitions with ``binary vdisc$_3$ error'' established for slice-wise stable hypergraphs in \cite{terry2021irregular}, and in fact both are equivalent to slice-wise stability for a hereditarily closed family (this is a special case of Theorem \ref{thm: finitary slice-wise reg}):

\begin{thmx}\label{thm: Intro finitary slice-wise reg}
	Let $\mathcal{H}$ be a hereditarily closed family of $3$-partite $3$-hypergraphs. Then the following are equivalent.
	\begin{enumerate}
		\item There exists $d \in \mathbb{N}$ so that every $H \in \mathcal{H}$ is slice-wise $d$-stable.
		\item Any ultraproduct  $\widetilde{H} = \left(\widetilde{E}; \widetilde{X}, \widetilde{Y}, \widetilde{Z} \right)$ of hypergraphs from $\mathcal{H}$ admits a partition as in Theorem \ref{thm: Intro slice-wise infinitary}.
\item (Strong slice-wise stable regularity) For every $\varepsilon > 0$ and $f: \mathbb{N} \to (0,1]$, there exists $N = N(\varepsilon,f) \in \mathbb{N}$ satisfying the following. For any $H = (E; X,Y,Z) \in \mathcal{H}$ there exists $N' \leq N$, partitions $X = \bigsqcup_{i \in [N']} A^i$, $Y = \bigsqcup_{j \in [N']} B^j$  and $Z = \bigsqcup_{k \in [N']} C^k$, and $\Sigma_{X,Y}, \Sigma_{X,Z}, \Sigma_{Y,Z} \subseteq [N']^2$ so that:
\begin{enumerate}
\item
 $\left \lvert \bigsqcup_{(i,j) \notin \Sigma_{X,Y}} A^i \times B^j \right \rvert < \varepsilon 	\left \lvert X \times Y \right \rvert$,\\ $ \left \lvert \bigsqcup_{(i,j) \notin \Sigma_{X,Z}} A^i \times C^k \right \rvert < \varepsilon \left \lvert X \times Z\right \rvert$ and \\
	$\left \lvert \bigsqcup_{(j,k) \notin \Sigma_{Y,Z}} B^j \times C^k \right \rvert < \varepsilon 	\left \lvert Y \times Z \right \rvert$;
	\item for all  
$(i,j,k) \in \left( \Sigma_{X,Y} \land \Sigma_{X,Z} \right) \cup \left(\Sigma_{X,Y} \land \Sigma_{Y,Z}  \right) \cup \left( \Sigma_{X,Z} \land \Sigma_{Y,Z} \right) $,
$\frac{\left \lvert E \cap \left( A^i \times B^j \times C^k \right) \right \rvert }{\left \lvert  A^i \times B^j \times C^k \right \rvert } \in \left[0, f(N') \right) \cup \left( 1 - f(N'), 1\right]$.
\end{enumerate}
\item (Slice-wise stable regularity) For every $\varepsilon >0$  there exists some $N = N(\varepsilon) \in \mathbb{N}$ satisfying the following. For any $H = (E; X,Y,Z) \in \mathcal{H}$ there exists $N' \leq N$ and partitions 
$X \times Y = \bigsqcup_{0 \leq i < N'} A^i$, $X \times Z = \bigsqcup_{0 \leq j< N'}B^j$ and $Y \times Z = \bigsqcup_{0 \leq k < N'} D^k$   so that:
	\begin{enumerate}
	\item $\left \lvert A^0 \right \rvert < \varepsilon \left \lvert X \times Y\right \rvert, \mu_{X \times Z} \left \lvert B^0 \right \rvert < \varepsilon  \left \lvert X \times Z \right \rvert,  \left \lvert D^0 \right \rvert < \varepsilon \left \lvert Y \times Z  \right \rvert$;
			\item for each $1 \leq i,j, k < N'$, each of 
	\begin{gather*}
		\frac{ \left \lvert E \land A^i \land B^j \right \rvert }{ \left \lvert  A^i \land B^j \right \rvert },
		\frac{\left \lvert E \land A^i \land D^k \right \rvert}{\left \lvert A^i \land D^k \right \rvert}, \frac{\left \lvert E \land B^j \land D^k \right \rvert}{\left \lvert B^j \land D^k \right \rvert }
	\end{gather*}
is in  $\left[0, \varepsilon\right) \cup \left(1 - \varepsilon, 1 \right]$.
	\end{enumerate}

	\end{enumerate}
\end{thmx}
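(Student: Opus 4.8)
The plan is to prove the cycle $(1)\Rightarrow(2)\Rightarrow(3)\Rightarrow(4)\Rightarrow(1)$. Two of the four links are immediate. For $(1)\Rightarrow(2)$: if $\mathcal H$ is slice-wise $d$-stable for some fixed $d$, then it satisfies the hypothesis of Theorem~\ref{thm: Intro slice-wise infinitary}, and any ultraproduct $\widetilde H$ of hypergraphs from $\mathcal H$ is again an ultraproduct of slice-wise $d$-stable hypergraphs, so the conclusion of that theorem --- which is verbatim the partition demanded in $(2)$ --- applies to $\widetilde H$. For $(3)\Rightarrow(4)$ (pure bookkeeping): apply $(3)$ with the constant function $f\equiv\varepsilon$ and, on each of the three faces, take the good rectangles $A^i\times B^j$ with $(i,j)\in\Sigma_{X,Y}$ (resp.\ $A^i\times C^k$, $B^j\times C^k$) as the non-exceptional parts and the union of the remaining rectangles as $A^0$ (resp.\ $B^0$, $D^0$), padding the three partitions to a common size with empty classes; part $(a)$ of $(4)$ is then $(3)(a)$, and any triad built from three good rectangles is either empty or of the form $A^i\times B^j\times C^k$ with $(i,j,k)$ in one of $\Sigma_{X,Y}\land\Sigma_{X,Z}$, $\Sigma_{X,Y}\land\Sigma_{Y,Z}$, $\Sigma_{X,Z}\land\Sigma_{Y,Z}$, so $(3)(b)$ yields $(4)(b)$.

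For $(2)\Rightarrow(3)$ I would argue by contradiction and {\L}o\'s's theorem; this is the most technical step. Suppose $(3)$ fails for some $\varepsilon>0$ and $f\colon\mathbb N\to(0,1]$, witnessed by hypergraphs $H_N\in\mathcal H$ ($N\in\mathbb N$) with no partition of size $\le N$ of the required form, and let $\widetilde H=\prod_N H_N/\mathcal U$ for a nonprincipal ultrafilter $\mathcal U$. By $(2)$, $\widetilde H$ carries rectangle partitions $\widetilde X\times\widetilde Y=\bigsqcup_i A^i$, $\widetilde X\times\widetilde Z=\bigsqcup_j B^j$, $\widetilde Y\times\widetilde Z=\bigsqcup_k D^k$ with all binary densities of $\widetilde E$ on $A^i\land B^j$, $A^i\land D^k$, $B^j\land D^k$ equal to $0$ or $1$. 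First truncate, keeping in each family only finitely many rectangles whose complements have measure $<\varepsilon/10$; then take the common refinements $\{P_a\}$, $\{Q_b\}$, $\{R_c\}$ of the $X$-, $Y$-, $Z$-sides of the retained rectangles, which are finite partitions of $\widetilde X,\widetilde Y,\widetilde Z$ into internal sets. Put $\Sigma_{X,Y}:=\{(a,b):P_a\times Q_b\subseteq A^i\text{ for some retained }i\}$, and likewise $\Sigma_{X,Z},\Sigma_{Y,Z}$; then the cells with $(a,b)\notin\Sigma_{X,Y}$ fill out exactly the complement of the retained $A^i$, of measure $<\varepsilon/10$, which gives $(3)(a)$, while a triple in $\Sigma_{X,Y}\land\Sigma_{X,Z}$ yields a cell $P_a\times Q_b\times R_c\subseteq A^i\land B^j$, hence of $\widetilde E$-density $0$ or $1$ (and the other two pieces of the union are handled identically via $A^i\land D^k$ and $B^j\land D^k$), which gives $(3)(b)$. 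Finally pass down: each of the finitely many internal sets $P_a,Q_b,R_c$ has a representative sequence, and the first-order statement ``the representatives partition $X_N$'' (resp.\ $Y_N$, $Z_N$), the size bounds ``the exceptional cells have relative size $<\varepsilon$'', and the finitely many conditions ``the density of $E_N$ on $P_{a,N}\times Q_{b,N}\times R_{c,N}$ lies in $[0,f(N'))\cup(1-f(N'),1]$'' all hold for $\mathcal U$-almost every $N$; intersecting these $\mathcal U$-large sets with $\{N>N'\}$ produces an index $N^*$ for which $H_{N^*}$ does admit a partition of size $N'\le N^*$ as in $(3)$, a contradiction.

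Finally, $(4)\Rightarrow(1)$ reduces to the graph case. If $(1)$ failed, then for each $d$ some $H_d\in\mathcal H$ is not slice-wise $d$-stable, and by pigeonhole over the three pairs of coordinates we may assume this happens uniformly in, say, the $Z$-direction: there is $c_d\in Z_d$ with $E_{c_d}=\{(x,y):(x,y,c_d)\in E_d\}\subseteq X_d\times Y_d$ containing an induced half-graph with both sides of size $d$. By hereditary closure the induced sub-hypergraph $G_d$ on the vertices of this half-graph, with third part the singleton $\{c_d\}$, lies in $\mathcal H$; applying $(4)$ to $G_d$ with a fixed $\varepsilon_0<1/4$ and unwinding the definitions --- the face $X'_d\times\{c_d\}$ becomes a partition of the $X$-side of the half-graph, the face $Y'_d\times\{c_d\}$ a partition of its $Y$-side, and the density of $E'_d$ on $B^j\land D^k$ is exactly the half-graph density between the corresponding parts --- produces a stable-regular partition of the half-graph of height $d$, with exceptional parts of relative size $<\varepsilon_0$ on each side and density error $\varepsilon_0$, into at most $N(\varepsilon_0)$ parts, a bound independent of $d$; this contradicts Theorem~\ref{thm: Intro stable reg equiv for graphs} applied to the hereditarily closed, not uniformly $d$-stable family of all finite half-graphs. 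The one delicate point here is absorbing the small exceptional parts so as to land precisely in the exceptional-part-free form of stable regularity (Definition~\ref{def: Intro stable reg versions}(3)); this is routine for hereditarily closed families and is in any case subsumed by Theorem~\ref{thm: Intro stable reg equiv for graphs}, whose content is exactly the equivalence of the exceptional-part-carrying and exceptional-part-free stable regularities. I expect the transfer $(2)\Rightarrow(3)$ --- reconciling the rectangle-partitions-of-pairs produced by Theorem~\ref{thm: Intro slice-wise infinitary} with the vertex-partition-plus-$\Sigma$ bookkeeping required in $(3)$ and $(4)$ --- to be the main obstacle, the remaining implications being either immediate or a standard compactness argument.
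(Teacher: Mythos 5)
Your cycle $(1)\Rightarrow(2)\Rightarrow(3)\Rightarrow(4)\Rightarrow(1)$ is the same one the paper uses (Theorem \ref{thm: finitary slice-wise reg}), and steps $(1)\Rightarrow(2)$ and $(3)\Rightarrow(4)$ match the paper's essentially verbatim. Your $(4)\Rightarrow(1)$ is a mild variant: the paper reduces to half-graph hypergraphs with a singleton third part and invokes Lemma \ref{lem: half-graphs fail stab reg} directly, while you route through Theorem \ref{thm: Intro stable reg equiv for graphs}. Both are fine.

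There is, however, a genuine gap in $(2)\Rightarrow(3)$: after truncating to finitely many rectangles and taking common refinements $\{P_a\},\{Q_b\},\{R_c\}$, you assert these are ``finite partitions of $\widetilde X,\widetilde Y,\widetilde Z$ into internal sets'' and then apply \L o\'s to push them down. This is false as stated. The rectangle sides produced by Theorem \ref{thm: Intro slice-wise infinitary} (i.e.~Theorem \ref{thm: all three parts perfect rects}) live in $\mathcal{B}^{E}_{\widetilde X}$, $\mathcal{B}^{E}_{\widetilde Y}$, $\mathcal{B}^{E}_{\widetilde Z}$, which are \emph{$\sigma$-algebras} generated by the (internal) $E$-fibers; their elements are countable Boolean combinations of internal sets, and in general are not themselves internal, i.e.~not in $\mathcal{B}^0$. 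Since \L o\'s only sees internal sets, you cannot take representative sequences for the $P_a$, $Q_b$, $R_c$ as you propose. The missing step --- which is exactly what the paper's transfer machinery (Propositions \ref{prop: proof of transfer perf part} and \ref{prop: one way transfer for strong stable reg}) supplies --- is to first replace each side by an internal set (some member of $\mathcal{F}^{E,N}_{X}$, etc.) approximating it to within a small measure tolerance $\delta$, keeping the new sets pairwise disjoint, and then check that the exact $0$--$1$ densities degrade only to densities in $[0,c\delta)\cup(1-c\delta,1]$. Choosing $\delta$ small enough as a function of $f$ and the finite partition size yields the bound required in $(3)$, and only \emph{then} does transfer via \L o\'s apply. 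Without this approximation step, the argument does not go through.
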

\noindent In particular this answers \cite[Problem 2.14(2)]{terry2021irregular} (see Corollary \ref{cor: slice-wise stab binary vdisc3}).

Finally, we demonstrate that slice-wise stability does not imply stable regularity for $3$-hypergraphs (see Theorem \ref{thm: slice-wise counterexample}): 

\begin{thmx}\label{thm: Intro slice-wise counterex}
	There exists a hereditarily closed family of finite $3$-hypergraphs so that every $H \in \mathcal{H}$ is slice-wise $7$-stable, but $\mathcal{H}$ does not satisfy stable regularity.
\end{thmx}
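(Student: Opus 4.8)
The plan is to construct an explicit hereditarily closed family $\mathcal{H}$ of finite $3$-partite $3$-hypergraphs and to verify separately that (i) every member is slice-wise $7$-stable and (ii) $\mathcal{H}$ fails stable regularity. For (ii) the concrete goal is to produce a sequence $H_m = (E_m; X_m, Y_m, Z_m)$ in $\mathcal{H}$, with $|X_m|, |Y_m|, |Z_m| \to \infty$, and an absolute constant $\varepsilon_0 > 0$ (say $\varepsilon_0 = 1/4$) such that for every $N$ there is an $m$ for which \emph{no} partition of $X_m, Y_m, Z_m$ into at most $N$ classes each makes all vertex-triads have $E_m$-density in $[0, \varepsilon_0) \cup (1 - \varepsilon_0, 1]$; then $\mathcal{H}$ is the closure of $\{H_m\}$ under induced sub-hypergraphs. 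Alternatively, and more in the spirit of the rest of the paper, one works in an ultraproduct $\widetilde{H} = (\widetilde{E}; \widetilde{X}, \widetilde{Y}, \widetilde{Z})$ of the $H_m$, where the analogue of (ii) is that $\widetilde{H}$ admits no finite partition of $\widetilde{X}, \widetilde{Y}, \widetilde{Z}$ into internal-algebra sets with all triads of Loeb-density $0$ or $1$; this is also the setting in which the contrast with the \emph{binary}-error partitions that slice-wise stability does supply (Theorems \ref{thm: Intro slice-wise infinitary} and \ref{thm: Intro finitary slice-wise reg}) is most transparent, and the transfer results of Section \ref{sec: transfer for stable regularities} connect the two formulations.

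The guiding idea of the construction is that slice-wise stability only guarantees a regular partition relative to a partition of the \emph{pairs} — a ``binary $\vdisc_3$ error'' — and we want an example in which this binary structure genuinely carries density that cannot be captured by any bounded partition of the vertices. So each $E_m$ is assembled from a fixed small \emph{template}: a dense, say $2$-stable, bipartite pattern $G$ of density bounded away from $0$ and $1$. For each $z \in Z_m$ one chooses relabelings of $X_m$ and of $Y_m$ and declares the $z$-slice of $E_m$ to be the resulting relabelled copy of $G$, so that every $z$-slice is automatically $2$-stable. The nontrivial demands on the relabeling family are that it be (a) of bounded Boolean complexity, so that fixing instead an $x$ or a $y$ yields — after absorbing a single Boolean flip determined by the fixed coordinate — a \emph{uniformly} stable bipartite relation on the remaining two vertex sets, and (b) ``spread out'', so that no bounded partition of $X_m$ (or of $Y_m$) is compatible with more than a vanishing proportion of the relabelings simultaneously. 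Reconciling (a) with (b) is the crux, since (a) makes the slices tame and therefore tends to make the whole hypergraph resolvable by a vertex partition; the precise construction threading this needle, together with the bookkeeping on stability ranks that makes the slices $7$-stable rather than merely stable, is the content of Section \ref{sec: counterexample}.

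Granting the construction, slice-wise $7$-stability is a short verification: fixing each of the three variables in turn, one identifies the resulting binary relation as a Boolean combination of bounded size of ``tame'' pieces — rectangles, blow-ups of equivalence relations, and restrictions of these, each $2$-stable — and applies the standard fact that a Boolean combination of $d$-stable relations is $d'$-stable with $d'$ bounded explicitly in terms of $d$ and the number of pieces, checking that here $d' \le 7$ uniformly in $m$ and in the fixed coordinate. Hereditary closure needs nothing extra, as an induced sub-hypergraph of a slice-wise $7$-stable hypergraph is again slice-wise $7$-stable.

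The heart of the proof is (ii): showing $H_m$ is discrepancy-rich against every small partition. This is an averaging argument. Fix $N$, take $m$ large, and let $A \subseteq X_m$, $B \subseteq Y_m$, $C \subseteq Z_m$ be classes of size at least $|X_m|/N$, $|Y_m|/N$, $|Z_m|/N$ of an arbitrary partition into at most $N$ classes. Because $N \ll m$ and the relabelings are spread out, for all but a small fraction of $z \in C$ the images of $A$ and of $B$ under the $z$-relabelings meet each block of the template $G$ in roughly the expected proportion; for such $z$ the density of the $z$-slice of $E_m$ on $A \times B$ is within $o(1)$ of the density of $G$, hence lies in $[\varepsilon_0, 1 - \varepsilon_0]$, and averaging over $z \in C$ gives that $E_m$ has density in $[\varepsilon_0, 1 - \varepsilon_0]$ on $A \times B \times C$. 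The delicate ingredient, and the real obstacle, is producing a relabeling family that is simultaneously ``spread out'' in this sense and uniformly slice-wise stable: slice-wise stability sharply limits how the slices may change with $z$, so the spreading must be concealed within that constraint — which is exactly the phenomenon the example is designed to exhibit, thereby answering the relevant parts of \cite[Problem 2.14]{terry2021irregular} and the surrounding questions of Terry and Wolf.
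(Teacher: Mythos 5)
Your proposal identifies the right shape of argument but supplies neither the construction nor a correct argument that regularity fails. You repeatedly defer the crux to the paper itself (``the precise construction threading this needle \ldots{} is the content of Section \ref{sec: counterexample}''; ``The delicate ingredient, and the real obstacle, is producing a relabeling family \ldots''), which means what you have written is a description of a plan rather than a proof. The ``fixed template $G$ plus relabelings'' framing is also not a faithful account of the paper's example: the relation there is $E\subseteq(\{0,1,2\}^m)^3$, declared directly and symmetrically by ``at the first coordinate where the three words do not all agree, all three entries are distinct,'' and its $z$-slices are not literally relabelled copies of one fixed bipartite graph but a $z$-dependent family whose uniform $7$-stability needs the bespoke ladder argument of Lemma \ref{lem: the counterex is slice-wise stable}, not the generic ``Boolean combination of rectangles and equivalence-relation blow-ups'' bound you sketch.

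More seriously, your averaging argument for the failure of stable regularity proves too much and is therefore wrong. You assert that for \emph{every} triple $A,B,C$ of large parts of a size-$N$ partition, the density of $E_m$ on $A\times B\times C$ lands in $[\varepsilon_0,1-\varepsilon_0]$. That is false already for the paper's example: partitioning $X_m=Y_m=Z_m=\{0,1,2\}^m$ by prefixes of length $k$, the triad $[s_1]\times[s_2]\times[s_3]$ has density exactly $1$ when the first coordinate at which $s_1,s_2,s_3$ disagree sees all three entries distinct, density exactly $0$ when that coordinate sees only two distinct entries, and carries intermediate density only for the diagonal triads $s_1=s_2=s_3$, a $3^{-2k}$-fraction of the total. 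The correct and subtler statement proved in Theorem \ref{thm: slice-wise counterexample} is that \emph{at least one} triad must be bad, and obtaining it requires a genuine reduction from an arbitrary partition: extracting for each class $P_i$ a ``splitting prefix'' $s_i$, pruning to a pairwise $\trianglelefteq$-incomparable subfamily by a forest argument, showing some $[s'_j]$ is almost covered by the classes that nearly live inside it, and then pinning down one irregular box $P_{i^*}\times P_{j^*}\times P_{k^*}$ via the subcubes $[s^\frown 00],[s^\frown 01],[s^\frown 02]$. None of that machinery appears in your proposal, so even granting a construction with the spread-out and uniform-stability properties you list, the step that regularity fails does not go through as you describe it.
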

\noindent In particular, this provides a counterexample to  \cite[Conjecture 2.42]{terry2021irregular}. 

\subsection{Choice of a setting and structure of the paper}
  
There are several different (but ultimately equivalent) formalisms in the literature for discussing these questions. In particular, the direction stating that graphs and hypergraphs satisfying tame regularity have the corresponding combinatorial property has to be formulated with the right amount of uniformity.

For instance, the sorts of theorems we prove here can be stated for hereditarily closed families of hypergraphs. To get equivalences, rather than just implications, they need to be stated with some uniformity: for instance, a hereditarily closed family of graphs $\mathcal{H}$ is $d$-stable for some $d$ (that is, every graph in $\mathcal{H}$ omits the ladder of size $d$) if and only if, for every $\varepsilon>0$, there is an $N$ so that every graph in $\mathcal{H}$ has a partition into at most $N$ pieces witnessing stable regularity.

This can be restated using ultraproducts: every ultraproduct of graphs from $\mathcal{H}$ is stable if and only if every ultraproduct of graphs from $\mathcal{H}$ admits partitions witnessing stable regularity. This is an equivalent statement; the ultraproduct machinery takes care of the uniformity for us.\footnote{Actually, in general both of these settings are slightly more complicated---one must either, as in \cite{terry2021irregular}, consider hereditarily closed families only up to a ``close to'' relation, or consider families additionally closed under blow-ups, or allow arbitrarily probability measures on them in addition to the counting measure. This paper can avoid this complication because we deal only with ladders, which have enough self-similarity for the equivalence to hold anyway.}

A third approach is to focus only on individual infinite measurable hypergraphs. We cannot hope to have an equivalence between regularity and stability in measurable graphs, because a graph which is unstable only on a part of measure zero should behave just like a stable measurable graph. The correct formulation here is to weaken stability to ``stability up to measure $0$'', which we call $\mu$-stability. (This is similar to the notion of robust stability introduced in \cite{martin2022stability}.)

This third setting seems particularly well-suited to some of the notions we consider in this paper: in this setting, the strongest version of stable regularity we consider  (``perfect stable regularity'') can be naturally stated in terms of a countable partition, with no approximations or error sets needed.

Since these formalisms are all fundamentally equivalent, our approach in this paper will be to work in the third setting and then use that to prove statements in the first two formalisms.

More precisely, we work here (as well as in \cite{chernikov2020hypergraph}) in the context of arbitrary graded probability spaces.  
 While this sometimes creates additional complications, as a benefit  all of the partitions that we obtain are (uniformly) measurable with respect to the algebra of the fibers of the edge relation $E$ of the hypergraph that we are decomposing. On the one hand, applied to ultraproducts of finite hypergraphs with Loeb measure, this results in the finitary partition results discussed above. On the other hand, our results apply to tuples of commuting definable Keisler measures in an arbitrary first-order theory, and the partitions that we obtain will be definable in the same theory --- so we get model-theoretic implications for higher amalgamation and uniqueness questions with respect to Keisler measures. We review graded probability spaces in Section \ref{sec: graded prob spaces}, and recall how to view them as first-order structures and  to take ultraproducts of graded probability spaces in order to carry out various transfer and compactness arguments in Section \ref{sec: ultraprods of graded prob spaces}.

As remarked above, stability is a combinatorial property, while regularity decompositions are of measure theoretic nature, and are not sensitive to adding or removing sets of measure $0$. So it is natural to relax stability from forbidding any induced half-graphs to allowing a measure zero set of half-graph. This results in the notion that we call \emph{$\mu$-stability}. We generalize some basic results about stability to $\mu$-stability in Section \ref{sec: mu-stability}\footnote{While this paper was in preparation, a similar notion was considered in \cite{martin2022stability} in the context of ultraproducts of finite sets under the name of \emph{rough stability}.}. A reader only interested in finite combinatorial applications and familiar with the definition of stability can skip this section.

 In Section \ref{sec: perfect stable reg for graphs} we present a streamlined  measure-theoretic account of qualitative stable regularity for graphs from \cite{malliaris2014regularity} and for partition-wise stable hypergraphs from \cite{chernikov2016definable}. We consider \emph{perfect} sets, i.e.~sets so that every fiber of the graph either almost contains it or is almost disjoint from it,  and their approximate version \emph{$\varepsilon$-decent} sets (see Definition \ref{def: perfect and decent}). The key point here is the existence of perfect sets of positive measure for stable graphs. In fact, in 
 Lemma \ref{lem: pos meas of perf sets for stable E} we show that for every $\mu$-stable graph there is a fixed Boolean combination of its fibers which defines a perfect set of positive measure, and moreover this happens for a positive measure set of the parameter choices (which will be needed in the study of slice-wise stable hypergraphs). Combined with the observation that the density of edges between any two perfect sets is either $0$ or $1$ (Lemma \ref{lem: 01 dens on perf sets}), this implies perfect stable regularity (Definition \ref{def: perf stab reg}) for partition-wise stable hypergraphs (Proposition \ref{prop: stable hypergraph reg}).

  In Section \ref{sec: transfer for stable regularities} we study various versions of stable regularity for hypergraphs, and prove Theorems \ref{thm: Intro stab reg equiv for graphs}, \ref{thm: Intro stable reg equiv for graphs} and \ref{thm: Intro meta neq strong for 3-hyper} discussed above. We review the basic relationship between the three versions of stable hypergraph regularity in Section \ref{sec: defs transf}.
To illustrate the difference between strong stable regularity and approximately perfect stable regularity, we consider a single example in some detail in Section \ref{sec:example}.
   The key tool is a transfer principle between perfect regularity with countable partitions in ultraproducts and our new notion of approximately perfect regularity for families of finite hypergraphs, utilizing type-definability of perfection and related properties in continuous logic developed in Section \ref{sec: subsec tranfer} (see Proposition \ref{prop: proof of transfer perf part}). Theorems \ref{thm: Intro stab reg equiv for graphs}, \ref{thm: Intro stable reg equiv for graphs} and \ref{thm: Intro meta neq strong for 3-hyper} are proved in Section \ref{sec: stab regs rels}.
   We prove Theorem \ref{thm: Intro stab reg equiv for graphs} (see Theorem  \ref{thm: metastable general equivalences}) and Theorem \ref{thm: Intro stable reg equiv for graphs} (see Theorem \ref{thm: all versions of reg coincide for stable graphs}) using this transfer principle for one implication and an adaptation of  the well-known observation that any bounded size partition of large half-graphs contains irregular pairs to show that hypergraphs which are not partition-wise stable do not admit partitions into perfect sets.
 We also prove an analogous transfer principle for strong stable regularity (Proposition \ref{prop: one way transfer for strong stable reg}), and use it (combined with Corollary \ref{cor: finitary one dir stable other slice-wise}) to prove Theorem  \ref{thm: Intro meta neq strong for 3-hyper}.
 Namely, in Example \ref{ex: x=y<z stability props} we show that the hereditarily closed family of $3$-partite $3$-hypergraphs on $[n]\times [n] \times [n], n \in \mathbb{N}$ defined by $x \leq y = z$ (pointed out in \cite{terry2021irregular} as an example satisfying stable regularity without being partition-wise stable) satisfies strong stable regularity, but does not satisfy approximately perfect stable regularity.

   In Section \ref{sec: one dir or slicewise stable} we begin investigating  which partitions could be obtained for $3$-hypergraphs that only satisfy slice-wise stability for some of the partitions of the variables, working towards Theorem \ref{thm: Intro slice-wise infinitary} in arbitrary graded probability spaces. Assume that $E \subseteq X \times Y \times Z$ satisfies: for almost every $x \in X$, the binary fiber $E_{x} \subseteq Y \times Z$ is stable. Of course, stable graph regularity (Proposition \ref{prop: stable hypergraph reg})  could be applied to every fiber separately to get perfect partitions of $Y$ and $Z$, however a priori these partitions might also vary in an unpredictable manner as $x$ varies. 
   As a first step, we demonstrate in Theorem \ref{lem: uniform perf decomp} that perfect partitions can be chosen in a uniformly measurable manner, i.e.~there exists a partition of $X \times Y$ into countably many measurable sets $A^i, i \in \omega$, so that for almost every $x \in X$, $\left(A^i_x : i \in  \omega \right)$ is a partition of $Y$ into countably many sets perfect for $E_x$. 
   
    In Section \ref{sec: two dirs of slicewise stab} we consider hypergraphs that are slice-wise stable in two directions.
First  we establish a  general ``maximal local symmetrization'' lemma, roughly saying that for an arbitrary binary relation $R$ we can choose countable partitions of the sides so that $R$ is almost contained in the diagonal, and this cannot be further refined without breaking the symmetry (see Lemma  \ref{lem: loc sym lemma} and Corollary \ref{cor: part of binary into symm sets} for the precise statement). Combining it with Theorem \ref{lem: uniform perf decomp}, we obtain an optimal partition for ternary relations that are slice-wise stable in two directions in Proposition \ref{prop: part XY into perf sets}: if the binary relation $E_{x}$ is $\mu$-stable for almost all $x \in X$, and $E_y$ is $\mu$-stable for almost all $y \in Y$, then there is a partition of $X \times Y$ into measurable sets perfect for $E$, viewed as a binary relation on $\left( X \times Y \right) \times Z$. In Theorem \ref{thm: two dirs slicewise stab finitary version} we extract a finitary equivalence for hereditarily closed families of finite hypergraphs using the transfer principle.

 In Section \ref{sec: all three dirs of slice-wise stab} we consider hypergraphs that are slice-wise stable in all three directions and prove Theorems \ref{thm: Intro slice-wise infinitary}, \ref{thm: Intro finitary slice-wise reg} in Section \ref{sec: slicewise stab reg proof} and Theorem  \ref{thm: Intro slice-wise counterex} in Section \ref{sec: counterexample}. 
Some further  analysis of infinite (and infinitely branching) trees of partitions, with infinite branches tackled by $\mu$-stability on various repartitions of coordinates and slices, allows us to improve the partition from Section \ref{sec: two dirs of slicewise stab} in Proposition \ref{prop: part perf sets and rects}: under the same assumption,  there exists a countable partition $X \times Y = \bigsqcup_{i \in \omega} A^i$ with each $A^i$ perfect for the relation $E \subseteq (X \times Y) \times Z$, and a countable partition $Y \times Z = \bigsqcup_{j \in \omega} B^j$ into rectangles $B^j = B^{j,Y} \times B^{j,Z}$ for some measurable $B^{j,Y} \subseteq Y,  B^{j,Z} \subseteq Z$, so that for each $i,j \in \omega$,  $A^i \land B^j$  is almost contained in $E$,  or is almost disjoint from it.

 \noindent Combining all of the above, after some further repartitions we obtain in Proposition \ref{prop: part perf rects} that if $E$ is slice-wise $\mu$-stable (in all three directions), then there exist a countable partition $X \times Y = \bigsqcup_{i \in \omega} A^i$ so that each $A^i$ is both perfect for the relation $E \subseteq (X \times Y) \times Z$, and  is a rectangle (along with some additional coherence conditions). Using this, Theorem  \ref{thm: Intro slice-wise infinitary} follows quickly (see Theorem \ref{thm: all three parts perfect rects}). Theorem \ref{thm: Intro finitary slice-wise reg} is then deduced using transfer principle, some additional repartitions and an adaptation of the necessary existence of irregular pairs in half-graphs (see Theorem \ref{thm: finitary slice-wise reg}).

In Section \ref{sec: counterexample} we prove Theorem \ref{thm: Intro slice-wise counterex}. We consider the ternary relation on (finite or infinite) words in the alphabet $\{0,1,2\}$ defined as follows: $(x,y,z) \in E$ if for the first  $n$ so that $\left \lvert \left\{ x(n), y(n), z(n) \right\} \right\rvert > 1$, we have $\left \lvert \left\{ x(n), y(n), z(n) \right\} \right\rvert  = 3$, i.e.~at the first coordinate where not all three of the sequences have the same entry, all three have different entries. Equivalently, this can be thought of as a ternary relation on branches of a complete ternary tree which holds when all three branches meet at the same vertex. We demonstrate that this relation is slice-wise stable (Lemma \ref{lem: the counterex is slice-wise stable}), but does not satisfy stable regularity (Theorem \ref{thm: slice-wise counterexample}). It is easy to see that such a partition cannot consist of sets determined by all words with a fixed prefix, and the bulk of the proof is a reduction from an arbitrary partition to one of this special form. Both example and its analysis are related to the analysis of $\textrm{GS}_{3}(n)$ hypergraphs in \cite[Section 7]{terry2021irregular}, however enjoy a quite different combination of properties.

  Finally, in Section  \ref{sec: one dir partition-wise one slice-wise} we establish a corresponding perfect measure-theoretic version of Theorem \ref{thm: Intro partition-wise and slice-wise}, and deduce Theorem \ref{thm: Intro partition-wise and slice-wise} from it using a transfer principle from Section \ref{sec: transfer for stable regularities}.

\subsection{Acknowledgements}
Chernikov was partially supported by the NSF CAREER grant
DMS-1651321 and by the NSF Research Grant DMS-2246598. Towsner was
partially supported by NSF Research Grant DMS-2054379. 
 
\section{Stability, partition-wise and slice-wise stability, measure stability}\label{sec: prelims on slice-wise stab}

\subsection{Some notation}
We have $\mathbb{N} = \{ 0,1, \ldots \}$ and $\omega$ is the first infinite ordinal.
Given a set $X$ and a family of its subsets $\mathcal{F} \subseteq \mathcal{P}(X)$, we let $\sigma_0 \left( \mathcal{F} \right)$ denote the Boolean algebra generated by $\mathcal{F}$, and $\sigma \left(\mathcal{F} \right)$ the $\sigma$-algebra generated by $\mathcal{F}$. For a set $A \subseteq X$, $\chi_A : X \to \{0,1\}$ denotes its characteristic function. For a $\sigma$-algebra $\mathcal{B} \subseteq 2^{X}$ on $X$, $\mu$ a measure on $\mathcal{B}$ and $A,B \in \mathcal{B}$, we will write $A \subseteq^0 B$ if $\mu \left(A \setminus B \right) = 0$ and $A =^0 B$ if $\mu \left( A \triangle B \right) = 0$, where $\triangle$ denotes the symmetric difference.

We use standard set-theoretic notation for $0$-$1$ sequences. Namely, for $n \in \omega$, we have $n = \{0,1,2, \ldots, n-1 \}$ (so $0 = \emptyset$), and $\omega = \{0,1,2, \ldots \}$. For $n \in \omega$, we write $2^n$ for the set of functions from $n$ to $2 = \{0,1\}$.  Equivalently, $\sigma \in 2^{n}$ is  naturally viewed as a $0$-$1$ sequence $\sigma = \left(\sigma(0), \sigma(1), \ldots, \sigma(n-1) \right)$ of length $n$. In particular, $2^{0} = \{ \emptyset \}$, where $\emptyset$ is the empty string. For $n \in \omega$, $2^{<n} := \bigcup_{i=0}^{n-1} 2^{i}$ is the set of $0$-$1$ sequences of length smaller than $n$, and $2^{\leq n} := \bigcup_{i=0}^{n} 2^{i}$ is the set of $0$-$1$ sequences of length at most $n$. In particular $2^{<0} = \emptyset$. Of course $|2^{n}| = 2^{n}$, $\left \lvert 2^{<n} \right \rvert = 2^{n}-1$ and $\left \lvert 2^{\leq n} \right \rvert = 2^{n+1}-1$. We also write $2^{< \omega} := \bigcup_{i \in \omega} 2^{i}$ for the set of all finite sequences, and $2^{\omega}$ for the set of all infinite sequences indexed by the ordinal $\omega$. For $\sigma \in 2^{n}$ and $i \leq m$, $\sigma|_{i} := \left( \sigma(0), \sigma(1), \ldots, \sigma(i -1) \right) \in 2^{i}$ is the initial segment of the sequence $\sigma$ of length $i$, and $\sigma|_{0} = \emptyset \in 2^{0}$. For $\sigma \in 2^{n}$, $|\sigma| := n$ denotes the length of the sequence $\sigma$. And $\triangleleft$ denotes the (strict) lexicographic ordering on the sequences in $2^{< \omega}$.

For $E \subseteq \prod_{i \in [k]} X_i$, $I \subseteq [k]$ and $\bar{b} = \left(b_i : i \in [k] \setminus I \right) \in \prod_{i \in [k] \setminus I} X_i$, we write $E_{\bar{b}}$ (or $E(x,\bar{b})$) to denote the slice (or fiber) of $E$ at $\bar{b}$, that is 
$$E_{\bar{b}} = \left\{(b_i : i \in I) \in \prod_{i \in I} X_i : (b_i : i \in [k]) \in E \right\}.$$
 For any $A \subseteq X$, we let $A^{\emptyset} := X, A^1 := A$ and $A^{0} := X \setminus A$.

\subsection{Graded probability spaces}\label{sec: graded prob spaces}
Assume $k \in \mathbb{N}$ is fixed and we are given \emph{base sets}  $V_1, \ldots, V_k$. By a \emph{sort} we will mean an arbitrary cartesian product $X$ of the sets $V_1, \ldots, V_k$, in an arbitrary order and possibly with repetitions. 
\begin{definition}
	
A \emph{$k$-partite graded probability space} on the base sets $V_1, \ldots, V_k$ is given by specifying, for each sort $X$, a $\sigma$-algebra $\mathcal{B}_{X} \subseteq \mathcal{P}(X)$ of subsets of $X$ and a countably additive probability measure $\mu_{X}$ on $\mathcal{B}_{X}$ satisfying the following compatibility conditions between different sorts:
\begin{itemize}
	\item (closure under products) for any sorts $X,Y$, $\mathcal{B}_{X \times Y} \supseteq \mathcal{B}_{X} \times \mathcal{B}_{ Y} $, i.e. the $\sigma$-algebra on the sort $X \times Y$ contains (typically  properly) the product of the $\sigma$-algebras $\mathcal{B}_{X} $ and $\mathcal{B}_{ Y} $, and the measure $\mu_{X \times Y}$ extends the product measure $\mu_{X} \times \mu_{Y}$;
	\item (symmetry) For any set $A \in \mathcal{B}_{X \times Y}$, $\mu_{X \times Y} (A) = \mu_{Y \times X}(A')$, where $A' \in \mathcal{B}_{Y \times X}$ is obtained from $A$ by permuting the coordinates;
	\item (Fubini) Given $A \in \mathcal{B}_{X \times Y}$, we have  $A_y = \{x \in X : (x,y) \in A \} \in \mathcal{B}_{X}$ for $\mu_{Y}$-almost every $y \in Y$; the function $y \in Y \mapsto \mu_{X}(A_y) \in [0,1]$ is $\mu_{Y}$-measurable; and $\mu_{X \times Y}(A) = \int_{Y} \mu_{X}(A_y) d \mu_{Y}$.
\end{itemize}
\end{definition}

 We refer to \cite[Section 2.2]{chernikov2020hypergraph} for a further discussion of graded probability spaces and their basic properties. We will need the following couple of observations.

\begin{fac}\label{lem: meas of av fib}\cite[Lemma 4.8]{chernikov2020hypergraph}
	Let $\B \subseteq \B_{X}$ be an arbitrary $\sigma$-algebra, and assume that $g: X \times Y \to \mathbb{R}$ is a $\B_{X \times Y}$-measurable function such that
	the set of  $y \in Y$ for which the function $g_y : x \mapsto  g \left( x,y \right)$ is $\B$-measurable has measure $1$.	 Then the ``average fiber'' function $g'(x) := \int_Y g\left(x,y \right) d\mu \left( y \right)$ is also $\B$-measurable.
\end{fac}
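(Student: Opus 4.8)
\textbf{Proof proposal for Fact \ref{lem: meas of av fib}.}

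The plan is to reduce to the case of indicator functions and then invoke a monotone class / standard approximation argument, with the Fubini axiom for graded probability spaces doing the essential work. First I would observe that by splitting $g = g^+ - g^-$ into positive and negative parts (each of which is again $\mathcal{B}_{X\times Y}$-measurable, and whose $y$-fibers are $\mathcal{B}$-measurable for a.e.\ $y$ whenever those of $g$ are, since $\mathcal{B}$-measurability is preserved under these operations on a full-measure set of $y$), it suffices to treat nonnegative $g$. A nonnegative $\mathcal{B}_{X\times Y}$-measurable $g$ is an increasing pointwise limit of nonnegative simple functions $g_n = \sum_{m} c_{n,m}\chi_{A_{n,m}}$ with $A_{n,m}\in\mathcal{B}_{X\times Y}$; by monotone convergence $g'_n(x) = \int_Y g_n(x,y)\,d\mu(y) \to g'(x)$ pointwise (the integrals are finite since $g$ can be assumed bounded, or one truncates), so if each $g'_n$ is $\mathcal{B}$-measurable then $g'$ is, being a pointwise limit of $\mathcal{B}$-measurable functions. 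Thus it suffices to prove the statement for $g = \chi_A$ with $A \in \mathcal{B}_{X\times Y}$, i.e.\ to show that if $A_y \in \mathcal{B}_X$ for a.e.\ $y$, then the function $x\mapsto \int_Y \chi_A(x,y)\,d\mu(y)$ is $\mathcal{B}$-measurable.

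Here I must be slightly careful: the hypothesis is not merely that $A_y\in\mathcal{B}_X$ for a.e.\ $y$, but, after the reduction, that the fibers $(g_n)_y$ are $\mathcal{B}$-measurable for a.e.\ $y$, which for an indicator $g=\chi_A$ just says $A_y\in\mathcal{B}$ for a.e.\ $y$ — so the target algebra is $\mathcal{B}$, not $\mathcal{B}_X$. Then I would run a monotone class argument \emph{inside} the relevant collection: let $\mathcal{D}$ be the collection of all $A \in \mathcal{B}_{X\times Y}$ such that $A_y \in \mathcal{B}$ for $\mu_Y$-a.e.\ $y$ and the map $x\mapsto \mu_X(A_y\cap\{x\})$... — more cleanly, I would argue directly that $\mathcal{D} := \{A\in\mathcal{B}_{X\times Y} : A_y\in\mathcal{B} \text{ for a.e.\ } y \text{ and } x\mapsto \int_Y\chi_A(x,y)d\mu(y)\text{ is }\mathcal{B}\text{-measurable}\}$ contains all measurable rectangles $B\times C$ with $B\in\mathcal{B}$, $C\in\mathcal{B}_Y$ (for such a rectangle the integral is $\mu_Y(C)\chi_B(x)$, manifestly $\mathcal{B}$-measurable) and is closed under complements and countable increasing unions, hence is a $\sigma$-algebra (or at least a monotone class / Dynkin system) containing a generating algebra — but this only gives $\sigma(\mathcal{B}\times\mathcal{B}_Y)$, which need not be all of $\mathcal{B}_{X\times Y}$ in a graded probability space.

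This gap is exactly where the main obstacle lies, and it is what makes the statement nontrivial rather than textbook Fubini: in a graded probability space $\mathcal{B}_{X\times Y}$ can properly contain $\sigma(\mathcal{B}_X\times\mathcal{B}_Y)$, so a pure monotone-class argument does not reach every $A\in\mathcal{B}_{X\times Y}$. To close it I would appeal to the Fubini axiom together with approximation in measure: for $A\in\mathcal{B}_{X\times Y}$ with $A_y\in\mathcal{B}$ a.e., one wants to approximate $A$ by sets in $\sigma(\mathcal{B}\times\mathcal{B}_Y)$ — but an arbitrary $A$ need not be approximable by $\mathcal{B}$-rectangles. Instead I would follow the argument already used in \cite[Lemma 4.8]{chernikov2020hypergraph} (which is precisely this Fact), whose key idea is: the function $g'(x)=\int_Y g(x,y)d\mu(y)$ is the conditional-expectation-type object obtained by integrating out $Y$, and one shows it is measurable with respect to $\mathcal{B}$ by checking that for every $\mathcal{B}$-measurable $h$ on $X$ the integral $\int_X g'\cdot h\, d\mu_X = \int_{X\times Y} g(x,y)h(x)\,d\mu_{X\times Y}$ is computed correctly and that $g'$ agrees a.e.\ with $\mathbb{E}[g_y \mid \mathcal{B}]$ averaged over $y$; more concretely, one uses that $g$ itself, being $\mathcal{B}_{X\times Y}$-measurable with a.e.-$\mathcal{B}$-measurable $y$-fibers, lies in the $L^1$-closure of functions of the form $\sum_i \chi_{B_i}(x)\phi_i(y)$ with $B_i\in\mathcal{B}$ — and this $L^1$-density (rather than $\sigma$-algebra generation) is what the Fubini axiom delivers. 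Since the excerpt permits me to cite \cite[Lemma 4.8]{chernikov2020hypergraph} directly as the statement of this Fact, I will structure the writeup to make the reduction to indicators explicit and then either reproduce the $L^1$-approximation step or simply invoke the cited lemma; the honest assessment is that the one genuinely delicate point — passing from $\sigma(\mathcal{B}\times\mathcal{B}_Y)$-measurable sets to all of $\mathcal{B}_{X\times Y}$ — is handled in the reference via this $L^1$-density consequence of Fubini, and that is the step I would flag as the crux.
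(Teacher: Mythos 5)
Note first that the paper does not prove this statement: it is stated as a \emph{Fact} and cited from \cite[Lemma 4.8]{chernikov2020hypergraph}, so there is no in-paper argument to compare against; the question is simply whether your reconstruction is complete, and it is not. Your reduction to nonnegative and then to indicator functions is fine, and you are right to notice that a pure monotone-class argument over $\mathcal{B}$-rectangles only reaches $\sigma(\mathcal{B}\times\mathcal{B}_Y)$, which in a graded probability space is in general a proper subalgebra of $\mathcal{B}_{X\times Y}$. But the claim you then defer to --- that a $\mathcal{B}_{X\times Y}$-measurable $g$ with a.e.\ $\mathcal{B}$-measurable $y$-fibers lies in the $L^1$-closure of sums $\sum_i\chi_{B_i}(x)\phi_i(y)$ with $B_i\in\mathcal{B}$ --- is essentially the assertion that $g$ is a.e.\ $\sigma(\mathcal{B}\times\mathcal{B}_Y)$-measurable, which is at least as strong as what you set out to prove and is never established. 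As written, the crux is invoked, not proved.

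There is a short direct argument that sidesteps the density claim entirely. For $g=\chi_A$ with $A\in\mathcal{B}_{X\times Y}$ and $A_y\in\mathcal{B}$ for a.e.\ $y$, set $h(x):=\mu_Y(A_x)$ (bounded and $\mathcal{B}_X$-measurable by the Fubini axiom) and $\tilde h:=\mathbb{E}[h\mid\mathcal{B}]$, the conditional expectation in $L^2(\mu_X)$. Swapping integrals by Fubini and then using that $A_y\in\mathcal{B}$ for a.e.\ $y$ (so the defining property of conditional expectation gives $\int_{A_y}\tilde h\,d\mu_X=\int_{A_y}h\,d\mu_X$), one computes
\begin{gather*}
\int_X h\tilde h\,d\mu_X=\int_Y\int_{A_y}\tilde h\,d\mu_X\,d\mu_Y=\int_Y\int_{A_y}h\,d\mu_X\,d\mu_Y=\int_X h^2\,d\mu_X,
\end{gather*}
while $\int_X h\tilde h\,d\mu_X=\int_X\tilde h^2\,d\mu_X$ holds automatically. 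Hence $\|h-\tilde h\|_{L^2}=0$, i.e.\ $h$ agrees a.e.\ with the $\mathcal{B}$-measurable function $\tilde h$; the general case then follows by your reduction to simple functions and monotone convergence. The fiber hypothesis is used exactly once, to slide $\tilde h$ past $\chi_{A_y}$ inside the inner integral. Your proposal locates the right obstruction, but replaces this short $L^2$ calculation with an unproved $L^1$-density assertion that carries all the weight.
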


Given a measurable relation, we will consider various sub-algebras of sets that can be defined using the fibers of this relation:
\begin{definition}\label{def: algebras of fibers}
Assume that $\mathfrak{P}$ is a (partite) graded probability space on base sets $V_1, \ldots, V_k$. Assume that $X = \prod_{1 \leq t \leq K} V_{i_t}$ is a sort of $\mathfrak{P}$, for some $K \in \mathbb{N}$ and $i_t \in [k]$ for $t \in [K]$, and let $E \in \mathcal{B}_{X}$.
\begin{enumerate}
\item For $I \subseteq [K]$ and a sort $X_{I} := \prod_{t \in I} V_{i_t}$ of $\mathfrak{P}$, we let 
 $$\mathcal{F}^{0,E}_{X_{I}} := \left\{  E^{\sigma}_{\bar{b}} : \bar{b} = (b_t : t \in [K] \setminus I) \in \prod_{i \in [K] \setminus I} V_{i_t}, \sigma \in \{0,1\} \right\} \subseteq \mathcal{B}_{X_I},$$
 i.e.~the collection of all fibers of $E$ and their complements (in particular $\mathcal{F}^{0,E}_{X_{[K]}} = \left\{E, X_{[K]} \setminus E \right\}$).
 \item Let now $Y = \prod_{t \in [L]} V_{j_t}$ for $L \in \mathbb{N}, j_t \in [k]$, be an arbitrary sort in $\mathfrak{P}$. For $N \in \mathbb{N}$, we let $\mathcal{F}^{E, N}_{Y}$ be the collection of all subsets of $Y$ of the form
 \begin{gather*}
 	 \left\{ (b_i : i \in [L]) \in \prod_{t \in [L]} V_{j_t} : \bigwedge_{\ell \in [N]} (b_i : i \in J_{\ell}) \in S_{\ell}    \right\}
 \end{gather*}
 for some $J_{\ell} \subseteq [L]$ so that $\left( j_t : t \in J_{\ell} \right) = \left( i_t : t \in I_{\ell} \right)$  for some $I_{\ell} \subseteq [K]$ and some  $S_{\ell} \in \mathcal{F}^{0,E}_{X_{I_{\ell}}}$. I.e., the collection of all subsets of $Y$ that can be defined using at most $N$ conditions on some of the coordinates given by fibers of $E$ (or their complements) of arbitrary arities.
	\item 	We let $\mathcal{B}^{E}_{Y}$ be the $\sigma$-subalgebra of $\mathcal{B}_{Y}$ generated by $\bigcup_{N \in \mathbb{N}}\mathcal{F}^{E,N}_{Y}$ (in particular, if the sets $\left\{i_t : t \in [K] \right\}$ and $\left\{j_t : t \in [L] \right\}$ are disjoint, $\mathcal{B}^{E}_{Y} = \left\{ \emptyset, Y \right\}$).
\end{enumerate}
\end{definition}

\begin{lemma}\label{lem: alg of fibers closed under meas}
 For any sorts $X,Y$,  if $R \in  \mathcal{B}^E_{X \times Y}$ and $r \in \mathbb{R}$, then 
 $$\left\{ x \in X : \mu_{Y}(R_x) \square  r \right\} \in  \mathcal{B}^E_{X} \textrm{ for } \square \in \{ >, <, \geq, \leq, = \}.$$
\end{lemma}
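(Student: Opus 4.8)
The plan is to reduce the statement to a closure property of the algebra $\mathcal{B}^E_{X}$ under the ``average fiber'' operation supplied by Fact \ref{lem: meas of av fib}, and then to run an induction on the structure of the $\sigma$-algebra $\mathcal{B}^E_{X \times Y}$. First I would observe that it suffices to treat the case $\square$ being $<$ (or $\leq$): indeed $\{x : \mu_Y(R_x) > r\} = X \setminus \{x : \mu_Y(R_x) \leq r\}$, the case $=$ follows by intersecting the $\leq r$ and $\geq r$ sets, and $\geq r$ is the complement of $> r$; since $\mathcal{B}^E_X$ is a $\sigma$-algebra, closure under complements and countable intersections handles all five relations once one is done. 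Moreover $\{x : \mu_Y(R_x) < r\} = \bigcup_{n} \{x : \mu_Y(R_x) \le r - 1/n\}$, so it is enough to handle $\le s$ for all real $s$.

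Next, the core reduction: for a fixed $R \in \mathcal{B}^E_{X\times Y}$, consider the function $h_R(x) := \mu_Y(R_x)$. By Fubini (applied within the graded probability space) $h_R$ is a well-defined $\mathcal{B}_X$-measurable function on a set of full measure; what I want is that $h_R$ is in fact $\mathcal{B}^E_X$-measurable (up to null sets), since then $\{x : h_R(x) \le s\}$ lies in $\mathcal{B}^E_X$ for every $s$. To see $h_R$ is $\mathcal{B}^E_X$-measurable I would apply Fact \ref{lem: meas of av fib} with $\B := \mathcal{B}^E_X \subseteq \mathcal{B}_X$ and $g := \chi_R$: the hypothesis of that fact requires that for $\mu_Y$-almost every $y$ the fiber $g_y = \chi_{R_y}$ is $\mathcal{B}^E_X$-measurable, i.e.\ that $R_y \in \mathcal{B}^E_X$ for almost all $y$; its conclusion is exactly that $g'(x) = \int_Y \chi_R(x,y)\, d\mu_Y(y) = h_R(x)$ is $\mathcal{B}^E_X$-measurable. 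So everything comes down to the claim:
\begin{equation*}
R \in \mathcal{B}^E_{X\times Y} \ \Longrightarrow\ R_y \in \mathcal{B}^E_X \text{ for }\mu_Y\text{-almost all } y \in Y.
\end{equation*}

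Finally I would prove this claim by the standard generator-to-$\sigma$-algebra induction. The class $\mathcal{C} := \{ R \in \mathcal{B}^E_{X\times Y} : R_y \in \mathcal{B}^E_X \text{ for a.e.\ } y\}$ is closed under complements (since $(X\times Y \setminus R)_y = X \setminus R_y$) and under countable unions/intersections (a countable union of full-measure sets of good $y$ is full measure, and $\mathcal{B}^E_X$ is a $\sigma$-algebra), so $\mathcal{C}$ is a $\sigma$-algebra; it therefore suffices to check that $\mathcal{C}$ contains the generating family $\bigcup_N \mathcal{F}^{E,N}_{X\times Y}$. A member of $\mathcal{F}^{E,N}_{X\times Y}$ is a finite conjunction of conditions of the form $(b_i : i \in J_\ell) \in S_\ell$ with $S_\ell$ a fiber of $E$ (or its complement) on some sub-sort; since conjunction is intersection and $\mathcal{C}$ is closed under finite intersection, it is enough to treat a single such condition $R = \{(x,y) : (b_i : i \in J)\in S\}$. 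Here one distinguishes cases according to how the index set $J$ meets the $X$-coordinates versus the $Y$-coordinates: if $J$ uses only $Y$-coordinates then $R_y$ is either $\emptyset$ or all of $X$; if $J$ uses only $X$-coordinates then $R_y = S'$ for a fixed fiber $S'$ of $E$ restricted to the $X$-sort, hence $R_y \in \mathcal{F}^{0,E}_X \subseteq \mathcal{B}^E_X$ for every $y$; and in the mixed case $R_y$ is, for each fixed $y$, a fiber of $E$ obtained by additionally specializing the $Y$-coordinates of $y$ to constants, which is again an element of $\mathcal{F}^{0,E}_X \subseteq \mathcal{B}^E_X$ — with the one subtlety that this fiber is only guaranteed to be $\mathcal{B}_X$-measurable (indeed in $\mathcal{B}^E_X$) for $\mu_Y$-almost every $y$, which is exactly why the conclusion is stated ``almost everywhere'' and is precisely what is needed. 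The main obstacle is bookkeeping this mixed case carefully — matching up the index sets $I_\ell \subseteq [K]$, $J_\ell \subseteq [L]$ in Definition \ref{def: algebras of fibers} and verifying that specializing the $Y$-part of the parameter still lands in $\mathcal{F}^{0,E}$ of the appropriate $X$-sub-sort — but no real difficulty beyond that, and the almost-everywhere hedge absorbs the measurability issue inherited from Fubini.
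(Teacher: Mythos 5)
Your proof is correct and takes essentially the same approach as the paper: both apply Fact \ref{lem: meas of av fib} with $\mathcal{B}=\mathcal{B}^E_X$ and $g=\chi_R$, reducing everything to the claim that $R_y\in\mathcal{B}^E_X$ for $\mu_Y$-a.e.\ $y$. The paper simply asserts that last claim ``by the assumption on $R$,'' whereas you spell it out via the standard generator-to-$\sigma$-algebra induction; that extra detail is sound (the bookkeeping in the mixed case is exactly right: specializing the $Y$-coordinates of a fiber of $E$ yields another fiber of $E$, landing in $\mathcal{F}^{0,E}$ of a smaller $X$-sub-sort), and the ``almost everywhere'' hedge correctly absorbs the Fubini measurability caveat. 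Your opening reduction to a single comparison $\square$ is harmless but unnecessary — once $h_R$ is shown $\mathcal{B}^E_X$-measurable, all five level sets are automatically in $\mathcal{B}^E_X$.
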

\begin{proof}
	Indeed, $(x,y) \mapsto \chi_{R}(x,y)$ is $\mathcal{B}_{X\times Y}$-measurable and $x \mapsto \chi_{R_y}(x)$ is $\mathcal{B}^{E}_{X}$-measurable for almost every $y \in Y$ by the assumption on $R$, hence the function $x \mapsto \mu_Y(R_x) = \int_{Y} \chi_{R}(x,y) d\mu_{Y}(y) $ is also $\mathcal{B}^{E}_{X}$-measurable by Fact \ref{lem: meas of av fib}.
\end{proof}
\begin{remark}\label{rem: graded prob space of fibers}
	If follows that if $\mathfrak{P}$ is a graded probability space and $E \in \mathcal{B}_{X}$ for some sort $X$ of $\mathfrak{P}$, then replacing $\mathcal{B}_{Y}$ by $\mathcal{B}^{E}_{Y}$ and $\mu_{Y}$ by $\mu_{Y} \restriction_{\mathcal{B}^{E}_{Y}}$ for every sort $Y$ of $\mathfrak{P}$, we obtain a graded probability space $\mathfrak{P}^{E}$ on the same sorts.
\end{remark}

\subsection{Graded probability spaces as first-order structures and their ultraproducts}\label{sec: ultraprods of graded prob spaces}
We will use ultraproducts to connect asymptotic behavior in a family of finite hypergraphs with the qualitative behavior of infinite measurable hypergraphs, as well as to carry out certain compactness arguments. We will consider ultraproducts of graded probability spaces with measurable functions in the setting of approximate logic for measures \cite{goldbring2014approximate}  similarly to \cite[Section 9.3]{chernikov2020hypergraph}.

\begin{definition}
Given a graded probability space $\mathfrak{P}$, and for a countable set $I$,  $X_i$ a sort in $\mathfrak{P}$ and $E_i \in \mathcal{B}_{X_i}$, we let $\bar{E} := \left(E_i : i \in I \right)$ and associate to it a multi-sorted first-order structure 
$\mathcal{M}_{\mathfrak{P},\bar{E}}$ in the language $\mathcal{L}_{\infty,I}$ (or just $\mathcal{L}_{\infty}$ when $I$ is clear from the context) with sorts $\mathbf{X}_i$ corresponding to $X_i$ and $P_{\mathbf{X}_i}$ corresponding to $2^{\mathcal{B}_{X_i}}$, so its elements correspond to the sets in $\mathcal{B}_{X_i}$, as follows.
For every $i \in I$, $\mathcal{L}_0$ contains a relation symbol $E_{i}$ interpreted as $E_i$, and a binary relation symbol $S_{\mathbf{X}_i}(x,y)$ on $\mathbf{X}_i \times P_{\mathbf{X}_i}$ interpreted as the membership relation between the elements of $X_i$ and the sets in $\mathcal{B}_{X_i}$.
By induction on $t \in \mathbb{N}$, we define a \emph{countable} language $\mathcal{L}_t$ as follows. In addition to all the symbols in $\mathcal{L}_t$, for every \emph{quantifier-free} $\mathcal{L}
_t$-formula $\varphi(\bar{x}, \bar{y})$ such that the tuple of variables $\bar{x}$ corresponds to the sort $\mathbf{X}_i$ for some $i$ and $r \in \mathbb{Q}$, we add to $\mathcal{L}_{t+1}$ a new relational symbol $m_{\bar{x}} < r. \varphi(\bar{x}, \bar{y})$ with free variables $\bar{y}$, interpreted by: for every tuple $\bar{b}$ corresponding to $\bar{y}$,
	$$\mathcal{M}_{\mathfrak{P},\bar{E}} \models m_{\bar{x}} < r. \varphi(\bar{x}, \bar{b}) ~ :\iff \mu_{X_i} \left( \varphi(\bar{x}, \bar{b}) \right) < r,$$
	where as usual $\varphi(\bar{x},\bar{b}) = \left\{ \bar{a} \in X_i \mid  \mathcal{M}_{\mathfrak{P},\bar{E}} \models \varphi(\bar{a}, \bar{b}) \right\}$ is the set defined by the corresponding instance of $\varphi$
	(note that this set is in $\mathcal{B}_{X_i}$ by Fubini property in $\mathfrak{P}$ and induction).  
	Let $\mathcal{L}_{\infty} := \bigcup_{t \in \mathbb{N}} \mathcal{L}_t$.
	We will write $m_{\bar{x}} \geq r$ as an abbreviation for $\neg m_{\bar{x}} < r$.
\end{definition}

\begin{definition}
	Assume that for each $j \in \mathbb{N}$, $\mathfrak{P}_j$ is a graded probability space and for $i \in I$, $E_{i}^j \in \mathcal{B}_{X_i^j}$ for a sort $X_i^j$ of $\mathfrak{P}_j$. Let $\mathcal{U}$ be an ultrafilter on $\mathbb{N}$.
  We let $\widetilde{\mathcal{M}} := \prod_{j \in \mathbb{N}} \mathcal{M}_{\mathfrak{P}_j, \bar{E}^j} / \cU$ (i.e., the usual ultraproduct of $\mathcal{L}_{\infty}$-structures). We let $\widetilde{X}_i$ denote the ultraproduct of the corresponding sorts $X_i^j$, and $\widetilde{E}_i$ the ultraproduct of the $E_i^j$'s.
We let $\tilde{\B}_{\widetilde{X}_i}^0$ be the Boolean algebra of all subsets of $\widetilde{X}_i$ of the form $A = \prod_{j \in \mathbb{N}} A_j / \cU$ for some $A_j \in \B_{X_i^j}$. We let $\mu^0_{\widetilde{X}_i} := \lim_{j \to \mathcal{U}} \mu_{X_i^j}$ be the finitely additive probability measure on $\mathcal{B}^0_{\widetilde{X}_i}$, let $\mathcal{B}_{\widetilde{X}_i}$
 be the $\sigma$-algebra generated by $\mathcal{B}^0_{\widetilde{X}_i}$ and $\mu_{\widetilde{X}_i}$ the unique countably additive probability measure on $\mathcal{B}_{\widetilde{X}_i}$ extending $\mu^0_{\widetilde{X}_i}$. Then $\widetilde{\mathfrak{P}} = \left(\widetilde{X}_i, \mathcal{B}_{\widetilde{X}_i}, \mu_{\widetilde{X}_i}  \right)_{i \in I}$ is a graded probability space, and $\widetilde{E}_i \in \mathcal{B}_{\widetilde{X}_i}^0$.
 
 (We refer to \cite[Fact 9.12]{chernikov2020hypergraph} for more details.)
\end{definition}

\begin{remark}
	By \L os' theorem and definition of $\mathcal{M}_{\mathfrak{P}_j,\bar{E}^j}$, the sets in $\mathcal{B}^0_{\widetilde{X}_i}$ are precisely the sets of the form $\left\{ \bar{x} \in \mathbf{X}_i^{\widetilde{M}} : \widetilde{M} \models S_{\mathbf{X}_i}(\bar{x}, b) \right\}$ for $b \in P_{\mathbf{X}_i}^{\widetilde{M}}$ (hence they form a definable family of sets in $\widetilde{M}$).
\end{remark}

The interpretations of the ``$m_{\bar{x}} < r$'' predicates may differ in $\mathcal{M}_{\widetilde{\mathfrak{P}}, \bar{\widetilde{E}}}$ and $\widetilde{\mathcal{M}}$, but not by much:
\begin{definition}
	Let $\mathcal{M}, \mathcal{M}'$ be two $\mathcal{L}_{\infty}$-structures. We write $\mathcal{M} \propto \mathcal{M}'$ if the $\mathcal{L}_0$-reducts of the structures $\mathcal{M}, \mathcal{M}'$ are equal, and 
	for every $i \in I$, $r \in \mathbb{Q}^{[0,1]}$ and $\varepsilon \in \mathbb{Q}_{>0}$ so that $r+\varepsilon \leq 1$ we have
\begin{gather*}
		\mathcal{M}' \models m_{\bar{x}} < r . \varphi(\bar{x}, \bar{b}) \Rightarrow \mathcal{M} \models m_{\bar{x}} < r . \varphi(\bar{x}, \bar{b}) \Rightarrow
		\mathcal{M}' \models m_{\bar{x}} < \left(r + \varepsilon\right) . \varphi(\bar{x}, \bar{b})
\end{gather*}
for every  quantifier-free $\mathcal{L}_{\infty}$-formula $\varphi(\bar{x}, \bar{y})$ and a tuple $\bar{b}$ from $\mathcal{M}$ of appropriate sorts.
\end{definition}
\begin{remark}\label{rem: measure preds almost agree}\cite[Remark 9.13]{chernikov2020hypergraph}
	We have  $\widetilde{\mathcal{M}} \propto \mathcal{M}_{\widetilde{\mathfrak{P}}, \bar{\widetilde{E}}}$.
\end{remark}
 
\subsection{Stability, trees and their measurable variants}\label{sec: mu-stability}
We recall some notation around model-theoretic stability, and define their natural counterparts in a measure-theoretic context. Throughout we fix a graded probability space $\mathfrak{P}$ with sorts $X, Y, Z$, etc.
\begin{definition}
	\begin{enumerate}
		\item If $A,B \subseteq X$, we say that \emph{$B$ splits $A$} if both $A \cap B \neq \emptyset$ and $A \setminus B \neq \emptyset$.
		\item  If $A, B \in \mathcal{B}_{X}$, we say that \emph{$B$ $\mu$-splits $A$} if $\mu \left( A \cap B \right) >0$ and $\mu \left(A \setminus B \right) > 0$.
		\end{enumerate}
\end{definition}

\begin{definition}
For $E \subseteq X \times Y$, $n \in \omega$ and $\sigma \in 2^{n}$, we define:
\begin{itemize}
	\item $E^{\emptyset} := X \times Y$;
	\item $E^1 := E$, $E^0 := \left( X \times Y \right) \setminus E$;
	\item for $n \geq 1$, 
\end{itemize}
	$$E^{\sigma} := \left \{ (x, y_0, \ldots, y_{n-1}) \in X \times Y^{n} : \bigwedge_{i = 0}^{n-1} \left( (x,y_i) \in E^{\sigma(i)} \right) \right\}.$$
\end{definition}
\noindent Note that if $E \in \mathcal{B}_{X \times Y}$, then $E^{\sigma} \in \mathcal{B}_{X \times Y^{n}}$. As usual, for  $\bar{b} \in Y^{n}$, $E^{\sigma}_{\bar{b}} \subseteq X$ denotes the slice of $E^{\sigma}$ at $\bar{b}$.

\begin{definition}
	\begin{enumerate}
		\item If $E \subseteq X \times Y$, $U \subseteq X$ and $1 \leq d \in \omega$, by a  \emph{tree of height $d$ for $E$ relative to $U$} we mean a tuple $\bar{a}^{\frown}\bar{b} = \left(a_{\sigma} : \sigma \in 2^{d} \right)^{\frown} \left(b_{\sigma} : \sigma \in 2^{<d} \right)$ with $a_{\sigma} \in U, b_{\sigma} \in Y$ such that for all $\sigma \in 2^{d}$ and $i < d$,
		$$\left(a_{\sigma},b_{\sigma|_i} \right) \in E \iff \sigma(i) = 1.$$
		\item We let $\Tree^{E,d}(U) \subseteq X^{2^{d}} \times Y^{2^d - 1}$ be the set of all tuples $\bar{a}^{\frown}\bar{b} = \left(a_{\sigma} : \sigma \in 2^{d} \right)^{\frown} \left(b_{\sigma} : \sigma \in 2^{<d} \right)$ so that $\bar{a}^{\frown}\bar{b}$ is a tree of height $d$ for $E$ relative to $U$. Note that if $E \in \mathcal{B}_{X \times Y}, U \in \mathcal{B}_{X}$, then $\Tree^{E,d}(U) \in \mathcal{B}_{X^{2^d} \times Y^{2^d - 1}}$.
		\item If $E \in \mathcal{B}_{X\times Y}$, $U \in \mathcal{B}_{X}$ and $1 \leq d \in \omega$, by a \emph{$\mu$-tree of height $d$ for $E$ relative to $U$} we mean a tuple $\bar{b} = \left(b_{\sigma} : \sigma \in 2^{<d} \right)$ with $b_{\sigma} \in Y$ such that $E_{b_{\emptyset}}$ $\mu$-splits $U$, and for every $\sigma \in 2^{<d}$ with $|\sigma| \geq 1$, 
		$$E_{b_{\sigma}} \ \mu \textrm{-splits } U \cap E^{\sigma}_{\left(b_{\sigma|_0}, \ldots, b_{\sigma|_{|\sigma|-1}} \right)}.$$
			\item For $E \in \mathcal{B}_{X \times Y}$ and $U \in \mathcal{B}_{X}$, we let $\Tree^{\mu, E, d}(U) \in \mathcal{B}_{Y^{2^d - 1}}$ be the set of all $\bar{b} = \left( b_{\sigma} : \sigma \in 2^{<d}\right)$ so that $\bar{b}$ is a $\mu$-tree of height $d$ for $E$ relative to $U$.
			\item If $U = X$, we simply say ``a ($\mu$-)tree of height $d$ for $E$'', and write $\Tree^{E,d}$  (and $\Tree^{\mu,E,d}$) instead of $\Tree^{E,d}(U)$ (respectively, $\Tree^{\mu,E,d}(U)$).
	\end{enumerate}
\end{definition}
\begin{remark}\label{rem: basic props trees}
\begin{enumerate}
	\item For any $E \in \mathcal{B}_{X \times Y}$ and $U, U' \in \mathcal{B}_{X}$ with $U \subseteq U'$ we have $\Tree^{\mu, E, d}(U) \subseteq \Tree^{\mu, E,d}(U')$ and  $\Tree^{E, d}(U) \subseteq \Tree^{E,d}(U')$.
	\item Note that if $\left( b_{\sigma} : \sigma \in 2^{<d}\right) \in \Tree^{\mu, E, d}(U)$ and $\tau \in 2^{d}$, then 
	$$\mu \left( U \cap E^{\tau}_{\left( b_{\tau|_0}, \ldots,  b_{\tau|_{ d - 1}} \right)} \right) > 0.$$
	\item For any $1 \leq d \leq d'$, $\mu \left ( \Tree^{\mu, E, d'}(U) \right) \leq \mu \left ( \Tree^{\mu, E, d}(U) \right) $ (using Fubini).
\end{enumerate}
\end{remark}

\begin{lemma}\label{lem: two defs of mu stab}
	For $E \in \mathcal{B}_{X \times Y}$, we have $\mu \left( \Tree^{E,d}\right) > 0$ if and only if $\mu \left( \Tree^{\mu, E,d} \right) > 0$.
\end{lemma}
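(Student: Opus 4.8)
The plan is to establish the two implications separately, using the Fubini property of the graded probability space throughout.

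For the direction ``$\mu(\Tree^{E,d}) > 0 \Rightarrow \mu(\Tree^{\mu,E,d}) > 0$'': suppose $\bar a^\frown \bar b = (a_\sigma : \sigma \in 2^d)^\frown(b_\sigma : \sigma \in 2^{<d})$ ranges over a positive-measure subset $T$ of $\Tree^{E,d}$. The idea is that an honest combinatorial tree of positive measure forces the intermediate fibers to $\mu$-split. More precisely, I would argue by induction on the height, or directly: for $(\bar a^\frown \bar b) \in T$, each node $b_{\sigma|_i}$ separates $a_\sigma$ from the sibling leaves according to $\sigma(i)$, so for every $\tau \in 2^d$ the leaf $a_\tau$ lies in $E^\tau_{(b_{\tau|_0},\dots,b_{\tau|_{d-1}})}$. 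Integrating out the leaf variables via Fubini, a positive-measure set of tuples $\bar b = (b_\sigma : \sigma \in 2^{<d})$ must have the property that for every $\tau\in 2^d$ the set $E^\tau_{(b_{\tau|_0},\dots,b_{\tau|_{d-1}})}$ has positive measure (otherwise the set of leaf-completions would be null for a conull set of $\bar b$, forcing $\mu(T)=0$). Finally, for such $\bar b$, positivity of $\mu(E^\sigma_{(\dots)} \cap E_{b_\sigma})$ and $\mu(E^\sigma_{(\dots)} \setminus E_{b_\sigma})$ at each internal node $\sigma$ follows because the two children $E^{\sigma0}_{(\dots)}$ and $E^{\sigma1}_{(\dots)}$ both sit below some leaf of positive measure; hence $\bar b \in \Tree^{\mu,E,d}$, and this holds on a positive-measure set.

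For the converse ``$\mu(\Tree^{\mu,E,d}) > 0 \Rightarrow \mu(\Tree^{E,d}) > 0$'': fix $\bar b = (b_\sigma : \sigma \in 2^{<d}) \in \Tree^{\mu,E,d}$. By Remark \ref{rem: basic props trees}(2), for every $\tau \in 2^d$ we have $\mu(E^\tau_{(b_{\tau|_0},\dots,b_{\tau|_{d-1}})}) > 0$, so in particular each such fiber is nonempty; choosing $a_\tau$ in it for each $\tau \in 2^d$ produces an honest tree $\bar a^\frown \bar b \in \Tree^{E,d}$. To get \emph{positive measure} rather than just nonemptiness, I would instead integrate: for $\bar b \in \Tree^{\mu,E,d}$, the measure of the leaf-completions is $\prod_{\tau \in 2^d} \mu(E^\tau_{(b_{\tau|_0},\dots,b_{\tau|_{d-1}})}) > 0$ (the fibers over distinct $\tau$ live in independent copies of $X$, so the product measure gives this; this uses the closure-under-products axiom), and integrating this strictly positive quantity over the positive-measure set of $\bar b$'s yields $\mu(\Tree^{E,d}) > 0$ by Fubini and the measurability of the relevant fiber-measure functions (Lemma \ref{lem: alg of fibers closed under meas}).

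The main obstacle I anticipate is the bookkeeping in the first direction: making rigorous the claim that positive measure of the full tree set $\Tree^{E,d}$ in the product sort $X^{2^d}\times Y^{2^d-1}$ descends, after integrating out the $2^d$ leaf coordinates, to positive measure of the set of $\bar b$ for which \emph{all} $2^d$ branch-fibers are simultaneously of positive measure and, moreover, split correctly at every internal node. This requires a careful iterated application of Fubini over the $2^d$ leaf variables together with the observation that a finite intersection of conull conditions is conull; once that is set up the split conditions at internal nodes are immediate since a child fiber of positive measure witnesses the split of its parent. The measurability of all the sets involved is guaranteed by the remarks following the definitions of $\Tree^{E,d}$ and $\Tree^{\mu,E,d}$, so no extra work is needed there.
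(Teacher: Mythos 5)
Your proposal is correct and follows essentially the same approach as the paper: you integrate out the leaf variables via Fubini and use the fact that the fiber $\Tree^{E,d}_{\bar b}$ factors as a product over leaves $\sigma\in 2^d$, so that $\mu(\Tree^{E,d}_{\bar b})>0$ iff every leaf factor has positive measure, which you then identify (via containment of children in parents) with the $\mu$-splitting conditions defining $\Tree^{\mu,E,d}$. The paper's proof phrases this slightly more compactly — it shows directly that the set $B=\{\bar b : \mu(\Tree^{E,d}_{\bar b})>0\}$ \emph{equals} $\Tree^{\mu,E,d}$, from which both directions follow by a single Fubini application — whereas you split into two implications, but the underlying argument is the same.
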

\begin{proof}
Let $B := \left\{\bar{b} \in Y^{2^d - 1} :  \mu \left( \Tree^{E,d}_{\bar{b}} \right) > 0\right\}$. By Fubini $\mu \left( \Tree^{E,d} \right) >0$ if and only if $\mu(B) > 0$.
	Note that for every fixed $\bar{b} \in Y^{2^d - 1}$ we have 
	\begin{gather*}
		\Tree^{E,d}_{\bar{b}} = \prod_{\sigma \in 2^{d}} \left\{a_{\sigma} \in X :  \bigwedge_{i < d} \left(a_{\sigma},b_{\sigma|_i} \right) \in E \iff \sigma(i) = 1\right\}.
	\end{gather*}
	Hence $\bar{b} \in B$ if and only if for every $\sigma \in 2^{d}$ we have 
	$$\mu \left( \left\{a_{\sigma} \in X :  \bigwedge_{i < d} \left(a_{\sigma},b_{\sigma|_i} \right) \in E \iff \sigma(i) = 1\right\} \right) >0.$$
	Unwinding the definitions, this holds if and only if $\bar{b} \in \Tree^{\mu, E, d}$ (using Remark \ref{rem: basic props trees}(2)). 
\end{proof}

We use one of the equivalent definitions of stability in terms of the tree property (equivalently, in terms of Shelah's $2$-rank).
\begin{definition}
	\begin{enumerate}
		\item We say that the relation $E \subseteq X \times Y$ is \emph{$d$-stable}, for $d \geq 1$, if there is no tree of height $d$ for $E$, that is $\Tree^{E,d} = \emptyset$.
		\item And $E \subseteq X \times Y$ is stable if it is $d$-stable for some $d \in \omega$.
		\item We say that $E \in \mathcal{B}_{X \times Y}$ is $d$-$\mu$-stable if the measure of the set of $\mu$-trees of height $d$ for $E$ is $0$, that is $\mu \left( \Tree^{\mu, E, d} \right) = 0$ (equivalently $\mu(\Tree^{E,d}) = 0$, by Lemma \ref{lem: two defs of mu stab}).
		\item We say that $E \subseteq X \times Y$ is $\mu$-stable if $E$ is $d$-$\mu$-stable for some $d \in \omega$.
	\end{enumerate}
\end{definition}

\begin{remark}
	Of course, if $E \in \mathcal{B}_{X \times Y}$ is $d$-stable, then it is also $d$-$\mu$-stable (but not vice versa in general).
\end{remark}

\begin{remark}
	A notion analogous to $\mu$-stability is considered independently in \cite{martin2022stability}, where it is called \emph{robust stability}.
\end{remark}

Next we will show that $\mu$-stability of $E$ is preserved under exchanging the roles of the variables of $E$. For this we recall the order property, and consider its measurable analog.
\begin{definition}\label{def: ladders}
	\begin{enumerate}
		\item For $E \subseteq X \times Y$ and $d \geq 1$, by a \emph{ladder for $E$ of height $d$} we mean a tuple $\bar{a}^{\frown}\bar{b} = (a_i : i \in d)^{\frown}(b_i : i \in d)$ with $a_i \in X, b_i \in Y$ such that for every $i,j \in d$ we have $(a_i, b_j) \in E \iff i \leq j$.
		\item Let $\Lad^{E,d} \subseteq X^{d} \times Y^{d}$ be the set of all tuples $\bar{a}^{\frown}\bar{b} = (a_i : i \in d)^{\frown}(b_i : i \in d)$ so that $\bar{a}^{\frown}\bar{b}$ is a ladder for $E$ of height $d$. Note that if $E \in \mathcal{B}_{X \times Y}$, then $\Lad^{E,d} \in \mathcal{B}_{X^{d} \times Y^{d}}$.
		\item For $E \in \mathcal{B}_{X \times Y}$, by a \emph{$\mu$-ladder for $E$ of height $d$} we mean a tuple $\bar{b} = (b_j : j \in d)$ so that for every $i \in d$ we have 
		$\mu \left( \bigcap_{i \leq j} E_{b_j} \setminus \left( \bigcup_{j > i}E_{b_j}\right) \right) > 0$.
		\item For $E \in \mathcal{B}_{X \times Y}$, let $\Lad^{\mu,E,d} \in \mathcal{B}_{Y^d}$ be the set of all $\bar{b} = \left( b_i : i \in d \right)$ so that $\bar{b}$ is a $\mu$-ladder for $E$ of height $d$.
		\item $E$ is \emph{ladder $d$-stable} if there are no ladders of height $d$ for $E$, and \emph{ladder stable} if it is ladder $d$-stable for some $d \in \omega$.
		\item $E \in \mathcal{B}_{X \times Y}$ is \emph{ladder $d$-$\mu$-stable} if $\mu \left( \Lad^{\mu,E,d} \right) = 0$. And $E$ is \emph{ladder $\mu$-stable} if it is ladder $d$-$\mu$-stable for some $d \in \omega$.
	\end{enumerate}
\end{definition}

\begin{lemma}\label{lem: two defs of mu ladder stab}
	For $E \in \mathcal{B}_{X \times Y}$, $\mu \left( \Lad^{E,d} \right) = 0$ if and only if $\mu \left( \Lad^{E,d, \mu} \right) = 0$.
\end{lemma}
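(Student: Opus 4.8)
The plan is to follow the proof of Lemma~\ref{lem: two defs of mu stab} almost verbatim, the key point being that, for a fixed parameter tuple, the fiber of $\Lad^{E,d}$ is a product set in $X^{d}$ whose factors are exactly the sets whose positivity is demanded in the definition of a $\mu$-ladder.

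Concretely, I would set $B := \{ \bar b \in Y^{d} : \mu(\Lad^{E,d}_{\bar b}) > 0 \}$. Since $\Lad^{E,d} \in \mathcal{B}_{X^{d} \times Y^{d}}$ (Definition~\ref{def: ladders}(2)), the Fubini property of the graded probability space gives $\mu(\Lad^{E,d}) > 0$ if and only if $\mu(B) > 0$. So it suffices to show $B = \Lad^{\mu,E,d}$; granting this, $\mu(\Lad^{E,d}) = 0 \iff \mu(\Lad^{\mu,E,d}) = 0$ follows at once.

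To identify $B$, fix $\bar b = (b_j : j \in d) \in Y^{d}$ and observe that the defining condition of a ladder decouples over the rows $i \in d$:
\[
\Lad^{E,d}_{\bar b} \;=\; \prod_{i \in d} \Big\{ a \in X : \textstyle\bigwedge_{j \in d}\big( (a,b_j) \in E \iff i \leq j \big) \Big\}.
\]
Because $\mu_{X^{d}}$ extends the product measure (closure under products in a graded probability space) and each factor is measurable, this product has positive measure if and only if each of its $d$ factors does. The $i$-th factor unwinds to $\bigcap_{j \geq i} E_{b_j} \setminus \bigcup_{j < i} E_{b_j}$, which is precisely the set appearing in the definition of a $\mu$-ladder of height $d$ for $E$. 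Hence $\bar b \in B$ iff $\bar b$ is a $\mu$-ladder of height $d$ for $E$, i.e.\ $B = \Lad^{\mu,E,d}$, as required.

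I do not anticipate a genuine obstacle. The closest thing to one is the identification $B = \Lad^{\mu,E,d}$, which hinges on (a) the measurability of $\Lad^{E,d}$ (already recorded in Definition~\ref{def: ladders}(2)) and (b) the elementary fact that a finite product of measurable sets has positive product measure iff every factor has positive measure, valid here by the product-measure compatibility of a graded probability space. In particular, unlike the tree case, there is no nesting of conditions to untangle, so no analogue of Remark~\ref{rem: basic props trees}(2) is needed; the one thing to watch is to match the indexing in Definition~\ref{def: ladders}(3) so that the factors of $\Lad^{E,d}_{\bar b}$ are indeed the prescribed difference sets.
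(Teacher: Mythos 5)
Your proof is correct and follows essentially the same route as the paper's: reduce via Fubini to the set $B$ of fibered base points of positive measure, observe that the fiber $\Lad^{E,d}_{\bar b}$ is a product over the row index $i$, and conclude $B = \Lad^{\mu,E,d}$ from positivity of the product iff positivity of each factor.

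One thing worth flagging (and you were right to be wary of the indexing): your unwinding of the $i$-th factor as $\bigcap_{j \geq i} E_{b_j} \setminus \bigcup_{j < i} E_{b_j}$ is what the ladder condition $(a_i,b_j)\in E \iff i\le j$ actually gives. The paper's Definition~\ref{def: ladders}(3) (and the parallel line in its proof of this lemma) writes $\bigcap_{i \leq j} E_{b_j} \setminus \bigl(\bigcup_{j > i} E_{b_j}\bigr)$; the $\bigcup_{j > i}$ there is a typo for $\bigcup_{j < i}$, since as written the set would be empty for every $i<d-1$ (the intersection $\bigcap_{j\ge i}E_{b_j}$ is contained in each $E_{b_{j'}}$ with $j'>i$), making $\Lad^{\mu,E,d}$ vacuous for $d\ge 2$. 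So your formula is the correct one, and the identification $B=\Lad^{\mu,E,d}$ goes through exactly as you argue.
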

\begin{proof}
Let $B := \left\{\bar{b} \in Y^{d} : \mu \left(\Lad^{E,d}_{\bar{b}} \right) > 0\right\}$, then by Fubini $\mu \left( \Lad^{E,d} \right) > 0$ if and only if  $\mu(B) >0$ (where subscript $\bar{b}$ denotes the fiber  at $\bar{b}$ of the corresponding relation).
	Note that for every fixed $\bar{b} \in Y^d$ we have: 
  \begin{gather*}
  	\Lad^{E,d}_{\bar{b}} = \prod_{i \in d} \left \{ a_i \in X : \bigwedge_{j \in d} (a_i, b_j ) \in E \iff i \leq j \right\} \\
  	= \prod_{i \in d} \left( \bigcap_{i \leq j} E_{b_j} \setminus \left( \bigcup_{j > i}E_{b_j}\right) \right).
  \end{gather*} 
  Hence $\bar{b} \in B$ if and only if for every $i \in d$ we have $\mu \left(  \bigcap_{i \leq j} E_{b_j} \setminus \left( \bigcup_{j > i}E_{b_j}\right) \right) > 0$, that is if and only if $\bar{b} \in \Lad^{\mu,E,d}$.
\end{proof}
\begin{cor}\label{cor: ladder mu stab is symm}
	If $E \in \mathcal{B}_{X \times Y}$ is ladder $d$-$\mu$-stable, then the relation $E^* := \left\{(y,x) \in Y \times X : (x,y) \in E \right\} \in \mathcal{B}_{Y \times X}$ is also ladder $d$-$\mu$-stable.
\end{cor}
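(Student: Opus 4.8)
The plan is to reduce the statement for $E^*$ to the known ladder $\mu$-stability of $E$ by exhibiting an explicit correspondence between ladders for $E^*$ and ladders for $E$. First I would recall the elementary combinatorial fact that a ladder for a binary relation admits a ``reversal'': if $\bar a^\frown\bar b = (a_i : i \in d)^\frown(b_i : i \in d)$ is a ladder for $E$ of height $d$, meaning $(a_i,b_j) \in E \iff i \leq j$, then reindexing in the opposite order, i.e.\ setting $a'_i := a_{d-1-i}$ and $b'_i := b_{d-1-i}$, one checks $(a'_i, b'_j) \in E \iff (d-1-i) \leq (d-1-j) \iff j \leq i$. Thus $(b'_i, a'_j) \in E^* \iff j \leq i$, which is a ladder for $E^*$ of height $d$ once we also swap the roles of the two sides (note $E^*$ lives on $Y \times X$, so the $b'$'s play the role of the first coordinate). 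Conversely the same transformation takes ladders for $E^*$ to ladders for $E$. This gives a measurable bijection (a composition of a coordinate permutation on $X^d \times Y^d$ and the obvious flip to $Y^d \times X^d$) between $\Lad^{E,d}$ and $\Lad^{E^*,d}$, and by the symmetry axiom of the graded probability space these two sets have the same measure. Hence $\mu(\Lad^{E,d}) = 0$ iff $\mu(\Lad^{E^*,d}) = 0$.

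Then I would invoke Lemma~\ref{lem: two defs of mu ladder stab} on both sides: ladder $d$-$\mu$-stability of $E$ means $\mu(\Lad^{\mu,E,d}) = 0$, which by the lemma is equivalent to $\mu(\Lad^{E,d}) = 0$; by the bijection this equals $\mu(\Lad^{E^*,d}) = 0$; and applying the lemma again (now to $E^* \in \mathcal{B}_{Y \times X}$) this is equivalent to $\mu(\Lad^{\mu,E^*,d}) = 0$, i.e.\ $E^*$ is ladder $d$-$\mu$-stable. So the proof is essentially: reversal bijection $+$ symmetry axiom $+$ two applications of Lemma~\ref{lem: two defs of mu ladder stab}. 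An equally short alternative, avoiding the reversal, is to argue directly on $\mu$-ladders: a $\mu$-ladder $\bar b = (b_j : j \in d)$ for $E$ witnesses $\mu\big(\bigcap_{i \leq j} E_{b_j} \setminus \bigcup_{j > i} E_{b_j}\big) > 0$ for all $i$, but here one must be slightly careful because the definition of $\mu$-ladder is not literally symmetric in $X$ and $Y$; so routing through $\Lad^{E,d}$ (which \emph{is} manifestly symmetric up to coordinate permutation) is cleaner.

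The only genuinely delicate point — really a bookkeeping point rather than a mathematical obstacle — is making sure the measurability and measure-preservation claims are justified: $\Lad^{E,d} \in \mathcal{B}_{X^d \times Y^d}$ is already recorded in Definition~\ref{def: ladders}(2), the reversal map is a permutation of coordinates (hence measure-preserving and measurability-preserving by the symmetry axiom applied repeatedly), and the swap $X^d \times Y^d \to Y^d \times X^d$ is again covered by the symmetry axiom. I expect this to be entirely routine, so the main ``obstacle'' is just writing down the index arithmetic of the reversal carefully; no compactness or $\mu$-stability machinery beyond Lemma~\ref{lem: two defs of mu ladder stab} is needed.
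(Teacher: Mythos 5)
Your argument is correct and is essentially the paper's own proof: exhibit the coordinate permutation $(a_0,\ldots,a_{d-1},b_0,\ldots,b_{d-1}) \mapsto (b_{d-1},\ldots,b_0,a_{d-1},\ldots,a_0)$ bijecting $\Lad^{E,d}$ with $\Lad^{E^*,d}$, use the symmetry axiom of the graded probability space to conclude $\mu(\Lad^{E,d}) = \mu(\Lad^{E^*,d})$, and finish with Lemma~\ref{lem: two defs of mu ladder stab}. One small slip worth noting: from $(a'_i,b'_j)\in E \iff j\le i$ you should get $(b'_i,a'_j)\in E^* \iff (a'_j,b'_i)\in E \iff i\le j$, not $j\le i$, so the reversed sequence is already a ladder for $E^*$ with no additional swap needed — the "swap the roles" step in your write-up is an artifact of that index error and can be dropped.
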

\begin{proof}
Note that 
\begin{gather*}
\left( a_0, \ldots, a_{d-1}, b_0, \ldots, b_{d-1} \right) \in \Lad^{E,d} \\
\iff \left( b_{d-1}, \ldots, b_{0}, a_{d-1}, \ldots, a_{0} \right)\in \Lad^{E^{*},d}.	
\end{gather*}
Since permuting the coordinates of a set in a graded probability space preserves its measure, we have $\mu \left( \Lad^{E,d} \right) = \mu \left( \Lad^{E^{*},d} \right)$, and the corollary follows by Lemma \ref{lem: two defs of mu ladder stab}.
%
\end{proof}
\begin{fac}\cite[Lemma 6.7.9]{hodges1993model}\label{fac: Hodges}
Assume $E \subseteq X \times Y$.
	\begin{enumerate}
		\item For any $d \geq 1$, if $\left(a_i : i \in 2^{d+1} \right)^{\frown}\left(b_i : i \in 2^{d+1} \right)$ is a ladder of height $2^{d+1}$ for $E$, then for some injective functions $f: 2^{d+1} \to 2^{d+1}, g: 2^{<d+1} \to 2^{d+1}$, the tuple $\left( a_{f(\sigma)} : \sigma \in 2^{d+1}\right)^{\frown}\left( b_{g(\sigma)} : \sigma \in 2^{<d+1}\right)$ is a tree of height $d+1$ for $E$.
	\item Given $d \geq 1$, let $D := 2^{d+1}-2$.  If $\left( a_{\sigma} : \sigma \in 2^{D}\right)^{\frown}\left( b_{\sigma} : \sigma \in 2^{< D}\right)$ is a tree of height $D$ for $E$, then for some injective functions $f: d \to 2^{D}, g: d \to 2^{< D}$ the tuple $\left(a_{f(i)} : i \in d \right)^{\frown}\left(b_{g(i)} : i \in d \right)$ is a ladder of height $d$ for $E$.
	\end{enumerate}
\end{fac}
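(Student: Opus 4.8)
This is a classical fact of Hodges \cite{hodges1993model}; the plan is to recall its (direct, combinatorial) proof in both directions.

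\textbf{Part (1): from a ladder to a tree.} The idea is to realize the required height‑$(d+1)$ tree literally \emph{inside} the given ladder of height $N:=2^{d+1}$, by dyadic‑subdivision bookkeeping. Write the ladder as $(a_p:p<N)^{\frown}(b_q:q<N)$, so $(a_p,b_q)\in E\iff p\le q$. First I would identify the leaf set $2^{d+1}$ of the tree with $\{0,1,\dots,N-1\}$ via the bit‑flip bijection
\[
 f(\sigma):=\sum_{t=0}^{d}\bigl(1-\sigma(t)\bigr)2^{\,d-t},
\]
so that for each node $\tau\in 2^{\le d}$ of length $\ell$ the block $\{f(\sigma):\tau\text{ an initial segment of }\sigma\}$ is the dyadic interval $[c_\tau,c_\tau+2^{\,d+1-\ell})$, where $c_\tau:=\sum_{t<\ell}(1-\tau(t))2^{\,d-t}$ is a multiple of $2^{\,d+1-\ell}$, and the leaves extending $\tau^{\frown}1$ (resp.\ $\tau^{\frown}0$) occupy exactly the bottom (resp.\ top) half of this interval. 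Next I would put, for an internal node $\tau$ of length $\ell$, $g(\tau):=c_\tau+2^{\,d-\ell}-1$, the largest index of the bottom half of $\tau$'s block. The verification is then immediate: for a leaf $\sigma$ and an ancestor $\tau=\sigma|_i$, the ladder gives $(a_{f(\sigma)},b_{g(\tau)})\in E\iff f(\sigma)\le g(\tau)$, and $f(\sigma)\le c_\tau+2^{\,d-i}-1$ holds exactly when $f(\sigma)$ lies in the bottom half of $\tau$'s block, i.e.\ exactly when $\sigma(i)=1$, which is the defining condition of a tree. Here $f$ is a bijection (hence injective), and $g$ is injective because $g(\tau)+1$ is an odd multiple of $2^{\,d-|\tau|}$, whence $|\tau|$, and then $\tau$ via $c_\tau$, is recovered from $g(\tau)$.

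\textbf{Part (2): from a tree to a ladder.} This is the dual direction; the plan is to exhibit an explicit sub‑ladder of the given height‑$D$ tree, $D=2^{d+1}-2$. Take as the ladder's $Y$‑side the nodes $g(j):=1^{\,d-1-j}$ for $j<d$ (the all‑$1$ strings of lengths $d-1,d-2,\dots,0$, each of length $<d\le D$), and as its $X$‑side the leaves $f(i):=1^{\,d-i}\,0^{\,D-d+i}$ for $i<d$ (each of length $D$, with exactly $d-i$ leading $1$'s); both $f$ and $g$ are plainly injective. One checks that $g(j)$ is an initial segment of $f(i)$ exactly when $i\le j+1$, and in that case the tree condition gives $(a_{f(i)},b_{g(j)})\in E\iff f(i)(d-1-j)=1\iff i\le j$, the required ladder pattern. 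The only pairs not covered are those with $i\ge j+2$ (which occur once $d\ge3$), for which one needs $(a_{f(i)},b_{g(j)})\notin E$; this holds for the \emph{standard} tree, in which $(a_\sigma,b_\tau)\in E$ iff $\tau$ is an ancestor of $\sigma$ with $\sigma(|\tau|)=1$ and $\notin E$ otherwise, and in the situations where the Fact is applied one may take the tree standard (for a definable $E$: by using a strict tree for the defining formula, or by first extracting an indiscernible tree), after which the displayed $f,g$ work verbatim.

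\textbf{Main obstacle.} Part (1) presents no real difficulty once the dyadic picture is in place: everything reduces to ``$p\le\text{midpoint}\iff p$ is in the lower half'', and the height $2^{d+1}$ is exactly ``one index per leaf''. The delicate point is in Part (2): a bare tree configuration pins down $E$ only on the ancestor pairs $(a_\sigma,b_{\sigma|_i})$, and for $d\ge3$ no choice of $f,g$ makes \emph{every} ladder pair an ancestor pair, so the clean height $D=2^{d+1}-2$ (rather than a Ramsey‑type bound) is available precisely because the input tree may be taken standard.
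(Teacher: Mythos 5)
The paper does not give its own proof of this Fact; it is cited directly from Hodges. So I will evaluate your argument on its merits.

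Your Part (1) is correct. The explicit bijection $f(\sigma)=\sum_{t\le d}(1-\sigma(t))2^{d-t}$ together with the ``midpoint'' map $g(\tau)=c_\tau+2^{d-|\tau|}-1$ indeed sends the ladder condition $p\le q$ onto the tree condition $\sigma(i)=1$, $f$ is a bijection, and your $2$-adic argument that $g$ is injective is sound. This is a clean way to realize a binary tree inside a half-graph.

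Part (2) has a genuine gap, and you have in fact identified it yourself in the ``main obstacle'' paragraph but then resolved it incorrectly. Your fixed maps $f(i)=1^{d-i}0^{D-d+i}$, $g(j)=1^{d-1-j}$ only produce ancestor pairs $(f(i),g(j))$ when $i\le j+1$, so for $d\ge 3$ there are ladder positions $(i,j)$ with $i\ge j+2$ on which the tree configuration says nothing about $E$. Your fix --- ``one may take the tree standard'' --- does not prove the Fact as stated. The Fact is a bare combinatorial extraction statement about an \emph{arbitrary} tuple $\bar a^\frown\bar b$ satisfying the tree pattern, and there is no model around in which to pass to an indiscernible or strict tree; moreover, the paper invokes Part (2) precisely in a setting where such normalization is unavailable. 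In Proposition \ref{prop: equiv of mu stab and ladder}(2) the argument is: $\mu(\Tree^{E,D})>0$ and $\Tree^{E,D}\subseteq\bigcup_{f,g}\{\bar a^\frown\bar b:(a_{f(i)})^\frown(b_{g(i)})\in\Lad^{E,d}\}$, the union being over the \emph{finitely many} injective $f,g$ promised by Fact (2), so one of the sets in the union has positive measure. For this the Fact must hold for every individual tree in a positive-measure set --- not merely for standard trees, and certainly not after an indiscernibility extraction that would change which tuples lie in $\Tree^{E,D}$.

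That the maps must genuinely vary with the tree (and hence a pigeonhole over choices is needed, which is where the exponential $D=2^{d+1}-2$ is used) is easy to see: take $E$ to be the ``anti-standard'' tree relation, $(a_\sigma,b_\tau)\in E$ iff $\tau\not\trianglelefteq\sigma$, or $\tau\trianglelefteq\sigma$ and $\sigma(|\tau|)=1$. This is a legitimate tree configuration of height $D$. Your $f,g$ do not yield a ladder for it (the uncontrolled lower-triangle pairs are all in $E$), but the maps $f'(i)=0^i1^{D-i}$, $g'(j)=0^j$ do. So the injective maps in Fact (2) cannot be chosen uniformly, and the argument needs the case analysis on the off-branch behavior of $E$ that you omitted. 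The missing content of the proof is precisely a pigeonhole/halving descent that selects, at each level, which side of the branch to use depending on the uncontrolled values of $E$; this is what consumes the exponential height.
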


\begin{prop}\label{prop: equiv of mu stab and ladder}
Assume $E \in \mathcal{B}_{X \times Y}$ and $d \geq 1$.
	\begin{enumerate}
		\item If $E$ is $(d+1)$-$\mu$-stable, then $E$ is ladder $2^{d+1}$-$\mu$-stable.
		\item If $E$ is ladder $d$-$\mu$-stable, then $E$ is $\left( 2^{d+1}-2 \right)$-$\mu$-stable.

		\item In particular, $E$ is $\mu$-stable if and only if $E$ is ladder $\mu$-stable.
	\end{enumerate}
\end{prop}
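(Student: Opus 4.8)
The plan is to reduce the two quantitative implications to the purely combinatorial content of Fact~\ref{fac: Hodges}, passing freely between the $\mu$-versions and the honest versions of the tree and ladder properties via Lemmas~\ref{lem: two defs of mu stab} and~\ref{lem: two defs of mu ladder stab}, so that the only genuinely measure-theoretic ingredient is a bookkeeping observation about coordinate maps on graded probability spaces.

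\emph{Part (1).} I would argue contrapositively: assume $E$ is not ladder $2^{d+1}$-$\mu$-stable, so $\mu\left(\Lad^{\mu,E,2^{d+1}}\right) > 0$, and hence $\mu\left(\Lad^{E,2^{d+1}}\right) > 0$ by Lemma~\ref{lem: two defs of mu ladder stab}. For each pair of injective functions $f \colon 2^{d+1} \to 2^{d+1}$ and $g \colon 2^{<d+1} \to 2^{d+1}$, let $\pi_{f,g} \colon X^{2^{d+1}} \times Y^{2^{d+1}} \to X^{2^{d+1}} \times Y^{2^{<d+1}}$ be the coordinate map sending $(a_\tau)_{\tau}{}^{\frown}(b_\tau)_{\tau}$ to $(a_{f(\sigma)})_{\sigma \in 2^{d+1}}{}^{\frown}(b_{g(\sigma)})_{\sigma \in 2^{<d+1}}$; since $|2^{<d+1}| = 2^{d+1} - 1$, this $\pi_{f,g}$ is just a permutation of coordinates followed by forgetting a single $Y$-coordinate. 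By Fact~\ref{fac: Hodges}(1), every ladder of height $2^{d+1}$ for $E$ is mapped by some such $\pi_{f,g}$ into $\Tree^{E,d+1}$, i.e.\ $\Lad^{E,2^{d+1}} \subseteq \bigcup_{f,g} \pi_{f,g}^{-1}\left(\Tree^{E,d+1}\right)$, a \emph{finite} union of measurable sets (each term is measurable, since $\Tree^{E,d+1}$ is measurable and $\pi_{f,g}$ is a coordinate map). As the left side has positive measure, some $\pi_{f,g}^{-1}\left(\Tree^{E,d+1}\right)$ has positive measure, and by the claim on coordinate maps below this forces $\mu\left(\Tree^{E,d+1}\right) > 0$; by Lemma~\ref{lem: two defs of mu stab}, $\mu\left(\Tree^{\mu,E,d+1}\right) > 0$, so $E$ is not $(d+1)$-$\mu$-stable.

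\emph{Part (2), and then (3).} This is symmetric, with the roles of trees and ladders swapped and using Fact~\ref{fac: Hodges}(2). Put $D := 2^{d+1} - 2$ and suppose $E$ is not $D$-$\mu$-stable, so $\mu\left(\Tree^{\mu,E,D}\right) > 0$ and hence $\mu\left(\Tree^{E,D}\right) > 0$ by Lemma~\ref{lem: two defs of mu stab}. For injective $f \colon d \to 2^D$ and $g \colon d \to 2^{<D}$, let $\pi_{f,g} \colon X^{2^D} \times Y^{2^D - 1} \to X^{d} \times Y^{d}$ be the corresponding coordinate selection (a permutation followed by forgetting finitely many coordinates). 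By Fact~\ref{fac: Hodges}(2), $\Tree^{E,D} \subseteq \bigcup_{f,g} \pi_{f,g}^{-1}\left(\Lad^{E,d}\right)$, a finite union of measurable sets, so some $\pi_{f,g}^{-1}\left(\Lad^{E,d}\right)$ and hence $\Lad^{E,d}$ has positive measure; by Lemma~\ref{lem: two defs of mu ladder stab}, $\mu\left(\Lad^{\mu,E,d}\right) > 0$, i.e.\ $E$ is not ladder $d$-$\mu$-stable. Part (3) is then immediate from (1) and (2): $\mu$-stability means $(d+1)$-$\mu$-stability for some $d$, which yields ladder $2^{d+1}$-$\mu$-stability; and ladder $\mu$-stability means ladder $d$-$\mu$-stability for some $d$, which yields $(2^{d+1}-2)$-$\mu$-stability.

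\emph{The one point needing care} — and essentially the only place the graded-probability-space axioms enter — is the claim that for each coordinate map $\pi_{f,g}$ (a permutation of coordinates composed with a projection onto a sub-product of sorts) and each measurable $S$ one has $\mu\left(\pi_{f,g}^{-1}(S)\right) = \mu(S)$. Permuting coordinates preserves measure by the symmetry axiom, and for a projection forgetting some of the factors the preimage $\pi^{-1}(S)$ is, up to that permutation, $S$ crossed with the full forgotten factors, so the product/Fubini axioms give $\mu\left(\pi^{-1}(S)\right) = \mu(S) \cdot 1 = \mu(S)$. This is routine but should be stated explicitly since we are not in a genuine product measure space; everything else is the combinatorics of Fact~\ref{fac: Hodges} together with the finiteness of the sets of reindexing functions $(f,g)$.
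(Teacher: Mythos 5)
Your proof is correct and follows essentially the same route as the paper: reduce to the combinatorial Fact~\ref{fac: Hodges}, cover $\Lad^{E,2^{d+1}}$ (resp. $\Tree^{E,D}$) by a finite union indexed by the reindexing pairs $(f,g)$, and use that coordinate permutations and projections preserve measure in a graded probability space. Your sets $\pi_{f,g}^{-1}(\Tree^{E,d+1})$ are exactly the paper's $A_{f,g}$, and the final paragraph spells out the measure-preservation step that the paper states as ``using axioms of graded probability spaces.''
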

\begin{proof}
	(1) Assume that $E$ is not ladder $2^{d+1}$-$\mu$-stable, then $\mu \left( \Lad^{E,2^{d+1}} \right) > 0$.
Given injective functions $f: 2^{d+1} \to 2^{d+1}, g: 2^{<d+1} \to 2^{d+1}$, let
\begin{gather*}
	A_{f,g} := \Big\{ \left(a_i : i \in 2^{d+1} \right)^{\frown}\left(b_i : i \in 2^{d+1} \right) \in X^{2^{d+1}} \times  Y^{2^{d+1}}:  \\
	\left( a_{f(\sigma)} : \sigma \in 2^{d+1}\right)^{\frown}\left( b_{g(\sigma)} : \sigma \in 2^{<d+1}\right) \in \Tree^{E,d+1}  \Big\} \in \mathcal{B}_{X^{2^{d+1}} \times Y^{2^{d+1}}}.
\end{gather*}
Note that we have $\mu_{X^{2^{d+1}} \times Y^{2^{d+1}}}\left( A_{f,g} \right) = \mu_{X^{2^{d+1}} \times Y^{2^{d+1} - 1}}\left( \Tree^{E,d+1} \right)$ using axioms of graded probability spaces, and by Fact \ref{fac: Hodges}(1) we have $\Lad^{E,2^{d+1}} \subseteq \bigcup_{f,g} A_{f,g}$, where the union is over all (but finitely many) injective functions $f: 2^{d+1} \to 2^{d+1}, g: 2^{<d+1} \to 2^{d+1}$.
Hence $\mu(A_{f,g}) >0$ for at least one choice of $f,g$, hence $\mu \left( \Tree^{E,d+1} \right) > 0$.

 (2) Similar to (1), using Fact \ref{fac: Hodges}(2) instead.
 
 (3) Combining (1) and (2).
\end{proof}
\begin{cor}\label{cor: stab pres under opp}
$E \in \mathcal{B}_{X \times Y}$ is $\mu$-stable if and only if $E^{*} \in \mathcal{B}_{Y \times X}$ is $\mu$-stable.	
\end{cor}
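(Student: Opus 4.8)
The plan is to chain together the two results that immediately precede the statement, namely Proposition \ref{prop: equiv of mu stab and ladder}(3) (equivalence of $\mu$-stability and ladder $\mu$-stability) and Corollary \ref{cor: ladder mu stab is symm} (ladder $\mu$-stability passes to the opposite relation with the same rank $d$). So there is essentially no combinatorial content left to develop — the argument is a three-step transfer.

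Concretely, I would argue as follows. Suppose $E \in \mathcal{B}_{X \times Y}$ is $\mu$-stable. By Proposition \ref{prop: equiv of mu stab and ladder}(3), $E$ is ladder $\mu$-stable, i.e.\ ladder $d$-$\mu$-stable for some $d \in \omega$. By Corollary \ref{cor: ladder mu stab is symm}, the opposite relation $E^{*} \in \mathcal{B}_{Y \times X}$ is then also ladder $d$-$\mu$-stable, hence ladder $\mu$-stable. Applying Proposition \ref{prop: equiv of mu stab and ladder}(3) now to $E^{*}$ (with the roles of the sorts $X$ and $Y$ exchanged), we conclude that $E^{*}$ is $\mu$-stable.

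For the converse, I would simply observe that $\left(E^{*}\right)^{*} = E$ (permuting coordinates twice returns the original relation, and measures are preserved), so the implication just proved, applied to $E^{*}$ in place of $E$, yields that $E^{*}$ being $\mu$-stable implies $E$ is $\mu$-stable. The only points that need a moment of care are purely bookkeeping ones: that $E^{*}$ is indeed measurable in $\mathcal{B}_{Y \times X}$ (this is part of the coordinate-permutation axiom of a graded probability space, already invoked in the proof of Corollary \ref{cor: ladder mu stab is symm}), and that the quantifier "for some $d$" is handled uniformly — but since Corollary \ref{cor: ladder mu stab is symm} preserves the exact value of $d$, no uniformity issue arises. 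I do not expect any genuine obstacle here; the work was all done in establishing Lemma \ref{lem: two defs of mu ladder stab}, Corollary \ref{cor: ladder mu stab is symm}, and Proposition \ref{prop: equiv of mu stab and ladder}.
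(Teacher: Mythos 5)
Your proof is correct and is exactly the argument the paper intends: the paper's proof consists of the single line ``Combining Proposition \ref{prop: equiv of mu stab and ladder} and Lemma \ref{cor: ladder mu stab is symm},'' and you have simply unwound that chain (and correctly observed that the converse follows from $(E^{*})^{*} = E$). No gap, no deviation.
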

\begin{proof}
	Combining Proposition \ref{prop: equiv of mu stab and ladder} and Lemma \ref{cor: ladder mu stab is symm}.
\end{proof}

\begin{lemma}\label{lem: Bool comb of mu-stab}

		If $E_1 \in \mathcal{B}_{X \times Y_1},  E_2 \in  \mathcal{B}_{X \times Y_2}$ are $\mu$-stable, then $E_1 \land E_2 \in \mathcal{B}_{X \times (Y_1 \times Y_2)}$ and $\neg E_1 \in \mathcal{B}_{X \times Y_1}$ are also $\mu$-stable.
\end{lemma}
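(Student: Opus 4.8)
The plan is to reduce both closure properties to the characterization of $\mu$-stability via ladders (Proposition \ref{prop: equiv of mu stab and ladder}), or alternatively via trees, and to count: if $E_1 \land E_2$ or $\neg E_1$ admitted a long $\mu$-ladder, then one of $E_1, E_2$ would admit a long $\mu$-ladder, contradicting the hypothesis. First I would dispose of the complement. Observe that for any $\bar b = (b_i : i \in d) \in Y_1^{d}$, the set $\bigcap_{i \leq j} (\neg E_1)_{b_j} \setminus \bigcup_{j > i} (\neg E_1)_{b_j}$, with the reversed order, is exactly a ladder-witness set for $E_1$ with the parameters listed in reverse; more directly, $(\neg E_1)^{*}$-ladders correspond to $E_1$-ladders after reversing the order of indices, so $\neg E_1$ is ladder $d$-$\mu$-stable iff $E_1$ is, by the same permutation-of-coordinates argument as in Corollary \ref{cor: ladder mu stab is symm} (or: $\Lad^{\neg E_1, d}_{\bar b}$ with the index order reversed equals $\Lad^{E_1, d}$ after the obvious bijection, and permuting coordinates preserves measure). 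Hence $\neg E_1$ is $\mu$-stable by Proposition \ref{prop: equiv of mu stab and ladder}(3).

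For the conjunction, I would work with trees rather than ladders, since trees interact more cleanly with Boolean combinations via a fiber-counting argument, but ladders also work. Here is the ladder version. Suppose $E_1$ is ladder $d_1$-$\mu$-stable and $E_2$ is ladder $d_2$-$\mu$-stable; set $d := (d_1 - 1)(d_2 - 1) + 1$ (a van der Waerden / Ramsey-type bound, or just $d_1 + d_2$ if one uses the tree formulation with the rank-additivity of $\land$). Suppose toward a contradiction that $\mu(\Lad^{\mu, E_1 \land E_2, d}) > 0$, so there is a positive-measure set of tuples $\bar b = \big( (b^1_i, b^2_i) : i \in d \big) \in (Y_1 \times Y_2)^{d}$ that are $\mu$-ladders for $E_1 \land E_2$; this means for each $i \in d$,
\[
\mu\!\left( \bigcap_{i \leq j} \big( (E_1)_{b^1_j} \cap (E_2)_{b^2_j} \big) \setminus \bigcup_{j > i} \big( (E_1)_{b^1_j} \cap (E_2)_{b^2_j} \big) \right) > 0.
\]
The hard part is extracting from a long $E_1 \land E_2$-ladder a sub-ladder that is simultaneously a ladder for $E_1$ and for $E_2$ on a positive-measure fiber. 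I would do this by a pigeonhole on a fixed parameter choice: fixing $\bar b$ in the positive-measure set, the positivity of the displayed measures lets one choose witnesses $a_i$ realizing the "cut at $i$" pattern, and then pass through $\Lad^{E_1 \land E_2, d}$ having positive measure (Lemma \ref{lem: two defs of mu ladder stab}); on the combinatorial level, a monochromatic-increasing-subsequence argument (Erd\H os--Szekeres / Dilworth applied to the two orderings induced by $E_1$- and $E_2$-membership along the ladder) yields a subset of size $\geq \min(d_1, d_2)$ that is a genuine ladder for $E_1$ and for $E_2$, contradicting ladder $d_i$-$\mu$-stability of the respective $E_i$ via Lemma \ref{lem: two defs of mu ladder stab} and Fubini.

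Cleaner still, and what I would actually write, is the tree/rank version: recall $d$-stability is equivalent to Shelah's $2$-rank being $< d$, and for Boolean combinations one has the standard inequality $\rk(E_1 \land E_2) \leq \rk(E_1) + \rk(E_2)$ and $\rk(\neg E_1) = \rk(E_1)$. The $\mu$-analogue is obtained by running the same rank induction but replacing "splits" by "$\mu$-splits" throughout, which is legitimate because $\mu$-splitting is still finitely additive-compatible: if $A$ is $\mu$-split by $E_1 \land E_2$ into pieces of positive measure, then chasing which of $E_1, E_2$ is responsible and using Remark \ref{rem: basic props trees}(2) to stay within positive measure, one reconstructs a $\mu$-tree of the appropriate height for $E_1$ or for $E_2$; quantitatively, a $\mu$-tree of height $d_1 + d_2$ for $E_1 \land E_2$ forces, by Lemma \ref{lem: two defs of mu stab} and Fact \ref{fac: Hodges}, a $\mu$-tree of height $d_1$ for $E_1$ or a $\mu$-tree of height $d_2$ for $E_2$ on a positive-measure set of parameters, contradicting the hypothesis. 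So $E_1 \land E_2$ is $(d_1 + d_2)$-$\mu$-stable. The main obstacle is bookkeeping: making sure that at each splitting node one can choose the branch so that the relevant intersections retain positive measure, which is exactly where Remark \ref{rem: basic props trees}(2) and the Fubini axiom of graded probability spaces are used, but it is a routine induction once set up.
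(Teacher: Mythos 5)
Your overall strategy for the conjunction matches the paper's: reduce to $\mu$-ladders, apply a finitary Ramsey-type fact to a long $(E_1\land E_2)$-ladder, and then transfer back to a positive-measure statement. The negation case is also fine (and can be made even more elementary: a tree of height $d$ for $\neg E_1$ is literally a tree of height $d$ for $E_1$ after relabelling the indices $\sigma\mapsto 1-\sigma$, and permuting coordinates preserves measure in a graded probability space, so $\mu(\Tree^{\neg E_1,d})=\mu(\Tree^{E_1,d})$ without any reference to ladders).

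However, there is a genuine gap in your transfer step for $E_1\land E_2$. You propose to fix a single $\bar b$ with positive-measure ladder fibers, choose witnesses $a_i$, and extract a sub-ladder for one of the $E_\epsilon$, ``contradicting ladder $d_\epsilon$-$\mu$-stability.'' But producing a \emph{single} ladder for $E_1$ does not contradict $\mu$-stability; $\mu$-stability only forbids a positive-measure set of ladders. Waving at Fubini is not enough here, because the injection $f:d\to D$ and the choice of $\epsilon\in\{1,2\}$ you extract in step~3 may depend on $\bar b$ in an uncontrolled way, so you cannot directly push a positive-measure set of $(E_1\land E_2)$-ladders forward to a positive-measure set of $E_\epsilon$-ladders without further argument. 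The paper avoids this by arguing at the level of sets rather than points: the finitary Ramsey fact says $\Lad^{E_1\land E_2,D}$ is \emph{covered} by the finite union over all $(f,\epsilon)$ of the pullbacks of $\Lad^{E_\epsilon,d}$ under the coordinate-selection/permutation maps; a finite cover of a positive-measure set has a positive-measure piece; and these pullbacks have the same measure as $\Lad^{E_\epsilon,d}$ by the Fubini and symmetry axioms. This is the missing bookkeeping that turns your pointwise extraction into a measure statement.

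Two smaller issues. First, the quantitative claims are off: the selection is a Ramsey argument, not Erd\H{o}s--Szekeres/Dilworth, and it yields a sub-ladder for $E_1$ \emph{or} for $E_2$ (depending on the monochromatic color), not a simultaneous ladder for both; in any case no explicit bound on $D$ in terms of $d$ is needed --- only its existence --- so you should not fight for $(d_1-1)(d_2-1)+1$ or $d_1+d_2$. Second, in your ``tree/rank'' variant, Fact~\ref{fac: Hodges} is misinvoked: it converts between trees and ladders for one relation, and does not decompose a tree for $E_1\land E_2$ into trees for $E_1$ and $E_2$. If you want to run the tree version you would need the corresponding rank inequality directly, and you would still face the same measure-transfer issue as in the ladder version; the covering argument over all index selections is what makes it work either way.
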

\begin{proof}
	
	 The standard argument showing that stability is preserved under Boolean combinations (using Ramsey theorem, see for example \cite[Lemma 2.2.10]{chernikov2019lecture}) shows the following: for every $d \in \mathbb{N}$ there exists $D \in \mathbb{N}$ such that if $(a_i : i \in D)^{\frown}((b_i, b'_i) : i \in D) \in \Lad^{E_1 \land E_2, D}$, then we have either $\left( a_{f(i)} : i \in d \right)^{\frown} \left(b_{f(i)} : i \in d \right) \in \Lad^{E_1, d}$ for some injective $f : d \to D$, or $\left( a_{f(i)} : i \in d \right)^{\frown} \left(b'_{f(i)} : i \in d \right) \in \Lad^{E_2, d}$ for some injective $f : d \to D$. Hence $\Lad^{E_1 \land E_2, D}$ is contained in
	\begin{gather*}
		\bigcup_{f : d \to D \textrm{, injective}} \Big\{ (a_i : i \in D)^{\frown}((b_i, b'_i) : i \in D) \in X^D \times (Y_1 \times Y_2)^{D} :  \\\left( a_{f(i)} : i \in d \right)^{\frown} \left(b_{f(i)} : i \in d \right) \in \Lad^{E_1, d}\Big\} \cup\\
		\bigcup_{f : d \to D \textrm{, injective}} \Big\{ (a_i : i \in D)^{\frown}((b_i, b'_i) : i \in D) \in X^D \times (Y_1 \times Y_2)^{D} :  \\\left( a_{f(i)} : i \in d \right)^{\frown} \left(b'_{f(i)} : i \in d \right) \in \Lad^{E_2, d}\Big\}.
	\end{gather*}
	So if $\mu \left( \Lad^{E_1 \land E_2, D} \right) > 0 $, at least one of the sets in this finite union also has positive measure, hence at least one of the sets $\Lad^{E_1, d}, \Lad^{E_2, d}$ has positive measure, and we conclude by Lemma \ref{lem: two defs of mu ladder stab} and Proposition \ref{prop: equiv of mu stab and ladder}(3).	
\end{proof}

\begin{definition}\label{def:slice-wise}
\begin{enumerate}
\item A ternary relation $E \subseteq X \times Y \times Z$ is \emph{slice-wise ($d$-)stable} if: the slices $E_x \subseteq Y \times Z$ are ($d$-)stable for all $x \in X$, the slices $E_y \subseteq X \times Z$ are ($d$-)stable for  all $y \in Y$, and the slices $E_z  \subseteq X \times Y$ are ($d$-)stable for all $z\in Z$. 

	\item A ternary relation $E \in \mathcal{B}_{X \times Y \times Z}$ is \emph{slice-wise ($d$-)$\mu$-stable} if: the slices $E_x \in \mathcal{B}_{Y \times Z}$ are ($d$-)$\mu$-stable for almost all $x \in X$, the slices $E_y \in \mathcal{B}_{X \times Z}$ are ($d$-)$\mu$-stable for almost all $y \in Y$, and the slices $E_z \in \mathcal{B}_{X \times Y}$ are ($d$-)$\mu$-stable for almost all $z\in Z$. 
\end{enumerate}

\end{definition}

\begin{definition}\label{def: partition-wise stability}
\begin{enumerate}
	\item A $k$-ary relation $E \in \mathcal{B}_{X_1 \times \ldots \times X_k}$ is \emph{partition-wise $d$-stable} (\emph{partition-wise $d$-$\mu$-stable}) if for every $1 \leq \ell \leq k$, $E$ viewed as a binary relation on $X_{\ell} \times \left(\prod_{i \in [k] \setminus \{\ell\}} X_i \right)$ is $d$-stable (respectively, $d$-$\mu$-stable).
	\item A $k$-ary relation $E \in \mathcal{B}_{X_1 \times \ldots \times X_k}$  is \emph{partition-wise stable} (\emph{partition-wise $\mu$-stable}) if it is partition-wise $d$-stable (respectively, partition-wise $d$-$\mu$-stable) for some $d \in \mathbb{N}$.
\end{enumerate}
\end{definition}

\begin{prop}
	\begin{enumerate}
		\item If $E \subseteq X \times Y \times Z$ is partition-wise ($d$-)stable then it is also slice-wise ($d$-)stable (but not vice versa --- see Example \ref{ex: x=y<z stability props}).
		\item If $E \in \mathcal{B}_{X \times Y \times Z}$ is partition-wise ($d$-)$\mu$-stable then it is also slice-wise ($d$-)$\mu$-stable.
	\end{enumerate}
\end{prop}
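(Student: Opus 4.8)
The plan for (1) is to observe that a height-$d$ tree for any binary slice of $E$ is literally a height-$d$ tree for $E$ in one of the three partition-wise directions. Indeed, if $\left(a_\sigma : \sigma \in 2^d\right){}^{\frown}\left(b_\sigma : \sigma \in 2^{<d}\right)$ witnesses $\Tree^{E_z, d} \neq \emptyset$ with $a_\sigma \in X$, $b_\sigma \in Y$, then setting $w_\sigma := (b_\sigma, z) \in Y \times Z$ we get $\left(a_\sigma, w_{\sigma|_i}\right) \in E \iff \left(a_\sigma, b_{\sigma|_i}, z\right) \in E \iff \sigma(i) = 1$, so $\left(a_\sigma\right){}^{\frown}\left(w_\sigma\right)$ is a height-$d$ tree for $E$ viewed as a binary relation on $X \times (Y \times Z)$; the cases of $E_y$ and $E_x$ are symmetric (one appends the frozen coordinate to the appropriate side of each node parameter, landing in the $X$- resp. $Y$-direction). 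Contrapositively, partition-wise $d$-stability of $E$ forces each slice to be $d$-stable, and taking the union over $d$ gives the statement for "stable" as well; that the converse fails is Example \ref{ex: x=y<z stability props}.

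For (2) the plan is to carry out the same reduction measure-theoretically. By Lemma \ref{lem: two defs of mu stab}, $E_z$ is $d$-$\mu$-stable iff $\mu\left(\Tree^{E_z, d}\right) = 0$, so (by symmetry it is enough to treat the $E_z$ family and) it suffices to show $\mu_Z\left(\left\{z : \mu\left(\Tree^{E_z, d}\right) > 0\right\}\right) = 0$. Form the set $\mathcal{D} := \left\{(\bar a, \bar b, z) \in X^{2^d} \times Y^{2^d - 1} \times Z : \bar a{}^{\frown}\bar b \in \Tree^{E_z, d}\right\}$, which is a finite Boolean combination of fibers of $E$, hence measurable (in the $\mu$-tree formulation the defining $\mu$-splitting inequalities are measurable by Lemma \ref{lem: alg of fibers closed under meas}). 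By Fubini, $\mu(\mathcal{D}) = \int_Z \mu\left(\Tree^{E_z, d}\right)\,d\mu_Z$, so everything reduces to $\mu(\mathcal{D}) = 0$. Unwinding coordinates, $\mathcal{D}$ is obtained from $\Tree^{E^{(X)}, d} \subseteq X^{2^d} \times (Y \times Z)^{2^d - 1}$ (the height-$d$ tree set for $E$ as a binary relation on $X \times (Y \times Z)$, which has measure $0$ by hypothesis) by identifying all $Z$-coordinates of the node parameters; that is, $\mathcal{D}$ is the "constant-$z$ diagonal" of $\Tree^{E^{(X)}, d}$.

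The step I expect to be the main obstacle is precisely this last point: one cannot deduce $\mu(\mathcal{D}) = 0$ from $\mu\left(\Tree^{E^{(X)}, d}\right) = 0$ by Fubini alone, since a null set can contain a positive-measure diagonal. My plan to get around this is to argue by contradiction at the level of $\mu$-trees: unwinding the $\mu$-tree definition shows that $\left(b_\sigma : \sigma \in 2^{<d}\right) \in \Tree^{\mu, E_z, d}$ iff $\left((b_\sigma, z) : \sigma \in 2^{<d}\right) \in \Tree^{\mu, E^{(X)}, d}$, so if $Z_0 := \left\{z : \mu\left(\Tree^{\mu, E_z, d}\right) > 0\right\}$ had positive measure, then after passing to a positive-measure subset of $Z_0$ on which all of the relevant $\mu$-splits are uniformly bounded below, I would try to interleave node parameters drawn from distinct elements of $Z_0$ so as to produce a positive-measure family of $\mu$-trees for $E^{(X)}$, contradicting its $\mu$-stability. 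This yields the qualitative ("$\mu$-stable") statement; for the quantitative "($d$-)" version one then tracks the constants through Fact \ref{fac: Hodges}, Proposition \ref{prop: equiv of mu stab and ladder} and Lemma \ref{lem: Bool comb of mu-stab}. Note finally that for finite hypergraphs with the counting measure "($d$-)$\mu$-stable" coincides with "($d$-)stable" and (2) reduces to (1) with no exceptional set, so it is only the possibility of a positive-measure set of bad slices that requires this extra argument.
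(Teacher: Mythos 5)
Your argument for (1) matches the paper (which calls it immediate): a tree for $E_z \subseteq X\times Y$ becomes a tree for $E$ viewed on $X\times(Y\times Z)$ by appending $z$ to every node.

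For (2), you have correctly diagnosed that a bare Fubini argument fails because $\mathcal D$ lies on a $z$-constant diagonal, and your high-level plan --- get a positive-measure family of trees for $E(x;(y,z))$ by contradiction --- is the right one. But the ``interleaving'' step, which is the whole content, is not carried out, and the version you sketch does not work at the level of $\mu$-trees. If one picks $(b_\sigma)$ from a $\mu$-tree for $E_{z_\sigma}$ with the $z_\sigma$ distinct, the $\mu$-tree condition for $E(x;(y,z))$ at level $\sigma$ asks that $E_{(b_\sigma,z_\sigma)}$ $\mu$-split the accumulated set $\bigcap_{i<|\sigma|}\bigl((E_{z_{\sigma|_i}})_{b_{\sigma|_i}}\bigr)^{\sigma(i)}$, which is an intersection across \emph{different} $z$-slices; this is not implied by each $z$ separately supporting a positive measure of $\mu$-trees for $E_z$.

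The paper avoids this by working with genuine trees (via Lemma \ref{lem: two defs of mu stab}). Form $B:=\{(\bar x,\bar y,z): \bar x^{\frown}\bar y\in\Tree^{E_z,d}\}$, which has positive measure by the assumption on $Z_0$ and Fubini; a second Fubini gives a positive-measure set $C$ of tuples $\bar x^{\frown}\bar y$ with $\mu_Z(B_{\bar x^{\frown}\bar y})>0$. Now \emph{fix} $\bar x^{\frown}\bar y\in C$. The tree condition for $E_z$ at this fixed $\bar x^{\frown}\bar y$ is a conjunction of constraints each involving $z$ at a single node --- namely $(x_\tau,y_{\tau|_i},z)\in E\iff\tau(i)=1$ for all $\tau\in 2^d,\ i<d$ --- so choosing $z_\sigma\in B_{\bar x^{\frown}\bar y}$ \emph{independently} for each $\sigma\in 2^{<d}$ makes $\bar x^{\frown}\bigl((y_\sigma,z_\sigma):\sigma\bigr)$ a genuine tree for $E(x;(y,z))$: the check at $(x_\tau,(y_{\tau|_i},z_{\tau|_i}))$ only involves the single value $z_{\tau|_i}$. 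This gives $\mu(D_{\bar x^{\frown}\bar y})\geq\mu(B_{\bar x^{\frown}\bar y})^{2^d-1}>0$, and integrating over $C$ yields $\mu(\Tree^{E(x;(y,z)),d})>0$. The missing idea, then, is to fix the $X\times Y$-data before varying the $z$'s, rather than mixing $\mu$-trees attached to different $z$'s. Note also that this preserves the constant $d$ on the nose; the detour you sketch through Fact \ref{fac: Hodges}, Proposition \ref{prop: equiv of mu stab and ladder} and Lemma \ref{lem: Bool comb of mu-stab} for the quantitative version would inflate it.
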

\begin{proof}
(1) is immediate from the definitions, and we show (2).

Assume that $E$ is not slice-wise $d$-$\mu$-stable, so there is a set $Z_0 \in \mathcal{B}_{Z}$ with $\mu(Z_0) > 0$ such that $\mu \left( \Tree^{E_z(x;y),d}\right) > 0$ for every $z \in Z_0$. Let 
\begin{gather*}
	B := \left\{ \bar{x} ^{\frown} \bar{y} ^{\frown} z  \in X^{2^d} \times Y^{2^{d} - 1} \times Z: \bar{x} ^{\frown} \bar{y} \in \Tree^{E_z(x;y),d} \right\} \in \mathcal{B}_{X^{2^d} \times Y^{2^{d} - 1} \times Z}.
\end{gather*}
For every $z \in Z_0$ we have $\Tree^{E_z(x;y),d} \subseteq B_z$, so $\mu \left( B_z \right) > 0$, hence by Fubini $\mu \left( B \right) > 0$.

Let $C := \left\{ \bar{x}^{\frown}\bar{y} : \mu \left( B_{\bar{x}^{\frown} \bar{y}} \right) > 0 \right\} \in \mathcal{B}_{X^{2^d} \times Y^{2^{d} - 1}}$, by Fubini $\mu \left( C \right) > 0$.

Consider the set 
\begin{gather*}
	D := \Big\{ \bar{x}^{\frown}\bar{y}^{\frown}\bar{z} \in X^{2^d} \times   Y^{2^{d} - 1} \times Z^{2^{d} - 1} : \\
	 \bar{x}^{\frown} \left( (y_{\sigma}, z_{\sigma}) : \sigma \in 2^{<d}\right) \in \Tree^{E(x;(y,z)),d} \Big\} \in \mathcal{B}_{X^{2^d} \times Y^{2^{d} - 1} \times Z^{2^{d} - 1}}.
\end{gather*}
For each fixed $\bar{x}^{\frown}\bar{y} \in X^{2^d} \times   Y^{2^{d} - 1}$, we have 
\begin{gather*}
	\left\{ \left( z_{\sigma} : \sigma \in 2^{<d} \right)  \in Z^{2^{d}-1}: \bigwedge_{\sigma \in 2^{<d}} \left( z_{\sigma} \in B_{\bar{x}^{\frown}\bar{y}} \right) \right\} \subseteq D_{ \bar{x}^{\frown}\bar{y}},
\end{gather*}
hence for every $\bar{x}^{\frown}\bar{y} \in C$ we have $\mu \left( D_{ \bar{x}^{\frown}\bar{y}} \right) \geq \mu \left( B_{\bar{x}^{\frown}\bar{y}} \right)^{2^d - 1} > 0$. By Fubini (and since permuting the coordinates preserves measure) it follows that $\mu \left( D \right) = \mu \left( \Tree^{E(x;(y,z)),d} \right) > 0$, hence $E$ is not partition-wise $d$-$\mu$-stable.
\end{proof}

\section{Perfect and decent sets and partitions, and the measure theoretic regularity lemma for $\mu$-stable hypergraphs}\label{sec: perfect stable reg for graphs}

\begin{definition}\label{def: perfect and decent}
Assume $E \in \mathcal{B}_{X \times Y}$.
\begin{enumerate} 
	\item A set $A \in \mathcal{B}_{X}$ is \emph{perfect for $E$} if 
$$ \mu \left( \left\{b \in Y : E_b \ \mu \textrm{-splits } A \right\} \right) = 0.$$
\item For $\varepsilon, \delta \in [0,1]$, a set $A \in \mathcal{B}_{X}$ is \emph{$(\varepsilon, \delta)$-decent for $E$} if for 
$$A^{\#}_{\varepsilon} :=  \left\{b \in Y : \mu\left( A \cap E_b  \right) > \varepsilon \mu(A) \land  \mu\left( A \setminus E_b  \right) > \varepsilon  \mu(A) \right\} \in \mathcal{B}_{Y}$$
we have $ \mu \left( A^{\#}_{\varepsilon} \right) \leq \delta$.
We say that $A$ is \emph{$\varepsilon$-decent} if it is $(\varepsilon, \varepsilon)$-decent. So perfect is the same as $0$-decent.
\item We define perfection and decency for subsets of $Y$ symmetrically.
\end{enumerate}
	\end{definition}
	\begin{remark}\label{rem: basic props perf sets}
		\begin{enumerate}
		\item If $\mu(A) =0$, then $A$ is automatically perfect. 
			\item If $A \in \mathcal{B}_{X}$ is perfect and $A' \subseteq A, A' \in \mathcal{B}_{X}$, then $A'$ is also perfect.
			\item  A set is $(\varepsilon, 0)$-decent if it is \emph{$\varepsilon$-good} in the sense of \cite{malliaris2014regularity}. Allowing a set of exceptions of small measure $\delta$ is unavoidable when passing from stability to the more general context of $\mu$-stability.
		\end{enumerate} 
	\end{remark}
	

\begin{lemma}\label{lem: 01 dens on perf sets}
	If $E \in \mathcal{B}_{X \times Y}$, $\varepsilon \in [0,\frac{1}{2})$, $A \in \mathcal{B}_{X}$ and $B \in \mathcal{B}_{Y}$,   $A$ is $(\varepsilon^2, \varepsilon \mu(B))$-decent for $E$ and $B$ is  $(\varepsilon, \varepsilon \mu(A))$-decent for $E$, then 
	$$\frac{\mu \left(E \cap \left( A \times B \right) \right)}{\mu \left( A \times B \right)} \in [0, 3 \varepsilon) \cup (1 - 4 \varepsilon, 1].$$
	
		In particular, if  $A \in \mathcal{B}_{X}, B \in \mathcal{B}_{Y}$ are perfect, then $\frac{\mu \left(E \cap \left( A \times B \right) \right)}{\mu \left( A \times B \right)} \in \{0,1\}$.
\end{lemma}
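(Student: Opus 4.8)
The plan is to exploit the dichotomy that decency of $A$ forces on the fibers $E_b$, $b\in B$. I would first dispose of two degenerate situations: if $\mu(A)=0$ or $\mu(B)=0$ then $\mu(A\times B)=0$ and there is nothing to prove; and if $\varepsilon>\tfrac17$ then $1-4\varepsilon<3\varepsilon$, so every value in $[0,1]$ — in particular the density in question — already lies in $[0,3\varepsilon)\cup(1-4\varepsilon,1]$. So assume $\mu(A),\mu(B)>0$ and $\varepsilon\le\tfrac17$, write $\theta:=\mu\bigl(E\cap(A\times B)\bigr)/\bigl(\mu(A)\mu(B)\bigr)\in[0,1]$ (note $\mu(A\times B)=\mu(A)\mu(B)$ since $A\in\mathcal B_X$, $B\in\mathcal B_Y$ and $\mu_{X\times Y}$ extends the product measure), and aim to show $\theta\notin[3\varepsilon,1-4\varepsilon]$; suppose for contradiction that $3\varepsilon\le\theta\le1-4\varepsilon$.

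The first step is to split $B$ according to how $E$ sits over $A$: set $B_1:=\{b\in B:\mu(A\cap E_b)\ge(1-\varepsilon^2)\mu(A)\}$, $B_0:=\{b\in B:\mu(A\cap E_b)\le\varepsilon^2\mu(A)\}$ and $B_*:=B\setminus(B_0\cup B_1)$. These are pairwise disjoint (as $\varepsilon^2<1-\varepsilon^2$), measurable by the Fubini axiom, and $B_*\subseteq A^{\#}_{\varepsilon^2}$, so $\mu(B_*)\le\varepsilon\mu(B)$ by $(\varepsilon^2,\varepsilon\mu(B))$-decency of $A$. Integrating $\mu(A\cap E_b)$ over $B=B_0\sqcup B_1\sqcup B_*$ and using $\mu\bigl(E\cap(A\times B)\bigr)=\int_B\mu(A\cap E_b)\,d\mu(b)=\theta\mu(A)\mu(B)$ together with the crude bounds $\mu(A\cap E_b)\le\varepsilon^2\mu(A)$ on $B_0$ and $\mu(A\cap E_b)\le\mu(A)$ elsewhere gives $\mu(B_1)\ge(\theta-\varepsilon-\varepsilon^2)\mu(B)\ge(2\varepsilon-\varepsilon^2)\mu(B)$; the same argument with $\mu(A\setminus E_b)$ in place of $\mu(A\cap E_b)$ gives $\mu(B_0)\ge(1-\theta-\varepsilon-\varepsilon^2)\mu(B)\ge(3\varepsilon-\varepsilon^2)\mu(B)$.

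The second step transfers this to the $Y$-side. Consider the measurable functions $f(a):=\mu(B_1\setminus E_a)$ and $g(a):=\mu(B_0\cap E_a)$ on $A$. By Fubini, $\int_A f\,d\mu=\mu\bigl((A\times B_1)\setminus E\bigr)=\int_{B_1}\mu(A\setminus E_b)\,d\mu(b)\le\varepsilon^2\mu(A)\mu(B)$, and symmetrically $\int_A g\,d\mu\le\varepsilon^2\mu(A)\mu(B)$. By Markov's inequality, the set $A'$ of $a\in A$ with $f(a)<\varepsilon(1-\varepsilon)\mu(B)$ and $g(a)<\varepsilon(1-\varepsilon)\mu(B)$ satisfies $\mu(A')\ge\bigl(1-\tfrac{2\varepsilon}{1-\varepsilon}\bigr)\mu(A)=\tfrac{1-3\varepsilon}{1-\varepsilon}\mu(A)$. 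For $a\in A'$ one computes $\mu(B\cap E_a)\ge\mu(B_1)-f(a)>(2\varepsilon-\varepsilon^2)\mu(B)-\varepsilon(1-\varepsilon)\mu(B)=\varepsilon\mu(B)$ and $\mu(B\setminus E_a)\ge\mu(B_0)-g(a)>(3\varepsilon-\varepsilon^2)\mu(B)-\varepsilon(1-\varepsilon)\mu(B)=2\varepsilon\mu(B)$, so $a\in B^{\#}_{\varepsilon}$. Hence $\mu(B^{\#}_{\varepsilon})\ge\tfrac{1-3\varepsilon}{1-\varepsilon}\mu(A)$, which against $(\varepsilon,\varepsilon\mu(A))$-decency of $B$ forces $\tfrac{1-3\varepsilon}{1-\varepsilon}\le\varepsilon$, i.e.\ $\varepsilon^2-4\varepsilon+1\le0$ — impossible when $\varepsilon\le\tfrac17$ (then $\varepsilon^2-4\varepsilon+1\ge\tfrac37$). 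This contradiction proves the displayed inclusion. The ``in particular'' then follows by applying the statement for every $\varepsilon\in(0,\tfrac12)$: a perfect set is $(\varepsilon',\delta')$-decent for all $\varepsilon',\delta'\ge0$ (since $A^{\#}_{\varepsilon'}\subseteq A^{\#}_0$ and $\mu(A^{\#}_0)=0$), so the density lies in $[0,3\varepsilon)\cup(1-4\varepsilon,1]$ for all such $\varepsilon$, and $\bigcap_{\varepsilon>0}\bigl([0,3\varepsilon)\cup(1-4\varepsilon,1]\bigr)=\{0,1\}$.

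I expect the only real obstacle to be the constant-bookkeeping: one must choose the split thresholds ($\varepsilon^2$ versus $1-\varepsilon^2$) and the Markov cut-off ($\varepsilon(1-\varepsilon)\mu(B)$) so that all the error terms cancel to land exactly at $\mu(B\cap E_a)>\varepsilon\mu(B)$ and $\mu(B\setminus E_a)>\varepsilon\mu(B)$. Nothing is conceptually deep, but the estimates have to be tracked carefully, together with the routine measurability facts (that $b\mapsto\mu(A\cap E_b)$ is measurable, that $B_0,B_1$ and $A'$ lie in the relevant $\sigma$-algebras), all of which follow from the Fubini axiom for graded probability spaces.
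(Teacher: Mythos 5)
Your proof is correct, and it takes a genuinely different organizational route from the paper's. The paper argues forwards: assuming $\mu(E\cap(A\times B))\geq 3\varepsilon\mu(A)\mu(B)$, it constructs a chain of threshold improvements $A_0\supseteq A_1$ (subsets of $A$ with increasingly large $B$-fibers) and then $B_0\supseteq B_1$ (subsets of $B$), using the decency of $B$ to control $A_0\setminus A_1$ and the decency of $A$ to control $B_0\setminus B_1$, finally integrating over $B_1$ to land at $(1-4\varepsilon)\mu(A)\mu(B)$. You instead argue by contradiction: you decompose $B$ into three pieces $B_0, B_*, B_1$ by the $\varepsilon^2$-threshold, use the decency of $A$ to bound the ``bad middle'' $B_*$, and then use Markov's inequality on $a\mapsto\mu(B_1\setminus E_a)$ and $a\mapsto\mu(B_0\cap E_a)$ to manufacture a positive-measure subset $A'\subseteq B^{\#}_{\varepsilon}$, which directly contradicts the decency of $B$. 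The two proofs are the same kind of elementary Fubini/averaging argument and use the same two decency hypotheses, but your version makes the division of labor between them cleaner and more symmetric (one controls the split of $B$, the other is violated on the $A$-side), at the cost of needing the observation that the claim is vacuous for $\varepsilon>1/7$ so that the quadratic $\varepsilon^2-4\varepsilon+1$ stays positive. Your treatment of the ``in particular'' clause is also slightly more careful than the paper's, which just treats it as an immediate specialization. All the numerical estimates you quote check out, and the measurability of $B_0, B_1, B_*, A'$ does indeed follow from the Fubini axiom (via Lemma \ref{lem: alg of fibers closed under meas} and the fact that measurable-threshold sublevel sets are measurable).
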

\begin{proof}

%
%
	
	Assume $A,B$ are $\varepsilon$-good and $\mu \left(E \cap (A \times B) \right) \geq 3 \varepsilon \mu(A) \mu(B)$. 
	
	Let $A_0 := \left\{ x \in A: \mu(E_x \cap B) > \varepsilon \mu(B)\right\}$. 
		
	By Fubini and linearity of integration,
	\begin{gather*}
		3 \varepsilon \mu(A) \mu(B) \leq \mu \left(E \cap (A \times B) \right)  = \int_{A} \mu(E_{x} \cap B) dx \\
		= \int_{A_0}\mu(E_{x} \cap B) dx + \int_{A \setminus A_0}\mu(E_{x} \cap B) dx.
	\end{gather*}
	As \begin{gather*}
		\int_{A \setminus A_0}\mu(E_{x} \cap B) dx \leq \int_{A \setminus A_0} \varepsilon \mu(B) dx \leq  \varepsilon \mu(B) \mu (A ),
	\end{gather*} we get
	\begin{gather*}
		\left(3 \varepsilon - \varepsilon \right) \mu(A) \mu(B) \leq \int_{A_0}\mu(E_x \cap B)dx \leq \mu(A_0) \mu(B),
	\end{gather*}
 hence  $\mu(A_0) \geq 2 \varepsilon \mu(A)$. Let 
 \begin{gather*}
		A_1 := \left\{ x \in A: \mu(E_x \cap B) > (1- \varepsilon) \mu(B)\right\} \subseteq A_0.
	\end{gather*}
 
  As $B$ is $(\varepsilon, \varepsilon \mu(A))$-decent, we have 
  \begin{gather}
  	\mu(A_1 ) \geq \mu(A_0) - \varepsilon \mu(A) \geq \varepsilon \mu(A).\label{eq: 01 dens on perf sets 1}
  \end{gather}
Then, by Fubini again, 
\begin{gather*}
	\mu \left(E \cap (A_1 \times B) \right) = \int_{A_1} \mu(E_x \cap B) dx \geq (1-\varepsilon) \mu(A_1) \mu(B).
\end{gather*}
Let $B_0 := \left\{ y \in B: \mu(E_y \cap A_1) > \varepsilon \mu(A_1)\right\}$.

Similarly, by Fubini and linearity of integration,
	\begin{gather*}
		(1 -\varepsilon) \mu(A_1) \mu(B) \leq \mu \left(E \cap (A_1 \times B) \right)  = \int_{B} \mu(E_{y} \cap A_1) dy \\
		= \int_{B_0}\mu(E_{y} \cap A_1) dy + \int_{B \setminus B_0}\mu(E_{y} \cap A_1) dy.
	\end{gather*}
	
	As \begin{gather*}
		\int_{B \setminus B_0}\mu(E_{y} \cap A_1) dy \leq \int_{B \setminus B_0} \varepsilon \mu(A_1) dy \leq  \varepsilon \mu(B) \mu (A_1),
	\end{gather*} we get
	\begin{gather*}
		\left(1 - 2 \varepsilon \right) \mu(A_1) \mu(B) \leq \int_{B_0}\mu(E_y \cap A_1)dy \leq \mu(A_1) \mu(B_0),
	\end{gather*}
 hence  $\mu(B_0) \geq (1 - 2 \varepsilon) \mu(B)$. 
 
 By \eqref{eq: 01 dens on perf sets 1}, for every $b \in B_0$ we have $\mu(E_b \cap A) \geq  \mu(E_b \cap A_1) > \varepsilon \mu(A_1) \geq \varepsilon^2 \mu(A)$.
 Let 
  \begin{gather*}
		B_1 := \left\{ b \in B: \mu(E_b \cap A) > (1- \varepsilon^2) \mu(A)\right\} \subseteq B_0.
	\end{gather*}
 
As $A$ is $(\varepsilon^2, \varepsilon \mu(B))$-decent, we have $\mu(B_1) \geq \mu(B_0) - \varepsilon \mu(B) \geq (1 - 3\varepsilon) \mu(B)$. Hence, by Fubini, \begin{gather*}
	\mu\left( E \cap (A \times B) \right) = \int_{B} \mu(E_y \cap A) dy \geq \int_{B_1} \mu(E_y \cap A) dy \\
	\geq (1-3\varepsilon) \mu(B) (1-\varepsilon^2) \mu(A) \geq  \mu(A)\mu(B) \geq (1 - 4 \varepsilon)\mu(A)\mu(B) ,
	\end{gather*}
	which implies the claim.
\end{proof}

We next observe that for a stable relation, we can always find a perfect set of positive measure for it. In fact, as will be needed for our later analysis of slice-wise stability, we can do it uniformly for a family of stable relations, by finding a relation of higher arity such that a positive measure of its fibers are perfect. 

\begin{lemma}\label{lem: pos meas of perf sets for stable E}
Assume that $d \in \mathbb{N}_{\geq 1}$, $E \in \mathcal{B}_{X \times Y}$ is $d$-$\mu$-stable and $U \in \mathcal{B}_{X}$ with $\mu(U) > 0$. Then there exist $1 \leq m \leq d$ and $\sigma \in 2^{m}$ so that the set 
	$$\Perf^{E, \sigma} \left( U \right) := \left \{ \bar{b} \in  Y^{m}: U \cap E^{\sigma}_{\bar{b}} \textrm{ is perfect and } \mu\left( U \cap E^{\sigma}_{\bar{b}} \right) > 0 \right\} \in \mathcal{B}_{Y^{m}}$$
has positive measure.
	\end{lemma}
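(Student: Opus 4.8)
The plan is to induct on the stability rank $d$, peeling off one level of the $\mu$-tree at a time. Concretely, set up the statement so that the "target" is: for a $d$-$\mu$-stable $E$ and $U$ of positive measure, some fiber $U \cap E^\sigma_{\bar b}$ (over $\sigma \in 2^{\leq d}$, including $\sigma = \emptyset$, i.e.\ $U$ itself) is perfect of positive measure, for a positive-measure set of parameters $\bar b$. The base case is $d = 1$: if $E$ is $1$-$\mu$-stable then $\Tree^{\mu,E,1} = \emptyset$ up to measure zero, which unwinds to say $E_b$ does \emph{not} $\mu$-split $U$ for almost every $b \in Y$; so $U$ itself is perfect (take $m = 1$, $\sigma = (0)$ or $(1)$, which contributes $U$ or $\emptyset$ — this needs a tiny adjustment, see below).

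For the inductive step, suppose $E$ is $d$-$\mu$-stable with $d \geq 2$. Either $U$ is already perfect, in which case we are done (with $m=1$ and $\sigma$ chosen so that $E^\sigma_b$ is all of $X$ on a positive measure set — here the indexing forces a small bookkeeping fix, discussed below). Otherwise $U$ is $\mu$-split by $E_b$ for a positive-measure set of $b \in Y$; fix such a $b$, and consider the two sets $U_0 := U \setminus E_b$ and $U_1 := U \cap E_b$, both of positive measure. The key observation is that $E$ restricted as a relation on $(U_i) \times Y$ is $(d-1)$-$\mu$-stable: a $\mu$-tree of height $d-1$ for $E$ relative to $U_i$ would, together with the parameter $b$ at the root, give a $\mu$-tree of height $d$ for $E$ relative to $U$ (this is essentially the definition of a $\mu$-tree, reading off Remark \ref{rem: basic props trees}). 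By the inductive hypothesis applied to $E$ and $U_i$, there is $1 \leq m' \leq d-1$, $\sigma' \in 2^{m'}$ (or the empty-string case), and a positive-measure set of $\bar b' \in Y^{m'}$ with $U_i \cap E^{\sigma'}_{\bar b'}$ perfect of positive measure. Now $U_i \cap E^{\sigma'}_{\bar b'} = U \cap E^{i \frown \sigma'}_{b \frown \bar b'}$, so prepending the bit $i$ and the parameter $b$ gives the desired $\sigma \in 2^{m' + 1}$, $m = m' + 1 \leq d$.

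To make this a genuine measure-theoretic statement rather than "for some parameter," I would run the whole argument with the $\Tree^{\mu,E,d}$ sets and Fubini, exactly in the style of the proofs of Lemma \ref{lem: two defs of mu stab} and Lemma \ref{lem: two defs of mu ladder stab} above. That is: the set of "bad" root parameters $b$ (those for which $E_b$ $\mu$-splits $U$ \emph{and} the recursion fails to terminate along either branch) must, by $d$-$\mu$-stability, be null — more precisely, one shows that if $\Perf^{E,\sigma}(U)$ had measure zero for every $1 \leq m \leq d$ and every $\sigma \in 2^m$, then one could build, fiberwise and using Fubini to keep positivity of measure at each of the finitely many stages, a positive-measure set of $\mu$-trees of height $d$ for $E$, contradicting $d$-$\mu$-stability. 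The measurability of $\Perf^{E,\sigma}(U)$ follows from Lemma \ref{lem: alg of fibers closed under meas} and Fact \ref{lem: meas of av fib}: "being perfect" is the condition $\mu_Y(\{b : E_b \ \mu\text{-splits } U \cap E^\sigma_{\bar b}\}) = 0$, which is a countable combination of measure-comparison conditions on fibers, hence measurable in $\bar b$.

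The main obstacle is the bookkeeping around the empty string and the exact range of $m$ and $\sigma$: the clean recursive formulation wants to allow "$U$ itself is perfect" as a base case, but the statement as written demands $1 \leq m$ and $\sigma \in 2^m$ with $\mu(U \cap E^\sigma_{\bar b}) > 0$. This is reconciled by noting that if $U$ is perfect of positive measure, then for almost every $b$ the fiber $E_b$ does not $\mu$-split $U$, meaning for almost every $b$ either $\mu(U \cap E_b) = 0$ or $\mu(U \setminus E_b) = 0$; since $\mu(U) > 0$, at least one of these two events — say $\mu(U \cap E^i_b) = \mu(U) > 0$ for $i = 1$ on a positive-measure set of $b$, or the same with $i = 0$ — occurs with positive probability, and on that set $U \cap E^i_b =^0 U$ is still perfect of positive measure, giving $m = 1$, $\sigma = (i)$. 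The second genuine point requiring care is the Fubini argument that positivity of the parameter set is \emph{preserved} through the $d$ nested stages of the recursion: at each stage we intersect with a fiber and must ensure the resulting set of extended parameters still has positive measure, which is where one uses that the recursion, at a stage where no perfect fiber has yet appeared, produces $\mu$-splitting on a positive-measure set of next parameters — i.e., one tracks the inclusion into $\Tree^{\mu,E,d}$ directly and invokes $d$-$\mu$-stability only at the very end.
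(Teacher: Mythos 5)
Your final measure-theoretic argument (third paragraph) is correct and is essentially the paper's proof phrased as a contrapositive: you assume all $\Perf^{E,\sigma}(U)$ are null and build $\mu(\Tree^{\mu,E,m}(U))>0$ level by level via Fubini until you hit $m=d$ and contradict $d$-$\mu$-stability, whereas the paper takes the maximal $m<d$ with $\mu(\Tree^{\mu,E,m}(U))>0$ and reads off perfection directly from the failure to extend to $m+1$. These are the same computation run in opposite directions, and your remarks on measurability (via Lemma \ref{lem: alg of fibers closed under meas}) and the $m=1$ bookkeeping when $U$ itself is perfect match the paper's step where it notes $\mu(\Perf^{E,\sigma}(U))\geq\tfrac12$ for some $\sigma\in\{0,1\}$.

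I should flag, though, that the ``naive'' sketch in your first two paragraphs contains a genuine error that you never quite name. The claim that, for a \emph{single} fixed $b$, ``$E$ restricted to $U_i\times Y$ is $(d-1)$-$\mu$-stable because a $\mu$-tree of height $d-1$ relative to $U_i$ would, with $b$ at the root, give a $\mu$-tree of height $d$ relative to $U$'' is wrong for two reasons: a full $\mu$-tree of height $d$ relative to $U$ requires $\mu$-trees of height $d-1$ for \emph{both} $U_0$ and $U_1$ simultaneously, not just one; and even granting both, attaching them to a fixed $b$ produces a subset of $Y^{2^d-1}$ whose first coordinate ranges over the singleton $\{b\}$, which is null unless $Y$ has atoms, so it does not contradict $d$-$\mu$-stability. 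You diagnose the second issue (``for some parameter'') but not the first, and in particular the inductive hypothesis you invoke would require $E$ to be $(d-1)$-$\mu$-stable globally, which does not follow from $d$-$\mu$-stability (the implication runs the other way). None of this survives into your third-paragraph version, where you correctly track entire fibers of $\Tree^{\mu,E,m}(U)$ and invoke $d$-$\mu$-stability only once at the end, so the proof you actually intend to write is sound. But the sketch as written would mislead a reader into thinking the lemma reduces cleanly by induction on $d$; it does not, and the Fubini bookkeeping you describe last is not an optional ``upgrade'' but the whole content of the argument.
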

\begin{proof}

	As $E$ is $d$-$\mu$-stable, we have $\mu \left( \Tree^{\mu, E, d} \right) = 0$. 
	
If $\mu \left( \Tree^{\mu, E, 1}(U) \right) = 0$, then $U$ is already perfect. Hence for at least one $\sigma \in \{0,1\}$  we have $\mu \left( \Perf^{E,\sigma} \right) \geq \frac{1}{2}$.	
	
	Otherwise let $1 \leq m < d$ be maximal such that $\mu \left( \Tree^{\mu, E, m}  (U) \right) > 0$ (using Remark \ref{rem: basic props trees}).
	For every fixed $\bar{b} = \left(b_{\sigma} : \sigma \in 2^{<m} \right) \in \Tree^{\mu, E, m}(U)$, we have
	\begin{gather*}
		\prod_{\sigma \in 2^m} \left\{ b_{\sigma} \in Y : E_{b_{\sigma}} \  \mu \textrm{-splits } U \cap E^{\sigma}_{\left(b_{\sigma|_0}, \ldots, b_{\sigma|_{m-1}} \right)}\right\} \subseteq \Tree^{\mu, E, m+1}_{\bar{b}} \left( U \right).
	\end{gather*}
	Hence by maximality of $m$ and Fubini we have
	\begin{gather*}
		0 = \mu \left( \Tree^{\mu, E, m+1} \left( U \right) \right) = \int_{Y^{2^m - 1}} \mu \left( \left( \Tree^{\mu, E, m+1} \left( U \right) \right)_{\bar{b}} \right) d\mu \left(\bar{b} \right)\\
		\geq \int_{\Tree^{\mu, E, m}(U)} \mu \left( \left( \Tree^{\mu, E, m+1} \left( U \right) \right)_{\bar{b}} \right) d\mu \left(\bar{b} \right)\\
		\geq \int_{\Tree^{\mu, E, m}(U)} \prod_{\sigma \in 2^m} \mu \left( \left\{ b_{\sigma} \in Y : E_{b_{\sigma}} \  \mu \textrm{-splits } U \cap E^{\sigma}_{\left(b_{\sigma|_0}, \ldots, b_{\sigma|_{m-1}} \right)}\right\} \right) d\mu \left(\bar{b} \right).
	\end{gather*} 
	As the integral is equal to $0$, $\mu \left(\Tree^{\mu, E, m} \left( U \right)\right) > 0$ and we are integrating a non-negative function, there exists some $B \subseteq \Tree^{\mu, E, m}(U), B \in \mathcal{B}_{Y^{2^m - 1}}$ with $\mu \left(B \right) = \mu \left(\Tree^{\mu, E, m}(U)\right) > 0$ and such that for every $\bar{b} \in B$, 
	\begin{gather*}
		\prod_{\sigma \in 2^m} \mu \left( \left\{ b_{\sigma} \in Y : E_{b_{\sigma}} \  \mu \textrm{-splits } U \cap E^{\sigma}_{\left(b_{\sigma|_0}, \ldots, b_{\sigma|_{m-1}} \right)}\right\} \right) = 0.
	\end{gather*}
	But then there exist some $\sigma \in 2^{m}$ and $B' \subseteq B, B' \in \mathcal{B}_{Y^{2^m - 1}}$ with $\mu \left( B' \right) \geq \frac{1}{2^{m}} \mu \left( B \right) > 0$, such that for every $\bar{b} \in B'$ we have
	\begin{gather*}
		\mu \left( \left\{ b_{\sigma} \in Y : E_{b_{\sigma}} \  \mu \textrm{-splits } U \cap E^{\sigma}_{\left(b_{\sigma|_0}, \ldots, b_{\sigma|_{m-1}} \right)}\right\} \right) = 0.
	\end{gather*}
	 Let $B'' := \left\{ \left(b_{\tau} : \tau \in 2^{<m} \right) : \left( b_{\sigma|0}, \ldots, b_{\sigma|_{m-1}} \right) \in \Perf^{E, \sigma}(U)\right\} \in \mathcal{B}_{Y^{2^m - 1}}$. By Fubini we have $\mu \left(B'' \right) = \mu \left(\Perf^{E, \sigma}(U) \right)$.  Note also that for every $\bar{b} \in \Tree^{\mu, E, m}(U)$, and hence for every $\bar{b} \in B'$, by  Remark \ref{rem: basic props trees}(2) we have $\mu \left( U \cap E^{\sigma}_{\left(b_{\sigma|_0}, \ldots, b_{\sigma|_{m-1}} \right)} \right) > 0$. Hence $B' \subseteq B''$, and so $\mu \left(\Perf^{E, \sigma}(U) \right) \geq \mu(B') >0$.
\end{proof}

\begin{lemma}\label{lem: perf part stable}
	If $E \in \mathcal{B}_{X \times Y}$ is $\mu$-stable, then there exists a partition of $X$ into countably many perfect sets in $\mathcal{B}^E_{X}$.
\end{lemma}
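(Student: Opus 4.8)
The plan is to obtain the partition by a greedy exhaustion argument whose engine is Lemma \ref{lem: pos meas of perf sets for stable E}. Fix $d \in \mathbb{N}_{\geq 1}$ with $E$ being $d$-$\mu$-stable. I construct a decreasing sequence $X = U_0 \supseteq U_1 \supseteq \cdots$ of sets in $\mathcal{B}^E_X$ together with sets $A_n \in \mathcal{B}^E_X$, each perfect for $E$, with $A_n \subseteq U_n$ and $U_{n+1} = U_n \setminus A_n$, as follows. If $\mu(U_n) = 0$, set $A_n := U_n$ and $U_{n+1} := \emptyset$. Otherwise put $s_n := \sup\{\mu(A) : A \in \mathcal{B}^E_X,\ A \subseteq U_n,\ A \text{ perfect for } E\}$; since $U_n \in \mathcal{B}^E_X$ has positive measure, Lemma \ref{lem: pos meas of perf sets for stable E} applied with $U = U_n$ produces a set of the form $U_n \cap E^{\sigma}_{\bar{b}}$ (for suitable $m \leq d$, $\sigma \in 2^m$, $\bar{b} \in Y^m$) that is perfect for $E$, has positive measure, and lies in $\mathcal{B}^E_X$ (a fiber-combination of $E$ intersected with $U_n$, using Definition \ref{def: algebras of fibers}); hence $s_n > 0$, and I may choose $A_n \in \mathcal{B}^E_X$, $A_n \subseteq U_n$, perfect for $E$, with $\mu(A_n) > s_n/2$, and set $U_{n+1} := U_n \setminus A_n \in \mathcal{B}^E_X$.

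The $A_n$ are pairwise disjoint, since for $m > n$ we have $A_m \subseteq U_m \subseteq U_{n+1} = U_n \setminus A_n$; each $A_n$ is perfect for $E$; and $U_\infty := \bigcap_n U_n = X \setminus \bigsqcup_n A_n$ lies in $\mathcal{B}^E_X$. I claim $\mu(U_\infty) = 0$. Indeed $\sum_n \mu(A_n) \leq \mu(X) = 1$, so $\mu(A_n) \to 0$ and therefore $s_n \to 0$; if $\mu(U_\infty) > 0$, then Lemma \ref{lem: pos meas of perf sets for stable E} applied to $U_\infty$ yields a set $A \in \mathcal{B}^E_X$ with $A \subseteq U_\infty$, perfect for $E$, and $\mu(A) > 0$, but $A \subseteq U_n$ for every $n$ forces $\mu(A) \leq s_n \to 0$, a contradiction. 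Since $\mu(U_\infty) = 0$, the set $U_\infty$ is perfect for $E$ by Remark \ref{rem: basic props perf sets}(1). Thus, after discarding the empty pieces and relabelling, $\{A_n : n \in \omega\} \cup \{U_\infty\}$ is the desired countable partition of $X$ into perfect sets belonging to $\mathcal{B}^E_X$.

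The only step requiring care — and the only place where the stability hypothesis is used essentially — is verifying that the exhaustion actually reaches measure zero rather than stalling at a remainder of positive measure; this is precisely what the ``at least half the current supremum'' choice, combined with Lemma \ref{lem: pos meas of perf sets for stable E}, delivers. The remaining points (membership in $\mathcal{B}^E_X$, pairwise disjointness, that the pieces cover $X$) are routine bookkeeping.
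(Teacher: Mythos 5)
Your proposal is correct and follows essentially the same greedy exhaustion strategy as the paper's proof: repeatedly invoke Lemma \ref{lem: pos meas of perf sets for stable E} to peel off a perfect set of measure at least half the current supremum, then argue that the residual set $U_\infty$ must have measure zero because otherwise one could find a perfect subset of positive measure inside it, contradicting $s_n \to 0$. The bookkeeping about $\mathcal{B}^E_X$-membership and the measure-zero tail being automatically perfect (Remark \ref{rem: basic props perf sets}) matches the paper as well.
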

\begin{proof}
	We choose perfect sets $A_i \in \mathcal{B}^E_{X}$ by induction on $i \in \omega$. Suppose we have already chosen $A_j \in \mathcal{B}^E_{X}$ for $j < i$. Let $U_i := \bigcup_{j <i} A_i$. If $\mu \left( X \setminus U_i \right) = 0$, taking $A_i := X \setminus U_i  $ (perfect by Remark \ref{rem: basic props perf sets}) we are done. Otherwise, by Lemma \ref{lem: pos meas of perf sets for stable E}, there exists a perfect subset $B \in \mathcal{B}^E_{X}$ of $X \setminus U_i$ of positive measure. We can choose a perfect $A_i \in \mathcal{B}^E_{X}$ so that if $B\in \mathcal{B}^E_{X}$ is any perfect subset of $X \setminus U_i$, then $\mu(A_i) \geq \frac{\mu(B)}{2}$.
	
	If  $\mu \left( \bigcup_{i \in \omega} A_i \right) < 1$, by  Lemma \ref{lem: pos meas of perf sets for stable E} again we could find a perfect $B \subseteq X \setminus \left( \bigcup_{i \in \omega} A_i\right)$, $B \in \mathcal{B}^E_{X}$, with $r := \mu(B) > 0$. As $B  \subseteq X \setminus U_i$ for all $i \in \omega$, by the choice of the $A_i$'s we must have $\mu(A_i) \geq \frac{r}{2}$ for all $i \in \omega$ --- a contradiction.
\end{proof}
\begin{remark}
	This is related to an important early observation of Keisler in \cite {keisler1987measures} that on a family of instances of a stable formula, every Keisler measure is a countable weighted sum of types.
\end{remark}

The following immediate corollary is a (non-quantitive) measure-theoretic variant of the stable graph regularity lemma \cite{malliaris2014regularity} (See \cite{MR3731711,malliaris2016stable} for some variants.)
\begin{prop}\label{prop: stable graph reg}
	Assume that $E \in \mathcal{B}_{X \times Y}$ is $\mu$-stable. Then there exist countable partitions $X = \bigsqcup_{i \in \omega} A_i$ and $Y = \bigsqcup_{j \in \omega} B_j$ into perfect sets, such that $A_i \in \mathcal{B}_{X}^{E}$ and $B_j \in \mathcal{B}_{Y}^{E}$.
	 In particular, for each $i,j \in \omega$, $\frac{\mu \left( E \cap \left(A_i \times B_j \right) \right)}{\mu \left( A_i \times B_j \right)} \in \{0,1\}$.
	\end{prop}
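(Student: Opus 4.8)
The plan is to obtain this as an essentially immediate consequence of Lemma~\ref{lem: perf part stable}, together with the symmetry of $\mu$-stability (Corollary~\ref{cor: stab pres under opp}) and the $0$-$1$ density estimate of Lemma~\ref{lem: 01 dens on perf sets}. Concretely: apply Lemma~\ref{lem: perf part stable} to $E$ to partition $X$; apply it again to the coordinate-flipped relation $E^{*}$ to partition $Y$; then read off the densities from the ``in particular'' clause of Lemma~\ref{lem: 01 dens on perf sets}.

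First I would apply Lemma~\ref{lem: perf part stable} directly to the $\mu$-stable relation $E$, producing a countable partition $X = \bigsqcup_{i\in\omega} A_i$ with each $A_i \in \mathcal{B}^E_X$ perfect for $E$. For the $Y$-side I would invoke Corollary~\ref{cor: stab pres under opp}: since $E$ is $\mu$-stable, so is $E^{*} = \{(y,x) : (x,y)\in E\} \in \mathcal{B}_{Y\times X}$, and applying Lemma~\ref{lem: perf part stable} to $E^{*}$ gives a countable partition $Y = \bigsqcup_{j\in\omega} B_j$ with each $B_j \in \mathcal{B}^{E^{*}}_Y$ and each $B_j$ perfect for $E^{*}$. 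The only bookkeeping point is to translate ``$E^{*}$'' back to ``$E$'': the single-coordinate slices of $E^{*}$ are, as families of subsets, exactly those of $E$ (namely $(E^{*})_a = E_a \subseteq Y$ for $a\in X$ and $(E^{*})_b = E_b \subseteq X$ for $b\in Y$), so the fiber-algebras they generate agree on the sort $Y$, i.e.\ $\mathcal{B}^{E^{*}}_Y = \mathcal{B}^E_Y$ by Definition~\ref{def: algebras of fibers}; and, unwinding Definition~\ref{def: perfect and decent}, a subset of $Y$ is perfect for $E^{*}$ if and only if it is perfect for $E$ in the symmetric sense of Definition~\ref{def: perfect and decent}(3). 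Hence each $B_j$ lies in $\mathcal{B}^E_Y$ and is perfect for $E$, as required.

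Finally, for each $i,j\in\omega$ both $A_i$ and $B_j$ are perfect for $E$, so the last sentence of Lemma~\ref{lem: 01 dens on perf sets} (the $\varepsilon = 0$ case) yields $\frac{\mu(E\cap(A_i\times B_j))}{\mu(A_i\times B_j)}\in\{0,1\}$ whenever $\mu(A_i\times B_j)>0$; parts of measure $0$ contribute nothing to the measure-theoretic conclusion, and can be absorbed into a neighbouring part if one insists on a partition into positive-measure pieces. I do not expect any genuine obstacle here: the substantive content — existence of perfect sets of positive measure for $\mu$-stable relations (Lemma~\ref{lem: pos meas of perf sets for stable E}), the greedy construction of a countable perfect partition (Lemma~\ref{lem: perf part stable}), and invariance of $\mu$-stability under exchanging the variables (Corollary~\ref{cor: stab pres under opp}) — has already been established, and the ``main step'' is simply assembling these three ingredients while keeping the $\sigma$-algebras $\mathcal{B}^E_X$ and $\mathcal{B}^E_Y$ straight under the coordinate swap.
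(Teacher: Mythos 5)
Your proof is correct and takes essentially the same approach as the paper, whose own proof is simply ``Combining Lemmas \ref{lem: perf part stable} and \ref{lem: 01 dens on perf sets}.'' You have merely made explicit what the paper leaves implicit: that a partition of $Y$ is obtained by applying Lemma \ref{lem: perf part stable} to $E^{*}$ (using Corollary \ref{cor: stab pres under opp} to preserve $\mu$-stability), and that $\mathcal{B}^{E^{*}}_Y = \mathcal{B}^E_Y$ and ``perfect for $E^{*}$'' agrees with ``perfect for $E$'' on the $Y$-side by Definition \ref{def: perfect and decent}(3).
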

	\begin{proof}
		Combining Lemmas \ref{lem: perf part stable} and \ref{lem: 01 dens on perf sets}.
	\end{proof}
	
This generalizes to partition-wise stable hypergraphs (see Definition \ref{def: partition-wise stability}), giving a simplified version of \cite[Theorem 4.13]{chernikov2021definable}.

\begin{lemma}\label{lem: prod of perf is perf}
Given $E \in \mathcal{B}_{X \times Y \times Z}$, assume that $X_0 \in \mathcal{B}_{X}$ is perfect for $E$ viewed as a binary relation on $X \times \left(Y \times Z \right) $ and $Y_0 \in \mathcal{B}_{Y}$ is perfect for $E$ viewed as a binary relation on $Y \times \left(X \times Z \right) $.
	Then $X_0 \times Y_0 \in \mathcal{B}_{X \times Y}$ is perfect for $E$ viewed as a binary relation on $\left( X \times Y \right) \times Z$.
\end{lemma}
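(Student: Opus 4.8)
The plan is to unwind the definition of perfection and show directly that $\mu_Z\bigl(\{z \in Z : E_z \ \mu \textrm{-splits } X_0 \times Y_0\}\bigr) = 0$, where $E_z \subseteq X \times Y$ is the slice of $E$ at $z$. If $\mu(X_0) = 0$ then $\mu(X_0 \times Y_0) = 0$ and the conclusion is immediate from Remark \ref{rem: basic props perf sets}(1), so assume $\mu(X_0) > 0$.

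First I would use perfection of $X_0$ to split $Y \times Z$ into three measurable pieces according to how the fibers $E_{(y,z)} = \{x : (x,y,z) \in E\}$ meet $X_0$:
$$P_1 := \{(y,z) : X_0 \subseteq^0 E_{(y,z)}\}, \qquad P_0 := \{(y,z) : \mu_X(X_0 \cap E_{(y,z)}) = 0\},$$
together with $S := \{(y,z) : E_{(y,z)} \ \mu \textrm{-splits } X_0\}$. These are measurable by Fubini applied to the sets $(X_0 \times Y \times Z) \setminus E$ and $(X_0 \times Y \times Z) \cap E$; they cover $Y \times Z$; and $\mu_{Y \times Z}(S) = 0$ precisely because $X_0$ is perfect for $E$ on $X \times (Y \times Z)$. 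A routine Fubini computation (using that $S_z$ is $\mu_Y$-null for $\mu_Z$-a.e. $z$) then gives, for $\mu_Z$-a.e. $z$,
$$\mu\bigl((X_0 \times Y_0) \cap E_z\bigr) = \mu(X_0)\,\mu_Y\bigl(Y_0 \cap (P_1)_z\bigr), \qquad \mu\bigl((X_0 \times Y_0) \setminus E_z\bigr) = \mu(X_0)\,\mu_Y\bigl(Y_0 \cap (P_0)_z\bigr).$$
So it suffices to prove that for $\mu_Z$-a.e. $z$ at least one of $Y_0 \cap (P_1)_z$, $Y_0 \cap (P_0)_z$ is $\mu_Y$-null.

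The main technical step is to replace the ``averaged'' sets $(P_1)_z, (P_0)_z$ — which are not fibers of $E$ at a single point — by one well-chosen point of $X_0$. For this I would note that $(X_0 \times P_1) \setminus E$ and $(X_0 \times P_0) \cap E$ are $\mu_{X \times Y \times Z}$-null: their fibers over $(y,z) \in P_1$ (resp.\ $P_0$) are $X_0 \setminus E_{(y,z)}$ (resp.\ $X_0 \cap E_{(y,z)}$), null by definition, and empty otherwise, so Fubini in the $X$-direction makes the whole set null. Applying Fubini again in the $Y$-direction, for $\mu_{X \times Z}$-a.e.\ $(x,z)$ with $x \in X_0$ we get $(P_1)_z \subseteq^0 E_{(x,z)}$ and $\mu_Y\bigl((P_0)_z \cap E_{(x,z)}\bigr) = 0$, where $E_{(x,z)} = \{y : (x,y,z) \in E\}$. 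Intersecting this null exceptional set (inside $X \times Z$) with the null set $\{(x,z) : E_{(x,z)} \ \mu \textrm{-splits } Y_0\}$ coming from perfection of $Y_0$, and projecting to $Z$ via Fubini, we find that for $\mu_Z$-a.e.\ $z$ — here we use $\mu(X_0) > 0$ — there is some $x^* \in X_0$ with $(P_1)_z \subseteq^0 E_{(x^*,z)}$, $\mu_Y\bigl((P_0)_z \cap E_{(x^*,z)}\bigr) = 0$, and $E_{(x^*,z)}$ not $\mu$-splitting $Y_0$.

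A two-line case split then finishes the proof: if $\mu_Y(Y_0 \cap E_{(x^*,z)}) = 0$, then $Y_0 \cap (P_1)_z \subseteq (Y_0 \cap E_{(x^*,z)}) \cup \bigl((P_1)_z \setminus E_{(x^*,z)}\bigr)$ is a union of two null sets, hence null; if instead $\mu_Y(Y_0 \setminus E_{(x^*,z)}) = 0$, then $Y_0 \cap (P_0)_z \subseteq \bigl((P_0)_z \cap E_{(x^*,z)}\bigr) \cup (Y_0 \setminus E_{(x^*,z)})$ is null. In either case $E_z$ does not $\mu$-split $X_0 \times Y_0$, so the exceptional set of $z$ is $\mu_Z$-null, i.e.\ $X_0 \times Y_0$ is perfect for $E$ on $(X \times Y) \times Z$. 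Essentially all of this is bookkeeping about which instances of Fubini and of the product axioms of a graded probability space are invoked; the one genuinely non-formal move is the passage from $P_0, P_1$ to the witness $x^*$, and that is where I expect the only real care to be needed.
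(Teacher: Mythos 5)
Your proof is correct, but it takes a longer and more self-contained route than the paper's. The paper's argument is two Fubini applications and an appeal to the already-proved Lemma~\ref{lem: 01 dens on perf sets}: perfection of $X_0$ for $E$ on $X\times(Y\times Z)$, unwound via Fubini, says exactly that for $\mu_Z$-a.e.\ $z$ the set $X_0$ is perfect for the slice $E_z\subseteq X\times Y$; symmetrically $Y_0$ is perfect for $E_z$ for a.e.\ $z$; and then Lemma~\ref{lem: 01 dens on perf sets} applied to $E_z$ gives $\frac{\mu(E_z\cap(X_0\times Y_0))}{\mu(X_0\times Y_0)}\in\{0,1\}$ for a.e.\ $z$, which is the definition of $X_0\times Y_0$ being perfect for $E$ on $(X\times Y)\times Z$. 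You instead work directly in the three-variable setting and essentially re-derive the relevant special case of Lemma~\ref{lem: 01 dens on perf sets}: your decomposition into $P_0$, $P_1$, $S$ and the identities $\mu((X_0\times Y_0)\cap E_z)=\mu(X_0)\,\mu_Y(Y_0\cap (P_1)_z)$, etc., are fine, and the witness-extraction step (finding $x^*\in X_0$ for a.e.\ $z$ that lies outside the three Fubini-null exceptional sets) is a clean way to avoid the density computation. The trade-off is that your argument only covers exact perfection, whereas the paper's route through Lemma~\ref{lem: 01 dens on perf sets} works uniformly at the level of $\varepsilon$-decent sets and so is the one that transfers to the quantitative/finitary setting the paper needs later. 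If you are aware that Lemma~\ref{lem: 01 dens on perf sets} is available, it is worth invoking it here rather than reproving it.
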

\begin{proof}
 As $X_0$ is perfect for $E(x; (y,z))$, we have 
	\begin{gather*}
		\mu \left( \left\{ (y,z) \in Y \times Z : \frac{\mu \left( E_{(y,z)} \cap X_0 \right)}{\mu \left( X_0 \right)} \notin \{0,1\} \right\} \right) = 0.
	\end{gather*}
	By Fubini this implies 
	\begin{gather*}
		\mu \left( \left\{ z \in Z : \mu \left( \left\{ y \in Y : \frac{\mu \left( E_{(y,z)} \cap X_0 \right)}{\mu \left( X_0 \right)} \notin \{0,1\} \right\}  \right) > 0 \right\} \right) = 0.
	\end{gather*}
	That is,
	\begin{gather*}
				\mu \left( \left\{ z \in Z : X_0 \textrm{ is } \textrm{perfect for } E_z(x,y)\right\} \right) = 1.
	\end{gather*}
Similarly, as $Y_0$ is perfect for $E(y; (x,z))$, we conclude 
\begin{gather*}
	\mu \left( \left\{ z \in Z : Y_0  \textrm{ is } \textrm{perfect for } E_z(x,y)\right\} \right) = 1.
\end{gather*}
But then for almost every $z \in Z$ we have that both $X_0, Y_0$ are perfect for $E_z$, so $\frac{\mu \left(E_z \cap \left(X_0 \times Y_0 \right) \right)}{\mu \left(X_0 \times Y_0  \right)} \in \{0,1\}  $ by Lemma \ref{lem: 01 dens on perf sets}. That is, the set $X_0 \times Y_0$ is perfect for $E \left( (x,y); z \right)$.
\end{proof}
\begin{remark}
	Lemma \ref{lem: prod of perf is perf} has a straightforward quantitive version similarly to Lemma \ref{lem: 01 dens on perf sets}, but we will not need it here.
\end{remark}

\begin{lemma}\label{lem: perfect regularity equivalents}
	Let $E \in \mathcal{B}_{X_1 \times \ldots \times X_k}$ and assume that for $1 \leq t \leq k$, $\left\{ A_{t,i} : i  \in \omega \right\}$ with $A_{t,i} \in \mathcal{B}_{X_t}$ is a countable partition of $X_t$. Then the following two conditions are equivalent:
	\begin{enumerate}
		\item for every $1 \leq t \leq k, i \in \omega$, the set $A_{t,i}$ is perfect for $E$ viewed as a binary relation on $X_t \times \left( \prod_{s \in [k] \setminus \{t\}}  X_s \right)$;
		\item for every $(i_1, \ldots, i_k) \in \omega^k$, we have $\frac{\mu \left( E \cap \left(A_{1,i_1} \times \ldots \times A_{k,i_k} \right) \right)}{\mu \left( A_{1,i_1} \times \ldots \times A_{k,i_k} \right)} \in \{0,1\}$.
	\end{enumerate}
\end{lemma}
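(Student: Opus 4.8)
The plan is to prove the two implications separately: $(1)\Rightarrow(2)$ by induction on the arity $k$, reducing to Lemmas \ref{lem: prod of perf is perf} and \ref{lem: 01 dens on perf sets}, and $(2)\Rightarrow(1)$ by a direct averaging argument. Throughout I use the compatibility axioms of graded probability spaces, so that a set singled out against the remaining coordinates is perfect for $E$ irrespective of how those remaining coordinates are grouped into a single sort.

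For $(1)\Rightarrow(2)$, the base case $k=2$ is exactly the ``in particular'' clause of Lemma \ref{lem: 01 dens on perf sets}: hypothesis (1) says that $A_{1,i_1}\subseteq X_1$ is perfect for $E$ and that $A_{2,i_2}\subseteq X_2$ is perfect for $E$ in the symmetric sense of Definition \ref{def: perfect and decent}(3) (unwinding the definition, ``$A_{2,i_2}$ perfect for $E$ viewed on $X_2\times X_1$'' is literally the statement $\mu(\{b\in X_1:E_b\ \mu\textrm{-splits }A_{2,i_2}\})=0$), so the density of $E$ on $A_{1,i_1}\times A_{2,i_2}$ is $0$ or $1$. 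For the inductive step, fix $(i_1,\dots,i_k)\in\omega^k$. By hypothesis (1), $A_{1,i_1}$ is perfect for $E$ viewed as a binary relation on $X_1\times\bigl(X_2\times(X_3\times\dots\times X_k)\bigr)$ and $A_{2,i_2}$ is perfect for $E$ viewed as a binary relation on $X_2\times\bigl(X_1\times(X_3\times\dots\times X_k)\bigr)$, so Lemma \ref{lem: prod of perf is perf} gives that $A_{1,i_1}\times A_{2,i_2}$ is perfect for $E$ viewed as a binary relation on $(X_1\times X_2)\times(X_3\times\dots\times X_k)$. Now regard $X_1\times X_2$ as a single sort and consider the $(k-1)$-ary relation $E$ on the sorts $X_1\times X_2, X_3,\dots,X_k$, together with the product partition $\{A_{1,j}\times A_{2,\ell}:j,\ell\in\omega\}$ of $X_1\times X_2$ and the partitions $\{A_{t,i}\}_{i\in\omega}$ for $3\le t\le k$. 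By the previous sentence and hypothesis (1) for the coordinates $t\ge3$, condition (1) of the lemma holds for this $(k-1)$-ary data, so the inductive hypothesis yields (2) for it; since the box $(A_{1,i_1}\times A_{2,i_2})\times A_{3,i_3}\times\dots\times A_{k,i_k}$ is just $A_{1,i_1}\times\dots\times A_{k,i_k}$, this is exactly the desired statement.

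For $(2)\Rightarrow(1)$, fix $1\le t\le k$ and $i\in\omega$; by symmetry assume $t=1$ and put $W:=\prod_{s=2}^{k}X_s$. If $\mu(A_{1,i})=0$ then $A_{1,i}$ is perfect by Remark \ref{rem: basic props perf sets}(1), so assume $\mu(A_{1,i})>0$ and define $g\colon W\to[0,1]$ by $g(b):=\mu(E_b\cap A_{1,i})/\mu(A_{1,i})$; by the Fubini axiom applied to $E\cap(A_{1,i}\times W)$ this is a well-defined $\mu_W$-measurable function off a null set, and $\{b\in W:E_b\ \mu\textrm{-splits }A_{1,i}\}=\{b\in W:0<g(b)<1\}$, so it suffices to prove $g(b)\in\{0,1\}$ for $\mu_W$-almost every $b$. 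The sets $B_{\bar\jmath}:=A_{2,j_2}\times\dots\times A_{k,j_k}$, with $\bar\jmath=(j_2,\dots,j_k)\in\omega^{k-1}$, form a countable partition of $W$, and for each $\bar\jmath$ with $\mu(B_{\bar\jmath})>0$ Fubini and the product axiom give
\[
\frac{1}{\mu(B_{\bar\jmath})}\int_{B_{\bar\jmath}}g\,d\mu_W
=\frac{\mu\bigl(E\cap(A_{1,i}\times B_{\bar\jmath})\bigr)}{\mu\bigl(A_{1,i}\times B_{\bar\jmath}\bigr)}\in\{0,1\}
\]
by hypothesis (2). Since $0\le g\le1$, an average equal to $0$ over $B_{\bar\jmath}$ forces $g=0$ a.e.\ on $B_{\bar\jmath}$ and an average equal to $1$ forces $g=1$ a.e.\ on $B_{\bar\jmath}$; combining this with the trivial bound $\mu(\{0<g<1\}\cap B_{\bar\jmath})\le\mu(B_{\bar\jmath})=0$ for the null boxes and summing over the countably many $\bar\jmath$ yields $\mu(\{b:0<g(b)<1\})=0$, as required.

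The only delicate point is the coordinate bookkeeping in $(1)\Rightarrow(2)$: one must keep track of which grouping of sorts a given set is perfect with respect to and use the compatibility axioms to pass between groupings; this is routine but should be spelled out. The $(2)\Rightarrow(1)$ direction has no real obstacle beyond the elementary fact that a $[0,1]$-valued function whose integral over a positive-measure set equals $0$ (resp.\ equals the measure of that set) must vanish (resp.\ equal $1$) almost everywhere on it.
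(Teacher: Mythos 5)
Your proof is correct and uses essentially the same tools and approach as the paper: the forward direction reduces to Lemmas \ref{lem: prod of perf is perf} and \ref{lem: 01 dens on perf sets}, and the converse is a Fubini/averaging argument over the product partition. The only cosmetic difference is that you induct on the arity $k$ (regrouping $X_1\times X_2$ as a single sort), whereas the paper inducts on the prefix length $t$ within a fixed $k$, building up the perfect box $\prod_{\ell\le t}A_{\ell,i_\ell}$ directly — both amount to the same iterated application of Lemma \ref{lem: prod of perf is perf}, so the choice is immaterial.
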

\begin{proof}
	(1) implies (2).  Fix $(i_1, \ldots, i_k) \in \omega^k$.
	By induction on $1 \leq t \leq k-1$ we have that the set $\prod_{\ell = 1}^{t} A_{\ell, i_{\ell}}$ is 	perfect for $E$ viewed as a binary relation on $\prod_{\ell = 1}^{t} X_{\ell}$ and $\prod_{\ell = t+1}^{k} X_{\ell}$. Indeed, the base case $t=1$ is given by (1), and on the inductive step we get that $\prod_{\ell = 1}^{t+1} A_{\ell, i_{\ell}}$ is perfect by Lemma \ref{lem: prod of perf is perf} applied to $\prod_{\ell = 1}^{t} A_{\ell, i_{\ell}}$ (perfect by the inductive assumption) and $A_{t+1, i_{t+1}}$ (perfect by (1)). But then we have that $\prod_{\ell = 1}^{k-1} A_{\ell, i_{\ell}}$ and $A_{k, i_k}$ are perfect for $E$ viewed as a binary relation on $\prod_{\ell = 1}^{k-1} X_{\ell}$ and $X_k$, hence $\frac{\mu \left( E \cap \left(A_{1,i_1} \times \ldots \times A_{k,i_k} \right) \right)}{\mu \left( A_{1,i_1} \times \ldots \times A_{k,i_k} \right)} \in \{0,1\}$ by Lemma \ref{lem: 01 dens on perf sets}.

(2) implies (1). Fix $i_1 \in \omega$, we show that $A_{1,i_1}$ is perfect (the argument is the same for all $A_{t,i_t}$). For any $(i_2, \ldots, i_k)$, by (2) we have $\frac{\mu \left( E \cap \left(A_{1,i_1} \times \ldots \times A_{k,i_k} \right) \right)}{\mu \left( A_{1,i_1} \times \ldots \times A_{k,i_k} \right)} \in \{0,1\}$. By Fubini and countable additivity this implies that the measure of the set
\begin{gather*}
	\left\{  (x_2, \ldots, x_k) \in A_{2,i_2} \times \ldots \times A_{k,i_k} :  \frac{\mu \left(A_{1, i_1} \cap E_{(x_2, \ldots, x_k)} \right)}{\mu \left(A_{1,i_1} \right)} \notin \{0,1\} \right\}
\end{gather*}
is $0$. But then, taking the union over all $(i_2, \ldots, i_k) \in \omega^{k-1}$, by countable additivity we get 
\begin{gather*}
	\mu \left( \left\{  (x_2, \ldots, x_k) \in X_2 \times \ldots \times X_k :  \frac{\mu \left(A_{1, i_1} \cap E_{(x_2, \ldots, x_k)} \right)}{\mu \left(A_{1,i_1} \right)} \notin \{0,1\} \right\}  \right) = 0,
\end{gather*}
which means that $A_{1,i_1}$ is perfect.
\end{proof}

\begin{definition}\label{def: perf stab reg}
\begin{enumerate}
	\item We will say that $E \in \mathcal{B}_{X_1 \times \ldots \times X_k}$ satisfies \emph{perfect stable regularity} if  for $1 \leq t \leq k$ there exist  countable partitions $\left\{ A_{t,i} : i  \in \omega \right\}$ of $X_t$  with $A_{t,i} \in \mathcal{B}_{X_t}$ which satisfy either of the equivalent conditions in Lemma \ref{lem: perfect regularity equivalents}.
	\item We will say that $E \in \mathcal{B}_{X_1 \times \ldots \times X_k}$ satisfies \emph{definable perfect stable regularity} if moreover we can choose $A_{t,i} \in \mathcal{B}^{E}_{X_t}$.
\end{enumerate}
\end{definition}

\begin{prop}\label{prop: stable hypergraph reg}
	Assume that $E \in \mathcal{B}_{X_1 \times \ldots \times X_k}$ is partition-wise $\mu$-stable. Then it satisfies  definable	perfect stable regularity.
	\end{prop}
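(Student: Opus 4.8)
The plan is to obtain the required partitions direction by direction and then invoke the equivalence in Lemma \ref{lem: perfect regularity equivalents}. There is essentially no new work to do: the content has already been packaged into Lemma \ref{lem: perf part stable} (existence of a countable perfect partition of the domain of a $\mu$-stable binary relation, lying in the fiber algebra) and Lemma \ref{lem: perfect regularity equivalents} (which trades $k$ separate one-sided perfection conditions for the $k$-ary $0$-$1$ density condition).

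First I would fix $1 \le t \le k$ and view $E$ as a binary relation $E^{(t)}$ on $X_t \times \bigl(\prod_{s \in [k]\setminus\{t\}} X_s\bigr)$. By the definition of partition-wise $\mu$-stability, $E^{(t)}$ is $\mu$-stable, so Lemma \ref{lem: perf part stable} yields a countable partition $X_t = \bigsqcup_{i \in \omega} A_{t,i}$ with each $A_{t,i}$ perfect for $E^{(t)}$ and $A_{t,i} \in \mathcal{B}^{E^{(t)}}_{X_t}$. I would then check the (routine) inclusion $\mathcal{B}^{E^{(t)}}_{X_t} \subseteq \mathcal{B}^{E}_{X_t}$: every generator of the left-hand algebra is a Boolean combination of at most finitely many fibers of $E^{(t)}$ over $X_t$, and each such fiber is a fiber of the $k$-ary relation $E$ with all coordinates outside $X_t$ fixed, hence belongs to $\mathcal{F}^{E,N}_{X_t}$ for a suitable $N$. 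Thus $A_{t,i}\in\mathcal{B}^{E}_{X_t}$ for every $i$. Repeating this for each $t$ produces $k$ countable partitions $\{A_{t,i} : i\in\omega\}$, all inside the fiber algebras $\mathcal{B}^E_{X_t}$.

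Finally, by construction these partitions satisfy clause (1) of Lemma \ref{lem: perfect regularity equivalents}: for all $t$ and $i$, the set $A_{t,i}$ is perfect for $E$ viewed as a binary relation on $X_t \times \prod_{s\ne t}X_s$. Hence clause (2) holds, i.e.\ $\frac{\mu(E \cap (A_{1,i_1}\times\cdots\times A_{k,i_k}))}{\mu(A_{1,i_1}\times\cdots\times A_{k,i_k})}\in\{0,1\}$ for every $(i_1,\dots,i_k)\in\omega^k$; together with $A_{t,i}\in\mathcal{B}^E_{X_t}$ this is precisely definable perfect stable regularity in the sense of Definition \ref{def: perf stab reg}. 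The only step requiring any attention is the algebra bookkeeping of the previous paragraph --- making sure the partitions obtained separately in the $k$ directions all land in $\mathcal{B}^E_{X_t}$, so that the \emph{definable} conclusion (and not merely the non-definable one) is reached. The passage from one-sided perfection to the $k$-ary density statement is already handled inside Lemma \ref{lem: perfect regularity equivalents} (through the product Lemma \ref{lem: prod of perf is perf}), so nothing further needs to be verified.
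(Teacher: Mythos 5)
Your proof is correct and follows essentially the same route as the paper's (the paper cites Proposition \ref{prop: stable graph reg}, which is just Lemma \ref{lem: perf part stable} plus Lemma \ref{lem: 01 dens on perf sets}, and the step from one-sided perfection to the $k$-ary density statement is already built into Definition \ref{def: perf stab reg} via Lemma \ref{lem: perfect regularity equivalents}). The algebra bookkeeping you flag --- that $\mathcal{B}^{E^{(t)}}_{X_t}$ lies inside $\mathcal{B}^{E}_{X_t}$ --- is indeed the only point worth a sentence; in fact the two algebras coincide, since the fibers of $E^{(t)}$ over $X_t$ are exactly the one-dimensional fibers of $E$ over $X_t$.
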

\begin{proof}
	By partition-wise $\mu$-stability, Lemma \ref{prop: stable graph reg} can be applied to choose a perfect partition on $X_t$ for each $t \in [k]$.
\end{proof}

\section{Various versions of the stable regularity lemma for hypergraphs and a transfer principle}\label{sec: transfer for stable regularities}

\subsection{Definitions}\label{sec: defs transf}

As Proposition \ref{prop: stable hypergraph reg} is established for arbitrary (possibly uncountable) graded probability spaces, using a standard compactness argument we can obtain some additional uniformity for free corresponding to the stable version of the \emph{strong regularity lemma} (see Definition \ref{def: strong stab reg}), and which is meaningful for families of $d$-stable finite hypergraphs (a similar argument in the case of graphs was observed in \cite{chavarria2021continuous}):

Given finite sets $X_1, \ldots, X_k$, unless specified otherwise, we will view them as a $k$-partite graded probability space with $\mathcal{B}_{Y}$ consisting of all subsets of $Y$ and $\mu_{Y}$ the uniform counting probability measure on $Y$, for $Y$ an arbitrary direct product of the $X_i$'s.

\begin{definition}\label{def: strong stab reg}
Let $\mathcal{H}$ be a  family of finite $k$-partite $k$-hypergraphs of the form $H = (E;X_1, \ldots, X_k)$ with $E \subseteq \prod_{i = 1}^{k} X_i$ and $X_i$ finite. We say that $\mathcal{H}$ satisfies:
	\begin{enumerate}
		
	\item  \emph{approximately perfect stable regularity} if for every $\varepsilon \in \mathbb{R}_{>0}$ and every function $f: \mathbb{N} \to (0,1)$ there exists some $N = N(f,\varepsilon)$ so that: for any $H \in \mathcal{H}$ there exists $N' \leq N$ and partitions $X_i = \bigsqcup_{0 \leq t < N'} A_{i,t}$  so that:
	\begin{enumerate}
	\item $\left \lvert A_{i,0} \right \rvert \leq \varepsilon \left \lvert X_{i} \right \rvert$ for all $1 \leq i \leq k$;
		\item for any $1 \leq i \leq k$ and any $ 1 \leq t < N'$, the set $A_{i,t}$ is $f(N')$-decent for $E$ viewed as a binary relation on $X_{i} \times \left( \prod_{j \neq i} X_j \right)$.
	\end{enumerate}

		\item \emph{strong stable regularity} if for every $\varepsilon \in \mathbb{R}_{>0}$ and every function $f: \mathbb{N} \to (0,1]$ there exists some $N = N(f,\varepsilon)$ so that: for any $H \in \mathcal{H}$ there exists $N' \leq N$ and partitions $X_i = \bigsqcup_{0 \leq t < N'} A_{i,t}$  so that:
	\begin{enumerate}
		\item $\left \lvert A_{i,0} \right \rvert \leq \varepsilon \left \lvert A_{i,1} \right \rvert$ for all $1 \leq i \leq k$;
		\item for any $ 1 \leq t_1, \ldots, t_k < N'$ we have 
		\begin{gather*}
			\frac{\left \lvert E \cap \left( A_{1,t_1} \times \ldots \times  A_{k,t_k} \right) \right \rvert }{\left \lvert A_{1,t_1} \times \ldots \times  A_{k,t_k}  \right \rvert } \in [0, f(N')) \cup (1 - f(N'), 1].
		\end{gather*}
	\end{enumerate}
	
	\item \emph{stable regularity} if for every $\varepsilon \in \mathbb{R}_{>0}$ there exists some $N = N(\varepsilon)$ such that: for any $H = (E;X_1, \ldots, X_k) \in \mathcal{H}$ there exists $N' \leq N$ and partitions $X_i = \bigsqcup_{1 \leq t < N'} A_{i,t}$  so that 
	for any $ 1 \leq t_1, \ldots, t_k \leq N'$ we have 
		\begin{gather*}
			\frac{\left \lvert E \cap \left( A_{1,t_1} \times \ldots \times  A_{k,t_k} \right) \right \rvert }{\left \lvert A_{1,t_1} \times \ldots \times  A_{k,t_k}  \right \rvert } \in [0, \varepsilon) \cup (1 - \varepsilon, 1].
		\end{gather*}
\end{enumerate}
\end{definition}

	


\begin{definition}\label{def: definable stable regularities}
	We will say that $\mathcal{H}$ satisfies \emph{definable}  approximately perfect stable (strong stable, stable) regularity if moreover for any $H \in \mathcal{H}$, the sets $A_{i,t}$ satisfying the requirement in Definition \ref{def: strong stab reg} can be chosen in $\mathcal{F}^{E,N}_{X_i}$ (see Definition \ref{def: algebras of fibers}).
\end{definition}

\begin{remark}\label{rem: stronger metastable version}
	If the family $\mathcal{H}$ is hereditarily closed, then in Definition \ref{def: strong stab reg}(1), we could equivalently strengthen (a) to: 
		\begin{itemize}
			\item[(a$'$)] $\left \lvert A_{i,0} \right \rvert \leq \varepsilon \left \lvert A_{i,1} \right \rvert$ for all $1 \leq i \leq k$.
		\end{itemize}
\end{remark}
\begin{proof}
	Assume $\mathcal{H}$ satisfies Definition \ref{def: strong stab reg}(1). For $g: \mathbb{N} \to (0,1]$ and $\delta \in \mathbb{R}_{>0}$, we let $N(g,\delta)$ denote the smallest $N \in \mathbb{N}$ satisfying  Definition \ref{def: strong stab reg}(1). Note that 
	\begin{gather}
		N(g,\delta) \geq N(g, \delta') \textrm{ for any function } g \textrm{ and } \delta < \delta'. \label{eq: N monotone}
	\end{gather}
	Let arbitrary $\varepsilon >0$ and $f: \mathbb{N} \to (0,1]$ be given. 
	  Let $f_2 (n) := f(n+1)$, $f_1(n) := f \left( N \left(f_2,\frac{\varepsilon}{2n} \right) + 1 \right)$, and let $N_1 := N(f_1, \frac{1}{2})$.
	Let $\varepsilon_2 := \frac{\varepsilon}{2 N_1}$, and $N_2 := N(f_2, \varepsilon_2)$.  We claim that $N_3 := N_2 + 1$ (only depending on $f$ and $\varepsilon$) satisfies (a$'$) and (b).
	
	Given an arbitrary $H = (E;X_1, \ldots, X_k) \in \mathcal{H}$, we consider partitions of $X_1$ (the argument is the same for the other $X_i$'s). By the choice of $N_1$, there exist some $N'_1 \leq N_1$ and a partition $A_{1,t} \subseteq X_1$ of $X$ for $0 \leq t \leq N'_1$  so that $\mu_{X_1} \left(A_{1,0} \right) \leq \frac{1}{2}$ (throughout the argument, we talk about the uniform measure on the set in the subscript, $X_1$ in this case) and  $A_{1,t}$ is $f_1(N'_1)$-decent for $1 \leq t \leq N'_1$. In particular, renumbering the sets if necessary, we may assume $\mu_{X_1} \left( A_{1,1}\right) \geq \frac{1}{2N'_1}$.  Let $X'_1 := X_1 \setminus A_{1,1}$. 
	
	As $\mathcal{H}$ is hereditarily closed, the hypergraph 
	$$H' := \left(E \restriction_{X'_1 \times X_2 \times \ldots \times X_k}; X'_1, X_2, \ldots, X_k \right)$$
	 is in $\mathcal{H}$.  Let $N'_2 := N \left(f_2, \frac{\varepsilon}{2N'_1} \right)$. By \eqref{eq: N monotone} we have $N'_2 \leq N_2$.
	So there exists a partition $A'_{1,0}, \ldots, A'_{1, N'_2}$ of  $X'_1$ so that $\mu_{X'_1} \left( A'_{1,0} \right) \leq  \frac{\varepsilon}{2N'_1}$, hence  $ \mu_{X_1} \left( A'_{1,0} \right) = \mu_{X'_1} \left( A'_{1,0} \right) \cdot \mu_{X_1} \left( X'_1 \right) \leq  \varepsilon \frac{1}{2N'_1} \leq \varepsilon \mu_{X_1}(A_{1,1})$ and $A'_{1,t} \subseteq X_{1}$ is $f_2(N'_2)$-decent for $1 \leq t \leq N'_2$ in $H'$, but then also in $H$.
	Finally, consider the partition $A'_{1,0}, \ldots, A'_{1,N'_2}, A'_{1, N'_2 + 1} := A_{1,1}$ of $X_1$ of size  $N'_2 + 1 \leq N_3$. Each of the sets $A'_{1,t}, 1 \leq t \leq N'_2$ is $f_2(N'_2)$-decent, hence $f(N'_2 + 1)$-decent by the choice of $f_2$, and $A_{1,1}$ is $f_1(N'_1)$-decent, hence $f \left(N \left(f_2, \frac{\varepsilon}{2N'_1} \right) + 1 \right)$-decent by the choice of $f_1$, hence $f(N'_2 + 1)$-decent by the choice of $N_2$. Renumbering the sets, we have thus found a partition satisfying (a$'$) and (b) in Definition \ref{def: strong stab reg}(1). 
\end{proof}

\begin{remark}\label{rem: meta implies strong implies stable}
\begin{enumerate}
	\item If $\mathcal{H}$ satisfies (definable) approximately perfect stable regularity, then it satisfies (definable) strong stable regularity.
	\item If $\mathcal{H}$ satisfies (definable) strong stable regularity, then it satisfies (definable) stable regularity.
\end{enumerate}
	\end{remark}
	\begin{proof}
	(1) Combining Theorem \ref{thm: metastable general equivalences}, namely (1) implies (3) there, and Proposition \ref{prop: one way transfer for strong stable reg}.
(Alternatively, this can be obtained directly via a quantitive version of Lemma \ref{lem: perfect regularity equivalents} for $\varepsilon$-decent sets.)

%
	
	(2) Replacing $A_{i, 1}$ in the partition of $X_i$ by $A_{i, 1} \cup A_{i,0}$, for each $1 \leq i \leq k$ (see e.g. \cite[Remark 4.7]{chavarria2021continuous} for the details).
\end{proof}

\begin{remark}\label{rem: relaxed strong stable reg implies stable reg}
	In the definition of  stable regularity (Definition \ref{def: strong stab reg}(3)) we can equivalently relax the requirement that $\left\{ A_{i,t} : 1 \leq t \leq N' \right\}$ give a partition of $X_i$ by: 
	\begin{itemize}
		\item for every $1 \leq i \leq k$, the sets $A_{i,t}, 1 \leq t \leq N'$ are pairwise-disjoint and $\lvert \bigsqcup_{1 \leq t < N'} A_{i,t} \rvert \geq (1 - \varepsilon) |X_i|$.
	\end{itemize}
(We note that this equivalence does not hold for the definable versions --- in order to get the equivalence of the definable versions, we would have to require $\lvert \bigsqcup_{1 \leq t < N'} A_{i,t} \rvert \geq  |X_i| - \varepsilon |A_{i,1}|$.)  Indeed, let $\varepsilon >0$ be given. Take an arbitrary $0 < \varepsilon' < \min \left\{\varepsilon, \frac{1}{2} \right\}$, and let $N$ be as given by this relaxed stable regularity for $\mathcal{H}$ with respect to $\varepsilon'$. Given $H \in \mathcal{H}$, there is $N' \leq N$ and $\{ A_{i,t} : 0 \leq t <N' \}$ partitions of $X_i$ satisfying $\lvert A_{i,0} \rvert \leq \varepsilon |X_i|$ and for any $ 1 \leq t_1, \ldots, t_k \leq N'$ we have $\frac{\left \lvert E \cap \left( A_{1,t_1} \times \ldots \times  A_{k,t_k} \right) \right \rvert }{\left \lvert A_{1,t_1} \times \ldots \times  A_{k,t_k}  \right \rvert } \in [0, \varepsilon) \cup (1 - \varepsilon, 1]$.
 Fix $i \in [k]$.  As $|A_{i,0}| \leq \varepsilon'|X_i|$ and $\varepsilon' < \frac{1}{2}$, hence $\varepsilon' < 2 \varepsilon' (1 - \varepsilon')$, we can partition (might not be possible to do definably in general) $A_{i,0}$ into $\left\{ A_{i,0}^t : 1 \leq t <N'  \right\}$ with $\lvert A_{i,0}^t \rvert \leq 2 \varepsilon'   \left \lvert A_{i,t}\right \rvert $. And let $A'_{i,t} := A_{i,t} \cup A_{i,0}^{t}$ for $1 \leq t < N'$, then $\left\{A'_{i,t} : 1 \leq t < N' \right\}$ is a partition of $X_i$. For any $1 \leq t_1, \ldots, t_{k} < N'$, by assumption we have $\frac{\left \lvert E \cap \left( A_{1,t_1} \times \ldots \times  A_{k,t_k} \right) \right \rvert }{\left \lvert A_{1,t_1} \times \ldots \times  A_{k,t_k}  \right \rvert } \in [0, \varepsilon') \cup (1 - \varepsilon', 1]$, hence $\frac{\left \lvert E \cap \left( A'_{1,t_1} \times \ldots \times  A'_{k,t_k} \right) \right \rvert }{\left \lvert A'_{1,t_1} \times \ldots \times  A'_{k,t_k}  \right \rvert } \in [0, \varepsilon' +  2 k \varepsilon') \cup (1 - (\varepsilon' + 2 k \varepsilon'), 1]$. So if we took $\varepsilon' >0$ so that $(2k+1) \varepsilon' < \varepsilon$  for all $n \in \mathbb{N}$, it follows that $N$ and the partitions $A'_{i,t}$ satisfy stable regularity (Definition \ref{def: strong stab reg}(3)), as wanted.
\end{remark}

\subsection{Working Through an Example}\label{sec:example}

To illustrate the difference between strong stable regularity and (approximately) perfect stable regularity, we consider a single  example in the infinitary setting in some detail and try to provide the corresponding intuition. An explicit hereditarily closed family associated with it and separating the two kinds of stable regularity properties is exhibited in  Example \ref{ex: x=y<z stability props} after we develop a transfer principle. 

\begin{example}\label{ex: x=y<z finite}
  Let $X = Y = Z := [0,1]$ and  $E := \left\{ (x,y,z) : x = y < z \right\}$.  Then $E$ is slice-wise stable (in all three directions).
\end{example}
To really examine the behavior of this hypergraph, we should consider it under an arbitrary probability measure, but the generic behavior comes out if we place the uniform measure on $Z$ while placing some atomic measures on $X$ and $Y$ (so that the event $x=y$ occurs with positive probability)---for instance, pick any probability measure on $[0,1]\cap\mathbb{Q}$ which assigns to every rational some positive measure.

If the reader prefers to consider hereditary families of finite $3$-hypergraphs, we may consider the family of finite $3$-hypergraphs $(X,Y,Z,E)$ such that:
\begin{itemize}
\item $Z\subseteq[0,1]$,
\item there are functions $\pi_X:X\rightarrow[0,1]$ and $\pi_Y:Y\rightarrow[0,1]$ such that $E=\left\{(x,y,z) : \pi_X(x)=\pi_Y(y)<z\right\}$.
\end{itemize}
The representative example is when $Z$ is ``uniform''---say, $Z$ is all rationals whose denominator, in least terms, is $<k$---while $X$ and $Y$ map many points (say, half of all points) to $1/2$, slightly fewer points to $1/3$ and $2/3$ (say, an eighth of all points map to each) and so on (or see Example \ref{ex: x=y<z stability props}).

First, it is not hard to see that this $3$-hypergraph cannot have perfect stable regularity. We do have good choices of countable partitions for $X$ and $Y$---they partition into atoms. (Analogously, in the finite setting, the approximately perfect pieces are the ``blobs'' which have a common value under $\pi_X$ or $\pi_Y$.) However there is no corresponding way to choose even a single partition piece from $Z$: for any subset $Z'$ of $Z$ with positive measure, there is some rational $q$ so that $Z'$ contains both positive measure above $q$ and positive measure below, and therefore the box $\{q\}\times \{q\}\times Z'$ will neither be disjoint from $E$ nor contained in $E$ (even up to measure $0$).

However this $3$-hypergraph does satisfy stable regularity, and even strong stable regularity. Fix some $\varepsilon>0$. To produce a strong stable regularity partition, we take the largest atoms of $X$ and $Y$---that is, we take $A_1=\{x_1\}$ where $\mu(\{x_1\})$ is maximal, then $A_2=\{x_2\}$ where $\mu(\{x_2\})\leq\mu(\{x_1\})$ is maximal among remaining points, etc. We keep choosing such atoms as partition components until the measure of the remaining points is $<\varepsilon\mu(A_1)$. We then let the exceptional set $A_0$ be the remainder. We do the same thing to $Y$ to obtain a partition into atoms $B_i$ and a remainder $B_0$.

The finite partitions we chose only involved finitely many values $x_i,y_i$, so we may partition $Z$ into intervals $C_i$ between these points; we may take the remainder $C_0$ to consist of the finitely many values $x_i$ or $y_i$, which is a finite set of points and so has measure $0$.

Then in any box $A_i\cap B_j\cap C_k$ with $0<\min\{i,j,k\}$, we have $A_i=\{x_i\}$ and $B_j=\{y_j\}$. If $x_i\neq y_j$ then this box is disjoint from $E$. If $x_i=y_j$ then, since $C_k$ is an interval which does not cross $x_i$, either $A_i\cap B_j\cap C_k\subseteq E$ or $A_i\cap B_j\cap C_k$ is disjoint from $E$.

As usual, we can obtain stable regularity from strong stable regularity by redistributing the error component (see Remark \ref{rem: meta implies strong implies stable}(2)): we replace each $A_i$ with some $A'_i$ of the form $A_i\cup S_i$ where $S_i$ is a subset of $A_0$ with measure approximately $\mu(A_0)\mu(A_i)<\varepsilon\mu(A_i)$, and similarly for each $B_i$.

We can ask why this strong stable regularity does not give us almost perfect stable regularity. One way to think about this is to notice what happens if we change $\varepsilon$. When $\varepsilon$ gets smaller, the $A_i$ with $i>0$ which we have already identified stay in our partition, and we need to break additional atoms off of $A_0$. The same happens for the partition of $Y$.
These new atoms force us to break up the partition of $Z$, however: if our new value of $\varepsilon$ is much smaller, we expect to have identified new atoms in every interval, so that \emph{no} set $C_k$ from our original partition is still usable --- all the sets $C_k$ need to be partitioned into yet smaller sets in order to obtain a strong stable regularity partition with this new value of $\varepsilon$.

This is precisely the thing that blocks approximately perfect stable regularity. We may think of strong stable regularity as giving us a sequence of partitions, one for each $\varepsilon$. Approximately perfect stable regularity asks that these partitions themselves stabilize---that the sets in our partition stay largely the same as we go along the sequence, and the main change as we go along this sequence is that pieces get broken off the remainder $A_0$ to become new partition components.

\subsection{Transfer}\label{sec: subsec tranfer}
First we establish a transfer principle between approximately perfect stable regularity for hereditarily closed families of hypergraphs (with respect to the uniform counting measures)  and perfect stable regularity (Definition \ref{def: perf stab reg}) for their ultraproducts. 

\begin{prop}\label{prop: proof of transfer perf part} Let $\mathcal{H}$ be a hereditarily closed family of finite bipartite graphs of the form  $H = (E;X,Y)$ with $E \subseteq X \times Y$. The following are equivalent:
	\begin{enumerate}
		\item for any ultraproduct $\widetilde{H} = (\widetilde{E}; \widetilde{X}, \widetilde{Y})$ of the graphs from $\mathcal{H}$, there exists a  partition of $\widetilde{X}$ into countably many perfect sets from $\mathcal{B}_{\widetilde{X}}$;

		\item for every $\varepsilon \in \mathbb{R}_{>0}$ and every function $f: \mathbb{N} \to (0,1]$ there exists some $N = N(f,\varepsilon)$ so that: for any $H \in \mathcal{H}$ there exists $N' \leq N$ and partition $X = \bigsqcup_{0 \leq t < N'} A_{t}$  so that:
	\begin{enumerate}
	\item $\left \lvert A_{0} \right \rvert \leq \varepsilon \left \lvert X \right \rvert$;
		\item for any $ 1 \leq t < N'$, the set $A_{t}$ is $f(N')$-decent for $E$.
	\end{enumerate}

	\end{enumerate}
\end{prop}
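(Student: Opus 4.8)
The plan is to prove both directions by a compactness/overspill argument in the continuous logic setting set up in Section~\ref{sec: ultraprods of graded prob spaces}, using the fact that "$A$ is $\varepsilon$-decent for $E$" and "$A_t, t \in \mathbb{N}$ partition $X$" are (approximately) type-definable conditions on the $P_{\mathbf{X}}$-sort.

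\medskip
\emph{(2)$\Rightarrow$(1).} Fix an ultraproduct $\widetilde{H} = (\widetilde E; \widetilde X, \widetilde Y) = \prod_{j}(E^j;X^j,Y^j)/\cU$. First I would note that it suffices to produce, for each $n \in \mathbb{N}$, a finite partition of $\widetilde X$ into sets $A_0^{(n)}, A_1^{(n)}, \ldots, A_{N'-1}^{(n)}$ from $\tilde{\mathcal B}^0_{\widetilde X}$ with $\mu(A_0^{(n)}) \le 2^{-n}$ and each $A_t^{(n)}$, $t \ge 1$, being $2^{-n}$-decent for $\widetilde E$; then a routine diagonalization/König's lemma argument over the (finitely-branching, by the uniform bound $N(f,\varepsilon)$) tree of such partitions, refining as $n$ grows, yields a single countable partition of $\widetilde X$ into perfect sets --- here I would use that $\mu$-decency passes to measurable limits and that "perfect $=$ $0$-decent." To get the finite partition at level $n$: apply hypothesis (2) with $\varepsilon = 2^{-n}$ and $f \equiv 2^{-n}$, obtaining $N = N(f,\varepsilon)$; for each $j$ pick the partition $X^j = \bigsqcup_{t<N'_j} A^j_t$ with $N'_j \le N$ guaranteed by (2); pass to a sub-ultrafilter refinement (or just use that $N'_j$ is eventually constant $= N'$ along $\cU$) and set $A_t^{(n)} := \prod_j A^j_t/\cU \in \tilde{\mathcal B}^0_{\widetilde X}$. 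By \L o\'s' theorem the counting-measure statements "$|A_0^j| \le 2^{-n}|X^j|$" and "$A^j_t$ is $2^{-n}$-decent" transfer to $\mu^0_{\widetilde X}$-statements, which (using Remark~\ref{rem: measure preds almost agree} to pass between $\widetilde{\mathcal M}$ and $\mathcal M_{\widetilde{\mathfrak P},\bar{\widetilde E}}$, absorbing the $\varepsilon$-slack into the decency parameter, and extending $\mu^0$ to $\mu_{\widetilde X}$) give that $A_0^{(n)}$ has measure $\le 2^{-n}$ and $A_t^{(n)}$ is $\le 2^{-n+o(1)}$-decent for $\widetilde E$ --- good enough after adjusting constants.

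\medskip
\emph{(1)$\Rightarrow$(2).} This is the direction needing genuine compactness. Suppose (2) fails: there are $\varepsilon > 0$ and $f: \mathbb{N} \to (0,1]$ so that for every $N$ there is $H_N = (E^N;X^N,Y^N) \in \mathcal H$ admitting no partition of the required form with $N' \le N$. Form the ultraproduct $\widetilde H = \prod_N H_N/\cU$ over a nonprincipal $\cU$ on $\mathbb N$. By (1), $\widetilde X$ has a countable partition into perfect sets $(C_i)_{i \in \omega}$ from $\mathcal B_{\widetilde X}$; truncate to a finite partition $C_0' := \bigcup_{i \ge M} C_i$, $C_1' := C_1, \ldots, C_{M-1}' := C_{M-1}$ with $M$ chosen so that $\mu(C_0') < \varepsilon/2$ --- each $C_t'$ for $t \ge 1$ is still perfect, i.e. $0$-decent, hence certainly $\tfrac12 f(M)$-decent. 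Now approximate: each $C_t'$ lies in the $\sigma$-algebra generated by $\tilde{\mathcal B}^0_{\widetilde X}$, so there are $D_t \in \tilde{\mathcal B}^0_{\widetilde X}$ with $\mu(C_t' \triangle D_t)$ arbitrarily small; after disjointifying the $D_t$ into a partition $D_0, \ldots, D_{M-1}$ of $\widetilde X$ (sweeping the error into $D_0$), we get $\mu(D_0) < \varepsilon$ and each $D_t$, $t \ge 1$, is $f(M)$-decent for $\widetilde E$ --- the point being that decency is robust under small symmetric-difference perturbations, which I would verify by a short estimate on how $A^\#_\delta$ changes. Write $D_t = \prod_N D_t^N / \cU$ with $D_t^N \in \mathcal B_{X^N}$. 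The statement "$(D_t^N)_{t<M}$ is a partition of $X^N$ with $|D_0^N| \le \varepsilon|X^N|$ and each $D_t^N$, $t\ge1$, $f(M)$-decent for $E^N$" is a first-order $\mathcal L_\infty$-statement about $\widetilde{\mathcal M}$ (using the $m_{\bar x} < r$ predicates to encode decency, with a rational strictly between $f(M)$ and $\tfrac12 f(M)$ to absorb the $\propto$-slack of Remark~\ref{rem: measure preds almost agree}), hence holds for $\cU$-almost all $N$ by \L o\'s. But for $N \ge M$ such an $N$ gives a partition of $X^N$ of size $M \le N$ of exactly the form (2) demands, contradicting the choice of $H_N$.

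\medskip
\emph{Main obstacle.} The delicate point throughout is the interplay between the three slightly different measure-predicate interpretations --- the genuine counting measures $\mu_{X^N}$, the finitely additive $\mu^0_{\widetilde X} = \lim_\cU \mu_{X^N}$ on $\tilde{\mathcal B}^0_{\widetilde X}$, and the countably additive Loeb-type extension $\mu_{\widetilde X}$ on $\mathcal B_{\widetilde X}$ --- together with the fact that "perfect" is an exact ($\delta = 0$) condition that must be realized in the countably additive world but only \emph{approximately} detected (and transferred) by the quantifier-free $\mathcal L_\infty$-formulas, which only see $\tilde{\mathcal B}^0_{\widetilde X}$. Handling this requires (i) the stability of $(\varepsilon,\delta)$-decency under $=^0$-perturbations and under shrinking both parameters, so that "approximately decent in $\mathcal B^0$" upgrades to "perfect in $\mathcal B$" and vice versa, and (ii) careful bookkeeping of the $\propto$ slack from Remark~\ref{rem: measure preds almost agree} when passing between $\widetilde{\mathcal M}$ and $\mathcal M_{\widetilde{\mathfrak P},\bar{\widetilde E}}$ --- in both cases one leaves a rational gap between the target parameter and the witnessed parameter so the slack is harmless. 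The diagonalization in (2)$\Rightarrow$(1) is routine given the uniform bound $N(f,\varepsilon)$, which is exactly what makes the tree of finite partitions finitely branching.
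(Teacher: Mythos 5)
Your \((1)\Rightarrow(2)\) direction is essentially the paper's argument and is correct: take an ultraproduct of a sequence witnessing failure of (2), truncate the perfect partition to high measure, approximate the pieces by sets in \(\tilde{\mathcal{B}}^0_{\widetilde X}\), check that approximate decency survives the small symmetric-difference perturbation, and transfer back via \L o\'s using the type-definability of decency (the paper's Claim \ref{cla: decent is type-def}) and the \(\propto\)-slack from Remark \ref{rem: measure preds almost agree}.

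Your \((2)\Rightarrow(1)\) direction has a genuine gap at the ``routine diagonalization/K\"onig's lemma'' step. The partitions you obtain for different values of \(n\) have no refinement relationship to one another, so there is no tree to run K\"onig's lemma on; and the branching is not finite anyway, since at each level there is a continuum of possible partitions of bounded size, not a finite set. Even granting some extracted limit, there is no reason the limiting pieces are perfect: being \(2^{-n}\)-decent for sets \(A_t^{(n)}\) that \emph{change with \(n\)} gives nothing about any fixed set. Your preamble correctly observes that decency and being a partition are type-definable conditions on the \(P_{\mathbf{X}}\)-sort, but the argument never uses this. That observation is precisely the missing ingredient: the paper encodes ``\(K-1\) disjoint sets, each \(\gamma\)-decent for all \(\gamma\in\mathbb{Q}_{>0}\), covering measure \(\geq 1-\varepsilon\)'' as a countable partial type, shows finite satisfiability by a numerical argument (the function \(g(k)\) measuring the infimum of achievable decency for a size-\(k\) approximate partition, combined with (2) applied to a tailored \(f\)), and then invokes \(\aleph_1\)-saturation of the ultraproduct to realize the type, yielding genuinely perfect sets covering \(1-\varepsilon\) (Claim \ref{cla: finding up to epsilon in UP}). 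Separately, you never use hereditary closure of \(\mathcal{H}\) in this direction, while the paper's proof needs it crucially in the subsequent iterative step: after finding finitely many perfect sets covering \(1-\varepsilon_T\), one passes to the restricted finite hypergraphs \((E_n\restriction (X_n^T\times Y_n); X_n^T,Y_n)\) --- which lie in \(\mathcal{H}\) only by hereditary closure --- and reapplies the claim to the remainder, building the countable partition whose total measure tends to \(1\). Without both the saturation step and the hereditary-closure-powered iteration, the direction does not go through.
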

\begin{proof}

Before beginning the proof of the implications, we show that perfection and decency are type-definable properties in definable families of sets, which will be needed for the transfer and saturation arguments. Let $\mathcal{H}$ be a family of finite bipartite graphs (each equipped with the corresponding graded probability space given by the uniform counting measures on the algebra of all subsets), $\mathcal{U}$ a non-principal ultrafilter, and $\widetilde{H} = (\widetilde{E}; \widetilde{X}, \widetilde{Y})$ an ultraproduct of the graphs from $\mathcal{H}$ with respect to $\mathcal{U}$, with the underlying graded probability space (see Section \ref{sec: ultraprods of graded prob spaces}).

\begin{claim}\label{cla: decent is type-def}
For every $\varepsilon, \delta \in \mathbb{Q}_{(0,1]}$ there exists a countable quantifier-free partial  $\mathcal{L}_{\infty}$-type $\Gamma_{\varepsilon, \delta}(z)$	 so that: for any graded probability space $\mathfrak{P}$ on $(X,Y, Z \ldots)$, $E \in \mathcal{B}_{X \times Y}$, $S \in \mathcal{B}_{X \times Z}$,  $\mathcal{M} \propto \mathcal{M}_{\frak{P}, E,S}$ and $b$ in $\mathcal{M}$,  $\mathcal{M} \models \Gamma_{\varepsilon, \delta}(b) \iff (S)_b$ is $(\varepsilon, \delta)$-decent with respect to $E, \frak{P}$.
\end{claim}
\begin{proof}
	Let $\frak{P}$ be a  graded probability space on $(X,Y, \ldots)$, $E \in \mathcal{B}_{X \times Y}$. Fix  $A \in \mathcal{B}_{X}$ and let 
	\begin{gather*}
		A^{\#}_{\varepsilon} := \{ y \in Y : \varepsilon \mu_{X} (A) < \mu_{X} \left( E_y \cap A  \right) < (1 - \varepsilon) \mu_{X}(A)  \} \in \mathcal{B}_{Y}.
	\end{gather*} 
	Let $Q :=  \left\{ (r,q) \in  \mathbb{Q}_{[0,1]}^2  : r < q\right\}$, and for $(r,q) \in Q$ let
	\begin{gather}
		A^{\#, r,q}_{\varepsilon} := \Bigg\{ y \in Y :  \mu(A) < \frac{r}{\varepsilon} \ \land \  \mu(E_y \cap A) \geq r \label{eq: defining perfection 0} \\
		 \  \land \  \mu(E_y \cap A) < q \  \land  \ \mu(A) \geq \frac{q}{1 - \varepsilon}  \Bigg\} .\nonumber
	\end{gather}
	
	Then $A^{\#}_{\varepsilon} = \bigcup_{(r,q) \in Q} A^{\#, r,q}_{\varepsilon}$, and  by countable additivity 
	\begin{gather}
		A \textrm{ is } (\varepsilon, \delta) \textrm{-decent} \iff \mu_{Y} \left(A^{\#}_{\varepsilon} \right) \leq \delta  \label{eq: defining perfection 1} \\
		\iff  \textrm{ for all  finite } Q_0 \subseteq Q \textrm{ and }\gamma \in \mathbb{Q}_{(0,1]}, \mu_{Y} \left( \bigcup_{(r,q) \in Q_0} A^{\#, r,q}_{\varepsilon} \right) < \delta + \gamma. \nonumber
	\end{gather}

	For $(r,q) \in Q$, consider the quantifier-free $\mathcal{L}_{\infty}$-formula 
	\begin{gather*}
		\psi_{\varepsilon}^{r,q}(y,z) := m_x < \frac{r}{\varepsilon}. S(x,z) \ \land \  \neg m_x < r. \left( E(x,y) \land S(x,z) \right) \\
		\land \  m_x < q. \left(E(x,y) \land S(x,z) \right) \  \land \  m_x < \frac{q}{1 - \varepsilon}. S(x,z)
	\end{gather*}
%
	and the quantifier-free  partial $\mathcal{L}_{\infty}$-type  
	\begin{gather*}
		\Gamma_{\varepsilon, \delta}(z) := \bigwedge_{\gamma \in \mathbb{Q}_{(0,1]}} \bigwedge_{Q_0 \subseteq Q, |Q_0| < \aleph_0} m_{y} < (\delta + \gamma). \bigvee_{(r,q) \in Q_0} 	\psi_{\varepsilon}^{r,q}(y,z).
	\end{gather*}
	
	By \eqref{eq: defining perfection 0} and \eqref{eq: defining perfection 1}  we have: 
	\begin{gather}
		S_b \textrm{ is }(\varepsilon, \delta) \textrm{-decent} \iff \mathcal{M}_{\frak{P},E,S} \models 	\Gamma_{\varepsilon, \delta}(b).\label{eq: defining perfection 2}
	\end{gather}

	  And as $\mathcal{M} \propto \mathcal{M}_{\frak{P}, E,S}$, for any $\gamma \in \mathbb{Q}_{>0}$ we have 
	  \begin{gather*}
	  	\mathcal{M} \models m_{y} < (\delta + \gamma). \bigvee_{(r,q) \in Q_0} 	\psi_{\varepsilon}^{r,q}(y,z)\\
	  	\Rightarrow \mathcal{M}_{\frak{P},E,S} \models m_{y} < (\delta + \gamma). \bigvee_{(r,q) \in Q_0} 	\psi_{\varepsilon}^{r,q}(y,z) \\
 	\Rightarrow \mathcal{M} \models m_{y} < (\delta + 2\gamma). \bigvee_{(r,q) \in Q_0} 	\psi_{\varepsilon}^{r,q}(y,z).
	  \end{gather*}
	  Hence, using \eqref{eq: defining perfection 2} and the definition of $\Gamma_{\varepsilon,\delta}$, we conclude that 
	 $S_b$ is $(\varepsilon,\delta)$-decent $ \iff \mathcal{M} \models \Gamma_{\varepsilon,\delta}(b)$.
\end{proof}

\begin{claim}\label{cla: transfer claim 2}
Assume that $A = \prod_{i \in \mathbb{N}} A_i / \mathcal{U} \in \mathcal{B}^0_{\widetilde{X}}$	 with $A_i \in \mathcal{B}_{X_i}$ and $\varepsilon \in (0,\frac{1}{2} )$.
\begin{enumerate}
	\item If $A$ is $\varepsilon$-decent with respect to $\widetilde{E}, \widetilde{\mathfrak{P}}$, then  
	$$\left\{ i \in \mathbb{N} : A_i \textrm{ is } 2\varepsilon \textrm{-decent for }E_i \right\} \in \mathcal{U}.$$
	\item If there is $U \in \mathcal{U}$ so that $A_i$ is $\varepsilon$-decent for all $i \in U$, then $A$ is $2\varepsilon$-decent.
\end{enumerate} 
\end{claim}
\begin{proof}
(1) Let $r := \mu_{\widetilde{X}}(A)$. Fix some $\delta = \delta(r,\varepsilon) > 0$ so that $2 \varepsilon \delta < \varepsilon r$ and $\delta(1-2\varepsilon) < \varepsilon r$. As $\mu_{\widetilde{X}}(A) = \lim_{\mathcal{U}} \mu_{X_i}(A_i)$, we can choose $U \in \mathcal{U}$ such that $\mu_{X_i}(A_i) \in (r - \delta, r + \delta)$ for all $i \in U$. Let $b_i \in P_{\mathbf{X},i}$ be such that $A_i$ is defined by $S_{\mathbf{X}}(x,b_i)$ in $\mathcal{M}_{\mathfrak{P}_i, E_i}$, then $A$ is defined by $S_{\mathbf{X}}(x,\widetilde{b})$ in $\widetilde{\mathcal{M}}$ (or $\mathcal{M}_{\widetilde{\mathfrak{P}}, \widetilde{E} } $) for  $\widetilde{b} := (b_i : i \in \mathbb{N}) / \mathcal{U}$. Assume (1) fails, then intersecting $U$ with the complement of $\left\{ i \in \mathbb{N} : A_i \textrm{ is } 2\varepsilon \textrm{-decent for }E_i \right\}$, we may assume that $A_i$ is not $2 \varepsilon$-decent for all $i \in U$. Then for every $i \in U$, for 
\begin{gather*}
	\left( A_i \right)^{\#}_{2\varepsilon} = \left\{b \in Y_i : \frac{\mu_{X_i} \left( A_i \cap E_b\right)}{ \mu_{X_i}(A_i)} \in (2 \varepsilon, 1 - 2\varepsilon)  \right\}
\end{gather*}
we have $\mu_{Y_i}\left(\left( A_i \right)^{\#}_{2 \varepsilon}  \right) > 2 \varepsilon $. By the choice of $U$ this implies that $\mu_{Y_i}(A'_i) > 2 \varepsilon$ for  
\begin{gather*}
	A'_i := \left\{b \in Y_i : \mu_{X_i} \left( A_i \cap E_b\right) \in \left(2 \varepsilon (r - \delta), (1- 2\varepsilon) (r  + \delta)  \right) \right\}
\end{gather*}
By the choice of $\delta$, we have $\left(2 \varepsilon (r - \delta), (1- 2\varepsilon) (r  + \delta)  \right) \subseteq \left(\varepsilon r, (1 - \varepsilon) r \right)$,
so for all $i \in U$ we have 
\begin{gather*}
	\mathcal{M}_{\mathfrak{P}_i,E_i} \models \neg  m_{y} < 2 \varepsilon. \Big( \left( \neg m_{x} < \varepsilon r. S_{\mathbf{X}}(x,b_i) \land E(x,y) \right) \\
	\land  \left( m_{x} < (1-\varepsilon) r. S_{\mathbf{X}}(x,b_i) \land E(x,y) \right) \Big).
\end{gather*}
(If $\varepsilon, r$ were not rational, we can replace them by sufficiently close rational numbers.)
Hence, by \L os,  $\widetilde{b}$ satisfies the same sentence in $\widetilde{\mathcal{M}}$, so using \eqref{eq: Linfty ultraprod} we get
\begin{gather*}
	\mu_{\widetilde{Y}} \left( \left\{ y \in \widetilde{Y} :  \varepsilon r < \mu_{\widetilde{X}} \left( A \cap E_y \right) < (1 - \varepsilon) r \right\} \right) >  \varepsilon,
\end{gather*}
which by the choice of $r$ implies that $A$ is not $\varepsilon$-decent.

(2) 
By Claim \ref{cla: decent is type-def}, $\mathcal{M}_{\mathfrak{P}_i,E_i} \models \Gamma_{\varepsilon',\varepsilon'}(b_i)$ for all $i \in U$, where $\varepsilon'$ is an arbitrary rational number in $(\varepsilon, 2 \varepsilon)$. Hence by \L os $\widetilde{\mathcal{M}} \models \Gamma_{\varepsilon',\varepsilon'}(\widetilde{b})$, and as $\widetilde{\mathcal{M}} \propto \mathcal{M}_{\widetilde{\mathfrak{P}}, \widetilde{E}}$, we conclude that $A = \left(S_{\mathbf{X}} \right)_{\widetilde{b}}$ is $2 \varepsilon$-decent for $\widetilde{E}$.
\end{proof}

\textbf{(1) implies (2).} 	Assume that $\mathcal{H}$ satisfies (1), and assume towards contradiction that there exist some fixed $\varepsilon, f$ for which the conclusion of (2) fails.
 That is, for every $n \in \mathbb{N}$ there exist some $H_n = (E_n; X_n, Y_n) \in \mathcal{H}$ so that no partition of $X_n$ of size at most $n$ satisfies conditions (a) and (b) of (2) with respect to $E_n$ and $n$ in place of $N$.
 Let $\mathcal{U}$ be a non-principal ultrafilter on $\mathbb{N}$. We consider the ultraproduct $\widetilde{H} = (\widetilde{E}; \widetilde{X}, \widetilde{Y}) := \prod_{n \in \mathbb{N}} H_n / \mathcal{U}$ (with the underlying graded probability space).
 
 By Remark \ref{rem: measure preds almost agree} we have: for every $r \in \mathbb{Q}_{[0,1]}$ and $\varepsilon \in \mathbb{Q}_{>0}$ so that $r + \varepsilon \leq 1$, quantifier-free $\mathcal{L}_{\infty}$-formula $\varphi(\bar{x},\bar{y})$ and tuple $\bar{b}$ from $\widetilde{\mathcal{M}}$ of appropriate sort we have:
 \begin{gather}
 	\mathcal{M}_{\widetilde{\mathfrak{P}}, \widetilde{E} } \models m_{\bar{x}} < r. \varphi(\bar{x}, \bar{b}) \Rightarrow \widetilde{\mathcal{M}} \models m_{\bar{x}} < r. \varphi(\bar{x}, \bar{b}) \label{eq: Linfty ultraprod}\\
 	\Rightarrow \mathcal{M}_{\widetilde{\mathfrak{P}}, \widetilde{E} } \models m_{\bar{x}} < (r+\varepsilon). \varphi(\bar{x}, \bar{b}). \nonumber
 \end{gather}

By (1), there exists a countable partition $A_i \in \mathcal{B}_{\widetilde{X}}$ with $1 \leq i< \omega$  into perfect sets for $\widetilde{E}$.

Fix $\varepsilon' = \varepsilon'(\varepsilon) \in (0,1)$, to be determined later.
By countable additivity we can choose $N \in \mathbb{N}$ so that 
$$\mu_{\widetilde{X}} \left( \bigcup_{1 \leq i < N} A_i \right) \geq 1 - \varepsilon'.$$

Fix $\delta = \delta(f,N) \in (0,1)$, to be determined later. As $\mathcal{B}_{\widetilde{X}}$ is the $\sigma$-algebra generated by $\mathcal{B}^0_{\widetilde{X}}$, for each $1 \leq i < N$ we can choose $A'_i \in \mathcal{B}^0_{\widetilde{X}}$ with $\mu_{\widetilde{X}} \left( A'_i \triangle A_i \right) \leq \delta \mu(A_i)$ so that $A'_i$ are pairwise-disjoint and $\mu_{\widetilde{X}}(A'_i)>0$ for $1 \leq i < N$. Then
\begin{gather}
	\mu_{\widetilde{X}} \left( \bigcup_{1 \leq i < N} A'_i \right) \geq 1 - ( \varepsilon' + \delta).\label{eq: str stab hyper 1}
\end{gather}

As $\mu \left( A'_i \triangle A_i \right) \leq \delta \mu(A_i)$ we have $\mu(A_i) \leq \frac{1}{1-\delta}\mu(A'_i)$, so $\mu \left( A'_i \triangle A_i \right) \leq  \frac{\delta}{1-\delta} \mu \left(A'_i \right)$.

Then, as each $A_i$ is perfect with respect to $\widetilde{E}$, we still have: for every $1 \leq i < N$ and almost every $y \in Y$,
\begin{gather*}
	\frac{\mu \left( \widetilde{E}_y \cap A'_i \right)}{\mu \left( A'_i \right)} \in \left[0, \frac{\delta}{1-\delta} \right) \cup \left(1 - \frac{\delta}{1-\delta}, 1 \right]. 
\end{gather*}
(Indeed, if $\mu \left( \widetilde{E}_y \cap A_i  \right) = 0$, then 
\begin{gather*}
	\mu \left(\widetilde{E}_y \cap A'_i \right) \leq \mu \left(\widetilde{E}_y \cap A_i  \right) + 
	\mu \left(\widetilde{E}_y \cap (A'_i \setminus A_i) \right) \\
	\leq 0 + \frac{\delta}{1-\delta} \mu \left(A'_i \right) = \frac{\delta}{1-\delta} \mu \left(A'_i \right),
\end{gather*}
and similarly for $\mu \left( \widetilde{E}_y \cap A_i  \right) = \mu \left( A_i  \right)$.)
In particular, 
\begin{gather}
	A'_i \textrm{ is } \left(\frac{\delta}{1-\delta} \right) \textrm{-decent for } 1 \leq i < N. \label{eq: str stab hyper 2}	
\end{gather}

For each $i$ we have $A'_i = \prod_{n \in \mathcal{U}} A_{n,i}/\mathcal{U}$ for some sets $A_{n,i} \in \mathcal{B}_{X_n}$.

We can take $\varepsilon' := \frac{\varepsilon}{4}$ and $\delta > 0$
 so that $ \delta < \frac{\varepsilon}{4}$ and $2 \frac{\delta}{1-\delta} < f(N)$, 
 
 Then there is $U \in \mathcal{U}$ so that for every $n \in U$ we have:
 \begin{itemize}
 \item the sets $A_{n,i}$ for $1 \leq i < N$ are pairwise disjoint (by \L os);
 	\item $\mu_{X_n} \left( \bigcup_{1 \leq i < N} A_{n,i} \right) \geq 1 - 2 ( \varepsilon' + \delta) > 1 - \varepsilon$ (by \eqref{eq: str stab hyper 1} and definition of $\mu_{\widetilde{X}}$);
 	\item for every $1 \leq i < N$, the set $A_{n,i}$ is $\left( \frac{2 \delta}{1 - \delta} \right)$-decent with respect to $E_n$ (by \eqref{eq: str stab hyper 2} and Claim \ref{cla: transfer claim 2}(1)), hence $f(N)$-decent.
 \end{itemize}
 
 But then for any $n \in U, n \geq N$, the partition $A_{n,1}, \ldots, A_{n,N}, A_{n,0} := X_n \setminus \bigcup_{1 \leq i < N} A_{n,i}$ of $X_n$ contradicts the choice of $H_n$.

~
~

\noindent \textbf{(2) implies (1).} 
Suppose that (1) holds.

\begin{claim}\label{cla: finding up to epsilon in UP}
 For every $\varepsilon > 0$	 there exists some $K\in \mathbb{N}$ and disjoint $A_i \in \mathcal{B}^0_{\widetilde{X}}$ for $1 \leq i < K$ so that each $A_i$ is perfect with respect to $\widetilde{E}$ and $\mu_{\widetilde{X}} \left( \bigcup_{1 \leq i < K} A_i \right) \geq 1 - \varepsilon$.
\end{claim}
\begin{proof}
Fix $\varepsilon > 0$. For each $k \in \mathbb{N}$, let $g(k)$ be the infimum of $\delta \in [0,1] $ so that there exist pairwise disjoint sets $A_i \in \mathcal{B}^0_{\widetilde{X}}$ for $1 \leq i < k$  so that each $A_i$ is $\delta$-decent with respect to $\widetilde{E}$ and $\mu_{\widetilde{X}} \left( \bigcup_{1 \leq i < k} A_i \right) \geq 1 - \frac{\varepsilon}{2}$. Note that $0 \leq g(k) \leq 1$ for all $k \in \mathbb{N}$ since $\widetilde{X}$ itself is $1$-decent.

First we show that there exists some $K \in \mathbb{N}$ so that  $g(K) = 0$.

Suppose not, that is $0 < g(k) \leq 1$ for all $k \in \mathbb{N}$. Let $f: \mathbb{N} \to (0,1]$ be defined by $f(n) := \frac{g(n)}{3}$.

Let $N$ be as given for $\mathcal{H}$ by (1) with respect to $\frac{\varepsilon}{2}$ and $f$. 

Then for each $n \in \mathbb{N}$ we can find a partition $A_{n,i}, 0 \leq i < N'_{n}$ of $X_n$ so that:
\begin{itemize}
\item $N'_n \leq N$;
	\item $A_{n,i}$ is $f(N'_n)$-decent for $1 \leq i < N'_n$;
	\item $\mu_{X_n} \left( \bigcup_{1 \leq i < N'_n} A_{n,i} \right) \geq 1 - \frac{\varepsilon}{2}$.
\end{itemize}

For some $N' \leq N$, we have that  $U := \left\{ n \in \mathbb{N} : N'_n = N' \right\} \in \mathcal{U}$.
For $1 \leq i \leq N'$, let $A_i := \prod_{n \in \mathbb{N}} A_{n,i} / \mathcal{U} \in \mathcal{B}^0_{\widetilde{X}}$. Then we have:
\begin{itemize}
\item the sets $A_{i}, 1 \leq i < N'$ are pairwise disjoint (by \L os);
	\item $\mu_{\widetilde{X}} \left( \bigcup_{1 \leq i < N'} A_{i} \right) \geq 1 - \frac{\varepsilon}{2}$ (by the definition of $\mu_{\widetilde{X}}$);
	\item  $A_{i}$ is $2 f(N')$-decent with respect to $\widetilde{E}$, for each $1 \leq i < N'$ (by Claim \ref{cla: transfer claim 2}(2)).
\end{itemize}

But since $2 f(N') < g(N')$ by the choice of $f$, this contradicts the definition of $g$. 

Hence we fix $K \in \mathbb{N}$ so that  $g(K) = 0$.

For $\alpha, \beta \in \mathbb{Q}_{[0,1]}$, let $\Gamma_{\alpha,\beta}(z)$ be the countable partial $\mathcal{L}_{\infty}$-type as in Claim \ref{cla: decent is type-def}. Note that 
\begin{gather}
	\alpha' \geq \alpha, \beta' \geq \beta \implies \Gamma_{\alpha, \beta}(z)  \vdash \Gamma_{\alpha', \beta'}(z). \label{eq: Gamma props}
\end{gather}
We consider  the  $\mathcal{L}_{\infty}$-formula
\begin{gather*}
\psi(z_1, \ldots, z_{K-1}) := \left( \neg m_{x} < 1 - \varepsilon. \bigvee_{1 \leq i < K} S_{\mathbf{X}}(x, z_i) \right) \land\\
\land \left( \forall x \left( \bigwedge_{1 \leq i \neq j < K} S_{\mathbf{X}}(x, z_i) \leftrightarrow \neg S_{\mathbf{X}}(x, z_j) \right) \right)
\end{gather*}
and the partial countable $\mathcal{L}_{\infty}$-type
\begin{gather*}
	\Gamma(z_1, \ldots, z_{K-1}) :=\left\{ \psi(z_1, \ldots, z_{K-1}) \right\} \  \cup \  \bigcup_{\gamma \in \mathbb{Q}_{(0,1]}} \bigcup_{1 \leq i < K} \Gamma_{\gamma,\gamma}(z_i).
\end{gather*}

As $g(K) = 0$, by definition of $g$ and $S_{\mathbf{X}}(x,z)$ we have that for every $\gamma > 0$ there exist some $b^{\gamma}_1, \ldots, b^{\gamma}_{K-1} \in \widetilde{P}_{\mathbf{X}}$ so that each of the pairwise disjoint sets $\left( S_{\mathbf{X}}\right)_{b^{\gamma}_i} \in \mathcal{B}^0_{\widetilde{X}}, 1 \leq i < K$ is $\gamma$-decent and $\mu_{\widetilde{X}}\left(\bigcup_{1 \leq i <K } \left(S_{\mathbf{X}}\right)_{b^{\gamma}_i}  \right) \geq 1 - \frac{\varepsilon}{2}$, hence using \eqref{eq: Linfty ultraprod}  we have
\begin{gather*}
	\widetilde{\mathcal{M}} \models \left\{ \psi \left(b^{\gamma}_1, \ldots, b^{\gamma}_{K-1} \right) \right\} \  \cup \   \bigcup_{1 \leq i < K} \Gamma_{\gamma,\gamma}(b^{\gamma}_i).
\end{gather*}

As $\Gamma$ is a countable partial type, by $\aleph_1$-saturation of the ultraproduct $\widetilde{M}$ (and \eqref{eq: Gamma props}), this implies that there exist some $b_1, \ldots, b_{K-1} \in \widetilde{P}_{\mathbf{X}}$ so that
\begin{gather*}
	\widetilde{\mathcal{M}} \models \Gamma(b_1, \ldots, b_{K-1}).
\end{gather*}

By countable additivity of the measure, a set is perfect if and only if it is $\gamma$-decent for every $\gamma \in \mathbb{Q}_{>0}$. Hence, using Claim \ref{cla: decent is type-def}, each of the pairwise disjoint sets $\left( S_{\mathbf{X}} \right)_{b_i} \in \mathcal{B}^0_{\widetilde{X}}$ is perfect with respect to $\widetilde{E}$, and $\mu_{\widetilde{X}}\left(\bigcup_{1 \leq i <K } \left(S_{\mathbf{X}}\right)_{b^{\gamma}_i}  \right) \geq \varepsilon$ (using \eqref{eq: Linfty ultraprod}). This proves Claim \ref{cla: finding up to epsilon in UP}.
\end{proof}

Now we choose a partition of $\widetilde{X}$ into countably many perfect sets in $\mathcal{B}_{\widetilde{X}}$ by induction on $t \in \mathbb{N}_{\geq 1}$, in blocks of finite sizes $K_{t} \in \mathbb{N}$, applying Claim \ref{cla: finding up to epsilon in UP} to find an almost partition into perfect sets of what remains of $\widetilde{X}$ after the previous steps, so that the total measure of the constructed  perfect sets  converges to $1$. 
More precisely, assume we have already chosen $A^{t}_{i} \in \mathcal{B}^0_{\widetilde{X}}$ for $1 \leq t \leq  T, 1 \leq i < K_{t}$ so that:
\begin{itemize}
	\item all of the sets $\left\{ A^{t}_{i}: 1 \leq t \leq T, 1 \leq i < K_{t} \right\}$ are pairwise disjoint;
	\item letting  $B^T := \bigcup_{1 \leq t \leq T, 1 \leq i < K_t} A^t_{i}$, we have
	\begin{gather*}
		\mu_{\widetilde{X}} \left( B^T \right) \geq \sum_{t=1}^{T}\frac{1}{2^{t}};
	\end{gather*} 
	\item each set $ A^{t}_{i}$ is perfect with respect to $\widetilde{E}$.
\end{itemize}

We need to choose $K_{T+1} \in \mathbb{N}$ and $A^{T+1}_{i} \in \mathcal{B}_{\widetilde{X}}^0, 1 \leq i < K_{T+1}$. 

If we already have $\mu_{\widetilde{X}}\left( B^T \right) \geq \sum_{t=1}^{T+1} \frac{1}{2^t}$, we just let $K_{T+1} := 0$ (so no new sets are added on step $T+1$).  If $ \mu_{\widetilde{X}}\left( B^T \right) < \sum_{t=1}^{T+1} \frac{1}{2^t}$, consider the remaining part $\widetilde{X}^T := \widetilde{X} \setminus B^T$. Then $\mu_{\widetilde{X}} \left(\widetilde{X}^T \right) = 1 - \mu_{\widetilde{X}}\left( B^T \right)  >  1 -  \sum_{t=1}^{T+1} \frac{1}{2^t}$, and we can choose $\varepsilon_{T} > 0$ so that 
\begin{gather}
(1 - \varepsilon_T) \mu_{\widetilde{X}} \left(\widetilde{X}^T \right)  \geq \sum_{t=1}^{T+1} \frac{1}{2^t} -  \mu_{\widetilde{X}}\left( B^T \right).\label{eq: enough of remainder}
\end{gather}

 For every $1 \leq t \leq T, 1 \leq i < K_t$ we have $A^t_i = \prod A^t_{n,i} / \mathcal{U}$ for some $A^t_{n,i} \in \mathcal{B}_{X_n}$.
 We let 
 \begin{gather*}
 	X^T_n := X_n \setminus \bigcup_{1 \leq t \leq T, 1 \leq i < K_t} A^t_{n,i} \textrm{ and } E^T_n := E_n \restriction \left( X^T_n \times Y_n \right).
 \end{gather*}

As $\mathcal{H}$ is hereditarily closed (and this is the only place where we use it), we have that $H_n^T := \left(E^T_n; X^T_n, Y_n  \right) \in \mathcal{H}$ for every $n \in \mathbb{N}$.
Then by Claim \ref{cla: finding up to epsilon in UP} applied to 
$$\prod_{n \in \mathbb{N}} H_n^T/\mathcal{U} = \left( \widetilde{E}\restriction \left( \widetilde{X}^T \times \widetilde{Y} \right);  \widetilde{X}^T, \widetilde{Y} \right)$$
 with respect to $\varepsilon_{T}$ we find some $K_{T+1} \in \mathbb{N}$ and pairwise disjoint $A^{T+1}_i \in \mathcal{B}^0_{\widetilde{X}^T} \subseteq \mathcal{B}^0_{\widetilde{X}}$ for $1 \leq i < K_{T+1}$ so that: each $A^{T+1}_i$ is perfect with respect to the binary relation $\widetilde{E}\restriction \left( \widetilde{X}^T \times \widetilde{Y} \right)$ and measures $\mu_{\widetilde{X}^T} = \lim_{n \to \mathcal{U}} \mu_{X^T_n}$ on $\widetilde{X}^T$ and $\mu_{\widetilde{Y}}$ on $\widetilde{Y}$, where $\mu_{X^T_n}$ is the uniform measure on the finite set $X^T_n$; and $\mu_{\widetilde{X}^{T}} \left( \bigcup_{1 \leq i < K_{T+1}} A^{T+1}_i \right) \geq 1 - \varepsilon$.

As  for every $A \in \mathcal{B}^0_{\widetilde{X}^T}$ we have $\mu_{\widetilde{X}^T} (A) = \frac{\mu_{\widetilde{X}}(A)}{\mu_{\widetilde{X}}\left( \widetilde{X}^T \right)} $, it follows that  $\mu_{\widetilde{X}} \left( \bigcup_{1 \leq i < K_T} A^{T}_i \right) \geq (1 - \varepsilon) \mu_{\widetilde{X}} \left( \widetilde{X}^T \right)$, hence by \eqref{eq: enough of remainder} we get
\begin{gather*}
	\mu_{\widetilde{X}} \left( \bigcup_{1 \leq t \leq T+1, \  1 \leq i < K_t} A^{t}_i \right) \geq \sum_{t=1}^{T+1} \frac{1}{2^t}.
\end{gather*} 
Note also that each of the sets $A^{T+1}_i$  is still perfect for $\widetilde{E}$ (as it is perfect for $\widetilde{E}\restriction_{\left( \widetilde{X}^T \times \widetilde{Y} \right)}$, and $\widetilde{Y}$ and $\mu_{\widetilde{Y}}$ do not change passing to $\widetilde{E}$). This concludes the inductive construction.

Finally, we get $\mu \left( \bigsqcup_{t \in \mathbb{N}, 1 \leq i < K_t} A^{t}_{i} \right) = 1$, and to get a partition of $\widetilde{X}$ into perfect sets  satisfying (1) we just add the set $\widetilde{X} \setminus \left( \bigcup_{t \in \mathbb{N}, 1 \leq i \leq K_{t}} A^t_{i} \right)$ in $\mathcal{B}_{\widetilde{X}} $ of measure $0$ (hence automatically perfect).
\end{proof}

\begin{remark}\label{rem: transfer holds definably}
	It is immediate from the proof that the equivalence in Proposition \ref{prop: proof of transfer perf part} also holds for the corresponding definable versions, that is if in Proposition \ref{prop: proof of transfer perf part}(1) we require the existence of a partition of $\widetilde{X}$ into countably many perfect sets from $\mathcal{B}^{E}_{\widetilde{X}}$, and in \ref{prop: proof of transfer perf part}(2) we require that for every $H \in \mathcal{H}$, the sets $A_t$ for $1 \leq t \leq N'$ can be chosen in $\mathcal{F}^{ E, N}_{X}$.
\end{remark}

\subsection{Comparing stable hypergraph regularities}\label{sec: stab regs rels}

\begin{theorem}\label{thm: metastable general equivalences}
	Let $\mathcal{H}$ be a hereditarily closed family of finite $k$-partite $k$-hypergraphs of the form $H = (E;X_1, \ldots, X_k)$ with $E \subseteq \prod_{i = 1}^{k} X_i$ and $X_i$ finite. Then the following are equivalent:
	\begin{enumerate}
		\item $\mathcal{H}$ satisfies (definable or non-definable) approximately perfect stable regularity (in the sense of Definition \ref{def: strong stab reg}(1));
		\item there exists some $d \in \mathbb{N}$ so that every $H \in \mathcal{H}$ is partition-wise $d$-stable (Definition \ref{def: partition-wise stability});
		\item  any ultraproduct of hypergraphs from $\mathcal{H}$  satisfies (definable or non-definable) perfect stable regularity (Definition \ref{def: perf stab reg}) with respect to the ultralimit of uniform counting measures.
	\end{enumerate}
\end{theorem}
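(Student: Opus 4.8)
The plan is to prove $(2)\Rightarrow(3)$ and $(3)\Rightarrow(2)$ directly, and to derive $(1)\Leftrightarrow(3)$ from the transfer principle of Proposition \ref{prop: proof of transfer perf part} applied one coordinate direction at a time (throughout, ``direction $\ell$'' means viewing $E$ as a binary relation on $X_\ell\times\prod_{i\ne\ell}X_i$). For $(2)\Rightarrow(3)$: given $d$ as in $(2)$ and any ultraproduct $\widetilde H=\prod_{n}H_n/\mathcal U$ of hypergraphs from $\mathcal H$, the property ``$E$ has no tree of height $d$ in direction $\ell$'' is an $\mathcal L_0$-sentence true in every $H_n$, so by \L os' theorem it holds in $\widetilde H$ for every $\ell\in[k]$; hence $\widetilde E$ is partition-wise $d$-stable, a fortiori partition-wise $d$-$\mu$-stable, and Proposition \ref{prop: stable hypergraph reg} gives definable perfect stable regularity of $\widetilde E$, which is $(3)$ (the definable version implying the non-definable one).

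For $(1)\Leftrightarrow(3)$: by Lemma \ref{lem: perfect regularity equivalents}, $\widetilde E$ satisfies (definable) perfect stable regularity precisely when, for each $i\in[k]$, the side $\widetilde{X_i}$ admits a countable partition into sets (from $\mathcal B^{E}_{\widetilde{X_i}}$ in the definable case) perfect for $\widetilde E$ in direction $i$; likewise $\mathcal H$ satisfies (definable) approximately perfect stable regularity iff it does so ``in each direction $i$ separately'' — one intersects the finitely many bounds $N_i(f,\varepsilon)$ and splits the allowed exceptional mass $\varepsilon$ among the $k$ coordinates. So it suffices, for each fixed $i$, to match ``$\widetilde{X_i}$ has a perfect partition for the direction-$i$ reduct of every ultraproduct of $\mathcal H$'' with ``$X_i$ has approximately perfect decent partitions for direction $i$, uniformly over $\mathcal H$'', and this is exactly Proposition \ref{prop: proof of transfer perf part} (with Remark \ref{rem: transfer holds definably} for the definable forms), applied to the family $\mathcal G_i$ of bipartite graphs obtained from members of $\mathcal H$ by collapsing $\prod_{j\ne i}X_j$ to a single side. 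Here I would check that $\mathcal G_i$ inherits from $\mathcal H$ closure under deleting vertices of $X_i$ — the only instance of hereditary closedness used in the relevant half of that proof — and that the ultraproduct of members of $\mathcal G_i$ is literally the direction-$i$ reduct of the corresponding ultraproduct of $\mathcal H$: the sort $\widetilde{X_1\times\cdots\times X_k}$ of the ultraproduct graded probability space is the product $\widetilde{X_1}\times\cdots\times\widetilde{X_k}$, carrying the $\sigma$-algebra generated by $\mathcal B^{0}$ and the ultralimit of the uniform measures, so ``perfect'' has the same meaning on both sides. Intersecting over $i\in[k]$ then gives $(1)\Leftrightarrow(3)$.

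For $(3)\Rightarrow(2)$, I would argue by contraposition: if $(2)$ fails, so does $(1)$ (equivalently $(3)$). Assuming $(2)$ fails, for each $d$ some $H_d\in\mathcal H$ is not partition-wise $d$-stable; pigeonholing over the $k$ directions fixes one, say $\ell=1$, along which $d$-stability fails for infinitely many $d$, and Fact \ref{fac: Hodges}(2) turns these into ladders in direction $1$ of heights tending to infinity. Re-indexing, for every $n$ there is $H_n\in\mathcal H$ with a ladder $(a_i)_{i<n}$, $(\bar b_j)_{j<n}$ of height $n$ in direction $1$. Using hereditary closedness I would pass to an induced sub-hypergraph carrying this ladder and then adapt the standard fact that a half-graph of large height admits no bounded partition into nearly-homogeneous pieces: a union $A=\{a_i:i\in S\}$ of $X_1$-vertices of a half-graph can be $\gamma$-decent only if all but a $2\gamma$-fraction of $S$ (by index order) lies in a single window $I\subseteq[n]$ whose length is controlled by $\gamma$ times the size of the parameter side; over a partition of bounded size $N'$ these windows, being disjoint, cannot cover all but an $\varepsilon$-fraction of $X_1$ once $\gamma$ is small relative to $1/N'$. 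Taking $\varepsilon=\tfrac12$ and $f$ of order $1/m$ at argument $m$ then yields, for every $N$, some $H\in\mathcal H$ with no approximately perfect stable regularity partition in direction $1$, so $(1)$ fails.

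I expect this last step to be the main obstacle. When $k\ge 3$ the parameters $\bar b_j\in\prod_{m\ge 2}X_m$ need not form a product set, so after one restricts the remaining sorts to the coordinate values that actually occur the ladder can become a vanishing fraction of $\prod_{m\ge 2}X_m$; then the ``window'' above has length comparable to $\gamma|\prod_{m\ge2}X_m|\gg n$, the decency constraint is vacuous, and the trivial one-block partition already witnesses approximately perfect stable regularity. One must instead exploit the self-similarity of half-graphs emphasized in the introduction — a half-graph of height $n^{k-1}$ contains, along a block decomposition compatible with the counting measures, half-graphs of height $n$ whose block-parameters have density bounded below — so that a suitable ultraproduct of the $H_n$ restricts, on a positive-measure part of $\widetilde{X_1}$ and of $\prod_{m\ge2}\widetilde{X_m}$, to a copy of the order relation $x\le y$ on an atomless space, which by inspection has no countable perfect partition (every positive-measure subset of the $X_1$-side is $\mu$-split by a positive-measure set of parameters, and a countable perfect partition of $\widetilde{X_1}$ would restrict to one of this part by the hereditary property of perfection). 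This is exactly the point at which one would otherwise need to assume $\mathcal H$ closed under blow-ups.
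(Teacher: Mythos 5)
Your $(2)\Rightarrow(3)$ step matches the paper's exactly, and your $(3)\Rightarrow(1)$ via the transfer Proposition \ref{prop: proof of transfer perf part}, applied to each binary direction $X_i$ vs.\ $\prod_{j\neq i}X_j$ separately, is the paper's argument too (the paper just takes $N=\max_i N_i$). Your additional idea of also running the transfer in the other direction to get $(1)\Rightarrow(3)$ directly is not what the paper does --- the paper routes $(1)\Rightarrow(2)\Rightarrow(3)$ --- but it is a sound alternative, for exactly the reason you flag: the only use of hereditary closedness in the relevant half of Proposition \ref{prop: proof of transfer perf part} is deleting vertices from the $X_i$ side, which $\mathcal{G}_i$ does inherit from $\mathcal{H}$.

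The real divergence is in $(1)\Rightarrow(2)$ (your ``(3)$\Rightarrow$(2) by contraposition''). You correctly diagnose the obstacle for $k\geq 3$: a ladder of height $n$ in direction $1$ supplies only $n$ parameter tuples $\bar b_j\in\prod_{m\geq 2}X_m$, which after restricting to the occurring coordinate values still sit in a product of size up to $n^{k-1}$, so they carry vanishing mass and the decency constraint threatens to become vacuous. But the workaround you then sketch --- block decompositions of a height-$n^{k-1}$ half-graph, passing to an ultraproduct, and arguing about countable perfect partitions of an atomless order --- is not what the paper does, and it both overcomplicates the step and is likely to run into the very measure-zero problem you are trying to escape (the ladder set in the ultraproduct will typically have measure $0$). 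The paper's fix is elementary and avoids all of this. Fix $f(n)=(5n+1)^{-(k-1)}$ and $\varepsilon=\tfrac12$ \emph{in advance}. Given $N$, take $n$ huge, a ladder $(a_i,b_{2,i},\dots,b_{k,i})_{i\in[n]}$, and select roughly $5N$ breakpoints $j_1<\dots<j_{n'}$ ($n'\leq 5N+1$) with windows $[j_t,j_{t+1})$ of length $\leq n/(5N)$. Then, using hereditary closedness only, \emph{restrict each parameter sort} $X_\ell$ for $\ell\geq 2$ \emph{to the finite set} $\{b_{\ell,j_t}:t\in[n']\}$ of size $\leq 5N+1$. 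You need not (and should not) keep all $n$ parameter tuples: after the restriction, any single tuple $(b_{2,j_{t^*}},\dots,b_{k,j_{t^*}})$ has mass $\geq(5N+1)^{-(k-1)}$ in the restricted product, which is $\geq f(N')$ by construction. Pigeonholing gives a part $A_{i^*}$ of the alleged partition with $|A_{i^*}|\geq n/(2N)$, and the windows give a breakpoint $t^*$ at which this single tuple $\mu$-splits $A_{i^*}$ with ratio bounded below by a function of $N$ alone. So $A_{i^*}$ is not $f(N')$-decent, and $(1)$ fails. No blow-ups, no auxiliary ultraproduct, no block decomposition --- the footnote you allude to (``ladders have enough self-similarity'') refers precisely to this: a long ladder contains a short sub-ladder, and hereditary closedness lets you throw away the parameters outside the sub-ladder.
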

\begin{proof}
	\textbf{(1) implies (2).} 	Suppose that there is no $d \in \mathbb{N}$ so that every $H \in \mathcal{H}$ is partition-wise $d$-stable. That is, for every $n \in \mathbb{N}$ there exists some $H_n = \left( E_n; X_{1,n}, \ldots, X_{k,n} \right) \in \mathcal{H}$ and some $1 \leq k_n \leq k$ so that, viewing $E_n$ as a binary relation on $X_{k_n, n} \times \left( \prod_{\ell \in [k] \setminus \{k_n\}} X_{\ell, n} \right)$, it is not $n$-stable. By pigeon-hole we may assume that there is some $k' \in [k]$ so that $k_n = k'$ for all $n \in \mathbb{N}$, and without loss of generality $k' = 1$.
	
	We will show that $\mathcal{H}$ does not satisfy approximately perfect stable regularity (see Definition \ref{def: strong stab reg}(1)) with $f(n) = \frac{1}{(5n + 1)^{k-1}}$ and $\varepsilon = \frac{1}{2}$. Let $N \in \mathbb{N}$ be arbitrary. Let $n \in \mathbb{N}$ be sufficiently large, to be determined later.
	By assumption there is some $H = (E; X_1, \ldots, X_k) \in \mathcal{H}$ and some $(a_i, b_{2,i}, \ldots, b_{k,i} : i \in [n])$ with $a_i \in X_1$ and $b_{\ell, i} \in X_{\ell}$ for $2 \leq \ell \leq k $ so that for any $i,j \in [k]$ we have $(a_i, b_{2,j}, \ldots, b_{k,j}) \in E \iff i \leq j$. 
	
	We can choose $j_1 = 1 < j_2 < \ldots < j_{n'} = n$ with $n' \leq 5N +1 $ so that $\left\lvert [j_{t}, j_{t+1} ) \right \rvert \leq \frac{n}{5N}$ for all $t$. Let $X'_1 := X_1$ and for $2 \leq \ell \leq k$, let $X'_{\ell} := \left\{b_{\ell, j_t} : t \in [n'] \right\}$, let $E' := E \restriction \left( X'_1 \times \ldots \times X'_k\right)$ and $H' := \left(E'; X'_1, \ldots, X'_k \right)$. As $\mathcal{H}$ is hereditarily closed, we still have $H' \in \mathcal{H}$.
	
	Now let $A_1, \ldots, A_N \subseteq X_1$ be arbitrary pairwise-disjoint sets so that $\left \lvert \bigcup_{i \in [N]} A_i \right \rvert \geq (1 - \varepsilon) |X| = \frac{1}{2} n$. Then for at least one $i^* \in [N]$ we have $\left \lvert A_{i^*} \right \rvert \geq \frac{1}{2N} n$. By the choice of the $j_t$'s it follows that there is some $t^* \in [n']$ so that both sets 
	$$\left \{ i \in [n]: a_i \in A_{i^*} \ \land \ i \leq j_{t^*} \right \} \textrm{ and } \left \{ i \in [n]: a_i \in A_{i^*} \ \land \ i > j_{t^*} \right \}$$
	 have size at least $\frac{1}{2N} n$, hence 
	\begin{gather*}
		\mu_{X'_1}\left ( A_{i^*} \cap E_{ \left( b_{2,j_{t^*}}, \ldots, b_{k, j_{t^*}} \right) } \right ) \geq \frac{1}{2N} \textrm{ and } \mu_{X'_1} \left ( A_{i^*} \setminus E_{ \left( b_{2,j_{t^*}}, \ldots, b_{k, j_{t^*}} \right) } \right ) \geq \frac{1}{2N},
	\end{gather*}
	where $\mu_{X'_1}$ is the uniform probability measure on $X'_1$.
	Note also that for every $2 \leq \ell \leq k$, we have $\mu_{X'_{\ell}} \left( \left\{ b_{\ell, j_{t^*}} \right\} \right) = \frac{1}{|X'_{\ell}|} \geq \frac{1}{5N + 1}$, hence 
	\begin{gather*}
		\mu_{X'_2 \times \ldots \times X'_k} \left( \left\{\left( b_{2,j_{t^*}}, \ldots, b_{k, j_{t^*}} \right) \right\} \right) \geq \frac{1}{(5N + 1)^{k-1}}.
	\end{gather*} 
	This shows that $A_{i^*}$ is not $\frac{1}{(5N + 1)^{k-1}}$-decent, hence not $f(N)$-decent by the choice of $f$.
	
	\textbf{(2) implies (3).} Let $d$ be such that every $H \in \mathcal{H}$ is partition-wise $d$-stable. It follows by \L os' theorem that the ultraproduct $\widetilde{H} = \left(\widetilde{E}; \widetilde{X}_1, \ldots, \widetilde{X}_k \right)$ is also partition-wise $d$-stable. But then $\widetilde{H}$ satisfies  definable perfect stable regularity by Proposition \ref{prop: stable hypergraph reg}.
	
	\textbf{(3) implies (1).} For fixed $\varepsilon, f$, for every $i \in [k]$ let $N_i$ be as given by Proposition \ref{prop: proof of transfer perf part} with respect to the partitions of $\widetilde{X}_i$ for $\widetilde{E}$ viewed as a binary relation on $\widetilde{X}_i$ and $\prod_{j \in [k] \setminus \{i\}} \widetilde{X}_i$. Then $N := \max \{ N_i : i \in [k] \}$ satisfies (1) (and satisfies the definable version of (1) if the definable version of (3)  was satisfied, by Remark \ref{rem: transfer holds definably}).
\end{proof}

Similarly, we have a one way transfer for strong stable regularity:
\begin{prop}\label{prop: one way transfer for strong stable reg}
		Let $\mathcal{H}$ be an arbitrary family of finite $k$-partite $k$-hypergraphs of the form $H = (E;X_1, \ldots, X_k)$ with $E \subseteq \prod_{i = 1}^{k} X_i$ and $X_i$ finite. 
		Assume that  any ultraproduct $\widetilde{H} = \left(\widetilde E; \widetilde{X}_1, \ldots, \widetilde{X}_{k} \right)$ of the graphs from $\mathcal{H}$   satisfies the following:
		\begin{itemize}
			\item for every $\varepsilon > 0$ there exist finitely many pairwise-disjoint sets $A^{t}_{i} \in \mathcal{B}_{\widetilde{X}_t}, 1 \leq t \leq k, 1 \leq i \leq n_t \in \mathbb{N}$ so that $A^{t}_{i}$ is perfect for $\widetilde{E}$ viewed as a binary relation on $\widetilde{X}_{t} \times \left( \prod_{s \in [k] \setminus \{t\}} \widetilde{X}_s \right)$ and $\mu_{\widetilde{X}_t} \left( \bigcup_{i=1}^{n_t} A^t_{i} \right) \geq 1 - \varepsilon \mu_{\widetilde{X}_t} \left( A^t_1 \right)$.
		\end{itemize} 
Then $\mathcal{H}$ satisfies strong stable regularity (Definition \ref{def: strong stab reg}(2)). (Moreover, if we could chose $A^{t}_i \in \mathcal{B}_{\widetilde{X}_t}$, then $\mathcal{H}$ satisfies definable strong stable regularity, Definition \ref{def: definable stable regularities}).
\end{prop}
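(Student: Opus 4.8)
The plan is to run a compactness/transfer argument exactly parallel to the proof of Proposition \ref{prop: proof of transfer perf part}, (1)$\Rightarrow$(2), with ``perfect'' in place of ``decent''. Fix $\varepsilon>0$ and $f\colon\mathbb{N}\to(0,1]$ and suppose, towards a contradiction, that no $N$ works in Definition \ref{def: strong stab reg}(2) for this $\varepsilon,f$; then for every $n$ there is $H_n=(E_n;X_{1,n},\ldots,X_{k,n})\in\mathcal{H}$ admitting no partition of size $\le n$ satisfying (a), (b). Passing to an ultraproduct $\widetilde{H}=(\widetilde{E};\widetilde{X}_1,\ldots,\widetilde{X}_k)=\prod_{n}H_n/\mathcal{U}$ along a non-principal $\mathcal{U}$ (with the underlying graded probability space, the $\mu_{\widetilde{X}_t}$ being the countably additive extensions of the ultralimits of the counting measures) and applying the hypothesis to $\widetilde{H}$ with parameter $\varepsilon':=\varepsilon/4$, I would obtain for each $t$ finitely many pairwise-disjoint perfect sets $A^t_1,\ldots,A^t_{n_t}\in\mathcal{B}_{\widetilde{X}_t}$ with $\mu_{\widetilde{X}_t}\!\left(\bigcup_i A^t_i\right)\ge 1-\varepsilon'\mu_{\widetilde{X}_t}(A^t_1)$, where we may assume $\mu_{\widetilde{X}_t}(A^t_1)$ is maximal. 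After splitting $A^t_1$ into a bounded number of smaller positive-measure pieces when a coordinate has too few parts (subsets of perfect sets are perfect, Remark \ref{rem: basic props perf sets}(2)), keeping the new $A^t_1$ at least half of the old one, and padding with empty parts where a finite sort is too small, I can arrange a common part count $N'$ with parts $A^t_0,A^t_1,\ldots,A^t_{N'-1}$, $A^t_0:=\widetilde{X}_t\setminus\bigcup_{i\ge 1}A^t_i$ exceptional and $\mu_{\widetilde{X}_t}(A^t_0)\le 2\varepsilon'\mu_{\widetilde{X}_t}(A^t_1)$ (empty boxes then impose no constraint in Definition \ref{def: strong stab reg}(2)(b), equivalently one uses the convention that the edge density over an empty box is $0$).

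The next step is to observe that this already yields a perfect regularity partition of $\widetilde{H}$ itself: iterating Lemma \ref{lem: prod of perf is perf} and then applying Lemma \ref{lem: 01 dens on perf sets}, exactly as in the proof of the implication (1)$\Rightarrow$(2) of Lemma \ref{lem: perfect regularity equivalents}, the density of $\widetilde{E}$ on every box $A^1_{i_1}\times\cdots\times A^k_{i_k}$ with all $i_t\ge 1$ is exactly $0$ or $1$. To push this to the $H_n$, I would fix $\delta>0$ (depending only on $N'$, $f(N')$ and the smallest positive value among the finitely many $\mu_{\widetilde{X}_t}(A^t_i)$) and approximate each $A^t_i$ by an internal $\hat A^t_i\in\mathcal{B}^0_{\widetilde{X}_t}$ with $\mu_{\widetilde{X}_t}(\hat A^t_i\triangle A^t_i)\le\delta\mu_{\widetilde{X}_t}(A^t_i)$, the $\hat A^t_i$ pairwise disjoint and of positive measure, and $\hat A^t_0:=\widetilde{X}_t\setminus\bigcup_{i\ge1}\hat A^t_i\in\mathcal{B}^0_{\widetilde{X}_t}$. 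As in the proof of Proposition \ref{prop: proof of transfer perf part}, each $\hat A^t_i$ ($i\ge1$) is $\tfrac{\delta}{1-\delta}$-decent for $\widetilde{E}$ as a binary relation on $\widetilde{X}_t\times\prod_{s\ne t}\widetilde{X}_s$; then, combining the quantitative form of Lemma \ref{lem: prod of perf is perf} (whose existence is noted in the remark following it) with Lemma \ref{lem: 01 dens on perf sets} and taking $\delta$ small enough, the density of $\widetilde{E}$ on every box $\hat A^1_{i_1}\times\cdots\times\hat A^k_{i_k}$ with all $i_t\ge1$ lies in $[0,f(N'))\cup(1-f(N'),1]$ and is in fact bounded strictly away from $f(N')$ and $1-f(N')$, while $\mu_{\widetilde{X}_t}(\hat A^t_0)<\varepsilon\mu_{\widetilde{X}_t}(\hat A^t_1)$. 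Writing $\hat A^t_i=\prod_{n}A^t_{n,i}/\mathcal{U}$ and $A^t_{n,0}:=X_{t,n}\setminus\bigcup_{i\ge1}A^t_{n,i}$, \L os' theorem together with Remark \ref{rem: measure preds almost agree} (applied to the measure predicates, since the relevant sets lie in $\mathcal{B}^0$ and there $\mu^0$ is the ultralimit of the counting measures) gives $U\in\mathcal{U}$ on which the $A^t_{n,i}$, $0\le i<N'$, partition $X_{t,n}$, satisfy $|A^t_{n,0}|\le\varepsilon|A^t_{n,1}|$, and have density of $E_n$ on every non-exceptional box inside $[0,f(N'))\cup(1-f(N'),1]$; any $n\in U$ with $n\ge N'$ then contradicts the choice of $H_n$, proving that $\mathcal{H}$ satisfies strong stable regularity. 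For the definable ``moreover'': if the $A^t_i$ from the hypothesis can be taken in the algebra of fibers $\mathcal{B}^E_{\widetilde{X}_t}$, then since that algebra is generated by $\bigcup_M\mathcal{F}^{E,M}_{\widetilde{X}_t}$ (Definition \ref{def: algebras of fibers}), each member of which is by \L os an ultraproduct of members of $\mathcal{F}^{E_n,M}_{X_{t,n}}$, the approximants $\hat A^t_i$ and hence the finitary parts $A^t_{n,i}$ (absorbing complements for $A^t_{n,0}$) can be taken in $\mathcal{F}^{E_n,N}_{X_{t,n}}$ for $N$ large, giving definable strong stable regularity (Definition \ref{def: definable stable regularities}), exactly as in Remark \ref{rem: transfer holds definably}.

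The step I expect to require the most care is the quantitative approximation inside $\widetilde{H}$: one must check that an $L^1$-small perturbation of a perfect set remains $\delta$-decent with an explicit modulus, and then that perturbing all $k$ factors of a box still forces the edge density below $f(N')$. This is precisely where the quantitative version of Lemma \ref{lem: prod of perf is perf} is needed, chained with Lemma \ref{lem: 01 dens on perf sets}, and where one uses the uniformity afforded by the fact that only finitely many pieces occur (so their measures, and hence the decency thresholds $\delta\mu(B)$, are bounded below). The only subtlety beyond that is ensuring the perturbed box densities are bounded \emph{strictly} away from $f(N')$ and $1-f(N')$ so the inequalities survive transfer through $\mathcal{U}$ — but since the unperturbed densities are exactly $0$ or $1$ this is automatic. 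The padding/part-count bookkeeping and the reading of Definition \ref{def: strong stab reg}(2)(b) as vacuous on empty boxes are routine but should be flagged explicitly.
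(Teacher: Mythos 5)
Your proposal is correct and follows the same overall strategy as the paper: argue by contradiction, pass to an ultraproduct, invoke the hypothesis to get finitely many perfect sets covering all but an $\varepsilon'$-fraction of each sort, approximate them by internal sets in $\mathcal{B}^0_{\widetilde{X}_t}$, and transfer via \L os. The one place you take a detour is in bounding the edge density on the perturbed boxes: you route through $\delta/(1-\delta)$-decency of the approximants and then chain the (unproved, merely remarked-upon) quantitative version of Lemma~\ref{lem: prod of perf is perf} with Lemma~\ref{lem: 01 dens on perf sets}, whereas the paper observes that the unperturbed density is exactly $0$ or $1$ (Lemma~\ref{lem: perfect regularity equivalents}) and then directly estimates that replacing each factor by a set with relative symmetric difference $O(\delta)$ moves the density by at most $O_k(\delta)$ — this avoids any appeal to decency or to a quantitative product-of-perfect-sets lemma. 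Your route is valid but longer; on the other hand, you are more explicit than the paper about the part-count padding needed to arrange a single $N'$ across all $k$ sorts (which the paper leaves implicit), and the rest of the bookkeeping, including the definable ``moreover'' via $\mathcal{F}^{E,N}_{X_{t,n}}$, matches the paper's argument.
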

\begin{proof}
	Similar to the proof of (2)$\implies$(1) in Proposition \ref{prop: proof of transfer perf part} (we will skip some details explained there). 

	Assume $\mathcal{H}$ does not satisfy strong stable regularity. That is, there exist $\varepsilon >0$ and $f: \mathbb{N} \to (0,1]$ so that for every $n \in \mathbb{N}$ there exists some $H_n = (E_n; X_{1,n}, \ldots, X_{k,n}) \in \mathcal{H}$ so that there are no partitions of the $X_{t,n}, 1 \leq t \leq k$ satisfying the conditions (a) and (b) in Definition \ref{def: strong stab reg}(2) for any $N' \leq n$. Let $\widetilde{H} := \prod_{n \in \mathbb{N}} H_n / \mathcal{U}$ for $\mathcal{U}$ a non-principal ultrafilter.
	
	Fix $0 < \varepsilon' < \varepsilon$. By assumption, we can choose $A^{t}_{i} \in \mathcal{B}_{\widetilde{X}_t}, 1 \leq t \leq k, 1 \leq i \leq n_t \in \mathbb{N}$ so that $A^{t}_{i}$ is perfect for $\widetilde{E}$ and $\mu_{\widetilde{X}_t} \left( \bigcup_{i=1}^{n_t} A^{t}_{i} \right) \geq 1 - \varepsilon' \mu_{\widetilde{X}_t} \left(A^t_1 \right)$. Fix $\delta >0$, and choose pairwise disjoint $A^{0,t}_{i} \in \mathcal{B}^0_{\widetilde{X}_t}$ so that $\mu_{\widetilde{X}_t}\left(A^{t}_{i}   \triangle A^{0,t}_{i}\right) < \delta \mu_{\widetilde{X}_t}(A^t_1)  \mu_{\widetilde{X}_t}(A^t_i)$ for all $1 \leq i \leq n_t$. Then $\mu_{\widetilde{X}_t} \left( \bigcup_{i = 1}^{n_t} A^{0,t}_i \right) \geq 1 - (\varepsilon' + \delta) \mu_{\widetilde{X}_t} \left(A^t_1 \right)$  and $\mu_{\widetilde{X}_t} \left(A^{t}_1 \right) \leq   \frac{1}{1 -\delta \mu_{\widetilde{X}_t}(A^t_1) } \mu_{\widetilde{X}_t} \left(A^{0,t}_1 \right) \leq \frac{1}{1 -\delta } \mu_{\widetilde{X}_t} \left(A^{0,t}_1 \right)$, so
	\begin{gather}
		\mu_{\widetilde{X}_t} \left( \bigcup_{1 \leq i < n_t} A^{0,t}_i \right) \geq 1 - \frac{(\varepsilon' + \delta)}{1 - \delta} \mu_{\widetilde{X}_t} \left(A^{0,t}_1 \right) \textrm{ for all }1 \leq t \leq k. \label{eq: str stab hyper 10}
	\end{gather} 
	
	By assumption and Lemma \ref{lem: perfect regularity equivalents}, for any $(i_1, \ldots, i_k) \in [n_1] \times \ldots \times [n_k]$ we have  $\frac{\mu \left( \widetilde{E} \cap \prod_{t=1}^{k} A^{t}_{i_t} \right)}{\mu \left( \prod_{t=1}^{k} A^{t}_{i_t} \right)} \in \{0,1\}$. Hence we still have
\begin{gather}
	\frac{\mu \left( \widetilde{E} \cap \prod_{t=1}^{k} A^{0,t}_{i_t} \right)}{\mu \left( \prod_{t=1}^{k} A^{0,t}_{i_t} \right)} \in \left[0, \frac{k\delta}{1-\delta} \right) \cup \left(1 - \frac{k\delta}{1-\delta}, 1 \right]. \label{eq: str stab hyper 11}
\end{gather}
For each $t,i$ we have $A^{0,t}_i = \prod_{n \in \mathcal{U}}A^{t}_{n,i}/\mathcal{U}$ for some sets $A^{t}_{n,i} \in \mathcal{B}_{X_{t,n}}$.

Then, using definition of $\mu_{\widetilde{X}_1 \times \ldots \times  \widetilde{X}_k}$, \eqref{eq: str stab hyper 10},  \eqref{eq: str stab hyper 11}, \L os' theorem, the fact that ultralimits commute with continuous functions,  and that $\mathcal{U}$ is closed under finite intersections, 
we can choose $S \in \mathcal{U}$ so that for every $n \in S$ we have:
\begin{itemize}
	\item for all $(i_1, \ldots, i_k) \in [n_1] \times \ldots \times [n_k]$, we have 
	\begin{gather*}
		\frac{\mu_{X_{1,n} \times \ldots \times {X}_{k,n}} \left( E_n \cap \prod_{t=1}^{k}A^t_{n,i_t}  \right)}{\mu_{X_{1,n} \times \ldots \times {X}_{k,n}}\left(\prod_{t=1}^{k}A^t_{n,i_t} \right)} \in \left[0, \frac{k\delta}{1-\delta} + \delta\right) \cup \left(1 - \left( \frac{k\delta}{1-\delta} + \delta \right) , 1 \right];
	\end{gather*}
\item the sets $\left( A^t_{n,i} : 1 \leq i \leq n_t \right)$ are pairwise disjoint, for all $1 \leq t \leq k$;
\item $\mu_{X_{t,n}} \left( \bigcup_{1 \leq i \leq n_t} A^t_{n,i} \right)  \geq 1 - 2 \frac{( \varepsilon' +  \delta)}{1 - \delta} \mu_{X_{t,n}}\left(A^t_{n,1} \right)$.
\end{itemize} 

But if we took $\varepsilon', \delta > 0$
 so that 
 $$2 \frac{( \varepsilon' +  \delta)}{1 - \delta} < \varepsilon \textrm{ and } \frac{k\delta}{1-\delta} + \delta < f(\max \{n_1, \ldots, n_k \}),$$
 this contradicts the choice of the $H_n$'s for $n \in S$ with $n > \max \{n_1, \ldots, n_k \}$.
 
 The ``moreover'' part follows from the proof: if we could choose the sets $A^{t}_{i} \in \mathcal{B}_{\widetilde{X}_t}^{E}$, as $\mathcal{B}_{\widetilde{X}_t}^{E}$ is the $\sigma$-algebra generated by $\bigcup_{K \in \mathbb{N}} \mathcal{F}_{\widetilde{X}_t}^{E, K} $, we could choose all of  $A^{0,t}_i$ in $\mathcal{F}_{\widetilde{X}_t}^{E, K} $ for some  $K \in \mathbb{N}$, hence by \L os' theorem we could choose $A^{t}_{n,i} \in \mathcal{F}^{E,K}_{X_{t,n}}$ for all $t, n,i$, and conclude that definable strong stable regularity holds with $N$ replaced by $\max\{N,K\}$.
\end{proof}

We will show next that restricting to graphs, all versions of finitary and infinitary stable regularity are equivalent, and are equivalent to stability. We will need a well-known observation that any bounded size partition of large half-graphs contains irregular pairs (see \cite[Section 1.8]{komlos1995szemeredi} for a discussion and some references). We include a proof in our setting for completeness:
\begin{lemma}\label{lem: half-graphs fail stab reg}
	Let $A = B = [n]$ and $E \subseteq A \times B$ be the bipartite graph $E = \left\{(a,b) \in A \times B : a < b \right\}$. For any $t \in \mathbb{N}$ there exists $n$ large enough so that: if $A=\bigsqcup_{i\in I} A_i, B = \bigsqcup_{j \in J} B_j$ are partitions with $|I|,|J| \leq t$, then for some $(i,j) \in I \times J$ we have $\frac{\left \lvert E \cap \left( A_i \times B_j \right) \right\rvert }{\left \lvert A_i \times B_j \right\rvert} \in \left( \frac{1}{4}, \frac{3}{4}\right)$.
 \end{lemma}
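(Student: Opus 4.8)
The plan is to argue by a counting/averaging argument that in any partition of the half-graph $E=\{(a,b):a<b\}$ on $[n]\times[n]$ into few parts, most parts $A_i$ of non-negligible size must contain two elements that are ``far apart'' in the linear order, and hence there is a $b\in B$ splitting $A_i$ in a balanced way; then one shows that $b$ lies in a part $B_j$ of non-negligible size, giving an irregular pair.

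More concretely, first I would fix $t$ and choose $n$ large, say $n\geq t\cdot M$ for a suitable $M=M(t)$ to be determined. Given partitions $A=\bigsqcup_{i\in I}A_i$ and $B=\bigsqcup_{j\in J}B_j$ with $|I|,|J|\leq t$, pick $i^\ast\in I$ with $|A_{i^\ast}|\geq n/t$ and $j^\ast\in J$ with $|B_{j^\ast}|\geq n/t$. The key combinatorial observation is the following: identifying $A$ and $B$ with $[n]$, the part $B_{j^\ast}$ has $\geq n/t$ elements, so it contains elements spread across the order; in particular, by pigeonhole there is some $b\in B_{j^\ast}$ such that the number of elements of $A_{i^\ast}$ below $b$ and the number above $b$ are both $\geq |A_{i^\ast}|/(3t)$ or so --- because as $b$ ranges over $B_{j^\ast}$ the quantity $|A_{i^\ast}\cap[1,b)|$ increases from near $0$ to near $|A_{i^\ast}|$, and it increases by at most $1$ each time $b$ moves to the next element of $A_{i^\ast}$, but $B_{j^\ast}$ has so many elements (compared to $|A_{i^\ast}|$ being at most $n$ and the gaps being controlled) that it must hit a value in the ``middle band'' $[\tfrac14|A_{i^\ast}|,\tfrac34|A_{i^\ast}|]$. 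I would need to be slightly careful: the cleanest route is to note that $|A_{i^\ast}\cap[1,b)|$ takes all its intermediate values as $b$ increases, so there is some threshold $b_0\in[n]$ with $|A_{i^\ast}\cap[1,b_0)|\in[\tfrac14|A_{i^\ast}|,\tfrac34|A_{i^\ast}|]$; the issue is whether $b_0$ can be chosen inside $B_{j^\ast}$. This is where largeness of $B_{j^\ast}$ enters: the set of $b$ achieving a middle value forms an interval of length $\geq|A_{i^\ast}|/2\geq n/(2t)$ in $[n]$, and since $|B_{j^\ast}|\geq n/t$ while there are only $t$ parts, one can force (by choosing $n$ large, or rather by a second averaging over $j\in J$) a large part $B_{j}$ meeting a large middle-interval of a large part $A_{i}$.

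So the cleaner organization: for each $i\in I$ with $|A_i|\geq n/(2t)$, the set $\mathrm{Mid}_i:=\{b\in[n]:\tfrac14|A_i|\leq|A_i\cap[1,b)|\leq\tfrac34|A_i|\}$ is an interval of length $\geq\tfrac12|A_i|\geq n/(4t)$. The union of all $A_i$ with $|A_i|\geq n/(2t)$ has size $\geq n/2$ (the small parts total $<n/2$), so summing, $\sum_i|\mathrm{Mid}_i|\geq\tfrac12\cdot n/2=n/4$ roughly; choose $i$ with $|\mathrm{Mid}_i|\geq n/(4t)$. Within this interval $\mathrm{Mid}_i$, the parts $B_j$ restricted to it total $|\mathrm{Mid}_i|\geq n/(4t)$ elements, so some $B_j$ contributes $\geq|\mathrm{Mid}_i|/t\geq n/(4t^2)$ elements inside $\mathrm{Mid}_i$; in particular $|B_j|\geq n/(4t^2)$, and every $b\in B_j\cap\mathrm{Mid}_i$ satisfies $|A_i\cap E_b|=|A_i\cap[1,b)|\in[\tfrac14|A_i|,\tfrac34|A_i|]$ — wait, I must match the edge convention $E=\{(a,b):a<b\}$, so $E_b=\{a:a<b\}=[1,b)$, consistent. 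Then
\[
\frac{|E\cap(A_i\times B_j)|}{|A_i\times B_j|}=\frac{1}{|B_j|}\sum_{b\in B_j}\frac{|A_i\cap[1,b)|}{|A_i|}.
\]
This average need not itself land in $(\tfrac14,\tfrac34)$, so here I would instead argue directly: if every pair $(A_i,B_j)$ had density outside $(\tfrac14,\tfrac34)$, derive a contradiction by a global count of $|E|$ versus $\sum_{i,j}(\text{0-or-1})|A_i||B_j|$, using that a $0/1$-valued ``step function'' cannot approximate the half-graph, which has $|E|=\binom n2\approx n^2/2$ but also has the property that for each $i$ at most two values of the density $|A_i\cap[1,b)|/|A_i|$ as $b$ crosses from $0$ to $1$ are consistent with all-$0/1$ — formally, fixing $i$ and letting $b$ range over $[n]$ in increasing order, the function $b\mapsto|A_i\cap[1,b)|/|A_i|$ is nondecreasing from $0$ to $1$ and increases in steps of size $\leq 1/|A_i|\leq 2t/n<1/4$ for $n>8t$, so it takes a value in $(\tfrac14,\tfrac34)$; that value occurs at some $b^\ast$, and $b^\ast$ lies in some $B_{j^\ast}$; if $\tfrac14|A_i|<|A_i\cap[1,b^\ast)|$ and the block $B_{j^\ast}$ is an ``interval-like'' set...

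The main obstacle is precisely this last point: $B_{j^\ast}$ need not be an interval, so a single good $b^\ast$ does not immediately give density of the \emph{pair} $(A_i,B_{j^\ast})$ in the middle range — the other elements of $B_{j^\ast}$ might all be very high or very low. I expect the resolution to be: iterate. Build a decreasing chain of intervals: start with $[1,n]$; at each step we have an interval $\mathrm{Mid}$ where some large $A_i$ is ``split'' across all its points; some $B_j$ meets it in $\geq|\mathrm{Mid}|/t$ points; either the density of $(A_i,B_j)$ is already in $(\tfrac14,\tfrac34)$ — done — or $B_j\cap\mathrm{Mid}$ is concentrated near one end, in which case we pass to a subinterval $\mathrm{Mid}'\subseteq\mathrm{Mid}$ of length $\geq|\mathrm{Mid}|/2$ on which we have shrunk the ``bad'' part; since lengths can only halve $O(\log n)$ times before dropping below $2t$, and each step either finishes or eliminates one part $B_j$ from consideration (it can happen at most $t$ times), for $n$ large enough in terms of $t$ we must finish, i.e. find the desired irregular pair. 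The bookkeeping to make ``concentrated near one end'' precise and to guarantee termination is the crux; I would carry it out by choosing $n > (4t)^{4t}$ or a similar explicit bound so that halving $2t$ times still leaves intervals of length $>4t$, forcing the process to output a balanced pair.
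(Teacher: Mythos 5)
You correctly identify the central obstacle: finding a single $b^\ast$ that evenly splits $A_{i^*}$ says nothing about the density of the pair $(A_{i^*},B_{j^*})$, since the rest of $B_{j^*}$ could lie entirely above or below the split point. However, the iterative halving you sketch as a fix contains an unjustified step: you assert that each iteration either finishes or ``eliminates one part $B_j$ from consideration,'' but there is no argument for why the process would run out of options, and no reason a priori that passing to a subinterval produces a different $B_j$ or that it must terminate in $O(t)$ steps. Moreover, the density of the pair $(A_i,B_j)$ depends on all of $A_i$ and $B_j$, not on their restrictions to the current interval, so progress measured inside a shrinking interval does not cleanly translate back into control of the full-pair density.

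The idea you are missing, which the paper's proof uses, is to define middle intervals for \emph{both} the parts $A_i$ and the parts $B_j$ and look for an overlapping pair. For each $A_i$ pick $a_i<a'_i$ in $A_i$ so that a $\delta$-fraction of $A_i$ lies below $a_i$ and a $\delta$-fraction lies above $a'_i$; similarly pick $b_j<b'_j$ for each $B_j$. The intervals $(a_i,a'_i)$ together cover all but about $2\delta$ of $[n]$ (the excluded elements of each $A_i$ have total measure at most $2\delta\mu(A_i)$), and likewise for the $(b_j,b'_j)$; an inclusion-exclusion/pigeonhole step then gives some pair $(i,j)$ with $(a_i,a'_i)\cap(b_j,b'_j)\neq\emptyset$, which forces both $a_i<b'_j$ and $b_j<a'_i$. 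Now the $\delta$-fraction of $A_i$ below $a_i$ paired with the $\delta$-fraction of $B_j$ above $b'_j$ is a $\delta^2$-fraction subrectangle of $A_i\times B_j$ contained in $E$, and symmetrically the top $\delta$-fraction of $A_i$ with the bottom $\delta$-fraction of $B_j$ is a $\delta^2$-fraction subrectangle disjoint from $E$. These two subrectangles squeeze the density of $(A_i,B_j)$ into $[\delta^2,1-\delta^2]$ (with appropriate tuning of $\delta$), giving exactly the two-sided bound your single-threshold argument could not supply: your approach only controlled where $A_i$ sits relative to one point of $B_j$, whereas the overlapping-middle-intervals argument controls the positions of $A_i$ and $B_j$ relative to each other.
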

\begin{proof}
	Fix $0 < \varepsilon \leq \frac{1}{4}$, and let $\delta := (3\varepsilon)^{\frac{1}{2}} < 1$. Let $\mu$ be the uniform probability measure on $[n]$.
	Given a partition of $[n]$ of size $t$, for each $A_i$ we choose some $a_i, a'_i \in A_i$ so that:
	\begin{enumerate}
		\item $a_i < a'_i$,
		\item $\mu(\{ a \in A_i : a < a_i \}) = \delta \mu(A_i)$,
		\item $\mu(\{ a \in A_i : a'_i < a \}) = \delta \mu(A_i)$.
	\end{enumerate}
	(More precisely, we can get arbitrary close to equality in (2) and (3) by choosing $n$ large enough with respect to $t$, we will ignore this for notational simplicity.)
We refer to  $(a_i,a'_i) := \{ a \in A : a_i < a < a'_i \}$ as the \emph{middle interval} of $A_i$ (though it may be not contained in $A_i$). Similarly, we define the middle intervals $(b_i,b'_i)$ of $B_i$. Then $\mu(\bigcup_{i \in I}(a_i, a'_i)) \geq (1 - 2 \delta)$, and similarly for $B$. Hence 
\begin{gather*}
1 - 4 \delta \leq 	\mu \left( \left( \bigcup_{i \in I}(a_i, a'_i) \right) \cap   \left( \bigcup_{j \in J}(b_j, b'_j)  \right) \right) = \mu \left( \bigcup_{i \in I, j \in J} (a_i,a'_i) \cap (b_j, b'_j) \right),
\end{gather*}
so $\mu((a_i,a'_i) \cap (b_j, b'_j)) \geq \frac{1}{t^2} (1 - 4 \delta)$ for some $(i,j) \in I \times J$.  In particular, $(a_i,a'_i) \cap (b_j, b'_j) \neq \emptyset$, so $a_i < b'_j$ and $b_j < a'_i$. Let $A' := \{ a \in A_i : a < a_i \}$, $B' := \{ b \in B_j : b'_j < b \}$, $A'' := \{ a \in A_i : a'_i < a \}$, $B'' := \{ b \in B_j : b < b_j \}$. Note that $A',A'' \subseteq A_i$ and $B',B'' \subseteq B_j$.
Then $A' \times B' \subseteq E$, $ \left(A'' \times B''\right) \cap E = \emptyset$ and $ \mu(A' \times B'), \mu(A'' \times B'') \geq \delta^2 \mu(A_i) \mu(B_j)$. Hence $E$ cannot be $\varepsilon$-regular on $(A_i, B_j)$ assuming $\frac{\varepsilon}{\delta^2} < \frac{1}{2}$, which holds by our choice of $\delta$.
\end{proof}

	\begin{theorem}\label{thm: all versions of reg coincide for stable graphs}
	Let $\mathcal{H}$ be a hereditarily closed family of finite bipartite graphs of the form  $H = (E;X,Y)$ with $E \subseteq X \times Y$. The following are equivalent:
	\begin{enumerate}
		\item any ultraproduct of the graphs from $\mathcal{H}$   satisfies (definable) perfect stable regularity with respect to the ultralimit of uniform counting measures;
		\item the family $\mathcal{H}$ satisfies (definable or non-definable) approximately perfect stable regularity;

		\item the family $\mathcal{H}$ satisfies (definable or non-definable) strong stable regularity;

		\item the family $\mathcal{H}$ satisfies (definable or non-definable) stable regularity;

		\item there is some $d \in \mathbb{N}$ so that all graphs in $\mathcal{H}$ are $d$-stable.
	\end{enumerate}
\end{theorem}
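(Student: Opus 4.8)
The plan is to close the loop
$(5)\Leftrightarrow(1)\Leftrightarrow(2)\Rightarrow(3)\Rightarrow(4)\Rightarrow(5)$,
making sure each hand-off also preserves the ``definable'' flavor so that all the listed variants collapse to one. The equivalences $(5)\Leftrightarrow(1)\Leftrightarrow(2)$ will come from Theorem~\ref{thm: metastable general equivalences} specialized to $k=2$: that theorem already equates approximately perfect stable regularity, perfect stable regularity in ultraproducts, and the existence of a uniform $d$ making every $H\in\mathcal H$ \emph{partition-wise} $d$-stable, in both the definable and non-definable readings. The steps $(2)\Rightarrow(3)\Rightarrow(4)$ are exactly Remark~\ref{rem: meta implies strong implies stable}(1),(2), which also apply to the definable versions. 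So the genuinely new content is twofold: the (easy) identification, for $k=2$, of ``partition-wise $d$-stable for some $d$'' with ``$d$-stable for some $d$'', and the implication $(4)\Rightarrow(5)$.

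For the first point, note that on a finite bipartite graph $H=(E;X,Y)$ with the counting measure a set has positive measure iff it is nonempty, so $\mu$-stability coincides with stability and $\mu$-ladder-stability with ladder-stability. Hence Proposition~\ref{prop: equiv of mu stab and ladder}(3) and Corollary~\ref{cor: ladder mu stab is symm} give: $E$ is $d$-stable iff $E$ is ladder $d_1$-stable for some $d_1=d_1(d)$ iff $E^{*}$ is ladder $d_1$-stable iff $E^{*}$ is $d_2$-stable for some $d_2=d_2(d)$. Consequently a family has a uniform stability bound for $E$ in the direction $X\times Y$ iff it has one in both directions, i.e.\ iff all its members are partition-wise $d$-stable for a common $d$; this identifies $(5)$ with clause $(2)$ of Theorem~\ref{thm: metastable general equivalences} at $k=2$ and finishes $(5)\Leftrightarrow(1)\Leftrightarrow(2)$.

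For $(4)\Rightarrow(5)$ I would argue by contraposition. If no single $d$ bounds the stability of all members of $\mathcal H$, then for each $d$ some $H\in\mathcal H$ has a tree of height $2^{d+1}-2$ for its edge relation, hence by Fact~\ref{fac: Hodges}(2) a ladder of height $d$; the $d$ parameters on each side of this ladder are automatically distinct, so restricting $E_H$ to them gives an induced copy of a half-graph on $m\times m$ vertices with $m\to\infty$ (passing to every other ladder parameter if one insists on the strict version $\{(a,b):a<b\}$), and by hereditary closure every such half-graph belongs to $\mathcal H$. Now invoke stable regularity with $\varepsilon=\tfrac14$: it yields some $N=N(\tfrac14)$ such that every $H\in\mathcal H$ admits a partition into at most $N$ parts per side with every product density in $[0,\tfrac14)\cup(\tfrac34,1]$. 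But Lemma~\ref{lem: half-graphs fail stab reg} with $t=N$ furnishes a half-graph $([m],[m],<)\in\mathcal H$ for which every partition into at most $N$ parts per side contains a pair of density in $(\tfrac14,\tfrac34)$ --- a contradiction. Thus $\neg(5)$ implies $\neg(4)$. Finally, since each definable condition trivially implies its non-definable counterpart and the chain $(5)\Rightarrow\text{def-}(1)\Rightarrow\text{def-}(2)\Rightarrow\text{def-}(3)\Rightarrow\text{def-}(4)\Rightarrow(4)\Rightarrow(5)$ goes through, all the assertions in the theorem are equivalent. I expect no real obstacle here beyond bookkeeping: keeping track that the definable flavor survives every step, and applying the reduction of $(5)$ to partition-wise stability with the (constant-changed) rank coming from Proposition~\ref{prop: equiv of mu stab and ladder}.
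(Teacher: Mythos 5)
Your proof is correct and follows essentially the same route as the paper: Theorem~\ref{thm: metastable general equivalences} at $k=2$ gives $(1)\Leftrightarrow(2)\Leftrightarrow(5)$, Remark~\ref{rem: meta implies strong implies stable} gives $(2)\Rightarrow(3)\Rightarrow(4)$, and $(4)\Rightarrow(5)$ is by contraposition via hereditary closure yielding arbitrarily large half-graphs and then invoking Lemma~\ref{lem: half-graphs fail stab reg}. The one place you are more explicit than the paper---unwinding via Proposition~\ref{prop: equiv of mu stab and ladder} and Corollary~\ref{cor: ladder mu stab is symm} that a uniform stability bound in one direction gives one in both, which is what lets $(5)$ match the ``partition-wise $d$-stable'' clause of that theorem---is a small gap the paper leaves implicit, and your filling it in is correct.
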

\begin{proof}
	(1) implies (2). By Theorem \ref{thm: metastable general equivalences} for $k = 2$ (namely, (3) implies (1) there).

	(2) implies (3), (3) implies (4). By Remark \ref{rem: meta implies strong implies stable}.

	(4) implies (5). Fix $\varepsilon = \frac{1}{4}$. Let $N \in \mathbb{N}$ be arbitrary.  Assume that for every $d$, there is some $H_d \in \mathcal{H}$ which is not $d$-stable. Then, using that  $\mathcal{H}$ is hereditarily closed, for every $n$ the family $\mathcal{H}$ contains a half-graph $H_n$ with both sides of size $n$.
	But then, for $n$ sufficiently large with respect to $N$, in view of Lemma  \ref{lem: half-graphs fail stab reg} there is no partition of the vertices of size $N$ satisfying Definition \ref{def: strong stab reg}(3).
		
	(5) implies (1). By Theorem \ref{thm: metastable general equivalences} for $k = 2$ (namely, (2) implies (3) there).
\end{proof}

%
%
%
%
%
%

However, there exists a hereditarily closed family of 3-hypergraphs satisfying strong stable regularity, but not approximately perfect stable regularity:
\begin{example}\label{ex: x=y<z stability props}
	For $n \in \mathbb{N}$, let $H_n$ be the 3-hypergraph on $X_n \times Y_n \times Z_n$, $X_n = Y_n = Z_n := [n]$ with the edge relation 
	$$E_n = \left\{ (x,y,z) \in X_n \times Y_n \times Z_n: x \leq y = z \right\}.$$
	 Let $\mathcal{H}$ be the hereditary closure of the family $\left\{ H_n : n \in \mathbb{N}\right\}$.
	
	As for every $n$, $E_n$ is not $n$-stable viewed as a binary relation  between $X_n$ and $Y_n \times Z_n$, by (1)$\Rightarrow$(2) in Theorem \ref{thm: metastable general equivalences}, $\mathcal{H}$ does not satisfy approximately perfect stable regularity.
	
	On the other hand, for all $n \in \mathbb{N}$ we have: for every $z \in Z_n$, the binary fiber $\left(E_n \right)_z \subseteq X_n \times Y_n$ is $2$-stable, and $E_n$ viewed as a binary relation between $X_n \times Y_n$ and $Z_n$ is also $2$-stable (and this also holds for all graphs in $\mathcal{H}$). Hence $\mathcal{H}$ satisfies (definable) strong stable regularity by Corollary \ref{cor: finitary one dir stable other slice-wise}.
\end{example}

\begin{problem}
	Is there a hereditarily closed family of hypergraphs satisfying stable regularity, but not strong stable regularity?
\end{problem}

\section{One Direction of slice-wise Stability}\label{sec: one dir or slicewise stable}

The weakest property we consider is the case of $3$-graphs which fail to be even slice-wise stable, but which have some ``directions'' of slice-wise stability viewed as partite $3$-graphs. That is, we have $E\subseteq X\times Y\times Z$ and, say, $E_x$ is $d$-stable for all $x\in X$. For example, if $X=Y=Z=(0,1)$ and $E=\{(x,y,z)\mid x<\max\{y,z\}\}$ then $E_x$ is $2$-stable for all $x$, but any $E_y$ or $E_z$ is unstable (just choose a ladder entirely below $y$ or $z$ respectively).

Most of what we can hope to show in this case is nearly obvious: we can apply stable regularity to each $E_x$ separately, so for each $x$ we get partitions of $Y=\bigsqcup_i A^i_x$ and $Z=\bigsqcup_j B^j_x$ into perfect sets for $E_x$. We can then assemble these into sets of pairs---taking, for instance, $A^i$ to be $\{(x,y)\mid y\in A^i_x \}$.

In fact, for later purposes, we do need to extract slightly more from this situation: we wish to obtain a partition $X\times Y=\bigsqcup_i A^i$ so that each $A^i_x$ is perfect for $E_x$, and also the sets $A^i$ are measurable. (In the finite setting, this amounts to showing that the sets $A^i$ can be chosen from $\mathcal{F}^{E,N}_{X\times Y}$ for a suitable bound $N$, see Definition \ref{def: algebras of fibers}; we do not show this here, although it follows from the following proposition by the same arguments we use in later sections.)

\begin{theorem}\label{lem: uniform perf decomp}
Suppose that $E \in \mathcal{B}_{X \times Y \times Z}$ and $E_{x} \in \mathcal{B}_{Y \times Z}$ is $\mu$-stable for almost all $x \in X$. Then there is a partition of $X \times Y$ into countably many sets $A^i \in \mathcal{B}^{E}_{X \times Y}, i \in \omega$, so that for almost every $x \in X$, $\left(A^i_x : i \in  \omega \right)$ is a partition of $Y$ into countably many sets perfect for $E_x$ (viewed as a binary relation on $\left( X \times Y \right) \times Z$).
\end{theorem}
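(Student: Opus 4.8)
The plan is to build the partition $\{A^i\}$ of $X\times Y$ one perfect ``slice-wise'' piece at a time by a greedy/maximality argument, working relative to a previously chosen finite union of pieces, exactly mirroring the proof of Lemma~\ref{lem: perf part stable} but carried out fiberwise and uniformly in $x$. The crucial input is Lemma~\ref{lem: pos meas of perf sets for stable E}, which for a $d$-$\mu$-stable relation and a positive-measure set $U$ produces $1\le m\le d$, $\sigma\in 2^m$ and a positive-measure set of parameters $\bar b\in Y^m$ (here $Z^m$, since we work with $E_x(y;z)$) such that $U\cap (E_x)^\sigma_{\bar b}$ is perfect and of positive measure for $E_x$. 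I would like to invoke this \emph{simultaneously for almost all $x$}. The way to do that is to pass to the higher-arity relation $E$ itself rather than working fiber-by-fiber: for $m\le d$ and $\sigma\in 2^m$, and for a set $U\in\mathcal B^E_{X\times Y}$, define $U^{\sigma}_{\bar z}\subseteq X\times Y$ by $(x,y)\in U^{\sigma}_{\bar z}\iff (x,y)\in U \wedge \bigwedge_{i<m}\big((x,y,z_i)\in E^{\sigma(i)}\big)$; this set lies in $\mathcal B^E_{X\times Y\times Z^m}$ before fixing $\bar z$, and in $\mathcal B^E_{X\times Y}$ after. The point of Lemma~\ref{lem: pos meas of perf sets for stable E} (applied inside the graded probability space $\mathfrak P^E$ and inside each fiber $E_x$) is that for the relevant $U$, for each $x$ in a positive-measure set there exist $m,\sigma,\bar z$ with $U_x\cap (E_x)^\sigma_{\bar z}$ perfect of positive measure; averaging over $x$ (Fubini) and using that there are only finitely many choices of $(m,\sigma)$, I would fix one $(m,\sigma)$ and a positive-measure set of $(x,\bar z)$ pairs that works. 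Then a further Fubini/pigeonhole step lets me fix a single $\bar z\in Z^m$ such that $\{x : U_x\cap(E_x)^\sigma_{\bar z}\text{ is perfect for }E_x\text{ of positive measure}\}$ has positive measure, and the set $A:=U^{\sigma}_{\bar z}$ (intersected with that $x$-set, which is in $\mathcal B^E_X$ by Lemma~\ref{lem: alg of fibers closed under meas}) is a measurable subset of $U$ of positive measure, lying in $\mathcal B^E_{X\times Y}$, such that for almost every $x$ the fiber $A_x$ is perfect for $E_x$ and has positive measure. (Where $A_x$ is empty or null it is trivially perfect.)

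Given this ``fiberwise perfect chunk of positive measure'' lemma, the construction of the full partition proceeds by the standard greedy exhaustion. Enumerate $i\in\omega$; having chosen $A^0,\dots,A^{i-1}\in\mathcal B^E_{X\times Y}$ with pairwise disjoint, put $U_i:=(X\times Y)\setminus\bigcup_{j<i}A^j$. If $\mu(U_i)=0$, set $A^i:=U_i$ (automatically fiberwise perfect a.e., since a.e. fiber is null) and stop. Otherwise use the chunk lemma to obtain candidate subsets of $U_i$ with positive measure whose a.e. fiber is perfect; choose $A^i$ among them so that $\mu(A^i)$ is at least half the supremum of $\mu(B)$ over all such candidates $B\subseteq U_i$. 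If $\mu\big(\bigcup_{i\in\omega}A^i\big)<1$, then $U_\infty:=(X\times Y)\setminus\bigcup_i A^i$ has positive measure; applying the chunk lemma to $U_\infty$ yields a subset of positive measure $r>0$ that is a legitimate candidate at \emph{every} stage $i$ (since $U_\infty\subseteq U_i$), forcing $\mu(A^i)\ge r/2$ for all $i$ — contradicting $\sum_i\mu(A^i)\le 1$. So $\mu\big(\bigsqcup_i A^i\big)=1$; adding the null remainder as one more piece gives a genuine partition. Finally, to see that for almost every fixed $x$ the family $(A^i_x:i\in\omega)$ is a partition of $Y$ into sets perfect for $E_x$: each $A^i$ lies in a countable set, the ``$A^i_x$ perfect for $E_x$'' condition holds off an $x$-null set for each $i$, and $\bigcup_i A^i$ has full measure in $X\times Y$, so by Fubini and countable additivity for a.e. $x$ we have $\mu_Y\big(\bigcup_i A^i_x\big)=1$ and every $A^i_x$ perfect; discard the $x$-null exceptional set (and, if desired, redistribute the leftover $Y$-null set into $A^0_x$).

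The main obstacle, and the step I would spend the most care on, is the \emph{uniformity in $x$}: turning the pointwise statement ``for a.e. $x$ there exist $m,\sigma,\bar z$ that work for $E_x$'' into ``there exists a single $(m,\sigma,\bar z)$ that works for a positive-measure set of $x$'', and, even more delicately, ensuring that the set $A^i$ so produced is genuinely in $\mathcal B^E_{X\times Y}$ rather than merely $\mathcal B_{X\times Y}$. The first is handled by Fubini plus the finiteness of the index set $\{(m,\sigma): m\le d,\sigma\in 2^m\}$; one has to be mildly careful that the relevant sets — e.g. $\{(x,\bar z): U_x\cap(E_x)^\sigma_{\bar z}\text{ perfect of positive measure}\}$ — are measurable, which follows from Lemma~\ref{lem: alg of fibers closed under meas} (perfection of a fiber is an intersection over rationals of conditions on $\mu$ of sections of a $\mathcal B^E$-set, hence $\mathcal B^E$-measurable) and the closure of $\mathcal B^E$ under the fiber-measure operations. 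The second is automatic from the construction: we only ever form sets of the shape $U^\sigma_{\bar z}$ from $U\in\mathcal B^E$ by adjoining at most $d$ new conditions involving fibers of $E$, and intersect with $\mathcal B^E_X$-sets, so $A^i\in\mathcal B^E_{X\times Y}$ throughout; the greedy maximality argument only uses measures and so is unaffected.
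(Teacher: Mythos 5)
Your proposal is correct, but it takes a genuinely different route from the paper. You run the greedy/half-maximal construction from Lemma \ref{lem: perf part stable} fiberwise over $\omega$: you isolate a ``chunk lemma'' producing, for each positive-measure $U\in\mathcal{B}^E_{X\times Y}$, a set $A\subseteq U$ of positive measure whose a.e.\ fiber $A_x$ is perfect for $E_x$, and then iterate, taking each $A^i$ to have measure at least half the supremum of available candidates in the remainder, so that if $\mu\bigl(\bigsqcup_i A^i\bigr)<1$ a single surviving candidate of measure $r>0$ would force $\mu(A^i)\ge r/2$ for all $i$, contradicting summability. The paper instead runs a transfinite induction over countable ordinals: at each step $\alpha$ it constructs $A^\alpha$ so that $A^\alpha_x$ has \emph{positive} measure for almost \emph{every} $x$ in the remaining part of $X$, via the inner Claim \ref{cla: perfect sets measure one} which produces a \emph{countable} sequence $\bar{c}^\alpha\in Z^\omega$ of parameters and a case-by-case assignment over all $(I,\sigma)$ with $I\in\omega^m$. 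Termination there is by ``no strictly increasing $\omega_1$-sequence of reals'', and that same device appears again inside the proof of the Claim. Your approach buys a shorter, $\omega$-indexed construction without the nested transfinite loops; the paper's buys a more constructive assignment (no suprema) and progress ``simultaneously in $x$'' at each step.

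There is one small but real gap in your chunk lemma: the pigeonhole over finitely many $(m,\sigma)$ requires a \emph{uniform} bound $d$ on the $\mu$-stability rank of $E_x$, whereas the hypothesis gives only that $E_x$ is $\mu$-stable for a.e.\ $x$, with rank $d_x$ potentially unbounded over $x$. Without fixing $d$ first, ``there are only finitely many choices of $(m,\sigma)$'' is simply false, and the step from ``for each $x$ some $(m,\sigma,\bar z)$ works'' to ``a single $(m,\sigma)$ works for a positive-measure set of $x$'' fails. The fix is short and is exactly what the paper does at the start of Claim \ref{cla: perfect sets measure one}: let $X_d:=\{x: E_x \text{ is } d\text{-}\mu\text{-stable}\}$ (measurable by Lemma \ref{lem: alg of fibers closed under meas} applied to the tree relation $\Tree^{E_x,d}$), observe that a.e.\ $x$ with $\mu(U_x)>0$ lies in some $X_d$, and pick by countable additivity a single $d$ for which $X_d\cap\{x:\mu(U_x)>0\}$ has positive measure; then restrict to that set before applying Lemma \ref{lem: pos meas of perf sets for stable E}. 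With this added, the rest of your argument --- the Fubini steps, the $\mathcal{B}^E$-measurability of $\{(x,\bar z): U_x\cap(E_x)^\sigma_{\bar z}\text{ perfect of positive measure}\}$ via Lemma \ref{lem: alg of fibers closed under meas} and Fact \ref{lem: meas of av fib}, the inheritance of fiberwise perfection under subsets, the greedy termination, and the redistribution of the $x$-null and $Y$-null remainders --- is correct.
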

\begin{proof}

By transfinite induction on a countable ordinal $\alpha$ we define  sets $A^{\alpha} \in \mathcal{B}^{E}_{X \times Y}$ satisfying: for almost every $x \in X$, 
\begin{itemize}
	\item  the fiber $A^{\alpha}_x \in \mathcal{B}^{E}_{Y}$ is perfect for $E_x \in \mathcal{B}_{Y \times Z}$;
	\item the sets $A^{\beta}_x, \beta \leq \alpha$  are disjoint. 
\end{itemize}
Let $\alpha$ be an arbitrary countable ordinal, and assume we have already defined $A^{\beta} \in \mathcal{B}^{E}_{X \times Y}$ for all $\beta < \alpha$.

Let $\bar{A}^{\alpha} := \bigcup_{\beta < \alpha}A^{\beta}$, then $\bar{A}^{\alpha}\in \mathcal{B}^{E}_{X \times Y}$ as $\alpha$ is countable (so $\bar{A}^{0} = \emptyset$).
Let  $X_{\alpha, \emptyset} := \left\{ x \in X : \mu \left( Y \setminus \bar{A}^{\alpha}_x \right) = 0 \right\}$, we have $X_{\alpha, \emptyset} \in \mathcal{B}^{E}_{X}$ by Lemma \ref{lem: alg of fibers closed under meas} (we will continue to use Lemma \ref{lem: alg of fibers closed under meas} freely throughout the proof to conclude measurability of various sets with respect to the algebras of $E$-fibers).

~

\noindent \textbf{Case 1:} $\mu \left( X_{\alpha, \emptyset} \right) = 1$.

Then we terminate the construction. It follows  that for almost every $x \in X$, $\left( A^{\beta}_x : \beta < \alpha \right)$ is a partition of $Y$ (up to measure $0$) into countably many sets in $\mathcal{B}^{E}_{Y}$ perfect for $E_x$. Hence by Fubini, also $\left( A^{\beta}: \beta < \alpha \right)$ is a partition, up to measure $0$, of $X \times Y$ into countably many sets.

~

\noindent \textbf{Case 2:} $\mu \left( X_{\alpha, \emptyset} \right) < 1$.

We will need the following claim:
\begin{claim}\label{cla: perfect sets measure one}
There exists a countable sequence $\bar{c}^{\alpha} = \left(c^\alpha_i : i \in \omega \right) \in Z^{\omega}$ so that for the set  $P^{\alpha, \bar{c}^{\alpha}} \in \mathcal{B}^{E}_{X}$ defined by 
\begin{gather*}
 \Bigg\{ x \in X \setminus X_{\alpha, \emptyset} : \bigvee_{m \in \omega} \bigvee_{I \in  \omega^m} \bigvee_{\sigma \in 2^m} \Big(\left(E_x \right)^{\sigma}_{\left(c_i : i \in I \right)} \cap \left(Y \setminus \bar{A}^{\alpha}_x \right)  \textrm{ is perfect for } E_x \\
	\textrm{ and } \mu_{Y}\left( \left(E_x \right)^{\sigma}_{\left(c_i : i \in I \right)} \cap \left(Y \setminus \bar{A}^{\alpha}_x \right)  \right) > 0 \Big) \Bigg\} 
\end{gather*}
 we have $P^{\alpha,\bar{c}^{\alpha}} =^0 X \setminus X_{\alpha, \emptyset}$.
\end{claim}
\begin{proof}
	For $d \in \omega$, let $X_d \in \mathcal{B}^{E}_{X}$  be the set
	$$\left\{ x \in X \setminus X_{\alpha,\emptyset} : d \textrm{ is minimal so that }E_x\restriction_{\left(Y \setminus \bar{A}^{\alpha}_x \right) \times Z} \textrm{ is } d\textrm{-}\mu\textrm{-stable} \right\}.$$
	 By assumption, as $E_x$ is $\mu$-stable for almost every $x\in X$, we have  $X \setminus X_{\alpha,\emptyset} =^0 \bigsqcup_{d \in \omega} X_d$. Fix $d$, without loss of generality $\mu_{X}(X_d) >0$, and consider the set
\begin{gather*}
	Q^{\alpha,d} := \Bigg\{ (x, \bar{z}) \in X_d \times Z^d : \bigvee_{\sigma \in 2^d} \Big( \left( E_x \right)^{\sigma}_{\bar{z}} \cap \left(Y \setminus \bar{A}^{\alpha}_x \right) \textrm{ is perfect for } E_x \\
	\textrm{and } \mu_{Y} \left( \left( E_x \right)^{\sigma}_{\bar{z}} \cap \left(Y \setminus \bar{A}^{\alpha}_x \right) \right) > 0\Big)
	\Bigg\} \in \mathcal{B}^{E}_{X \times Z^d}.
\end{gather*}
For every $x \in X_d \subseteq X \setminus X_{\alpha, \emptyset}$, by Lemma \ref{lem: pos meas of perf sets for stable E} we have $\mu_{Z^d} \left( Q^{\alpha,d}_{x} \right) > 0$. Then by Fubini $\mu_{X \times Z^d} \left( Q^{\alpha,d}\right) > 0$, and so $\mu_{X}\left(Q^{\alpha,d}_{\bar{c}^{\alpha,d,0}} \right) > 0$ for some $\bar{c}^{\alpha,d,0} \in Z^d$.
If $\mu_{X} \left( Q^{\alpha,d}_{\bar{c}^{\alpha,d,0}} \right) < \mu_{X} (X_d)$, replacing $X_d$ by $X_d \setminus Q^{\alpha,d}_{\bar{c}^{\alpha,d,0}}$ and repeating the same argument, we find some $\bar{c}^{\alpha,d,1} \in Z^{d}$ so that $\mu_{X} \left( Q^{\alpha,d}_{\bar{c}^{\alpha},d,1} \setminus Q^{\alpha,d}_{\bar{c}^{\alpha},d,1} \right) >0 $.  Continuing by induction on 
 a countable ordinal $\beta < \omega_1$ we can  choose tuples $\bar{c}^{\alpha, d, \beta} \in Z^d$ so that $\mu_{X} \left(\bigcup_{\gamma < \beta} Q^{\alpha,d}_{\bar{c}^{\alpha, d, \gamma}} \right)$ is strictly increasing with $\beta$, which is impossible since $\mathbb{R}$ does not contain strictly increasing sequences of order type $\omega_1$, so we must get $X_d =^0 \bigcup_{\gamma < \beta} Q^{\alpha,d}_{\bar{c}^{\alpha, d, \gamma}} $  for some countable ordinal $\beta$. We let $\bar{c}^{\alpha,d}$ be the countable sequence obtained by concatenating $ \left(\bar{c}^{\alpha,d,\gamma} : \gamma < \beta\right)$.
Similarly, we obtain a sequence $\bar{c}^{\alpha,d}$ for each $d \in \omega$, and let $\bar{c}^{\alpha}$ be the countable sequence obtained by concatenating $\left(\bar{c}^{\alpha,d} : d \in \omega \right)$ (and reordering to make the index set $\omega$). By construction we have $X_d \subseteq P^{\alpha,\bar{c}_{\alpha}} $ for all $d \in \omega$, hence $P^{\alpha,\bar{c}_{\alpha}} =^{0} X \setminus X_{\alpha, \emptyset}$, proving the claim.
\end{proof}

Now let $\bar{c}^{\alpha}$ be as given by Claim \ref{cla: perfect sets measure one}.
Let $\prec$ be an arbitrary well-ordering on the countable set $\bigcup_{m \in \omega}\left( \omega^{m} \times 2^{m} \right)$.
For $I \in \omega^{m}$ and $\sigma \in 2^{m}$, let 
\begin{gather*}
	X^{0}_{\alpha, I, \sigma} := \Bigg\{ x \in X \setminus X_{\alpha, \emptyset} : \left(E_x \right)^{\sigma}_{\left(c_i : i \in I \right)} \cap \left(Y \setminus \bar{A}^{\alpha}_x \right)  \textrm{ is perfect for } E_x \\
	\textrm{ and } \mu_{Y}\left( \left(E_x \right)^{\sigma}_{\left(c_i : i \in I \right)} \cap \left(Y \setminus \bar{A}^{\alpha}_x \right)  \right) > 0  \Bigg\} \in \mathcal{B}^{E}_{X},
\end{gather*}
and let 
\begin{gather*}
	X_{\alpha,I, \sigma} := X^0_{\alpha, I, \sigma} \setminus \left( \bigcup_{(J,\tau) \prec (I, \sigma)}  X^{0}_{\alpha, J ,\tau}\right) \in \mathcal{B}^{E}_{X}.
\end{gather*}
By Claim \ref{cla: perfect sets measure one} and construction, $\left\{ X_{\alpha, \emptyset} \right\} \cup \left\{ X_{\alpha,I,\sigma} : (I,\sigma) \in \bigcup_{m \in \omega} \left( \omega^n \times 2^{m} \right) \right\}$ is a partition of $X$, up to measure $0$.
We define $A^{\alpha} \in \mathcal{B}^{E}_{X \times Y}$ via 
\begin{gather*}
	A^{\alpha} := \bigcup_{(I, \sigma) \in \bigcup_{m \in \omega} \left(\omega^{m} \times 2^{m} \right)} \Bigg\{ (x,y) \in X \times Y : x \in X_{\alpha, I, \sigma} \ \land  \\
	y \in  \left(  \left(E_x \right)^{\sigma}_{\left(c_i : i \in I \right)} \cap \left(Y \setminus \bar{A}^{\alpha}_x \right) \right)  \Bigg\}.
\end{gather*}

Finally, we claim that the construction must terminate in Case 1 for some countable ordinal $\alpha$.  By construction, for each $\alpha$ we have $\mu \left( X \setminus X_{\alpha, \emptyset} \right) > 0$ and for every $x \in X \setminus X_{\alpha, \emptyset}$ we have $\mu \left( A^{\alpha}_x \right) > 0$
, hence by Fubini 
$\mu \left( A^{\alpha} \right) \geq \int_{X \setminus X_{\alpha, \emptyset}} \mu \left(A^{\alpha}_x \right)> 0$. It follows that $\mu \left( \bar{A}^{\alpha} \right) < \mu \left( \bar{A}^{\beta} \right)$ for all countable ordinals $\alpha < \beta $. But this is a contradiction as there are no strictly increasing sequences of order type $\omega_1$ in $\mathbb{R}$.
\end{proof}

\section{Two Directions of slice-wise Stability}\label{sec: two dirs of slicewise stab}

A more interesting situation is when $E$ is slice-wise stable in two directions---that is, we have $E\subseteq X\times Y\times Z$ so that all $E_x$ and also all $E_y$ are $d$-stable. For a simple (but dull) example, if $X=Y=Z=(0,1)$ then $E=\{(x,y,z)\mid x<y\}$ has this property.

In this case, we will show that we can actually obtain a partition of $X\times Y$ into perfect sets.

The following example (which is just an infinitary version of the example used in the proof of (1)$\Rightarrow$(2) in Theorem \ref{thm: metastable general equivalences})  demonstrates that we cannot hope to have also  a partition of $Z$ into perfect sets for $E \subseteq \left(X \times Y \right) \times Z$, as we did with ordinary stability, even when the slices of $E$ are stable in all three directions.

\begin{example}\label{ex: x=y<z}
	Let $X = Y = Z := [0,1]$ and  $E := \left\{ (x,y,z) : x = y < z \right\}$, then $E$ is slice-wise stable (in all three directions).
	Place the Lebesgue measure on $Z$, and place discrete measures on $X$ and $Y$ which place a positive measure on each rational number in $[0,1]$. Now if $A \subseteq Z$ has positive Lebesgue measure, we can always choose $q \in \mathbb{Q} \cap [0,1]$ so that both $A \cap [0,q)$ and $A \cap (q, 1]$ have positive measure, that is $0 < \mu \left( E_{(q,q)} \cap A \right) < \mu \left(A \right)$. But $\mu \left(\left\{ (q,q) \right\} \right) >0$, so the set  $A$ is not perfect.
\end{example}

We first need the following technical lemma, showing the existence of a maximum local symmetrization.

\begin{lemma}\label{lem: loc sym lemma}
	Assume $X$ and $Y$ are sorts in a graded probability space, and $A \subseteq X \times Y$ with $A \in \mathcal{B}_{X \times Y}$ and $\mu(A) > 0$. Then there exist sets $U \in \mathcal{B}_{X}, V \in \mathcal{B}_{Y}$ so that:
	\begin{enumerate}
		\item $\mu \left( A \cap \left( U \times Y \right) \right) > 0$,
		\item $\mu \big( \left( A \cap \left( U \times Y \right) \right) \triangle \left( A \cap \left( X \times V \right) \right) \big) =  0$,
		\item for any $U' \subseteq U, U' \in \mathcal{B}_{X}$ with both $\mu \left( A \cap \left( U' \times Y \right) \right) >0$ and $\mu \left( A \cap \left( \left( U \setminus U' \right) \times Y \right) \right) >0$, for any $V' \subseteq Y, V' \in \mathcal{B}_{Y}$ we have 
		\begin{equation*}
			\mu \Big( \left( A \cap \left( U' \times Y \right) \right) \triangle \left( A \cap \left( U \times V' \right) \right) \Big) >0.
		\end{equation*}
	\end{enumerate}
	In particular, by (1) and (2) we have $\mu \left(A \cap (U \times V) \right) > 0$, hence by Fubini $\mu(U), \mu(V) >0$.
	\end{lemma}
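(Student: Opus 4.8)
The plan is to reformulate everything in terms of the lattice of ``symmetric pairs'' for $A$ and then to locate a suitably minimal one.

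First I would set up the framework. Call a pair $(U,V)\in\mathcal B_X\times\mathcal B_Y$ \emph{symmetric} if $\mu\big((A\cap(U\times Y))\triangle(A\cap(X\times V))\big)=0$; by routine bookkeeping with $=^0$ this forces $A\cap(U\times Y)=^0A\cap(U\times V)=^0A\cap(X\times V)$ (the pieces of $A$ lying in $U\times(Y\setminus V)$ or $(X\setminus U)\times V$ are null). Call $U$ \emph{symmetric} if it has a partner $V$; the collection $\mathcal S$ of symmetric sets contains $X$ (partner $Y$) and, again by $=^0$--manipulations, is closed under complements and countable unions and intersections (performing the corresponding operation on partners), so it is a $\sigma$-subalgebra of $\mathcal B_X$. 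Set $\nu(W):=\mu(A\cap(W\times Y))$, a finite measure with $\nu(X)=\mu(A)>0$ whose values on $\mathcal S$ agree (via any partner) with $V\mapsto\mu(A\cap(X\times V))$.

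Next I would reduce condition (3) to an atom condition. Namely: for a symmetric pair $(U,V)$, condition (3) holds iff $U$ is a $\nu$-atom of $\mathcal S$, i.e.\ there is no $W\in\mathcal S$ with $W\subseteq^0U$ and $0<\nu(W)<\nu(U)$. If some $U'\subseteq U$ and $V'$ witness the failure of (3) (so $A\cap(U'\times Y)=^0A\cap(U\times V')$), then using symmetry of $(U,V)$ one may replace $V'$ by $V'\cap V$ without changing $A\cap(U\times V')$, and then $A\cap(X\times(V'\cap V))=^0A\cap(U\times(V'\cap V))$; hence $U'$ is itself symmetric with partner $V'\cap V$ and $0<\nu(U')<\nu(U)$. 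Conversely, a symmetric $W\subseteq^0U$ with $0<\nu(W)<\nu(U)$ and partner $V_W$ makes the pair $(W\cap U,\,V_W\cap V)$ witness that (3) fails. So the whole lemma comes down to producing a $\nu$-atom of $\mathcal S$ (then take $U$ to be the atom and $V$ any partner: (1) is $\nu(U)>0$, (2) is the definition of partner, (3) is the atom condition, and the ``in particular'' follows since $A\cap(U\times V)=^0A\cap(U\times Y)$ has measure $\nu(U)>0$, forcing $\mu_X(U),\mu_Y(V)>0$).

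The hard part is showing $(\mathcal S,\nu)$ has an atom — here a naive transfinite recursion (or Zorn's lemma) fails, because a decreasing chain of symmetric sets of positive measure can intersect in a null set. The substantive observation is that this can only happen when $A$ is degenerate: I would argue by contradiction that if $(\mathcal S,\nu)$ is atomless then $\mu(A)=0$. Atomlessness of a finite measure algebra lets one build, for each $n$, a finite partition $\mathcal P_n$ of $X$ (mod null) into symmetric sets of $\nu$-measure $<2^{-n}$, with $\mathcal P_{n+1}$ refining $\mathcal P_n$ (Sierpi\'nski's theorem: an atomless finite measure attains all intermediate values). Choose partners $V_W$ for $W\in\mathcal P_n$ coherently, so that $W'\subseteq^0W$ (with $W'\in\mathcal P_{n+1}$, $W\in\mathcal P_n$) gives $V_{W'}\subseteq V_W$ — replace a chosen partner of $W'$ by its intersection with $V_W$; this stays a partner. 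Since $\bigcup_{W\in\mathcal P_n}(W\times V_W)$ covers $A$ up to a null set and these sets decrease in $n$, we get $A\subseteq^0\Delta:=\bigcap_n\bigcup_{W\in\mathcal P_n}(W\times V_W)$. For $\mu_X$-a.e.\ $x$, writing $W_n(x)$ for the piece of $\mathcal P_n$ containing $x$, the fiber $\Delta_x$ is the decreasing intersection $\bigcap_nV_{W_n(x)}$ and $\mu(A\cap(X\times V_{W_n(x)}))=\nu(W_n(x))<2^{-n}$, so by dominated convergence $\mu(A\cap(X\times\Delta_x))=0$; hence $\mu_X(A_y)=0$ for $\mu_Y$-a.e.\ $y\in\Delta_x$, and therefore for $\mu_Y$-a.e.\ $y\in A_x$ (since $A_x\subseteq^0\Delta_x$). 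By Fubini this means $A$ lies, up to a null set, inside $\{(x,y):\mu_X(A_y)=0\}$; but $\mu(A)=\int_Y\mu_X(A_y)\,d\mu_Y=\int_{\{\mu_X(A_y)\ne0\}}\mu_X(A_y)\,d\mu_Y$ and that integral vanishes because $A$ avoids $\{\mu_X(A_y)\ne0\}$ mod null. Thus $\mu(A)=0$, a contradiction, so $(\mathcal S,\nu)$ has an atom and the lemma follows. The only other point needing care is justifying the classical measure-algebra facts (Sierpi\'nski/Lyapunov, existence of finite partitions into arbitrarily small pieces) in the abstract graded-probability-space setting, which is routine.
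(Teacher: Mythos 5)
Your proof is correct, but it takes a genuinely different route from the paper's. The paper proceeds by a transfinite recursion on countable ordinals: it fixes $\varepsilon>0$ with $A^+:=\{x:\mu(A_x)\ge\varepsilon\}$ of positive measure, at each successor stage uses the putative failure of (3) to shrink $(U_\alpha,V_\alpha)$ while preserving $\mu(U_\alpha\cap A^+)>0$, proves the side inequality $\mu(V_\alpha)\ge\varepsilon$, and at limit stages, if the intersection drops out of $A^+$, \emph{restarts} the recursion on the complement --- the lower bound $\varepsilon$ on $\mu(V_\alpha)$ guarantees only finitely many restarts, and thereafter the strict decrease of $\mu(U_\alpha)$ along $\omega_1$ gives the contradiction. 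Your observation that a naive transfinite recursion fails because a chain of positive-measure symmetric sets can degenerate is exactly the difficulty the paper's $A^+$/restart bookkeeping is designed to overcome. Your argument sidesteps the recursion entirely: you isolate the $\sigma$-algebra $\mathcal{S}$ of symmetric sets with the measure $\nu(W)=\mu(A\cap(W\times Y))$, show that condition (3) is precisely the statement that $U$ is a $\nu$-atom of $\mathcal{S}$ (a clean and correct equivalence), and then show atomlessness is impossible by building refining finite partitions of $X$ mod null into symmetric sets of $\nu$-measure $<2^{-n}$ with coherent nested partners, which forces $A$ to live (mod null) over $\{y:\mu_X(A_y)=0\}$ and hence $\mu(A)=0$, a contradiction. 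The two arguments trade complexity differently: the paper keeps everything at the level of concrete set manipulations at the price of a somewhat delicate restart mechanism, while your version introduces the measure-algebra abstraction but then uses only a countable nested-partition construction and classical Sierpi\'nski-type facts. Your approach has the virtue of naming the obstruction (atomlessness of $(\mathcal{S},\nu)$) and explaining exactly why it cannot occur when $\mu(A)>0$; the one point you flag as ``routine'' --- Sierpi\'nski's intermediate-value theorem for atomless finite measures applied to $\nu\restriction\mathcal{S}$ --- is indeed routine here, since $\nu$ on $\mathcal{S}$ is an ordinary countably additive finite measure on a $\sigma$-algebra and the standard bisection argument applies verbatim.
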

\begin{proof}
	Assume that conclusion fails. Let $Y^{-} := \left\{ y \in Y : \mu(A_{y}) = 0 \right\} \in \mathcal{B}_{Y}$. As $\mu(A) > 0$, by Fubini we have $\mu(Y^{-}) < 1$; and also there exists some $\varepsilon > 0$ such that the set $A^{+} := \left\{ x \in X: \mu(A_x) \geq \varepsilon \right\} \in \mathcal{B}_{X}$ has positive measure. Let $U_0 := X, V_{0} := Y \setminus Y^{-}$, in particular $\mu_{X}(V_0) > 0$.
	
By transfinite induction on a countable ordinal $\alpha$, we try to choose sets $U_{\alpha} \in \mathcal{B}_{X}, V_{\alpha} \in \mathcal{B}_{Y}$ satisfying:
		\begin{enumerate}[(a)]
		\item $\mu \left( U_{\alpha} \cap A^{+} \right) >0$ (which implies $\mu \left( A \cap \left( U_{\alpha} \times Y\right) \right) > 0$ by Fubini),
		\item $\mu \big( \left( A \cap \left( U_{\alpha}\times Y \right) \right) \triangle \left( A \cap \left( X \times V_{\alpha} \right) \right) \big) =  0$.
	\end{enumerate}
	Note that $U_0, V_0$ satisfy (a),(b), and suppose we have chosen $U_{\alpha}, V_{\alpha}$. Note also that if $U_{\alpha}, V_{\alpha}$ satisfy (a),(b), then they also satisfy: 
 \begin{itemize}[(c)]
 	\item $\mu \left( V_{\alpha} \right) \geq \varepsilon$.
 \end{itemize}
 Indeed, suppose not, that is $\mu \left( V_{\alpha} \right) < \varepsilon$. Let $U'_{\alpha} := U_{\alpha} \cap A^{+}$, by (a) we have $\gamma := \mu \left( U'_{\alpha} \right) > 0$. On the one hand, by Fubini,  $\mu \left(A \cap  \left( U'_{\alpha} \times V_{\alpha}\right)\right) < \varepsilon \gamma$. On the other, by definition of $A^{+}$ and Fubini, $\mu \left(A \cap \left( U'_{\alpha} \times Y \right) \right) \geq \varepsilon \gamma$. Then $\mu \left(A \cap \left(U'_{\alpha} \times (Y \setminus V_{\alpha}) \right) \right) > 0$. But $U'_{\alpha} \times (Y \setminus V_{\alpha})$ is contained in $U_{\alpha}\times Y$ and disjoint from $X \times V_{\alpha} $, contradicting (b). This proves (c).

	Since the condition (3) of the lemma is not satisfied for $U_{\alpha}, V_{\alpha}$, there exist sets $U' , V'$ such that:
 \begin{itemize}
 \item $U' \subseteq U_{\alpha}$, $U' \in \mathcal{B}_{X}$ and $V' \subseteq Y, V' \in \mathcal{B}_{Y}$,
 	\item $\mu \left( A \cap \left( U' \times Y \right) \right) >0$,
\item $\mu \left( A \cap \left( \left( U_{\alpha} \setminus U' \right) \times Y \right) \right) >0$,
	\item $\mu \left( \left( A \cap \left( U' \times Y \right) \right) \triangle \left( A \cap \left( U_{\alpha} \times V' \right) \right) \right) =0$.
 \end{itemize}
Then, using in particular (b) for $U_{\alpha}, V_{\alpha}$, the sets $U_{\alpha}\setminus U', V_{\alpha} \setminus V'$ also satisfy these last four properties. And as $U_{\alpha}$ satisfies (a), at least one of $U' \cap A^{+}$ or $\left( U_{\alpha} \setminus U' \right) \cap A^{+}$ has positive measure, so we take it to be $U_{\alpha+1}$, and the corresponding choice of $V'$ or $V_{\alpha} \setminus V'$ to be $V_{\alpha + 1}$. Then $U_{\alpha+1}, V_{\alpha+1}$ also satisfy (a) and (b).

 Assume $\lambda$ is a countable limit ordinal, and we have decreasing families of sets $U_{\alpha} \in \mathcal{B}_{X}$ and $V_{\alpha} \in \mathcal{B}_{Y}$ satisfying (a),(b) for  $\alpha < \lambda$. Let $U_{\lambda} := \bigcap_{\alpha < \lambda} U_{\alpha} \in \mathcal{B}_{X}$ and  $V_{\lambda} := \bigcap_{\alpha < \lambda} V_{\alpha} \in \mathcal{B}_{Y}$. Then $U_{\lambda}, V_{\lambda}$ still satisfy (b) as
 \begin{gather*}
 	\left( A \cap \left( \left( \bigcap_{\alpha < \lambda} U_{\alpha} \right) \times Y \right) \right) \triangle \left( A \cap \left( X \times \left( \bigcap_{\alpha < \lambda} V_{\alpha} \right) \right) \right)\\
 	=\left(\bigcap_{\alpha < \lambda} \left( A \cap \left(  U_{\alpha}  \times Y \right)  \right) \right) \triangle \left( \bigcap_{\alpha < \lambda} \left( A \cap \left( X \times V_{\alpha} \right) \right) \right)\\
 	\subseteq \bigcup_{\alpha < \lambda} \left( \left( A \cap \left( U_{\alpha}\times Y \right) \right) \triangle \left( A \cap \left( X \times V_{\alpha} \right) \right)  \right),
 \end{gather*}
and $\mu(V_{\lambda}) \geq \varepsilon$, as $\mu$ is a countably additive probability measure and $V_{\lambda}$ is an intersection of a countable family of decreasing sets all of measure $\geq \varepsilon$ (using (a)+(b) imply (c)). Assume (a) fails, that is $\mu \left( U_{\lambda} \cap A^{+} \right) =0$.  Then we can restart the transfinite construction with $U'_0 := U_0 \setminus U_{\lambda}$ and $V'_0 := V_0 \setminus V_{\lambda}$, noting that  $U'_0, V'_0$ satisfy (a),(b).
Note that $\mu(V'_0) \leq \mu(V_0) - \varepsilon$ (as $V_{\lambda} \subseteq V_0$ and $\mu(V_{\lambda}) \geq \varepsilon$), hence every time we do this we remove measure $\varepsilon$ from our starting set $V_0$. 
As (a),(b) imply (c), we must have $\mu(V'_0) \geq \varepsilon$, so this can happen only finitely many times. This means that (after finitely many restarts), we can choose $U_{\alpha}, V_{\alpha}$ satisfying (a),(b) for every countable ordinal, and moreover by construction we have $\mu(U_{\alpha}) < \mu(U_{\beta})$ for all countable ordinals $\beta < \alpha$. Which is a contradiction as $[0,1]$ cannot contain a strictly decreasing sequence of order type $\omega_1^{*}$ (the inverse ordering on the ordinal $\omega_1$).
\end{proof}

By a repeated application of Lemma \ref{lem: loc sym lemma} we get the following:
\begin{cor}\label{cor: part of binary into symm sets}
	Assume $(X,Y)$ is a graded probability space and $A \subseteq X \times Y$ with $A \in \mathcal{B}_{X \times Y}$. Then there exist countable partitions $X = \bigsqcup_{i \in \omega} U_i$ with $U_i \in \mathcal{B}_{X}$ and $Y = \bigsqcup_{i \in \omega} V_i$ with $V_i \in \mathcal{B}_{Y}$ such that for each $i \in \omega$ we have:
	\begin{enumerate}
		\item $\mu \left( \left( A \cap \left( U_i \times Y \right) \right) \triangle \left( A \cap \left( X \times V_i \right) \right)  \right) = 0$,
		\item for any $U' \subseteq U_i, U' \in \mathcal{B}_{X}$ such that both $\mu \left(A \cap \left(U' \times Y \right) \right) >0$ and $\mu \left(A \cap \left((U_i \setminus U') \times Y \right) \right) >0$, for any $V' \subseteq V_i, V' \in \mathcal{B}_{Y}$ we have $\mu \big( \left( A \cap \left( U' \times Y \right) \right) \triangle \left( A \cap \left( U_i \times V' \right) \right)  \big) > 0$.
	\end{enumerate}
\end{cor}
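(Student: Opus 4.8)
The plan is to build the partitions $\{U_i\}$ and $\{V_i\}$ by transfinite recursion, peeling off at each step a "maximal locally symmetric" piece furnished by Lemma \ref{lem: loc sym lemma}, and then to argue that the process exhausts $X\times Y$ up to measure zero. Concretely: suppose at stage $\alpha$ (a countable ordinal) we have already chosen pairwise-disjoint $U_\beta\in\mathcal B_X$ and $V_\beta\in\mathcal B_Y$ for $\beta<\alpha$, satisfying (1) and (2) relative to $A$. Set $U_{<\alpha}:=\bigsqcup_{\beta<\alpha}U_\beta$, $V_{<\alpha}:=\bigsqcup_{\beta<\alpha}V_\beta$, $X_\alpha:=X\setminus U_{<\alpha}$, $Y_\alpha:=Y\setminus V_{<\alpha}$, and consider $A_\alpha:=A\cap(X_\alpha\times Y_\alpha)\in\mathcal B_{X_\alpha\times Y_\alpha}$. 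If $\mu(A_\alpha)=0$ we stop; otherwise apply Lemma \ref{lem: loc sym lemma} inside the graded probability space on $(X_\alpha,Y_\alpha)$ to get $U\in\mathcal B_{X_\alpha}\subseteq\mathcal B_X$, $V\in\mathcal B_{Y_\alpha}\subseteq\mathcal B_Y$ with $\mu(A_\alpha\cap(U\times Y_\alpha))>0$, $\mu\bigl((A_\alpha\cap(U\times Y_\alpha))\triangle(A_\alpha\cap(X_\alpha\times V))\bigr)=0$, and the maximality property (3) of that lemma. Declare $U_\alpha:=U$, $V_\alpha:=V$.

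The first thing to check is that conditions (1) and (2) of the corollary hold for $U_\alpha,V_\alpha$ as sets inside the \emph{original} space $(X,Y)$, not just inside $(X_\alpha,Y_\alpha)$. For (1): since $U_\alpha\subseteq X_\alpha$ and $V_\alpha\subseteq Y_\alpha$, while the earlier pieces $U_\beta$ ($\beta<\alpha$) are disjoint from $X_\alpha$, the only part of $A$ meeting $U_\alpha\times Y$ outside $U_\alpha\times Y_\alpha$ is $A\cap(U_\alpha\times V_{<\alpha})$; by (1) of the previous stages this is (up to measure $0$) $A\cap(U_{<\alpha}\times(\bigsqcup_{\beta<\alpha}V_\beta))\cap(U_\alpha\times Y)$, which has measure $0$ because $U_\alpha\cap U_{<\alpha}=\emptyset$. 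A symmetric remark handles $A\cap(X\times V_\alpha)$, and so $(A\cap(U_\alpha\times Y))=^0(A\cap(U_\alpha\times Y_\alpha))=^0(A\cap(X_\alpha\times V_\alpha))=^0(A\cap(X\times V_\alpha))$, giving (1). For (2): any admissible splitting $U'\subseteq U_\alpha$ with both $\mu(A\cap(U'\times Y))>0$ and $\mu(A\cap((U_\alpha\setminus U')\times Y))>0$ has, by the computation just given, the same positivity with $Y$ replaced by $Y_\alpha$ and $A$ replaced by $A_\alpha$; and for any $V'\subseteq V_\alpha$, again $A\cap(U_\alpha\times V')=^0 A_\alpha\cap(U_\alpha\times V')$ while $A\cap(U'\times Y)=^0 A_\alpha\cap(U'\times Y_\alpha)$, so the conclusion $\mu\bigl((A\cap(U'\times Y))\triangle(A\cap(U_\alpha\times V'))\bigr)>0$ follows directly from clause (3) of Lemma \ref{lem: loc sym lemma} applied in $(X_\alpha,Y_\alpha)$ — note $V'\subseteq V_\alpha\subseteq Y_\alpha$ is in particular a member of $\mathcal B_{Y_\alpha}$, so the lemma applies. (One may also need to extend to $V'\subseteq Y$ arbitrary rather than $V'\subseteq V_\alpha$; but $A\cap(U_\alpha\times V')=^0 A\cap(U_\alpha\times(V'\cap V_\alpha))$ by (1), reducing to the case $V'\subseteq V_\alpha$.)

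It remains to see the recursion terminates at a countable ordinal and that the resulting pieces form partitions up to measure zero. Termination: by Lemma \ref{lem: loc sym lemma}(1), $\mu(A_\alpha\cap(U_\alpha\times Y_\alpha))>0$; hence by Fubini $\mu(U_\alpha)>0$ (indeed $\mu(U_\alpha)\cdot 1\geq\mu(A\cap(U_\alpha\times Y))>0$). Since the $U_\alpha$ are pairwise disjoint, $\sum_\alpha\mu(U_\alpha)\leq 1$, so only countably many $\alpha$ can be processed before $\mu(A_\alpha)=0$ — the process stops at some countable ordinal $\gamma$. At that point set $U_0':=X\setminus\bigsqcup_{\beta<\gamma}U_\beta$ and $V_0':=Y\setminus\bigsqcup_{\beta<\gamma}V_\beta$ and add them as the ``zeroth'' pieces (or fold them into some existing piece); then $\{U_\beta:\beta<\gamma\}\cup\{U_0'\}$ and $\{V_\beta:\beta<\gamma\}\cup\{V_0'\}$ are genuine countable partitions. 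We must check that these leftover pieces satisfy (1) and (2) vacuously: since $\mu(A_\gamma)=0$, i.e. $\mu(A\cap(U_0'\times V_0'))=0$, and (by (1) of the earlier stages, summed over $\beta<\gamma$) $A\cap(U_0'\times V_\beta)=^0\emptyset$ and $A\cap(U_\beta\times V_0')=^0\emptyset$ for each $\beta<\gamma$, we get $\mu(A\cap(U_0'\times Y))=0=\mu(A\cap(X\times V_0'))$, so (1) holds trivially and (2) holds vacuously (no admissible $U'$). Finally, reindex the countably many pieces by $\omega$.

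\textbf{Main obstacle.} The technical heart is the bookkeeping in the second paragraph — verifying that ``locally symmetric'' and especially the \emph{maximality} clause, which Lemma \ref{lem: loc sym lemma} delivers only relative to the shrinking space $(X_\alpha,Y_\alpha)$ and the truncated relation $A_\alpha$, transfer faithfully to statements about the original $A$ over all of $X\times Y$. The key enabling fact, used repeatedly, is that the cross-terms $A\cap(U_\alpha\times V_{<\alpha})$ and $A\cap(U_{<\alpha}\times V_\alpha)$ are null, which itself follows from condition (1) at the earlier stages together with disjointness of the pieces; making this induction airtight (and confirming the leftover pieces cause no trouble) is the one place demanding care, while termination via Fubini and the absence of $\omega_1$-sequences in $[0,1]$ is routine.
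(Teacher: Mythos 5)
Your proof is correct and follows essentially the same transfinite recursion as the paper: at each stage apply Lemma \ref{lem: loc sym lemma} to what remains of $A$ after removing the previously chosen rectangles, and terminate using that $[0,1]$ admits no uncountable family of disjoint positive-measure pieces. The paper states the construction more tersely (it restricts $A$ to $A'$ rather than shrinking the ambient sorts, and merely asserts "we may assume these new sets are disjoint from all the previous ones"), whereas you carefully verify the one non-obvious point — that the null cross-terms $A\cap(U_\alpha\times V_{<\alpha})$ and $A\cap(U_{<\alpha}\times V_\alpha)$, forced by condition (1) of earlier stages, let both the symmetry and maximality clauses transfer back from the truncated relation to the original $A$ — which is indeed the step the paper leaves implicit.
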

\begin{proof}
		Let $ U_0 := \emptyset, V_0 := \emptyset$. 
		Given a countable ordinal $\alpha$, assume we have already chosen $U_{\beta}, V_{\beta}$ of positive measure satisfying the requirement  for all $\beta < \alpha$. Let 
		$$A' := A \cap \left( \left( X \setminus \left( \bigcup_{\beta < \alpha} U_{\beta} \right) \right) \times \left( Y \setminus \left( \bigcup_{\beta < \alpha} V_{\beta} \right) \right) \right) \in \mathcal{B}_{X,Y}.$$
If $\mu(A') > 0$, we let $U_{\alpha}, V_{\alpha}$ with $\mu(U_{\alpha}), \mu(V_{\alpha}) > 0$ be as given by Lemma \ref{lem: loc sym lemma}, in particular $\mu \left(A \cap \left( U_{\alpha} \times V_{\alpha} \right) \right) > 0$.
From the definition of $A'$, we may assume that these new sets are disjoint from all the previous ones. As $\mu(A') < \mu(A)$, for some countable ordinal $\alpha$ we must get $\mu(A') = 0$. Then we let $U_{\alpha} := X \setminus \left( \bigcup_{\beta < \alpha} U_{\beta} \right), V_{\alpha} := Y \setminus \left( \bigcup_{\beta < \alpha} V_{\beta} \right)$ and obtain a required partition.
\end{proof}

\begin{remark}
Note that for a partition satisfying condition (1) in  Corollary \ref{cor: part of binary into symm sets}, $A$ is almost contained in the rectangles on the diagonal, that is $\mu \left(A \setminus \bigcup_{i \in \omega} \left(U_i \times V_i  \right)\right) = 0$.
\end{remark}

\begin{theorem}\label{prop: part XY into perf sets}
Suppose that $E \in \mathcal{B}_{X \times Y \times Z}$, $E_{x} \in \mathcal{B}_{Y \times Z}$ is $\mu$-stable for almost all $x \in X$, and $E_y \in \mathcal{B}_{X \times Z}$ is $\mu$-stable for almost all $y \in Y$. Then there is a partition of $X \times Y$ into $\mathcal{B}^{E}_{X \times Y}$-measurable sets perfect for $E$, viewed as a binary relation on $\left( X \times Y \right) \times Z$.
\end{theorem}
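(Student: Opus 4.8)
The idea is to combine the uniform perfect decomposition of Theorem \ref{lem: uniform perf decomp} with the maximal local symmetrization of Corollary \ref{cor: part of binary into symm sets}, and to iterate the combination transfinitely, using a measure-increase argument to guarantee termination. First I would apply Theorem \ref{lem: uniform perf decomp} to obtain a partition $X \times Y = \bigsqcup_{i \in \omega} A^i$ with $A^i \in \mathcal{B}^E_{X \times Y}$ so that for almost every $x \in X$, $\left( A^i_x : i \in \omega\right)$ is a partition of $Y$ into sets perfect for $E_x$. This handles the ``$x$-direction'' perfectly, but the $A^i$ need not be perfect for $E$ viewed as a binary relation on $(X \times Y) \times Z$, because their fibers over $Y$ (i.e.\ the slices $A^i_y$) need not be perfect for $E_y$.

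To fix the $y$-direction, fix one $A^i$ and think of it as a subset of $X \times Y$. Apply Corollary \ref{cor: part of binary into symm sets} to $A^i$ to get partitions $X = \bigsqcup_m U_m$, $Y = \bigsqcup_m V_m$ with $A^i$ almost contained in $\bigcup_m (U_m \times V_m)$ and satisfying the non-further-symmetrizability condition (2). Now within each piece $A^i \cap (U_m \times V_m)$, apply Theorem \ref{lem: uniform perf decomp} again, but in the roles of the variables swapped: regard $E$ as a relation with the distinguished variable $y$, so that $E_y \in \mathcal{B}_{X \times Z}$ being $\mu$-stable for a.e.\ $y$ lets us partition $V_m \times X$ (equivalently refine each $A^i\cap(U_m\times V_m)$) into countably many sets whose slices over $X$ are perfect for $E_y$ for a.e.\ $y$. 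The point of the maximal symmetrization is that, by condition (2) of Corollary \ref{cor: part of binary into symm sets}, once we are inside a symmetric block we cannot split $U_m$ nontrivially without destroying the symmetry with $V_m$; this is what prevents the two perfecting operations (one in the $x$-direction, one in the $y$-direction) from fighting each other indefinitely. I would make this precise by defining, for each countable ordinal $\alpha$, a partition $\mathcal{P}_\alpha$ of (a positive-measure part of) $X \times Y$ refining $\mathcal{P}_\beta$ for $\beta < \alpha$, where at even stages we re-apply Theorem \ref{lem: uniform perf decomp} in the $x$-direction to the not-yet-perfected remainder, at odd stages we re-apply it in the $y$-direction, and at limit stages we take common refinements; on the already-good part of the space the partition stabilizes, and Fubini together with Lemma \ref{lem: pos meas of perf sets for stable E} ensures a positive amount of measure becomes ``perfect in both directions'' at each successor stage.

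The key technical point is that a set $A \subseteq X \times Y$ is perfect for $E \subseteq (X\times Y)\times Z$ if and only if, for almost every $z \in Z$, $E_z$ does not $\mu$-split $A$; and by Fubini this is controlled by a combination of the behaviour of the slices $A_x \subseteq Y$ against $E_x$ and of the slices $A_y \subseteq X$ against $E_y$ — roughly, the argument of Lemma \ref{lem: prod of perf is perf} shows that if for a.e.\ $x$ the set $A_x$ is perfect for $E_x$ \emph{and} for a.e.\ $y$ the set $A_y$ is perfect for $E_y$, then $A$ is perfect for $E$ as a binary relation on $(X\times Y) \times Z$ (apply Lemma \ref{lem: 01 dens on perf sets} inside a.e.\ slice $E_z$, using that both $A_x$-type and $A_y$-type perfection survive fibering over $Z$). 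So the real content is organizing the transfinite iteration so that it terminates with a partition all of whose pieces are simultaneously perfect in the $x$-slices and perfect in the $y$-slices.

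\textbf{Main obstacle.} The hard part is the termination/convergence of the transfinite construction: after perfecting in the $x$-direction, the $y$-slices of the new pieces are generally no longer perfect, and vice versa, so one must show the process does not ping-pong forever. The tool for this is exactly Lemma \ref{lem: loc sym lemma}/Corollary \ref{cor: part of binary into symm sets}: inside a maximally symmetric block, further perfecting in one direction is forced to respect the block structure in the other, so each round of re-perfecting strictly increases the measure of the ``stable'' region (where both directions are already perfect) — and a strictly increasing sequence of measures cannot have order type $\omega_1$. Making this measure-increase estimate quantitative and checking that the limit-stage common refinements stay within $\mathcal{B}^E_{X\times Y}$ (using Lemma \ref{lem: alg of fibers closed under meas} and countability of the ordinals involved) is where the real work lies; the measurability bookkeeping is routine given the earlier lemmas but must be carried through carefully.
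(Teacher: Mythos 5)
Your overall outline — first a uniform perfect decomposition in the $x$-direction (Theorem \ref{lem: uniform perf decomp}), then maximal local symmetrization (Corollary \ref{cor: part of binary into symm sets}) — correctly identifies the two main ingredients and is close in spirit to the paper's proof. However, the pivotal step you announce is wrong, and the way you assign roles to the two ingredients misses what the symmetrization is actually for.

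You claim that if $A\subseteq X\times Y$ is such that $A_x$ is perfect for $E_x$ (a.e.\ $x$) and $A_y$ is perfect for $E_y$ (a.e.\ $y$), then $A$ is perfect for $E\subseteq(X\times Y)\times Z$. This is false. Take $X=Y=Z=[0,1]$ with Lebesgue measure, $E=\{(x,y,z):\ x\le\tfrac12\text{ and }y\le\tfrac12\text{ and }z\le\tfrac12\}\cup\{(x,y,z):\ x>\tfrac12\text{ and }y>\tfrac12\text{ and }z>\tfrac12\}$, and $A=[0,\tfrac12]^2\cup(\tfrac12,1]^2$. For each $x\le\tfrac12$ one has $A_x=[0,\tfrac12]$ and $(E_x)_z\in\{[0,\tfrac12],\emptyset\}$, so $A_x$ is perfect for $E_x$; similarly for $x>\tfrac12$ and for all $y$. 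Yet $E_z$ with $z\le\tfrac12$ equals $[0,\tfrac12]^2$, which $\mu$-splits $A$. So the "slice-wise perfect in both directions" condition does not imply perfection for $E((x,y);z)$, and Lemma \ref{lem: prod of perf is perf} does not generalize from rectangles to arbitrary $A$ the way you suggest.

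This is precisely what the maximal symmetrization is there to fix — not termination of a transfinite alternation, but the final perfection claim itself. In the paper's proof the partitions $B^j$ of $X\times Z$ and $D^k$ of $Y\times Z$ are arranged so that, for a fixed piece $A$ of $X\times Y$, the set $A\cap E_z$ can simultaneously be written (up to measure $0$) in the form $A\cap(U_z\times Y)$ for some $U_z\subseteq X$ and in the form $A\cap(X\times V_z)$ for some $V_z\subseteq Y$, for almost every $z$. The maximality property furnished by Corollary \ref{cor: part of binary into symm sets} then forces $A\cap E_z$ to be either null or all of $A$. My counterexample violates exactly that maximality: with $U=V=[0,\tfrac12]$ one has $A\cap(U\times Y)=A\cap(X\times V)=[0,\tfrac12]^2$ with both pieces of positive measure, which the symmetrization step would have split off as a separate block. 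Your proposal never makes use of the symmetrization condition in the perfection argument, so even if your transfinite construction terminated it would not produce the conclusion.

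A secondary issue: the ping-pong you set up (alternately re-perfecting in the $x$- and $y$-direction) has no apparent reason to converge; the paper avoids the difficulty by producing all four partitions ($A^i$ of $X\times Y$, $B^j$ of $X\times Z$, $C^i$ of $X\times Y$, $D^k$ of $Y\times Z$) up front, intersecting the two partitions of $X\times Y$, and only then symmetrizing once, rather than iterating.
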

\begin{proof}
	As $E_x$ is $\mu$-stable for almost all $x \in X$, applying Lemma \ref{lem: uniform perf decomp} twice (for $E_x$ and for $E^*_x$, using Corollary \ref{cor: stab pres under opp}), we find partitions $X \times Y = \bigsqcup_{i \in \omega} A^i$ and $X \times Z = \bigsqcup_{j \in \omega} B^j$ so that:
	\begin{enumerate}
		\item $A^i \in \mathcal{B}^{E}_{X \times Y}, B^j \in \mathcal{B}^{E}_{X \times Z}$ for all $i,j \in \omega$;
		\item for almost every $x \in X$, $\left( A^i_x : i \in \omega \right)$ is a partition of $Y$ into perfect sets for $E_x \in \mathcal{B}_{Y \times Z}$;
		\item for almost every $x \in X$, $\left(B^j_x : i \in \omega \right)$ is a partition of $Z$ into perfect sets for $E_x \in \mathcal{B}_{Y \times Z}$.
	\end{enumerate}
	It follows by Lemma \ref{lem: 01 dens on perf sets} that for each $i,j \in \omega$ and almost every $x \in X$, we have that:
	\begin{itemize}
		\item  either $A^{i}_x \times B^{j}_x \subseteq^0 E_x$,
		\item or $\left(A^{i}_x \times B^{j}_x\right) \cap E_x =^0 \emptyset$.
	\end{itemize} 
	
	For each $i = \{ 0, \ldots, i-1\} \in \omega$ and $s \subseteq i$, let
	$$A^{i,s} := \left\{ (x,y) \in A^{i} : \bigwedge_{j \in i} \left(A^i_{x} \times B^j_x \subseteq^0 E_x  \iff j \in s \right)\right\}.$$
	
	Then $A^{i,s} \in \mathcal{B}^{E}_{X \times Y}$ (as before, we use Lemma \ref{lem: alg of fibers closed under meas} freely throughout the proof), and $\left(A^{i,s} : s \subseteq i \right)$ is a partition of 
	$A^i$.	
	
	Also, for $j \in \omega$ and $s \subseteq j+1$, we let 
	$$B^{j,s} := \left\{ (x,z) \in B^{j} : \bigwedge_{i \in j+1} \left(A^i_{x} \times B^j_x \subseteq^0 E_x  \iff i \in s \right)\right\},$$
	then $B^{j,s} \in \mathcal{B}^{E}_{X \times Z}$, and $\left(B^{j,s}  : s \subseteq j \right)$ is a partition of 
	$B^j$.	
	
	Then, using Fubini, for any $i,j \in \omega, s \subseteq i, s' \subseteq j+1$ we have either $A^{i,s} \land B^{j,s'} = \left\{(x,y,z) \in X \times Y \times Z : (x,y) \in A^{i,s} \land (x,z) \in  B^{j,s'} \right\} \subseteq^0 E$, or $A^{i,s} \land B^{j,s'} \cap E =^0 \emptyset$ (guaranteed by the definition of $A^{i,s}$ when $i > j$, and by the definition of $B^{j,s}$ when $i \leq j$).
	Reorganizing the partition, we may thus assume: 
	\begin{enumerate}[(4)]
		\item for each $i,j \in \omega$, either
		\begin{gather*}
			A^i \land B^j = \left\{ (x,y,z) \in X \times Y \times Z : (x,y) \in A^i \land (y,z) \in B^j \right\} \subseteq^0 E,\\
			\textrm{or } \left( A^i \land B^j \right) \cap E =^0 \emptyset.
		\end{gather*}
	\end{enumerate}
Using that $E_y \in \mathcal{B}_{X \times Z}$ is $\mu$-stable for almost all $y \in Y$, we can similarly produce partitions $X \times Y = \bigsqcup_{i \in \omega} C^i$ with $C^i \in \mathcal{B}^{E}_{X \times Y}$ and $Y \times Z = \bigsqcup_{k \in \omega} D^k$ with $D^{k} \in \mathcal{B}^{E}_{Y \times Z}$ so that
\begin{enumerate}[(5)]
	\item for each $i,k \in \omega$, either
		\begin{gather*}
			C^i \land D^k = \left\{ (x,y,z) \in X \times Y \times Z : (x,y) \in C^i \land (y,z) \in D^k \right\} \subseteq^0 E,\\
			\textrm{or } \left( C^i \land D^k \right) \cap E =^0 \emptyset.
				\end{gather*}
\end{enumerate}

 Intersecting the partitions $(A^i)_{i \in \omega}$ and $(C^i)_{i \in \omega}$ and renumbering the parts, we may assume additionally
 \begin{enumerate}[(6)]
 	\item $A^i = C^i$ for all $i \in \omega$.
 \end{enumerate}
From now on we will only use that (4),(5) and (6) hold. Fix $i \in \omega$. Applying Corollary \ref{cor: part of binary into symm sets} to $A^i \in \mathcal{B}^{E}_{X \times Y}$, working in the graded probability space $\mathfrak{P}^{E}$ (see Remark \ref{rem: graded prob space of fibers}),  we find countable partitions $X = \bigsqcup_{t \in \omega} U^i_{t}$ with $U^i_t \in \mathcal{B}^{E}_{X}$ and $Y = \bigsqcup_{t \in \omega} V^i_t$ with $V^i_t \in \mathcal{B}^{E}_{Y}$ such that for each $t \in \omega$ we have:
	\begin{itemize}
		\item $\mu \left( \left( A^i \cap \left( U^i_t \times Y \right) \right) \triangle \left( A \cap \left( X \times V^i_t \right) \right)  \right) = 0$,
		\item for any $U' \subseteq U^i_t, U' \in \mathcal{B}^{E}_{X}$ such that both $\mu \left(A^i \cap \left(U' \times Y \right) \right) >0$ and $\mu \left(A \cap \left((U^t_i \setminus U') \times Y \right) \right) >0$, for any $V' \subseteq V_i, V' \in \mathcal{B}^{E}_{Y}$ we have $\mu \big( \left( A \cap \left( U' \times Y \right) \right) \triangle \left( A \cap \left( U_i \times V' \right) \right)  \big) > 0$.
	\end{itemize}
	
	For $t \in \omega$, let $A^i_t := A^i \cap \left( U^i_t \times V^i_t \right) \in \mathcal{B}^{E}_{X \times Y}$. From the first item, $A^i =^0 \bigsqcup_{t \in \omega} A^i_t$ is a partition of $A^i$. And for any fixed $t \in \omega$, from the second item we have: for any $U \in \mathcal{B}^{E}_{X}$ with $\mu \left( A^i_t \cap \left(U \times Y \right) \right) >0$ and $\mu \left(A^i_t \cap \left( \left(X \setminus U \right) \times Y \right) \right) > 0$, for any $V \in \mathcal{B}^{E}_{Y}$ we have 
	$$\mu \left( \left( A^i_t \cap \left(U \times Y \right) \right) \triangle \left( A^i_t \cap \left(X \times V \right) \right) \right) > 0.$$
	Partitioning each $A^i $ into $\left(A^i_{t} : t \in \omega\right)$ with  $A^i_{t} \in \mathcal{B}^{E}_{X \times Y}$ in this manner and renumbering the parts, we may assume that the partition $\left(A^i \right)_{i \in \omega}$ additionally satisfies the following.
\begin{enumerate}[(7)]
	\item For each $i \in \omega$, for any $U \in \mathcal{B}^{E}_{X}$ with $\mu \left( A^i \cap \left(U \times Y \right) \right) >0$ and $\mu \left(A^i \cap \left( \left(X \setminus U \right) \times Y \right) \right) > 0$, for any $V \in \mathcal{B}_{Y}^{E}$ we have 
	$$\mu \left( \left( A^i \cap \left(U \times Y \right) \right) \triangle \left( A^i \cap \left(X \times V \right) \right) \right) > 0.$$
\end{enumerate}

Now fix an arbitrary $i \in \omega$ and consider the set $A^i \in \mathcal{B}^{E}_{X \times Y}$, we claim that it is perfect for $E \subseteq \left(X \times Y \right) \times Z$. Let 
\begin{gather*}
	J := \left\{ j \in \omega : A^i \land B^j \subseteq^0 E \right\}, K := \left\{ k \in \omega : A^i \land D^k \subseteq^0 E \right\}.
\end{gather*}
Then, by (4), (5) and (6),  
$$A^i \land \left( \bigcup_{j \in J} B^j \right) =^0 A^i \land \left( \bigcup_{k \in K} D^k \right) =^0 A^i \land E.$$

Hence for almost every $z \in Z$ we have
$A^i \land \left( \bigcup_{j \in J}B^j \right)_z =^0 A^i \land \left( \bigcup_{k \in K} D^k\right)_z$. Therefore, by (7), the set $A^i \cap E_z =^0 A^i \land \left( \bigcup_{j \in J} B^j \right)_z$ must either have measure $0$, or be all of $A^i$ up to measure $0$.
\end{proof}

We prove a finitary counterpart and converse.

\begin{theorem}\label{thm: two dirs slicewise stab finitary version}
  Let $\mathcal{H}$ be a hereditarily closed family of $3$-partite $3$-hypergraphs. Then the following are equivalent.
  \begin{enumerate}
  \item There exists $d\in\mathbb{N}$ so that for every $H=(E; X,Y,Z)\in\mathcal{H}$, every $E_x$ and every $E_y$ is $d$-stable.
  \item Any ultraproduct  $\widetilde{H} = \left(\widetilde{E}; \widetilde{X}, \widetilde{Y}, \widetilde{Z} \right)$ of hypergraphs from $\mathcal{H}$  satisfies the conclusion of Theorem \ref{prop: part XY into perf sets}.
  \item (Approximately perfect stable regularity) For every $\varepsilon>0$ and $f:\mathbb{N}\rightarrow(0,1]$ there exists $N=N(\varepsilon,f)\in\mathbb{N}$ satisfying the following. For any $H=(E;X,Y,Z)\in\mathcal{H}$ there exists $N'\leq N$, a partition $X\times Y=\bigsqcup_{i\in[N']}A^i$ with $A^i\in\mathcal{F}^{E,N}_Z$, and sets $\Sigma_{X,Y}\subseteq[N']$ and $\Sigma_Z\subseteq Z$ so that $\mu_{X\times Y}(\bigsqcup_{i\not\in\Sigma_Z}A^i)<\varepsilon$, $\mu_Z(Z\setminus \Sigma_Z)<\varepsilon$, and for each $i\in\Sigma_{X,Y}$ and $z\in \Sigma_Z$,
      \[  \frac{\mu_{X\times Y}(A^i\cap E_z)}{\mu_{X\times Y}(A^i)}\]
      is in $[0,f(N'))\cup (1-f(N'), 1]$.
  \end{enumerate}
\end{theorem}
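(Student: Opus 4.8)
The plan is to prove the cycle $(1)\Rightarrow(2)\Rightarrow(3)\Rightarrow(1)$. For $(1)\Rightarrow(2)$, fix $d$ as in (1) and pass to an arbitrary ultraproduct $\widetilde{H}=(\widetilde{E};\widetilde{X},\widetilde{Y},\widetilde{Z})$ of hypergraphs from $\mathcal{H}$. The assertion ``for all $x$ there is no tree of height $d$ for $E_x$'' is first-order in the $\mathcal{L}_0$-reduct, so by \L os' theorem $\Tree^{\widetilde{E}_{\tilde{x}},d}=\emptyset$ for every $\tilde{x}\in\widetilde{X}$; as this set is then an internal set of $\mu$-measure $0$, each $\widetilde{E}_{\tilde{x}}$ is $d$-$\mu$-stable, hence so for $\mu_{\widetilde{X}}$-almost every $\tilde{x}$, and symmetrically for the $\widetilde{E}_{\tilde{y}}$. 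Thus Theorem \ref{prop: part XY into perf sets} applies to $\widetilde{E}$ and produces a partition of $\widetilde{X}\times\widetilde{Y}$ into $\mathcal{B}^{E}_{\widetilde{X}\times\widetilde{Y}}$-measurable sets perfect for $\widetilde{E}$ viewed on $(\widetilde{X}\times\widetilde{Y})\times\widetilde{Z}$, which is exactly (2).

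For $(2)\Rightarrow(3)$, I would view each $H=(E;X,Y,Z)\in\mathcal{H}$ as a finite bipartite graph on $(X\times Y)\times Z$ with edge relation $E$, and apply the implication $(1)\Rightarrow(2)$ of Proposition \ref{prop: proof of transfer perf part} (together with its definable version, Remark \ref{rem: transfer holds definably}) to this bipartite family, with ``$\widetilde{X}$'' replaced by $\widetilde{X}\times\widetilde{Y}$ and ``$\widetilde{Y}$'' by $\widetilde{Z}$. Inspecting that proof, the hereditary-closure hypothesis is used only for the opposite implication there, so nothing about closure of the new bipartite family is needed; the internal approximations of the countably many perfect pieces $A^i\subseteq\widetilde{X}\times\widetilde{Y}$ are ultraproducts of (not necessarily rectangular) subsets of the finite sets $X_n\times Y_n$, and Claims \ref{cla: decent is type-def} and \ref{cla: transfer claim 2} apply verbatim. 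The output is: for every $\varepsilon>0$ and $g:\mathbb{N}\to(0,1]$ there is $N$ so that every $H\in\mathcal{H}$ admits a partition $X\times Y=\bigsqcup_{0\le i<N'}A^i$ with $A^i\in\mathcal{F}^{E,N}_{X\times Y}$, $\mu_{X\times Y}(A^0)\le\varepsilon$, and each $A^i$ ($1\le i<N'$) $g(N')$-decent for $E$ on $(X\times Y)\times Z$. To land exactly on (3), run this with $g(n):=\min\{f(n),\varepsilon/(2n)\}$, set $\Sigma_{X,Y}:=\{1,\dots,N'-1\}$ and $\Sigma_Z:=\{z\in Z:\mu_{X\times Y}(A^i\cap E_z)/\mu_{X\times Y}(A^i)\in[0,f(N'))\cup(1-f(N'),1]\text{ for all }i\in\Sigma_{X,Y}\}$; then $\mu_{X\times Y}(\bigsqcup_{i\notin\Sigma_{X,Y}}A^i)\le\varepsilon$ and $\mu_Z(Z\setminus\Sigma_Z)\le N'g(N')\le\varepsilon/2$, and the density condition holds by construction.

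For $(3)\Rightarrow(1)$ I argue the contrapositive. If (1) fails then, after pigeon-holing over $d$, for definiteness for every $h\in\mathbb{N}$ there exist $H\in\mathcal{H}$ and $x$ with $E_x\subseteq Y\times Z$ not $(2^{h+1}-2)$-stable; by Fact \ref{fac: Hodges}(2), $E_x$ then contains a ladder of height $h$, and restricting $X$ to $\{x\}$ and $Y,Z$ to the $h$ vertices of that ladder (allowed since $\mathcal{H}$ is hereditarily closed) shows $\mathcal{H}$ contains the hypergraph $H_h$ on $\{\ast\}\times[h]\times[h]$ with edge relation $\{(\ast,y,z):y<z\}$. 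For this $H_h$, $X\times Y\cong[h]$ carries the uniform measure and $E$ viewed on $(X\times Y)\times Z$ is the half-graph on $[h]\times[h]$; I claim no data as in (3) can exist once $h$ is large, taking $\varepsilon:=1/5$ and $f\equiv 1/4$. Indeed, given a partition $\{A^i\}$ and admissible $\Sigma_{X,Y},\Sigma_Z$, for each $i\in\Sigma_{X,Y}$ the bad set $B_i:=\{z\in[h]:|A^i\cap E_z|/|A^i|\in[1/4,3/4]\}$ is an interval of $z$-values (monotonicity of $z\mapsto|A^i\cap E_z|$), and it contains at least $|A^i|/2-2$ elements of $A^i$ itself, namely those whose rank inside $A^i$ lies in $[|A^i|/4,3|A^i|/4]$. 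Since the $A^i$ are pairwise disjoint, $\mu_Z\big(\bigcup_{i\in\Sigma_{X,Y}}B_i\big)\ge\sum_{i\in\Sigma_{X,Y}}|A^i\cap B_i|/h\ge\frac{1}{2}(1-\varepsilon)-2N/h$, which exceeds $\varepsilon$ for $h$ large; hence $\bigcup_{i\in\Sigma_{X,Y}}B_i$ meets $\Sigma_Z$, producing a pair $(i,z)\in\Sigma_{X,Y}\times\Sigma_Z$ with density in $[1/4,3/4]$, contradicting condition (b) of (3).

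The main obstacle is $(3)\Rightarrow(1)$: the partition in (3) is only of the ``$X\times Y$ side'', and the other side $Z$ is controlled merely by the large set $\Sigma_Z$, so the symmetric half-graph obstruction (Lemma \ref{lem: half-graphs fail stab reg}), which partitions both sides, does not apply off the shelf. The workaround above — locating the $z$'s bad for a fixed piece $A^i$ as an interval that already carries a definite fraction of $A^i$'s own mass, then summing over the good pieces using their disjointness — is the crux, and one must also first reduce, via Fact \ref{fac: Hodges} and hereditary closure, to half-graphs sitting over a single point of $X$ (or of $Y$) so that the half-graph actually carries the product counting measure. A secondary, purely bookkeeping point is the passage in $(2)\Rightarrow(3)$ from the ``exceptional piece $A^0$ plus decent pieces'' form of the transfer principle to the $\Sigma_{X,Y}/\Sigma_Z$ format, together with the observation that the single implication of Proposition \ref{prop: proof of transfer perf part} we borrow does not invoke hereditary closure.
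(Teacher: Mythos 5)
Your cycle $(1)\Rightarrow(2)\Rightarrow(3)\Rightarrow(1)$ is the same as the paper's, and the first two implications track the paper's argument: $(1)\Rightarrow(2)$ is \L os' theorem plus Theorem \ref{prop: part XY into perf sets}, and $(2)\Rightarrow(3)$ is the transfer argument — your formulation via the $(1)\Rightarrow(2)$ implication of Proposition \ref{prop: proof of transfer perf part} applied to the bipartite view on $(X\times Y)\times Z$, together with the observation that this implication does not invoke hereditary closure, is a cleaner way of saying ``the result follows by transfer,'' which is all the paper writes.

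Where your write-up diverges substantively from the paper is $(3)\Rightarrow(1)$, and you are right that this is the crux. The paper's proof, after reducing to half-graphs $H_d$ (with a degenerate singleton coordinate), simply asserts that ``Theorem \ref{thm: metastable general equivalences} with $k=2$ implies the failure of (3).'' As stated this is not a direct application: the proof of $(1)\Rightarrow(2)$ in Theorem \ref{thm: metastable general equivalences} exhibits, for a given size-$N$ partition of one side of a half-graph, a \emph{single} splitting point $z^*$ of mass roughly $1/(5N+1)$, and this fails to contradict condition (3) once $N$ is large, since the fixed tolerance $\varepsilon$ on $\mu_Z(Z\setminus\Sigma_Z)$ then exceeds $1/(5N+1)$ and $z^*$ may simply have been thrown into $Z\setminus\Sigma_Z$. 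Your argument — that for each good piece $A^i$ the interval of bad $z$-values already carries roughly $\mu(A^i)/2$ of mass drawn from $A^i$ itself, so that summing over the pairwise-disjoint $A^i$ yields $\mu_Z\bigl(\bigcup_i B_i\bigr)\gtrsim\tfrac12(1-\varepsilon)$, which beats $\varepsilon=1/5$ for large $h$ — is exactly what is needed and is in the spirit of the middle-interval trick in Lemma \ref{lem: half-graphs fail stab reg}, adapted to the asymmetric $\Sigma_Z$ format where only a large subset of $Z$, not a partition, is available. This is a genuine sharpening of what the paper writes.

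Two small technical nits, neither of which threatens the argument. In $(2)\Rightarrow(3)$, take $g(n):=\min\{f(n)/2,\varepsilon/(2n)\}$ rather than $\min\{f(n),\varepsilon/(2n)\}$: a $g(N')$-decent piece gives densities in the \emph{closed} set $[0,g(N')]\cup[1-g(N'),1]$ off the small bad set, and you need $g(N')<f(N')$ to land these inside the half-open $[0,f(N'))\cup(1-f(N'),1]$. In $(3)\Rightarrow(1)$, the count of elements of $A^i$ whose rank forces the density into $[1/4,3/4]$ is cleanest if you take ranks $j$ with $|A^i|/4<j\leq 3|A^i|/4$ (so the density $(j-1)/|A^i|$ stays within $[1/4,3/4]$), still giving $\geq |A^i|/2-2$; your bound and your final inequality $\frac12(1-\varepsilon)-2N/h>\varepsilon$ survive unchanged.
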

\begin{proof}
  \textbf{(1) implies (2)} By \L os's Theorem, (1) implies that $\widetilde{H}$ satisfies slice-wise stability in two directions, so the claim follows from Proposition \ref{prop: part XY into perf sets}.

	\ 

	\noindent \textbf{(2) implies (3)} Let $\varepsilon>0$ and $f:\mathbb{N}\rightarrow(0,1]$ be given, but suppose that (3) fails. Then for each $N$, there is an $H_N\in\mathcal{H}$ witnessing that the bound $N$ does not suffice, so we may take $\widetilde{H}$ to be a non-principal ultraproduct of this sequence. Applying (2) to $\widetilde{H}$ gives a countable partition $\widetilde X\times \widetilde Y=\bigsqcup_{i\in\omega}A^i$.

        Fix $\varepsilon'\in (0,\varepsilon)$. By countable additivity, we may choose an $N$ so that $\mu_{X\times Y}(\bigsqcup_i A^i)\geq 1-\varepsilon'$. The result follows by transfer.

        \

        \noindent \textbf{(3) implies (1)} We show the contrapositive. If (1) fails then for each $d\in\mathbb{N}$ we may find either $H_d=(E_d;X_d,Y_d,Z_d)$ with $X_d=Z_d:=[d]$, $Y_d:=[1]$, and $(x,1,z)\in E_d\Leftrightarrow x<z$, or the same with $X_d$ and $Y_d$ swapped. Without loss of generality, we assume the former. Then Theorem \ref{thm: metastable general equivalences} with $k=2$ implies the failure of (3).
\end{proof}

\section{All three directions of slice-wise stability}\label{sec: all three dirs of slice-wise stab}

We now consider the case where all slices in all directions are stable. The following example, which elaborates on Example \ref{ex: x=y<z}, has this property while failing to have partition-wise stability under any partition of the variables into two groups.

\begin{example}
  Let $X = Y = Z := [0,1]$. Let $E \subseteq X \times Y \times Z$ consist of the triples $(x,y,z)$ such that $\left \lvert \left\{ x,y,z \right\} \right \rvert = 2$, and the value $u$ which appears twice in $(x,y,z)$ is smaller than the value that appears once. This relation is slice-wise stable, but not stable as a binary relation under any partition of the coordinates into two groups. It satisfies stable regularity under any measures on $X,Y,Z$, because $E$ has to concentrate on atoms (that is, on singletons with positive measure).
\end{example}

As even the simpler Example \ref{ex: x=y<z} illustrates, we cannot hope to obtain as nice a partition of the individual sets $X$, $Y$, and $Z$ in this situation. Instead we obtain a collection of partitions of the binary sets $X\times Y$, $X\times Z$, and $Y\times Z$ so that the intersection of any two components is almost homogeneous.

\subsection{Slice-wise stable regularity lemma}\label{sec: slicewise stab reg proof}

\begin{definition}
  If $\mathcal{B}_{X}$ is a $\sigma$-algebra of subsets of $X$, $\mu_X$ a probability measure on $\mathcal{B}_{X}$ and $X' \in \mathcal{B}_{X}$, we denote by $\mathcal{B}_{X'}$ the $\sigma$-algebra $\left\{ X' \cap S : S \in \mathcal{B}_{X}  \right\}$ of subsets of $X'$, and by $\mu_{X'}$ the probability measure on $\mathcal{B}_{X'} \subseteq \mathcal{B}_{X}$ defined by $\mu_{X'}(S \cap X') := \frac{\mu_X(S \cap X')}{\mu_{X}(X')}$ for every $S \in \mathcal{B}_{X}$.
\end{definition}

\begin{theorem}\label{thm: all three parts perfect rects}
Suppose that $E \in \mathcal{B}_{X \times Y \times Z}$ is slice-wise $\mu$-stable. Then there exist countable partitions $X \times Y = \bigsqcup_{i \in \omega} A^i$, $X \times Z = \bigsqcup_{j \in \omega}B^j$ and $Y \times Z = \bigsqcup_{k \in \omega} D^k$ with  $A^i \in \mathcal{B}_{X \times Y}, B^j \in \mathcal{B}_{X \times Z}, D^k \in \mathcal{B}_{Y \times Z}$ so that:
\begin{enumerate}
	\item each $A^i = A^{i,X} \times A^{i,Y}, B^j = B^{j,X} \times B^{j,Z}, D^k = D^{k,Y} \times D^{k,Z}$ is a rectangle with $A^{i,X},B^{j,X} \in \mathcal{B}^{E}_{X},  A^{i,Y}, D^{k,Y} \in \mathcal{B}^{E}_{Y}, B^{j,Z}, D^{k,Z} \in \mathcal{B}^{E}_{Z}$, and each $A^i, B^j, D^k$ is perfect for $E$ viewed as a binary relation on the corresponding partition of $X,Y,Z$;
	\item \begin{enumerate}
 	\item for each $i,j \in \omega$,  either $A^i \land B^j \subseteq^0 E$ or $\left( A^i \land B^j \right) \cap E =^0 \emptyset$;
 	\item for each $i,k \in \omega$, either $A^i \land D^k \subseteq^0 E$ or $\left(A^i \land D^k \right) \cap E =^0 \emptyset$;
 	\item for each $j,k \in \omega$, either $B^j \land D^k \subseteq^0 E$ or $\left(B^j \land D^k \right) \cap E =^0 \emptyset$.

 \end{enumerate} 

\end{enumerate}
 \end{theorem}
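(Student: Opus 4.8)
The strategy is to produce the three partitions by successive refinements, working throughout inside the sub-$\sigma$-algebras $\mathcal{B}^E_{X}$, $\mathcal{B}^E_{Y}$, $\mathcal{B}^E_{Z}$ (and the corresponding algebras on product sorts), so that measurability of every auxiliary set is automatic from Lemma \ref{lem: alg of fibers closed under meas}, and where needed passing to the graded probability space $\mathfrak{P}^E$ of Remark \ref{rem: graded prob space of fibers}. The first step is cheap: slice-wise $\mu$-stability of $E$ supplies, for each of the three pairings, the two directions of slice-wise $\mu$-stability required by Theorem \ref{prop: part XY into perf sets} (using Corollary \ref{cor: stab pres under opp} to exchange the two variables of a slice when convenient), so three applications of that theorem give countable partitions $X\times Y=\bigsqcup_i\widehat A^i$, $X\times Z=\bigsqcup_j\widehat B^j$, $Y\times Z=\bigsqcup_k\widehat D^k$ into sets of the appropriate $\mathcal{B}^E$-algebras, each perfect for $E$ viewed as a binary relation on the corresponding pair. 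Since by Remark \ref{rem: basic props perf sets}(2) every measurable subset of a perfect set is perfect, any further refinement preserves perfection; the remaining work is (i) to refine each piece into genuine rectangles and (ii) to arrange the coherence conditions (2)(a)--(c).

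For (i), fix a perfect $A\subseteq X\times Y$ and first apply the maximal local-symmetrization result (Corollary \ref{cor: part of binary into symm sets}) in $\mathfrak{P}^E$, obtaining $X=\bigsqcup_t U_t$, $Y=\bigsqcup_t V_t$ with $A\subseteq^0\bigsqcup_t(U_t\times V_t)$ and no $A\cap(U_t\times V_t)$ admitting a nontrivial symmetrization. This alone is not enough: a ``triangular'' set is already maximally symmetric yet is not a rectangle, and such a set can genuinely occur as a perfect piece when only two directions of stability are available (the example $x=y<z$ of Example \ref{ex: x=y<z} already signals that the third sort resists being partitioned). The point is that here we also have $E_z$ $\mu$-stable for a.e.\ $z$, and within each $A\cap(U_t\times V_t)$ one runs an iterated refinement --- a (possibly infinitely branching) tree of partitions of the pair $(U_t,V_t)$ --- in which, at each node that is not yet (almost) a rectangle, one splits off a rectangular chunk of positive measure whose existence is guaranteed by a perfect-set argument in the spirit of Lemma \ref{lem: pos meas of perf sets for stable E} applied to suitable repartitions of the coordinates and slices (invoking Lemma \ref{lem: Bool comb of mu-stab} to keep the relevant auxiliary relations $\mu$-stable), with the complements of the children of a node carrying strictly less measure. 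An infinite branch of this tree assembles into an infinite tree witnessing instability of one of the slice relations of $E$ (or of a Boolean combination of its fibers), which is impossible; combined with the observation used repeatedly in Sections \ref{sec: one dir or slicewise stable} and \ref{sec: two dirs of slicewise stab} that $[0,1]$ has no strictly monotone $\omega_1$-chain, the construction terminates after countably many steps and decomposes $A$ into countably many perfect rectangles in $\mathcal{B}^E_{X\times Y}$. Running this for every $\widehat A^i,\widehat B^j,\widehat D^k$ gives perfect-rectangle partitions, which is (1).

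For (ii), observe that if $A^i=A^{i,X}\times A^{i,Y}$ and $B^j=B^{j,X}\times B^{j,Z}$ are rectangles, then $A^i\land B^j$ is the box with sides $A^{i,X}\cap B^{j,X}\subseteq X$, $A^{i,Y}\subseteq Y$, $B^{j,Z}\subseteq Z$; since $A^i$ is perfect for $E\subseteq(X\times Y)\times Z$ and $B^j$ is perfect for $E\subseteq(X\times Z)\times Y$, Fubini together with the fibrewise instance of Lemma \ref{lem: 01 dens on perf sets} lets one read off, for each $j$ (resp.\ each $k$), whether $A^i\land B^j$ (resp.\ $A^i\land D^k$) is $\subseteq^0 E$ or $=^0\emptyset$, once $A^i,B^j,D^k$ have been refined enough that these verdicts are unambiguous. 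One therefore refines the three partitions simultaneously: split each $A^i$ according to which $B^j$'s and $D^k$'s it is homogeneous or inhomogeneous with, and symmetrically for $B^j$ and $D^k$; intersecting $X$-, $Y$- and $Z$-sides separately keeps all pieces rectangles, and subsets of perfect sets stay perfect. This refinement is again transfinite but stabilizes by the same no-monotone-$\omega_1$-chain principle, and at the fixed point every pairwise $\land$ of pieces is $\subseteq^0 E$ or $=^0\emptyset$, giving (2).

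The principal obstacle is step (i): perfection is far weaker than being a rectangle, and none of the two-direction machinery (Theorem \ref{prop: part XY into perf sets}, Corollary \ref{cor: part of binary into symm sets}) produces rectangles on its own, so one must exploit all three directions of slice-wise $\mu$-stability at once and control an infinitely branching tree of refinements whose well-foundedness on every branch encodes $\mu$-stability of various repartitions of the coordinates and slices of $E$. A secondary difficulty, which is why (ii) must be interleaved with (i) rather than performed afterwards, is coordinating the three partitions of $X\times Y$, $X\times Z$, $Y\times Z$ so that coherence is achieved without reintroducing non-rectangular pieces or destroying perfection.
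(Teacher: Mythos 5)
There is a genuine gap at the heart of your step (i). You want to decompose a given perfect set $A\subseteq X\times Y$ into countably many perfect rectangles, and you assert that inside each maximally-symmetrized piece $A\cap(U_t\times V_t)$ one can ``split off a rectangular chunk of positive measure whose existence is guaranteed by a perfect-set argument in the spirit of Lemma \ref{lem: pos meas of perf sets for stable E}.'' But that lemma produces a \emph{perfect set} of positive measure, not a \emph{rectangle}, and since every measurable subset of a perfect set is already perfect (Remark \ref{rem: basic props perf sets}(2)), finding another perfect set inside $A$ gains nothing toward rectangle structure. Moreover, perfection of $A$ for $E\subseteq(X\times Y)\times Z$ says precisely that the fibers $E_z$ do \emph{not} discriminate inside $A$ --- each either almost covers $A$ or is almost disjoint from it --- so $\mu$-stability of the $E_z$'s cannot be used directly to carve rectangles out of $A$. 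The termination claim (``an infinite branch of this tree assembles into an infinite tree witnessing instability'') would require a concrete encoding of branches of your refinement tree as $\mu$-trees for some explicit relation built from $E$; you do not supply one and I do not see how to.

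The paper avoids the difficulty by reversing the order of operations: it never decomposes a perfect set into rectangles. Proposition \ref{prop: part perf sets and rects} (via a forest of $Y'$-supports of the given perfect pieces and a $\mu$-stability argument along a single branch, Claim \ref{cla: proof rects 3}) turns a perfect partition of one pair of sorts into a \emph{rectangle} partition of a different pair that is automatically coherent with it; Proposition \ref{prop: part perf rects} then perfectifies this rectangle partition by re-running the tail of the proof of Proposition \ref{prop: part XY into perf sets}, which refines each piece $A^i$ only by sets of the form $A^i\cap(U^i_t\times V^i_t)$, so rectangle structure is preserved for free. Theorem \ref{thm: all three parts perfect rects} then iterates this over the three pairs of sorts, building the coherence conditions (2)(a)--(c) into the construction rather than imposing them afterwards. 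Your step (ii), as stated, can also destroy rectangle-ness: inside a box $A^{i,X}\times A^{i,Y}\times B^{j,Z}$ the locus where the $E$-density is $0$ versus $1$ need not factor through sub-partitions of the sides, so ``splitting $A^i$ according to which $B^j$'s it is homogeneous with'' may produce non-rectangular pieces; the paper's ordering of the construction is precisely what avoids ever having to do such a split.
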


 The main technical step towards it is the following proposition.

\begin{prop}\label{prop: part perf sets and rects}
Suppose that $E \in \mathcal{B}_{X \times Y \times Z}$, the slices $E_x \in \mathcal{B}_{Y \times Z}$ are $\mu$-stable for almost all $x \in X$, and the slices $E_y \in \mathcal{B}_{X \times Z}$ are $\mu$-stable for almost all $y \in Y$. Then for any countable partition $X \times Y = \bigsqcup_{i \in \omega} A^i$ with each $A^i \in \mathcal{B}_{X \times Y}$ perfect for the relation $E \subseteq (X \times Y) \times Z$ (note that such a partition with $A^i \in \mathcal{B}^{E}_{X \times Y}$ exists by Proposition \ref{prop: part XY into perf sets}), there exists a countable partition $Y \times Z = \bigsqcup_{j \in \omega} B^j$ into rectangles $B^j = B^{j,Y} \times B^{j,Z}$ for some $B^{j,Y} \in \mathcal{B}_{Y},  B^{j,Z} \in \mathcal{B}_{Z}$, so that for each $i,j \in \omega$, either $A^i \land B^j \subseteq^0 E$ or $\left( A^j \land B^j \right) \cap E =^0 \emptyset$.
\end{prop}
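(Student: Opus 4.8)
The plan is to reduce the whole statement to a statement purely about the sets
$R^i := \{\, z \in Z : A^i \subseteq^0 E_z \,\} \in \mathcal{B}_Z$ (where $E_z \subseteq X\times Y$), and then to build the rectangle partition of $Y\times Z$ by a uniformization argument. First I would record, via Fubini and the perfection of $A^i$ for $E$ viewed on $(X\times Y)\times Z$, that $\widehat{A^i}\cap E =^0 \widehat{A^i}\cap (X\times Y\times R^i)$, where $\widehat{A^i} := \{(x,y,z):(x,y)\in A^i\}$; i.e.\ on a co-null set of $z$ the slice $E_z$ either almost contains $A^i$ or is almost disjoint from it, and $R^i$ records which. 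A direct computation (again Fubini, using that $(\widehat{A^i})_z = A^i$ does not depend on $z$) then shows: for any $P \in \mathcal{B}_Y$ and $Q \in \mathcal{B}_Z$, the set $A^i \land (P\times Q)$ is $\subseteq^0 E$ or almost disjoint from $E$ if and only if either $\mu_{X\times Y}(A^i\cap(X\times P))=0$, or $Q$ is $R^i$-homogeneous, i.e.\ $Q\subseteq^0 R^i$ or $Q\cap R^i =^0 \emptyset$. Thus it suffices to produce a countable partition $Y\times Z = \bigsqcup_j (P_j\times Q_j)$ into rectangles such that for each $j$ the set $Q_j$ is $R^i$-homogeneous for every $i$ with $\mu_{X\times Y}(A^i\cap(X\times P_j))>0$.

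Next I would show the required partition of $Z$ exists fiberwise. A second application of Fubini to the perfection of $A^i$ gives that for almost every $y\in Y$ each $(A^i)_y\subseteq X$ is perfect for $E_y\subseteq X\times Z$. Since $E_y$ is $\mu$-stable for a.e.\ $y$, so is $E_y^*\subseteq Z\times X$ (Corollary \ref{cor: stab pres under opp}); applying the stable graph decomposition (Lemma \ref{lem: perf part stable}, see also Proposition \ref{prop: stable graph reg}) to $E_y^*$ yields, for a.e.\ $y$, a countable partition of $Z$ into sets perfect for $E_y^*$. Using that $E^Z_{(x,y)} := \{z:(x,y,z)\in E\} =^0 R^i$ for a.e.\ $x\in(A^i)_y$ — which is exactly perfection of $(A^i)_y$ for $E_y$ together with the definition of $R^i$ — each such piece is $R^i$-homogeneous for every $i$ with $(A^i)_y$ of positive measure. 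So for a.e.\ $y$ the partition of $Z$ we need is available; what remains is to make it uniformly measurable in $y$ \emph{and} rectangular.

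For the uniformization I would apply Theorem \ref{lem: uniform perf decomp} to $E$ with the base sets reordered as $(Y,Z,X)$ — the hypothesis there holds since $E_y^*$ is $\mu$-stable for a.e.\ $y$ — obtaining a partition $Y\times Z = \bigsqcup_k D^k$ with $D^k\in\mathcal{B}^E_{Y\times Z}$ such that for a.e.\ $y$ the family $(D^k_y)_k$ is a partition of $Z$ into sets perfect for $E_y^*$ (hence $R^i$-homogeneous for every relevant $i$). Finally, to replace these generally skew sets $D^k$ by rectangles, I would apply the maximal local symmetrization, Corollary \ref{cor: part of binary into symm sets}, to each $D^k$ inside the graded probability space $\mathfrak{P}^E$ (Remark \ref{rem: graded prob space of fibers}): this produces partitions $Y = \bigsqcup_t V^k_t$ and $Z = \bigsqcup_t W^k_t$ with $D^k\cap(V^k_t\times Z) =^0 D^k\cap(Y\times W^k_t)$ and $D^k =^0 \bigsqcup_t D^k\cap(V^k_t\times W^k_t)$. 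The point is that the maximality clause (2) of Corollary \ref{cor: part of binary into symm sets} forbids splitting $V^k_t$ into two positive-measure pieces on which the fibers $D^k_y$ are essentially disjoint subsets of $Z$; since (after a preliminary refinement of the $D^k$, by the measurability class of the fibers, so that on each piece the fibers all lie in one countably generated sub-algebra of $\mathcal{B}^E_Z$ whose atoms are perfect for the relevant $E_y^*$) the distinct fiber-values available on $V^k_t$ are pairwise almost disjoint, this forces $D^k_y$ to be almost constant on $V^k_t$. Hence $D^k\cap(V^k_t\times W^k_t) =^0 V^k_t\times Q^{k,t}$ is, up to null sets, a rectangle; taking $\{V^k_t\times Q^{k,t}\}_{k,t}$ (absorbing the null remainder into one piece) as $\{B^j\}$, the homogeneity $A^i\land B^j\subseteq^0 E$ or $(A^i\land B^j)\cap E =^0 \emptyset$ follows from the first paragraph, because $Q^{k,t}$ equals $D^k_y$ for a.e.\ $y\in V^k_t$ and is therefore $R^i$-homogeneous for every $i$ with $\mu_{X\times Y}(A^i\cap(X\times V^k_t))>0$.

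The main obstacle is precisely this last step: turning the uniformly measurable but a priori skew decomposition $\bigsqcup_k D^k$ into an \emph{honest} countable rectangle partition. The danger is that as $y$ ranges over $Y$ the fiber $D^k_y$ runs through uncountably many distinct subsets of $Z$, so that no countable rectangle partition refines all of them; what saves us is that each $D^k_y$ is perfect for the stable graph $E_y^*$ and so lies in a countably generated algebra, together with the ``constancy across the $y$-side'' enforced by the maximality in the local symmetrization. Making this interaction precise — in particular arranging the preliminary refinement so that the fiber-values on a single $V^k_t$ really are pairwise almost disjoint atoms of one common algebra — is the delicate part; an alternative framing is to bypass Theorem \ref{lem: uniform perf decomp} and instead build the $B^j$ directly by transfinite recursion, peeling off at each stage a positive-measure rectangle $P\times Q$ with $Q$ a Boolean combination of finitely many of the $R^i$'s, using Lemma \ref{lem: pos meas of perf sets for stable E} together with Corollary \ref{cor: part of binary into symm sets} to guarantee rectangularity at each step.
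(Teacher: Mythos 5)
Your reduction in the first paragraph — recasting the problem as finding a rectangle partition $\bigsqcup_j(P_j\times Q_j)$ of $Y\times Z$ with each $Q_j$ being $R^i$-homogeneous for every $i$ that contributes to $P_j$ — is correct, and the sets $R^i=\{z:A^i\subseteq^0 E_z\}$ correspond exactly to the paper's relation $F$. The route you propose for producing this partition, however, has a genuine gap at the last step, and it is precisely where the paper's proof does something you do not.

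Your plan is: first apply Theorem \ref{lem: uniform perf decomp} (on the sorts $(Y,Z,X)$) to get $Y\times Z=\bigsqcup_k D^k$ with $D^k_y$ perfect for $E_y^*$, then apply Corollary \ref{cor: part of binary into symm sets} to each $D^k$ and argue that maximal local symmetrization forces the fibers $D^k_y$ to be almost constant on each $V^k_t$, hence forces rectangularity. The maximality calculation you sketch is valid under the hypothesis that the distinct fiber values of $y\mapsto D^k_y$ on $V^k_t$ are pairwise almost disjoint: in that case splitting $V^k_t$ by fiber value and taking $W'$ to be the $Z$-trace of one of them would indeed contradict clause (2). But that hypothesis is exactly what you cannot get for free, and the parenthetical ``preliminary refinement of the $D^k$, by the measurability class of the fibers, so that on each piece the fibers all lie in one countably generated sub-algebra of $\mathcal{B}^E_Z$ whose atoms are perfect for the relevant $E_y^*$'' is not a step that has any obvious justification. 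The perfect decomposition of $Z$ for $E_y^*$ varies with $y$ in an essentially arbitrary measurable way; Theorem \ref{lem: uniform perf decomp} makes the dependence measurable but does nothing to make the fiber values pairwise almost disjoint or to confine them to a common countably generated algebra (note that $\mathcal{B}^E_Z$ itself need not be countably generated, and even ``$R^i$-homogeneous for all $i$'' leaves an uncountable family of candidate atoms). So the preliminary refinement you invoke is doing the work of the entire proposition and is not achieved by anything you cite.

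Where the paper closes this gap is by not working with the $D^k$ at all. It proceeds by transfinite recursion, at each stage fixing a rectangle $Y'\times Z'$ of positive measure disjoint from what has already been chosen, and then it needs a new combinatorial mechanism to locate a sub-rectangle $Y_{\lambda^*}\times Z^*$ with $Z^*$ a finite Boolean combination of the $R^i$'s. The two key steps you are missing are: (a) refining the $A^i$ so that the $Y'$-supports $\supp_{Y'}(A^i)$ form a forest under inclusion (Claim \ref{cla: supps form a tree}), which turns the combinatorics of ``which $A^i$ live above which $y$'' into a tree $T'$; and (b) observing that the relation $F\subseteq T'\times Z'$ given by $(\lambda,z)\in F\iff A^\lambda\subseteq^0 E_z$ is $\mu$-stable along each branch of $T'$, because (via Claim \ref{cla: proof rects 2}) any $\mu$-tree for $F^*$ inside a branch gives rise to a positive-measure set of $y$ for which $E_y^*$ is not $d^*$-$\mu$-stable. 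This second step is the only place the slice-wise stability of $E_y$ is actually used, and it is what permits choosing a maximal-height $\mu$-tree and extracting the finite Boolean combination $Z^*$ (Claim \ref{cla: proof rects 3}). Your ``alternative framing'' at the end gestures towards exactly this transfinite-recursion/peeling strategy and correctly guesses that $Q$ should be a Boolean combination of finitely many $R^i$'s, but it invokes Lemma \ref{lem: pos meas of perf sets for stable E} and Corollary \ref{cor: part of binary into symm sets}, neither of which supplies the tree structure or the stability-of-$F$-on-branches argument that make the paper's peeling step go through. As it stands, then, the proposal is incomplete: the primary route relies on an unsubstantiated refinement, and the alternative is only a sketch that does not identify the mechanism that actually bounds the complexity of the rectangle to be extracted.
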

\begin{proof}
Fix a partition $X \times Y = \bigsqcup_{i \in \omega} A^i$ with each $A^i \in \mathcal{B}_{X \times Y}$ perfect for the relation $E \subseteq (X \times Y) \times Z$ (at least one such partition exists by Proposition \ref{prop: part XY into perf sets}).
First of all, we may assume that almost all fibers are uniformly $\mu$-stable: 
	\begin{claim}\label{cla: proof rects 0}
	We may assume that there exists $d^* \in \omega$ so that for almost all $y \in Y$, the	relation $E_y \in \mathcal{B}_{X \times Z}$ is $d^*$-stable.
	\end{claim}
	\begin{claimproof}

By assumption we have a partition (up to measure zero) $Y =^0 \bigsqcup_{d \in \omega} Y^d$, with $\mu\left( Y^d \right) >0$ for all $d \in \omega$ (ignoring some $d$'s if necessary), where 
$$Y^d := \left\{ y \in Y : E_y \textrm{ is } (d+1)\textrm{-}\mu\textrm{-stable, but not } d\textrm{-}\mu\textrm{-stable}\right\} \in \mathcal{B}_{Y}.$$ 

Note that for each $d \in \omega$, the relation  $E^d := E \cap \left( X \times Y_d \times Z \right) \in \mathcal{B}_{X \times Y^d \times Z}$  still satisfies the assumption of the proposition with $d$-$\mu$-stability (with respect to the measures $\mu_{X}, \mu_{Y^d}, \mu_{Z}$).
 Also, for each $d,i\in \omega$, let $A^{d,i} := A^i \cap (X \times Y_d) \in \mathcal{B}_{X \times Y^d}$. Then $\left( A^{d,i} : i \in \omega \right)$ is a countable partition of $X \times Y_d$, with each $A^{d,i}$ perfect for the binary relation $E^d \subseteq \left( X \times Y_d \right) \times Z$, with respect to the corresponding measures $\mu_{X \times Y^d}$ and $\mu_{Z}$ (indeed, for every $z\in Z$ with $\mu_{X \times Y^d} \left( E^d_{z} \cap A^{d,i} \right) > 0$ we have $\mu_{X \times Y} \left( E_{z} \cap A^{i} \right) > 0$, hence $\mu_{X \times Y} \left( A^i \setminus E_{z} \right) = 0$ by perfection of $A^i$, and so $\mu_{X \times Y^d} \left( A^{d,i} \setminus E^d_{z} \right) = 0$).

 Now assume that for each $d \in \omega$ and the $d$-$\mu$-stable relation $E^d$, there exists a partition $Y^d \times Z = \bigsqcup_{j \in \omega} B^{d,j}$ as required, i.e.~each $B^{d,j} = B^{d,j, Y} \times B^{d,j,Z}$ for some $B^{d,j, Y} \in \mathcal{B}_{Y^d}, B^{d,j, Z} \in \mathcal{B}_{Z}$, and so that for any $i,j \in \omega$, either $A^{d,i} \land B^{d,j} \subseteq^0 E^d$ or $\left( A^{d,i} \land B^{d,j} \right) \cap E^d =^0 \emptyset$ (with respect to the measure $\mu_{X \times Y^d \times Z} $).
 Then $\left( B^{d,j} : d,j \in \omega \right)$ is the required countable partition of $Y \times Z$ for $E$. Indeed, each $B^{d,j}$ is still a rectangle with respect to the algebras $\mathcal{B}_{Y}, \mathcal{B}_{Z}$, and for any $i \in \omega$ and $(d,j) \in \omega^2$ we have $A^i \land B^{d,j} = A^{d,i} \land B^{d,j}$, hence either $A^i \land B^{d,j} \subseteq^0 E $ or $\left( A^{i} \land B^{d,j} \right) \cap E =^0 \emptyset$.
	\end{claimproof}

	By induction on a countable ordinal $\alpha$, we will define pairwise-disjoint rectangles $B^{\alpha} = B^{\alpha,Y} \times B^{\alpha,Z}$ with $B^{\alpha,Y} \in \mathcal{B}_{Y}, B^{\alpha,Z} \in \mathcal{B}_{Z}$ and $\mu \left(B^{\alpha} \right) > 0$ satisfying the requirement of the proposition.
Given a countable ordinal $\alpha$, assume we have already chosen $B^{\beta}$ for $\beta < \alpha$. Let
$\bar{B}^{\alpha} :=  \bigcup_{\beta < \alpha}B^{\alpha}  \in \mathcal{B}_{Y \times Z}$. If $\mu \left(\bar{B}^{\alpha} \right) = 1$, then $\left(B^{\beta} : \beta < \alpha \right)$ is the desired partition. Otherwise we have $\mu \left(\left( Y \times Z \right) \setminus \bar{B}^{\alpha}  \right) > 0$. And since the complement of a rectangle is a union of three rectangles, by countable additivity there exists a rectangle $D = Y' \times Z'$ with $Y' \in \mathcal{B}_{Y}, Z' \in \mathcal{B}_{Z}$ so that $\mu \left(D \right) >0$ and $D$ is disjoint from $B^{\beta}$ for all $\beta < \alpha$. We will find a rectangle of positive measure contained in $D$ satisfying the requirement.

	We define the \emph{$Y'$-support of $A^i$} as $\supp_{Y'}(A^i) := \left\{ y \in Y' : \mu \left(A^i_y \right) > 0 \right\} \in \mathcal{B}_{Y}$. 	
	\begin{claim}\label{cla: supps form a tree}
	We may assume that for any $i < j \in \omega$, either $\supp_{Y'}(A^i) \supseteq \supp_{Y'}(A^j)$ or $\supp_{Y'}(A^i) \cap \supp_{Y'}(A^j) = \emptyset$.
	\end{claim}
	\begin{claimproof}
		By induction on $i \in \omega$ we choose $n_i \in \omega$ and a partition  $A^i = \bigsqcup_{j \in n_i} \bar{A}^{N_i + j}$, where $N_i = \sum_{j \in i} n_j$ and each $\bar{A}^i \in \mathcal{B}_{X \times Y}$ is perfect, so that the sets $\left(\bar{A}^j : j \in \left( N_i + n_i \right)\right)$ satisfy the requirement.
		
		Fix $i \in \omega$, and assume we have already chosen $n_j \in \omega$ for all $j \in i$ and $\bar{A}^k$ for all $k \in N_i = \sum_{j \in i} n_j$. For each $j \in N_i$, let 
		$$\bar{A}^{i,j,1} := A^i \land \supp_{Y'}\left(\bar{A}^j \right), \bar{A}^{i,j,0} := A^i \land \left( Y' \setminus \supp_{Y'}\left(\bar{A}^j \right) \right).$$
		Then $\bar{A}^{i,j,1}, \bar{A}^{i,j,0} \in \mathcal{B}_{X \times Y}$ and $A^i = \bar{A}^{i,j,1} \sqcup \bar{A}^{i,j,0}$. For $\sigma \in 2^{N_i}$, let 
		$$\bar{A}^{i, \sigma} := \bigcap_{j \in N_i} \bar{A}^{i,j,\sigma(j)} \in \mathcal{B}_{X \times Y}.$$
		Then $\left(\bar{A}^{i,\sigma} : \sigma \in 2^{N_i} \right)$ is a partition of $A^i$ and each $\bar{A}^{i, \sigma}$ is perfect (as a subset of a perfect set). And for every $j \in N_i$, if $\sigma(j) = 1$ then 
		$$\supp_{Y'} \left( \bar{A}^j \right) \supseteq \supp_{Y'} \left(\bar{A}^{i,j,1}\right) \supseteq \supp_{Y'} \left( \bar{A}^{i, \sigma} \right);$$ 
		and if $\sigma(j) = 0$, then  
		$$\supp_{Y'} \left( \bar{A}^j \right) \cap \supp_{Y'} \left(\bar{A}^{i,\sigma}\right) \subseteq  \supp_{Y'} \left( \bar{A}^j \right) \cap \supp_{Y'} \left(\bar{A}^{i,j,0}\right) = \emptyset.$$
	Let $n_i := 2^{N_i}$	 and let $\left(\bar{A}^{N_i + j} : j \in n_i \right)$ list $\left( \bar{A}^{i, \sigma} : \sigma \in 2^{N_i}\right)$ in an arbitrary order. Then the sets $\left(\bar{A}^j : j \in \left( N_i + n_i \right)\right)$ satisfy the requirement.
	\end{claimproof}
It follows from Claim \ref{cla: supps form a tree} that the $Y'$-supports of the $A^i$'s form a forest of subsets of $Y'$ under inclusion. That is, there exists an injection $f: \omega \to \omega^{<\omega}$ so that for any $i,j \in \omega$, 
\begin{gather*}
	f(i) \trianglelefteq f(j) \iff \supp_{Y'} \left(A^i \right) \supseteq \supp_{Y'}\left(A^j \right), \textrm{ and}\\
	f(i) \perp f(j) \iff \supp_{Y'} \left(A^i \right) \cap \supp_{Y'}\left(A^j \right) = \emptyset.
\end{gather*}

We re-enumerate the partition $\left(A^i : i \in \omega \right)$ according to $f$, that is let $T := \ima \left( f \right) \subseteq \omega^{< \omega}$, and for $\lambda \in T$ let $A^{\lambda} := A^{f^{-1}(\lambda)}$ and $Y_{\lambda} := \supp_{Y'}\left( A^{\lambda}\right)$. Then for any $\lambda,\nu \in T$, $Y_\nu \supseteq Y_{\lambda} \iff \nu \trianglelefteq \lambda$; and $Y_{\nu} \cap Y_{\lambda} = \emptyset \iff \nu \perp \lambda$.

	\begin{claim}\label{cla: proof rects 1.5}
			Let $T' := \left\{ \lambda \in T : \mu \left( Y_{\lambda}\right) > 0 \right\}$. 
			We may assume $T' \neq \emptyset$.
	\end{claim}
\begin{proof}
Note that for any $\lambda \in T$, if $\mu \left(Y_{\lambda} \right) = \supp_{Y'} \left(A^{\lambda}\right) = 0$, then $A^{\lambda} \land Y' =^0 \emptyset$. Hence if $\mu \left(Y_{\lambda} \right) = 0$ for all $\lambda \in T$, the rectangle $B^{\alpha} := Y' \times Z'$ satisfies the requirement.
\end{proof}

For $\lambda \in T'$ and $z \in Z'$, we define
\begin{gather*}
	S (\lambda) := \left\{ \nu \in T' : \nu \trianglelefteq \lambda \right\} = \left\{ \nu \in T' : \supp_{Y'}\left(A^{\nu} \right) \supseteq  \supp_{Y'}\left(A^{\lambda} \right)\right\},\\
	S (\lambda, z) :=  \left\{ \nu \in S(\lambda) : \mu \left( A^{\nu} \setminus E_z \right) = 0\right\}.
\end{gather*}

\begin{claim} \label{cla: proof rects 1} For almost all $z \in Z$, for all $\lambda \in T'$ we have: $\nu \in T' \setminus S(\lambda,z) \implies \mu \left( A^\nu \cap E_z \right) = 0$.
\end{claim}
\begin{claimproof}
	For a given $\nu \in T'$, as $A^{\nu}$ is perfect, the set 
	$$Z_{\nu} := \left\{z \in Z : 0 < \mu \left(A^{\nu} \cap E_z \right) < \mu \left( A^{\nu} \right) \right\}$$
	has measure $0$. Then the countable union $\bar{Z} := \bigcup_{\nu \in T'} Z_{\nu}$ also has measure $0$. It follows that if $z \in Z \setminus \bar{Z}$, then for any $\nu \in T'$ we have $A^{\nu} \subseteq^0 E_z$ or $A^{\nu} \cap E_z =^0 \emptyset$, and the claim follows.
	\end{claimproof}

%
%
%
%
%
%
%

 We define a relation $F \subseteq T' \times Z'$ via $(\lambda,z) \in F \iff A^{\lambda} \subseteq^0 E_z$.
\begin{claim}\label{cla: proof rects 2}
Given any $\lambda \in T'$, for almost all $(x,y) \in A^{\lambda} \land Y_{\lambda}$ we have $E_{x,y} \land Z' =^0 F_{\lambda}$.
\end{claim}
\begin{claimproof}
%
Fix $z \in F_{\lambda}$, then $A^{\lambda} \subseteq^0 E_z$ (by the definition of $F$), so by Fubini we have $\left(A^{\lambda} \land Y_{\lambda} \right) \times F_{\lambda} \subseteq^0 E$.

On the other hand, for almost  every $z \in Z' \setminus F_{\lambda}$ we have $A^{\lambda} \cap E_z =^0 \emptyset$ (by Claim \ref{cla: proof rects 1}). So by Fubini $\left(A^{\lambda} \land Y_{\lambda} \right) \times \left(Z \setminus F_{\lambda} \right) \cap E =^0 \emptyset$.

By Fubini again we thus have that for almost every $(x,y) \in A^{\lambda} \land Y_{\lambda}$, both $F_{\lambda} \subseteq^0  E_{x,y} \land Z'$ and $\left( E_{x,y} \land Z' \right) \setminus F_{\lambda} =^0 \emptyset$, that is $E_{x,y} \land Z' =^0 F_{\lambda}$.
\end{claimproof}

The proof of the following claim utilizes that the relation $F$ is $\mu$-stable when restricted to a branch of $T'$.
\begin{claim}\label{cla: proof rects 3}
There exists some $\lambda^* \in T'$ and some $S^* \subseteq S(\lambda^*)$ (in particular $S^*$ is finite) so that, taking $Z^* := \left\{ z \in Z' : S(\lambda,z) = S^* \right\} \in \mathcal{B}_{Z}$, we have the following: 
\begin{enumerate}
	\item $\mu\left(Z^*\right) > 0$;
	\item for every $\lambda \in T'$ with $\lambda \not \perp \lambda^*$ we have either $A^{\lambda} \land \left(Y^{\lambda} \times Z^*\right) \subseteq^0 E$ or $\left(A^{\lambda} \land \left(Y^{\lambda} \times Z^* \right)  \right) \cap E =^0 \emptyset$.
\end{enumerate}
\end{claim}
\begin{claimproof}
	Fix $d \in \omega, d \geq 1$, and assume that $R$ is a (possibly infinite) branch in $T'$. Assume that $\left( \lambda_{\sigma} : \sigma \in 2^{<d} \right)$  with $\lambda_{\sigma} \in R$ for all $\sigma \in 2^{<d}$ is a $\mu$-tree of height $d$ for $F^* \subseteq Z' \times T'$ (where $F^*$ is $F$ with the roles of the variables exchanged), that is: $F_{\lambda_{\emptyset}}$ $\mu$-splits $Z'$, and for every $\sigma \in 2^{<d}$ with $|\sigma| \geq 1$, 
		$F_{\lambda_{\sigma}} \ \mu \textrm{-splits }  F^{\sigma}_{\left(\lambda_{\sigma|_0}, \ldots, \lambda_{\sigma|_{|\sigma|-1}} \right)}$, where as usual  
		$$F^{\sigma} = \left \{ (z, r_0, \ldots, r_{|\sigma|-1}) \in Z' \times R^{n} : \bigwedge_{i = 0}^{|\sigma|-1} \left( (z,r_i) \in \left(F^* \right)^{\sigma(i)} \right) \right\}.$$

		By Claim \ref{cla: proof rects 2}, for each $\sigma \in 2^{<d}$ we have: for almost all $(x,y) \in A^{\lambda_{\sigma}} \land Y_{\lambda_{\sigma}}$, $E_{x,y} \land Z' =^0 F_{\lambda_{\sigma}}$. 
		As $\lambda_\sigma \in R$ for all $\sigma \in 2^{<d}$, there exists some $\sigma^* \in 2^{<d}$ so that $\lambda_{\sigma} \trianglelefteq \lambda_{\sigma^*}$ for all $\sigma \in 2^{<d}$; let $\lambda := \lambda_{\sigma^*}$. Then $\mu \left(Y_{\lambda} \right) > 0$ as $\lambda \in T'$, and  $Y_{\lambda} \subseteq Y_{\lambda_{\sigma}} = \supp_{Y'}\left(A^{\lambda_{\sigma}} \right)$ for all $\sigma \in 2^{<d}$. Hence by Fubini we have: for almost all $y \in Y_{\lambda}$, 
		\begin{enumerate}
			\item $\mu \left( \prod_{\sigma \in 2^{<d}} A^{\lambda_{\sigma}}_y \right) = \prod_{\sigma \in 2^{<d}} \mu \left(A^{\lambda_{\sigma}}_y \right)> 0$;
			\item for almost every tuple $\bar{x} = \left(x_{\sigma} : \sigma \in 2^{<d} \right) \in \prod_{\sigma \in 2^{<d}} A^{\lambda_{\sigma}}_y$, $E_{x_{\sigma},y} \land Z' =^0 F_{\lambda_{\sigma}}$ for every $\sigma \in 2^{<d}$. And since $\left( \lambda_{\sigma} : \sigma \in 2^{<d} \right)$ is a $\mu$-tree for $F^*$, it follows that $\bar{x}$ is a $\mu$-tree for the relation $E^*_y \subseteq Z \times X$ (that is, $E_y \subseteq X \times Z$ with the roles of the variables reversed).
		\end{enumerate}		
As $\mu \left( Y_{\lambda} \right) >0$, it follows that for a positive measure set of $y \in Y$, the relation $E^{*}_y$ is not $d$-$\mu$-stable.
		Hence by Claim \ref{cla: proof rects 1} and Corollary \ref{cor: stab pres under opp}, there is some $D^* \in \omega$ depending only on $d^*$ so that  no branch $R$ of $T'$ contains a $\mu$-tree of height $D^*$ for $F^*$.
	
	Now by Claim \ref{cla: proof rects 1.5}, $T' \neq \emptyset$.
Let  $\left( \lambda_{\sigma} : \sigma \in 2^{<d} \right)$ with  $\lambda_{\sigma} \in T'$ be a $\mu$-tree of maximal height for $F^*$ with all $\lambda_\sigma \in T'$ for $\sigma \in 2^{<d}$ pairwise $\trianglelefteq$-comparable. We try to complete it to a $\mu$-tree of height $d+1$ adding one node at a time: by induction on $\sigma \in 2^{d}$ with the lexicographic ordering $<_{\lex}$ on $2^d$, we try to chose $\lambda_{\sigma}$ so that $\lambda_{\sigma}$ is comparable to every element of $R_{\sigma} := \left\{ \lambda_{\tau} : \tau \in 2^{d} \land \tau <_{\lex} \sigma \right\} \cup \left\{\lambda_{\tau} : \tau \in 2^{<d} \right\}$ and $F_{\lambda_{\sigma}}$ $\mu$-splits $F^{\sigma}_{\left(\lambda_{\sigma|_0}, \ldots, \lambda_{\sigma|_{d-1}} \right)}$ (this set has positive measure by assumption). By maximality of $d$, we must get stuck for some $\sigma^* \in 2^{d}$ (non-maximal with respect to $<_{\lex}$): that is, for every $\lambda \in T'$  $\trianglelefteq$-comparable to every element in $R_{\sigma^*}$, we have either $F^{\sigma^*}_{\left(\lambda_{\sigma^*|_0}, \ldots, \lambda_{\sigma^*|_{d-1}} \right)} \subseteq^0 F_{\lambda}$ or $F^{\sigma^*}_{\left(\lambda_{\sigma^*|_0}, \ldots, \lambda_{\sigma^*|_{d-1}} \right)} \cap F_{\lambda} =^0 \emptyset$. 
Let $\lambda^*$ be the $\trianglelefteq$-maximal element of $R_{\sigma^*}$, then
$R_{\sigma^*} \subseteq S \left(\lambda^* \right)$, in particular $\lambda_{\sigma^*|_{i}} \in S \left(\lambda^* \right)$ for each $0 \leq i \leq d-1$.
Then for at least one set $S^* \subseteq S(\lambda^*)$ so that $\lambda_{\sigma^*|_i} \in S^* \iff \sigma^*(i) = 1$ for all $0 \leq i \leq d-1$, the set 
$$Z^* := \left\{z \in Z' :  \bigwedge_{\lambda \in S(\lambda^*)} \left( (\lambda,z) \in F \iff \lambda \in S\right)\right\} \subseteq F^{\sigma^*}_{\left(\lambda_{\sigma^*|_0}, \ldots, \lambda_{\sigma^*|_{d-1}} \right)}$$
still has positive measure. By Claim \ref{cla: proof rects 1}, $Z^*$ is of the required form. Note also that every element $\lambda \in T'$ with $\lambda \not \perp \lambda^*$ is $\trianglelefteq$-comparable to every element of $R_{\sigma^*}$. Hence for every $\lambda \in T'$ with $\lambda \not \perp \lambda^*$, we have either $Z^* \subseteq^0 F_{\lambda}$ or $Z^* \cap F_{\lambda} =^0 \emptyset$. By Claim \ref{cla: proof rects 2} and Fubini, in the first case we have $A^{\lambda} \land \left(Y^{\lambda} \times Z^*\right) \subseteq^0 E$, and in the second $\left(A^{\lambda} \land \left(Y^{\lambda} \times Z^* \right)  \right) \cap E =^0 \emptyset$.
\end{claimproof}

Let $\lambda^* \in T'$ and $Z^*$ be as given by Claim \ref{cla: proof rects 3}. We take  $B^{\alpha,Y} := Y_{\lambda^*}, B^{\alpha,Z} := Z^*, B^{\alpha} := Y_{\lambda^*} \times Z^*$, then $\mu \left(B^{\alpha}\right) > 0$ and the rectangle $B^{\alpha}$ is disjoint from $B^{\beta}$ for all $\beta < \alpha$ as $B^{\alpha} \subseteq Y' \times Z'$. Let $\lambda \in T$ be arbitrary. If $\lambda \notin T'$, then $\supp_{Y'}\left(A^{\lambda} \right) =^0 \emptyset$; if $\lambda \in T'$ and $\lambda \perp \lambda^*$, then by construction of the forest $\supp_{Y'}\left(A^{\lambda} \right) \cap Y_{\lambda^*} =^0 \emptyset$. In either case by Fubini we have $A^{\lambda} \land Y_{\lambda^*} =^0 \emptyset$, so in particular $A^{\lambda} \land B^{\alpha} =^0 \emptyset$. And if $\lambda \in T'$ and $\lambda \not \perp \lambda^*$, by Claim \ref{cla: proof rects 3} either $A^{\lambda} \land B^{\alpha} \subseteq^0 E$ or $A^{\lambda} \land B^{\alpha} \cap E =^0 \emptyset$, so $B^{\alpha}$ satisfies the requirements.

Finally, note that the sequence $\mu \left( \bar{B}^{\alpha} \right) \in [0,1]$ is strictly increasing with $\alpha$, so the construction must stop at some countable ordinal $\alpha$.
\end{proof}

\begin{prop}\label{prop: part perf rects}
Suppose that $E \in \mathcal{B}_{X \times Y \times Z}$ is slice-wise $\mu$-stable. Assume that we are given partitions $X \times Z = \bigsqcup_{j \in \omega}B^j$ with each $B^j \in \mathcal{B}_{X \times Z}$ perfect for $E$ viewed as a binary relation on $(X \times Z) \times Y$, and  $Y \times Z = \bigsqcup_{j \in \omega} D^k$ with each $D^k \in \mathcal{B}_{Y \times Z}$ perfect for $E$ viewed as a binary relation on $\left(Y \times Z \right) \times X$ (note that such partitions always exist by Proposition \ref{prop: part XY into perf sets} applied for the appropriate permutations of the coordinates).

 Then there exists a countable partition $X \times Y = \bigsqcup_{i \in \omega} A^i$ so that:
 \begin{enumerate}
 	\item each $A^i$ is perfect for the relation $E \subseteq (X \times Y) \times Z$;
 	\item $A^i = A^{i,X} \times A^{i,Y}$ is a rectangle with $A^{i,X} \in \mathcal{B}_{X}, A^{i,Y} \in \mathcal{B}_{Y}$;
 	\item for each $i,j \in \omega$,  either $A^i \land B^j \subseteq^0 E$ or $\left( A^j \land B^j \right) \cap E =^0 \emptyset$;
 	\item for each $i,k \in \omega$, either $A^i \land D^k \subseteq^0 E$ or $\left(A^i \land D^k \right) \cap E =^0 \emptyset$.
 \end{enumerate} 
\end{prop}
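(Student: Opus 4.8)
The plan is to reduce the statement to one further application of Proposition~\ref{prop: part perf sets and rects} (applied to permutations of the coordinates), followed by a common‑refinement step, and then to observe that condition~(1) comes \emph{for free} from conditions~(2)--(4) together with the fact that $\{B^j\}$ and $\{D^k\}$ are partitions.

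\emph{Producing the rectangle partition.} View $E$ with its coordinates reordered as $(Z,X,Y)$. Slice‑wise $\mu$‑stability gives that $E_z\in\mathcal{B}_{X\times Y}$ is $\mu$‑stable for almost all $z$ and $E_x\in\mathcal{B}_{Z\times Y}$ is $\mu$‑stable for almost all $x$, so Proposition~\ref{prop: part perf sets and rects} applies in this reordering with the given partition $\{B^j\}$ of $X\times Z$ (which is perfect for $E$ on $(X\times Z)\times Y$) playing the role of the input perfect partition; its conclusion is a countable rectangle partition $X\times Y=\bigsqcup_i A^i_{(1)}$, $A^i_{(1)}=A^{i,X}_{(1)}\times A^{i,Y}_{(1)}$, with $A^i_{(1)}\land B^j\subseteq^0 E$ or $(A^i_{(1)}\land B^j)\cap E=^0\emptyset$ for all $i,j$. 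Reordering instead as $(Z,Y,X)$ and feeding in $\{D^k\}$ in the same way produces a countable rectangle partition $X\times Y=\bigsqcup_{i'}A^{i'}_{(2)}$ with the analogous property relative to $\{D^k\}$. Now take the common refinement $A^{(i,i')}:=A^i_{(1)}\cap A^{i'}_{(2)}$: an intersection of two rectangles is again a rectangle, and the properties ``$A\land B^j\subseteq^0E$ or $(A\land B^j)\cap E=^0\emptyset$'' and ``$A\land D^k\subseteq^0E$ or $(A\land D^k)\cap E=^0\emptyset$'' are inherited by subsets of $A$; so after relabelling, $\{A^i\}$ is a countable rectangle partition of $X\times Y$ satisfying (2), (3) and (4).

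\emph{Perfection is automatic.} Fix $i$ and discard the trivial case $\mu(A^i)=0$. By (3) and the Fubini axiom, for almost every $z$ the slice $A^i\cap E_z$ agrees up to measure $0$ with the union, over those $j$ with $A^i\land B^j\subseteq^0 E$, of the slices $(A^i\land B^j)_z=(A^{i,X}\cap B^j_z)\times A^{i,Y}$; since $\{B^j_z\}$ partitions $X$ for almost every $z$, this gives $A^i\cap E_z=^0 (A^{i,X}\cap S_z)\times A^{i,Y}$ with $S_z:=\bigl(\bigcup_{j:\,A^i\land B^j\subseteq^0 E}B^j\bigr)_z$. Symmetrically, (4) gives $A^i\cap E_z=^0 A^{i,X}\times(A^{i,Y}\cap T_z)$ for the corresponding $T_z\subseteq Y$. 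Thus the sets $(A^{i,X}\cap S_z)\times A^{i,Y}$ and $A^{i,X}\times(A^{i,Y}\cap T_z)$ agree up to measure $0$; reading this fiberwise over $x\in A^{i,X}$, either $\mu(A^{i,X}\cap S_z)=0$, in which case $A^i\cap E_z$ is null, or there is a positive‑measure set of $x$ for which $A^{i,Y}\setminus T_z$ is null, and then comparing the measures of the two descriptions forces $A^{i,X}\setminus S_z$ to be null as well, whence $A^i\cap E_z=^0 A^i$. So for almost every $z$ we have $A^i\cap E_z=^0\emptyset$ or $A^i\cap E_z=^0 A^i$, which is exactly perfection of $A^i$ for $E$ viewed as a binary relation on $(X\times Y)\times Z$, i.e.\ (1).

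\emph{Where the difficulty lies.} Essentially all of the real work is packaged inside Proposition~\ref{prop: part perf sets and rects}, which is used here as a black box. The only points requiring care are the coordinate‑permutation bookkeeping --- verifying that the two ``one‑direction'' stability hypotheses of that proposition survive each reordering, which they do precisely because $E$ is slice‑wise $\mu$‑stable in all three directions --- and the fiberwise Fubini computation of the last paragraph, where one must check that the exceptional $z$‑sets remain null after a countable union over $j$ and over $k$, and that the dichotomy ``$A^{i,Y}\subseteq^0 T_z$ versus $A^{i,Y}\cap T_z=^0\emptyset$'' is a property of $z$ alone, so that it propagates to force $A^{i,X}\cap S_z$ to be either all of $A^{i,X}$ or null, up to measure $0$.
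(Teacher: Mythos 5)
Your proof is correct, and it follows the paper's structure for the first part (two applications of Proposition~\ref{prop: part perf sets and rects} with permuted coordinates, then common refinement) but then genuinely diverges on the perfection step, in a way that simplifies the argument. The paper takes the refined rectangle partition and applies the local‑symmetrization subdivision from Corollary~\ref{cor: part of binary into symm sets} to each $A^i$, thereby replicating condition (7) from the proof of Proposition~\ref{prop: part XY into perf sets}, and only then deduces perfection (so it ends with sets $A^i_t = A^i\cap(U^i_t\times V^i_t)$ and has to re‑verify (3) and (4) for those). You instead observe that for a \emph{rectangle} $A^i=A^{i,X}\times A^{i,Y}$ with both sides of positive measure, the local‑symmetrization condition is automatic: if $(A^{i,X}\cap S_z)\times A^{i,Y}=^0 A^{i,X}\times(A^{i,Y}\cap T_z)$, writing $a:=\mu(A^{i,X}\cap S_z)$ and $b:=\mu(A^{i,Y}\cap T_z)$, the null symmetric difference gives $a\cdot(\mu(A^{i,Y})-b)=0$ and $(\mu(A^{i,X})-a)\cdot b=0$, forcing either $a=0$ (slice null) or $b=\mu(A^{i,Y})$ and then $a=\mu(A^{i,X})$ (slice all of $A^i$). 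This spares one invocation of Corollary~\ref{cor: part of binary into symm sets} plus the re‑verification of (3) and (4) after subdivision. The Fubini bookkeeping for deriving $E_z\cap A^i=^0(A^{i,X}\cap S_z)\times A^{i,Y}$ from (3), using $E\cap A^i=^0\bigsqcup_{j\in J^i}(A^i\land B^j)$ with $J^i$ the good indices and then slicing at $z$ (the countable union keeps the exceptional $z$-set null), is exactly as you state; the same for $T_z$ via (4). The coordinate-permutation hypotheses of Proposition~\ref{prop: part perf sets and rects} are satisfied in both applications because all three slice directions are assumed $\mu$-stable. Both routes are valid; yours is shorter.
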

\begin{proof}
	Since the slices $E_x$ are $\mu$-stable for almost all $x \in X$ and the slices $E_z$ are $\mu$-stable for almost all $z \in Z$, applying Proposition \ref{prop: part perf sets and rects} with the appropriate permutation of the variables, we find a partition $X \times Y = \bigsqcup_{i \in \omega} A^i$ so that: each $A^i = A^{i,X} \times A^{i,Y}$ with $A^{i,X} \in \mathcal{B}_{X}, A^{i,Y} \in \mathcal{B}_{Y}$ is a rectangle,  and for every $i,j \in \omega$ we have either $A^i \land B^j \subseteq^0 E$ or $\left( A^i \land B^j \right) \cap E =^0 \emptyset$.
	
	Since the slices $E_y$ are $\mu$-stable for almost all $y \in Y$ and the slices $E_z$ are $\mu$-stable for almost all $z \in Z$, applying Proposition \ref{prop: part perf sets and rects} again with the appropriate permutation of the variables, we find a partition $X \times Y = \bigsqcup_{i \in \omega} C^i$ so that: each $C^i = C^{i, X} \times C^{i,Y}$ with $C^{i,X}\in \mathcal{B}_{X}, C^{i,Y} \in \mathcal{B}_{Y}$ is a rectangle, and for every $i,k \in \omega$ we have either $C^i \land D^k \subseteq^0 E$ or $\left(C^i \land D^k \right) \cap E =^0 \emptyset$.
	
	Intersecting the partitions $(A^i)_{i \in \omega}$ and $\left(C^i \right)_{i \in \omega}$ of $X \times Y$, i.e.~considering $\left(A^i \cap C^j : (i,j) \in \omega^2 \right)$, we obtain a countable partition of $X \times Y$ which still consists of rectangles and satisfies the condition on intersections with $E$ simultaneously for the $B^j$'s and for the $D^k$'s; hence we may assume that $A^i = C^i$.
	
	So the partitions $A^i, C^i, B^j, D^j$ satisfy the conditions (4), (5) and (6) in the proof of Proposition \ref{prop: part XY into perf sets}. Hence the proof of Proposition \ref{prop: part XY into perf sets} goes through unchanged to produce a partition of $X\times Y$ into perfect sets for $E \subseteq (X\times Y) \times Z$. This partition is obtained from $\left( A^i : i \in \omega \right)$ by subdividing each $A^i$ into countably many sets $A^i =^0 \bigsqcup_{t \in \omega} A^i_t$, where for each $t \in \omega$ we have  $A^i_t = A^i \cap \left( U^i_t \times V^i_t \right)$ for some $U^i_t \in \mathcal{B}_{X}, V^i_t \in \mathcal{B}_{Y}$. Since each $A^i$ is a rectangle, $A^i_t$ is also a rectangle for each $i,t \in \omega$. It follows that every set in the resulting partition of $X \times Y$ into perfect sets is also a rectangle.
	
	For (3): given $(i,t) \in \omega^2$ and $j \in \omega$, if $\mu \left( \left( A^i_t \land B^j \right) \cap E \right) > 0$, then $\mu(\left( A^i \land B^j \right) \cap E) > 0$, so $A^i \land B^j \subseteq^0 E$, hence in particular $A^i_t \land B^j \subseteq^0 E$.
	
	And (4) is similar.
\end{proof}



\begin{proof}[Proof of Theorem \ref{thm: all three parts perfect rects}]
	By Proposition \ref{prop: part XY into perf sets} applied for the appropriate permutations of the coordinates, we find partitions $X \times Z = \bigsqcup_{j \in \omega}B^j$ with each $B^j \in \mathcal{B}^{E}_{X \times Z}$ perfect for $E$ viewed as a binary relation on $(X \times Z) \times Y$, and $Y \times Z = \bigsqcup_{j \in \omega} D^k$ with each $D^k \in \mathcal{B}^{E}_{Y \times Z}$ perfect for $E$ viewed as a binary relation on $\left(Y \times Z \right) \times X$ (note that these are not necessarily rectangles).
	
	Applying Proposition \ref{prop: part perf rects}, working in the graded probability space $\mathfrak{P}^{E}$ (see Remark \ref{rem: graded prob space of fibers}), we then find a partition $X \times Y = \bigsqcup_{i \in \omega} A^i$ so that each $A^i \in \mathcal{B}^{E}_{X} \times \mathcal{B}^{E}_{Y}$ is a rectangle, perfect for the relation $E \subseteq (X \times Y) \times Z$.
 
 Applying Proposition \ref{prop: part perf rects} again in  the graded probability space $\mathfrak{P}^{E}$ for the appropriate permutation of the coordinates and starting with the partitions $X \times Y = \bigsqcup_{i \in \omega} A^i$ and  $Y \times Z = \bigsqcup_{j \in \omega} D^k$ (both consisting of perfect sets), we find a new partition $X \times Z = \bigsqcup_{j \in \omega}\widetilde{B}^j$ with each $\widetilde{B}^j \in \mathcal{B}^{E}_{X} \times \mathcal{B}^{E}_{Z}$ a rectangle, perfect for $E \subseteq \left( X \times Z \right) \times Y$ and so that
 	 \begin{enumerate}
 \item[(a)] for each $i,j \in \omega$,  either $A^i \land \widetilde{B}^j \subseteq^0 E$ or $\left( A^j \land \widetilde{B}^j \right) \cap E =^0 \emptyset$;
 	\item[(b)] for each $j,k \in \omega$, either $\widetilde{B}^j \land D^k \subseteq^0 E$ or $\left(\widetilde{B}^j \land D^k \right) \cap E =^0 \emptyset$.
 \end{enumerate}
 
 Finally, applying Proposition \ref{prop: part perf rects} again in  $\mathfrak{P}^{E}$ for the appropriate permutation of the coordinates and starting with the partitions $X \times Y = \bigsqcup_{i \in \omega} A^i$ and  $X \times Z = \bigsqcup_{j \in \omega}\widetilde{B}^j$ (both consisting of perfect sets), we find a partition $Y \times Z = \bigsqcup_{j \in \omega} \widetilde{D}^k$ with each $\widetilde{D}^k \in \mathcal{B}^{E}_{Y} \times \mathcal{B}^{E}_{Z}$ perfect for $E \subseteq \left(Y \times Z \right) \times X$ and so that:
\begin{enumerate}
 \item[(c)] for each $i,k \in \omega$,  either $A^i \land \widetilde{D}^k \subseteq^0 E$ or $\left( A^j \land \widetilde{D}^k \right) \cap E =^0 \emptyset$;
 	\item[(d)] for each $j,k \in \omega$, either $\widetilde{B}^j \land \widetilde{D}^k \subseteq^0 E$ or $\left(\widetilde{B}^j \land \widetilde{D}^k \right) \cap E =^0 \emptyset$.
 \end{enumerate}
 
Hence $A^i, \widetilde{B}^j, \widetilde{D}^k$ are partitions into perfect rectangles, and (2a), (2b), (2c) hold by (a), (c) and (d), respectively.
 \end{proof}
 
 \begin{remark}
Assume that $E \in \mathcal{B}_{X \times Y \times Z}$  is partition-wise $\mu$-stable, hence satisfies perfect stable regularity (Definition \ref{def: perf stab reg}, Proposition \ref{prop: stable hypergraph reg}): $X = \bigsqcup_{i \in \omega} X_i, Y = \bigsqcup_{i \in \omega} Y_i, Z = \bigsqcup_{i \in \omega} Z_i$ with $X_i \in \mathcal{B}_{X}, Y_i \in \mathcal{B}_{Y}, Z_i \in \mathcal{B}_{Z}$ and $\frac{\mu \left( E \cap X_i \times Y_j \times Z_k \right)}{\mu \left( X_i \times Y_j \times Z_k \right)} \in \{ 0,1\}$ for all $i,j,k \in \omega$. Then, considering the rectangles $A^{(i,j)} := X_i \times Y_j, B^{(i,k)} := X_i \times Z_k$ and $D^{(j,k)} := Y_j \times Z_k$, we have that $X \times Y = \bigsqcup_{(i,j) \in \omega^2} A^{(i,j)}$, $X \times Z = \bigsqcup_{(i,k) \in \omega^2} B^{(i,k)}$ and $Y \times Z = \bigsqcup_{(j,k) \in \omega^2} D^{(j,k)}$ are countable partitions into perfect rectangles satisfying the conclusion of Theorem \ref{thm: all three parts perfect rects}. 
  
 On the other hand, we stress that in Theorem \ref{thm: all three parts perfect rects} even though e.g.~$X \times Y = \bigsqcup_{i \in \omega} A^i$ and each $ A^i = A^{i,X} \times A^{i,Y}$ is a rectangle with $A^{i,X} \in \mathcal{B}_X, A^{i,Y} \in \mathcal{B}_{Y}$, this does not imply that $\left( A^{i,X} : i \in \omega \right)$ may be assumed to give a partition of $X$, and in fact these sets can overlap in a complicated way for a general slice-wise stable hypergraph, as the example in Section \ref{sec: counterexample} shows. At least we can always get the following  approximation for our partition of pairs $X \times Y = \bigsqcup_{i \in \omega} A^i$ and each $ A^i = A^{i,X} \times A^{i,Y}$ (which  obviously holds if our partition of pairs comes from partitions of singletons as in the previous paragraph):
 \begin{claim}\label{cla: pf of slice reg 1}
		For any $F \in \mathcal{B}_{Y}$ with $\mu \left( F \right) < 1$ and $\delta > 0$, there exists a finite set $I \subseteq \omega$  so that $\mu \left(\bigcup_{i \in I} A^{i,X} \right) \geq 1 - \delta \cdot \max_{i \in I} \left\{\mu \left(A^{i,X} \right) \right\}$
		and $\mu \left( \left( \bigcap_{i \in I} A^{i,Y} \right) \setminus F\right) > 0$.
	\end{claim}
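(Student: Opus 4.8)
The plan is to work entirely with the fixed partition $X \times Y = \bigsqcup_{i \in \omega} A^i$ into rectangles $A^i = A^{i,X} \times A^{i,Y}$ furnished by Theorem \ref{thm: all three parts perfect rects}; perfection will play no role, only that this is a partition into rectangles. We may assume $\delta < 1$, since enlarging $\delta$ only weakens the first requirement and leaves the second untouched, and we set $G := Y \setminus F$, so $\mu(G) > 0$ by hypothesis. The basic observation is a Fubini one: for $\mu_Y$-almost every $y$ the slices $\{A^i_y : i \in \omega\}$ form a partition of $X$ up to measure zero, and since each $A^i$ is a rectangle, $A^i_y$ equals $A^{i,X}$ when $y \in A^{i,Y}$ and is empty otherwise; hence, writing $I_y := \{ i : y \in A^{i,Y},\ \mu(A^{i,X}) > 0 \}$, we get $\sum_{i \in I_y} \mu(A^{i,X}) = 1$ for a.e.\ $y$.

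For each such $y$ (also in $G$) I would enumerate $I_y$ in non-increasing order of $\mu(A^{i,X})$, say as $i_1, i_2, \dots$ with ties broken by index, note $\mu(A^{i_1,X}) > 0$ because these masses sum to $1$, and pick the least $n(y) \geq 1$ with $\sum_{k > n(y)} \mu(A^{i_k,X}) \leq \delta\,\mu(A^{i_1,X})$ — this exists since the tail sums vanish and $\delta < 1$ prevents $n(y)=0$. Put $I(y) := \{i_1, \dots, i_{n(y)}\}$. This is a finite set whose $X$-parts, being pairwise disjoint within $I_y$, union to a set of measure $1 - \sum_{k > n(y)} \mu(A^{i_k,X}) \geq 1 - \delta \max_{i \in I(y)} \mu(A^{i,X})$, so $I(y)$ already meets the first requirement of the claim. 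And since $I(y) \subseteq I_y$ and $y \notin F$, the point $y$ lies in the measurable set $Y_{I(y)} := \big(\bigcap_{i \in I(y)} A^{i,Y}\big) \setminus F$.

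The decisive step is then pigeonhole over the countably many finite subsets of $\omega$. Let $\mathcal{I}$ be the collection of all sets of the form $I(y)$ as $y$ ranges over $G$ minus the null set above; it is countable, and every member of it meets the first requirement by the previous paragraph. Every admissible $y$ lies in $Y_{I(y)}$ with $I(y) \in \mathcal{I}$, so $G$ is covered, up to measure zero, by $\bigcup_{I \in \mathcal{I}} Y_I$; since $\mu(G) > 0$, countable subadditivity yields some $I^* \in \mathcal{I}$ with $\mu(Y_{I^*}) > 0$. This $I^*$ is the desired finite set: it satisfies $\mu\big(\bigcup_{i \in I^*} A^{i,X}\big) \geq 1 - \delta \max_{i \in I^*} \mu(A^{i,X})$ because $I^* \in \mathcal{I}$, and $\mu\big((\bigcap_{i \in I^*} A^{i,Y}) \setminus F\big) = \mu(Y_{I^*}) > 0$.

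I do not expect a real obstacle; the one point needing care is landing both conclusions on a \emph{single} finite $I$. The naive route — pick an $I$ covering $X$ up to $\delta\max$, then separately pick an $I'$ with $\bigcap_{i \in I'} A^{i,Y} \setminus F$ of positive measure — fails because these two finite sets need not coincide. The fix is exactly to restrict to the countable family $\mathcal{I}$ of index sets actually realized as $I(y)$: each such set automatically satisfies the covering bound, and the fact that $y \in Y_{I(y)}$ is what lets a single realized $I$ serve both purposes, with countable subadditivity of $\mu$ over $\mathcal{I}$ doing the rest. The remaining Fubini/partition bookkeeping about the slices $A^i_y$ on the relevant full-measure set is routine.
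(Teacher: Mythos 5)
Your proof is correct, and it takes a genuinely different route from the paper's.

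The paper's argument is a tree-of-subsets construction: it defines cells $Y^\sigma := \bigcap_{i<|\sigma|}(A^{i,Y})^{\sigma(i)}$, restricts attention to the subtree $T$ of $\sigma$ with $\mu(Y^\sigma\setminus F)>0$, discards the (null) union of cells outside $T$ to obtain a set $Y'$ with $\mu(Y'\setminus F)=\mu(Y\setminus F)>0$, picks a point $y\in Y'\setminus F$ and the branch $\beta$ through it, and then proves by a maximality argument on $\beta$ that $I':=\supp(\beta)$ satisfies $\mu\left(\bigcup_{i\in I'}A^{i,X}\right)=1$ while every \emph{finite} $I\subseteq I'$ has $\mu\left(\left(\bigcap_{i\in I}A^{i,Y}\right)\setminus F\right)>0$. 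Only at the last step does $\delta$ enter, when a finite subset of $I'$ is extracted by countable additivity. Your proof skips the tree entirely: you observe that for any $y$ the set $I_y=\{i: y\in A^{i,Y},\ \mu(A^{i,X})>0\}$ has $X$-masses summing to $1$ (this follows directly from the rectangle partition, even without the Fubini gloss), order $I_y$ by decreasing mass to produce a finite $I(y)$ already meeting the covering bound, and then pigeonhole over the countable family $\{I(y): y\in G\}$ to find a single $I^*$ with $\mu(Y_{I^*})>0$. The trade is: your argument is shorter and more elementary, avoiding branches and the maximality step; the paper's argument produces a single infinite index set $I'$ from which a suitable finite $I$ can be extracted for \emph{every} $\delta$ simultaneously, whereas your $I^*$ depends on $\delta$ through the cutoff $n(y)$. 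For the claim as stated — which fixes $\delta$ in advance — this makes no difference, and both proofs are valid.

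One small remark: as you construct it, $\sum_{i\in I_y}\mu(A^{i,X})=1$ holds for \emph{every} $y$, not just almost every $y$, since $X\times Y=\bigsqcup_i A^i$ is a literal partition into rectangles (the sets $A^{i,X}$ for $i$ with $y\in A^{i,Y}$ are pairwise disjoint and their union is all of $X$). So the Fubini step and the attendant ``a.e.\ $y$'' qualifier can be dropped, which slightly streamlines the pigeonhole at the end: $G$ is covered exactly, not merely up to measure zero, by $\bigcup_{I\in\mathcal{I}}Y_I$.
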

	\begin{claimproof}
	
	We define a tree of subsets of $Y$ as follows. For $\sigma \in 2^{< \omega}$, let $Y^{\sigma} := \bigcap_{i \in |\sigma|} \left(A^{i,Y} \right)^{\sigma(i)} \in \mathcal{B}_{Y}$ (where as usual $\left(A^{i,Y} \right)^1 = A^{i,Y}$ and $\left(A^{i,Y} \right)^0 = Y \setminus A^{i,Y}$). Consider a subtree of $2^{<\omega}$ defined by $T := \left\{ \sigma \in 2^{< \omega} : \mu \left( Y^{\sigma}  \setminus F \right) > 0 \right\}$, and let $Y' := Y \setminus \left( \bigcup_{\sigma \in 2^{< \omega} \setminus T} Y^{\sigma} \right)$. Then $Y' \in \mathcal{B}_{Y}$ and $\mu \left(Y' \setminus F \right) = \mu \left(Y \setminus F \right) > 0$, by countable additivity. Let $\Br(T)$ be the set of all branches of the tree $T$ (where, as usual, a branch is maximal subset of $T$ linearly ordered by $\triangleleft$).  For $\beta \in \Br(T)$, let $Y^{\beta} := \bigcap_{\sigma \in \beta} Y^{\sigma} \in \mathcal{B}_{Y}$ (note that while $\mu \left(Y^{\sigma} \setminus F \right) > 0$ for every $\sigma \in T$, we might have $\mu \left( Y^{\beta}  \setminus F \right) = 0$).
	Then $\left(Y^{\beta} \setminus F : \beta \in \Br(T) \right)$ is a partition of $Y' \setminus F$ (into possibly uncountably many sets). Take an arbitrary $y \in  Y' \setminus F $ (exists as $\mu \left(Y' \setminus F \right) > 0$), and let $\beta \in \Br(T)$ be such that $y \in Y^{\beta}$. Let $\supp(\beta) := \left\{ i \in \omega : \sigma(i) = 1 \textrm{ for some } \sigma \in \beta \right\}$.
	Assume that $\mu \left( \bigcup_{i \in \supp(\beta)} A^{i,X}\right) < 1$. As in particular $\left(X \setminus \bigcup_{i \in \supp(\beta)} A^{i,X} \right) \times \{y\} \subseteq \bigcup_{i \in \omega} A^{i,X} \times A^{i,Y}$, there must exist some $i^* \in \omega$ such that $y \in A^{i^*,Y}$ and $\mu \left( A^{i^*,X} \setminus \bigcup_{i \in \supp(\beta)} A^{i,X} \right) > 0$. But by the construction of the tree and maximality of the branch, if $y \in \left( Y' \setminus F \right) \cap A^{i^*,Y}$ then we must already have $i^* \in \supp(\beta)$, a contradiction.	
	
	Hence, taking $I' := \supp(\beta)$ we have: $\mu \left(\bigcup_{i \in I'} A^{i,X}\right) = 1$, and for every finite $I \subseteq I'$, $\mu \left( \left( \bigcap_{i \in I} A^{i,Y} \right) \setminus F \right) > 0$. Let $\delta >0$ be given, and take an arbitrary $i' \in I'$, then $\mu \left(A^{i',X} \right) > 0$, and by countable additivity there exists a finite subset $I$ of $I'$ containing $i'$ so that $\mu \left( \bigcup_{i \in I} A^{i,X} \right) \geq 1 - \delta \mu \left(A^{i',X}\right) \geq 1 - \delta \max_{i \in I} \left\{\mu \left(A^{i,X} \right) \right\}$.
		\end{claimproof}
 \end{remark}
 
We prove a finitary counterpart and a converse to Theorem \ref{thm: all three parts perfect rects}, demonstrating that it characterizes slice-wise stable hypergraphs:

\begin{theorem}\label{thm: finitary slice-wise reg}
	Let $\mathcal{H}$ be a hereditarily closed family of $3$-partite $3$-hypergraphs. Then the following are equivalent.
	\begin{enumerate}
		\item There exists $d \in \mathbb{N}$ so that every $H \in \mathcal{H}$ is slice-wise $d$-stable.
		\item Any ultraproduct  $\widetilde{H} = \left(\widetilde{E}; \widetilde{X}, \widetilde{Y}, \widetilde{Z} \right)$ of hypergraphs from $\mathcal{H}$  satisfies both (1) and (2) in the conclusion of Theorem \ref{thm: all three parts perfect rects}.
\item (Strong slice-wise stable regularity) For every $\varepsilon > 0$ and $f: \mathbb{N} \to (0,1]$, there exists $N = N(\varepsilon,f) \in \mathbb{N}$ satisfying the following. For any $H = (E; X,Y,Z) \in \mathcal{H}$ there exists $N' \leq N$, partitions $X = \bigsqcup_{i \in [N']} A^i$ with $A^i \in \mathcal{F}^{E,N}_{X}$, $Y = \bigsqcup_{j \in [N']} B^j$ with $B^j \in \mathcal{F}^{E,N}_{Y}$ and $Z = \bigsqcup_{k \in [N']} C^k$ with $C^k \in \mathcal{F}^{E,N}_{Z}$, and $\Sigma_{X,Y}, \Sigma_{X,Z}, \Sigma_{Y,Z} \subseteq [N']^2$ so that:
\begin{enumerate}
\item $\mu_{X \times Y} \left( \bigsqcup_{(i,j) \notin \Sigma_{X,Y}} A^i \times B^j \right) < \varepsilon, \mu_{X\times Z} \left( \bigsqcup_{(i,j) \notin \Sigma_{X,Z}} A^i \times C^k \right) < \varepsilon,
	\mu_{Y \times Z} \left( \bigsqcup_{(j,k) \notin \Sigma_{Y,Z}} B^j \times C^k \right) < \varepsilon$;
	\item for all  
$(i,j,k) \in \left( \Sigma_{X,Y} \land \Sigma_{X,Z} \right) \cup \left(\Sigma_{X,Y} \land \Sigma_{Y,Z}  \right) \cup \left( \Sigma_{X,Z} \land \Sigma_{Y,Z} \right) $,
$\frac{\mu_{X \times Y \times Z}\left(E \cap \left( A^i \times B^j \times C^k \right) \right)}{\mu_{X \times Y \times Z}\left(  A^i \times B^j \times C^k \right)} \in \left[0, f(N') \right) \cup \left( 1 - f(N'), 1\right]$.
\end{enumerate}
\item (Slice-wise stable regularity) For every $\varepsilon >0$  there exists some $N = N(\varepsilon) \in \mathbb{N}$ satisfying the following. For any $H = (E; X,Y,Z) \in \mathcal{H}$ there exists $N' \leq N$ and partitions 
$X \times Y = \bigsqcup_{0 \leq i < N'} A^i$, $X \times Z = \bigsqcup_{0 \leq j< N'}B^j$ and $Y \times Z = \bigsqcup_{0 \leq k < N'} D^k$   so that:
	\begin{enumerate}
	\item $\mu_{X \times Y} \left ( A^0 \right ), \mu_{X \times Z} \left (B^0 \right ), \mu_{Y \times Z} \left ( D^0 \right ) < \varepsilon$;
			\item for each $1 \leq i,j, k < N'$, each of 
	\begin{gather*}
		\frac{ \mu_{X \times Y \times Z} \left ( E \land A^i \land B^j \right)}{\mu_{X \times Y \times Z} \left (  A^i \land B^j \right) },
		\frac{\mu_{X \times Y \times Z}\left ( E \land A^i \land D^k \right )}{\mu_{X \times Y \times Z}\left ( A^i \land D^k \right )}, \frac{\mu_{X \times Y \times Z}\left ( E \land B^j \land D^k \right )}{\mu_{X \times Y \times Z}\left ( B^j \land D^k \right )}
	\end{gather*}
is in  $\left[0, f(N') \right) \cup \left(1 - f(N') , 1 \right]$.
	\end{enumerate}

	\end{enumerate}
\end{theorem}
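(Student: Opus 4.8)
The plan is to prove the equivalences by running the cycle $(1)\Rightarrow(2)\Rightarrow(3)\Rightarrow(4)\Rightarrow(1)$, using Theorem \ref{thm: all three parts perfect rects} for one direction, the transfer machinery of Section \ref{sec: subsec tranfer} for the middle, elementary repartitioning for $(3)\Rightarrow(4)$, and an adaptation of Lemma \ref{lem: half-graphs fail stab reg} for the converse.

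\emph{$(1)\Rightarrow(2)$.} By \L os' theorem, if every $H\in\mathcal{H}$ is slice-wise $d$-stable then any ultraproduct $\widetilde H=(\widetilde E;\widetilde X,\widetilde Y,\widetilde Z)$ has $\widetilde E_x$ $d$-stable for every internal $x$, and symmetrically for $\widetilde E_y,\widetilde E_z$; since internal sets have full measure in the sense needed, $\widetilde E$ is slice-wise $d$-$\mu$-stable and Theorem \ref{thm: all three parts perfect rects} applies verbatim, giving (1) and (2) of its conclusion.

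\emph{$(2)\Rightarrow(3)$.} This is the main step and proceeds as in $(2)\Rightarrow(1)$ of Proposition \ref{prop: proof of transfer perf part} and $(2)\Rightarrow(3)$ of Theorem \ref{thm: two dirs slicewise stab finitary version}, now carried out simultaneously on the three pair-sorts. Suppose $(3)$ fails for some $\varepsilon,f$, witnessed by a sequence $H_n\in\mathcal{H}$, and let $\widetilde H$ be a non-principal ultraproduct. Apply $(2)$ to obtain perfect-rectangle partitions $\widetilde X\times\widetilde Y=\bigsqcup_i A^i$ with $A^i=A^{i,X}\times A^{i,Y}$, $A^{i,X}\in\mathcal{B}^E_{\widetilde X}$, $A^{i,Y}\in\mathcal{B}^E_{\widetilde Y}$, and similarly for $\widetilde X\times\widetilde Z$ and $\widetilde Y\times\widetilde Z$, with the coherence conditions (2a)--(2c). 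Fix $\varepsilon'<\varepsilon$ and, by countable additivity, pass to a finite initial segment of each of the three partitions covering measure $\geq 1-\varepsilon'$. Approximate each singleton-factor $A^{i,X},B^{j,X},\dots$ from $\mathcal{B}^0$, then pass to the finite Boolean algebra generated inside each of $\widetilde X,\widetilde Y,\widetilde Z$ by the finitely many factors that occur; this yields finite partitions of $\widetilde X,\widetilde Y,\widetilde Z$ into atoms lying in $\mathcal{F}^{E,K}$ for a fixed $K$ (after discarding a further small error). Let $\Sigma_{X,Y}$ record those pairs of atoms contained (up to a null set) in some $A^i$, and likewise $\Sigma_{X,Z},\Sigma_{Y,Z}$; condition (3a) follows from the $\geq 1-\varepsilon'$ coverage, and for a triple of atoms with $(i,j)\in\Sigma_{X,Y}$, $(i,k)\in\Sigma_{X,Z}$ (say), Fubini applied to the perfect-rectangle intersections via (2a)--(2c) and Lemma \ref{lem: perfect regularity equivalents}-type reasoning shows the $\widetilde E$-density on the corresponding triad is $0$ or $1$, giving (3b). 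Since perfection and decency are type-definable (Claim \ref{cla: decent is type-def}) and $\widetilde{\mathcal M}\propto\mathcal{M}_{\widetilde{\mathfrak P},\widetilde E}$ (Remark \ref{rem: measure preds almost agree}), all of this is expressed by a quantifier-free type over $\widetilde{\mathcal M}$; $\aleph_1$-saturation lets us realize it by sets in $\mathcal{B}^0$, and \L os pushes back the partition and the data $\Sigma_{X,Y},\Sigma_{X,Z},\Sigma_{Y,Z}$ to $H_n$ for $\mathcal{U}$-many $n$, contradicting the choice of the $H_n$. As everything is in $\mathcal{F}^{E,K}$, replacing $N$ by $\max\{N,K\}$ gives the stated definability.

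\emph{$(3)\Rightarrow(4)$.} Given $\varepsilon$, apply $(3)$ with $\varepsilon$ and a suitable $f$ to get partitions $X=\bigsqcup A^i$, $Y=\bigsqcup B^j$, $Z=\bigsqcup C^k$ and the $\Sigma$'s. Set $A^{(i,j)}:=A^i\times B^j$ for $(i,j)\in\Sigma_{X,Y}$ and lump all remaining pairs into one exceptional piece $A^0$; by (3a), $\mu_{X\times Y}(A^0)<\varepsilon$. Define $B^{(i,k)}$ over $\Sigma_{X,Z}$ and $D^{(j,k)}$ over $\Sigma_{Y,Z}$ analogously. A nonempty triad $A^{(i,j)}\land B^{(i',k)}\land D^{(j',k')}$ forces $i=i',j=j',k=k'$, so it equals $A^i\times B^j\times C^k$ with $(i,j,k)$ in the union appearing in (3b), whence its $E$-density lies in $[0,f(N'))\cup(1-f(N'),1]$; choosing $f$ so that $f(n)\leq\varepsilon$ for all $n$ gives (4b), and (4a) is immediate.

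\emph{$(4)\Rightarrow(1)$.} Contrapositive. If $(1)$ fails, then for every $d$ some $H_d\in\mathcal{H}$ has an unstable slice; passing to a subsequence we may fix the direction, say $z$. By hereditary closure, for every $n$ the family $\mathcal{H}$ contains a hypergraph $([n]\times[n]\times\{*\};E_n)$ with $(a,b,*)\in E_n\iff a<b$ — a half-graph concentrated on a single $z$-slice. Any partition $X\times Y=\bigsqcup_{i}A^i$ into $\leq N'$ pieces restricts on this slice to a partition of $[n]^2$ into $\leq N'$ classes, and (choosing $B^j,D^k$ arbitrarily) condition (4b) forces $E_n\cap A^i$ to have density in $[0,\varepsilon)\cup(1-\varepsilon,1]$ inside $[n]^2$ for each $i$; for $n$ large this contradicts Lemma \ref{lem: half-graphs fail stab reg} (which applies to arbitrary, not necessarily rectangular, partitions into boundedly many classes). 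The cases where the unstable slice is in the $x$- or $y$-direction are handled identically using the partition $B^j$ of $X\times Z$ (resp. $D^k$ of $Y\times Z$) restricted to a single slice of the third variable.

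The hard part is $(2)\Rightarrow(3)$: one must convert the possibly-overlapping rectangle factors $A^{i,X},B^{j,X},\dots$ into honest finite partitions of the vertex sets while staying inside a fixed $\mathcal{F}^{E,K}$, check that the three coherence conditions of Theorem \ref{thm: all three parts perfect rects} translate exactly into the triad-density condition (3b) over the combinatorially-defined $\Sigma$'s, and run the saturation/\L os transfer uniformly for all three pair-sorts at once. Once this bookkeeping is arranged the argument is a routine elaboration of Proposition \ref{prop: proof of transfer perf part}, but the bookkeeping is the genuine crux.
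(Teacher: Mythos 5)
Your implications $(1)\Rightarrow(2)$, $(2)\Rightarrow(3)$, and $(3)\Rightarrow(4)$ match the paper's argument (your $(2)\Rightarrow(3)$ even spells out the transfer step in slightly more detail than the paper, which only references Proposition \ref{prop: proof of transfer perf part} and Proposition \ref{prop: one way transfer for strong stable reg}). The problem is $(4)\Rightarrow(1)$.

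In that step you take $Z=\{*\}$, so a hypergraph in $\mathcal{H}$ becomes a half-graph sitting on the single $z$-slice, and you then argue that condition (4b) forces the partition $X\times Y=\bigsqcup_i A^i$ of $[n]^2$ to have pieces $A^i$ of $E$-density near $0$ or $1$, finally invoking Lemma \ref{lem: half-graphs fail stab reg} to derive a contradiction. This has two defects. First, (4b) only controls the density of $E$ on the \emph{intersections} $A^i\land B^j$, $A^i\land D^k$, $B^j\land D^k$; it does not say anything about the density on $A^i$ alone, and the partitions $B^j,D^k$ are \emph{given} by (4), not freely chosen, so you cannot simply take $B^1=X\times Z$ to make $A^i\land B^1\approx A^i$. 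Second, and more seriously, your parenthetical claim that Lemma \ref{lem: half-graphs fail stab reg} ``applies to arbitrary, not necessarily rectangular, partitions into boundedly many classes'' is false. The lemma is stated for bipartite partitions $A=\bigsqcup_i A_i$, $B=\bigsqcup_j B_j$, yielding rectangular cells $A_i\times B_j$, and it cannot be extended to arbitrary partitions of $A\times B$: the two-cell partition $\{E,\,[n]^2\setminus E\}$ of the half-graph already has every cell of density exactly $0$ or $1$, so no non-rectangular analogue of the lemma holds.

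The correct route, as in the paper, is to ignore the $A^i$ entirely and use the $B^j\land D^k$ clause of (4b). Since $Z=\{*\}$, each $B^j\subseteq X\times Z$ is (up to the $\{*\}$ factor) a subset $B^{j,X}\subseteq X$, and likewise $D^k$ is a subset $D^{k,Y}\subseteq Y$; then $B^j\land D^k$ is the rectangle $B^{j,X}\times D^{k,Y}$ in $X\times Y$, and the $B^j\land D^k$ clause of (4b) gives precisely that the half-graph $E_*$ has density near $0$ or $1$ on every such rectangle, with the exceptional mass $<\varepsilon$ controlled by (4a). This is a rectangular partition of $[n]^2$, so Lemma \ref{lem: half-graphs fail stab reg} (after absorbing the exceptional pieces via Remark \ref{rem: relaxed strong stable reg implies stable reg}) produces the contradiction for $n$ large.
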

\begin{proof}
	\textbf{(1) implies (2)} By \L os, if every $H \in \mathcal{H}$ is slice-wise $d$-stable, then $\widetilde{H}$ is also slice-wise $d$-stable, hence Theorem \ref{thm: all three parts perfect rects} applies.
	
	\

	\noindent \textbf{(2) implies (3)} Let $\varepsilon >0$ and $f: \mathbb{N} \to (0,1]$ be given. Assume that (3) fails, and let $\widetilde{H}$ be a non-principal ultraproduct of a sequence of finite hypergraphs witnessing this. 
	Using (2), choose countable partitions $\widetilde{X} \times \widetilde{Y} = \bigsqcup_{i \in \omega} A^i$, $\widetilde{X} \times \widetilde{Z} = \bigsqcup_{j \in \omega}B^j$ and $\widetilde{Y} \times \widetilde{Z} = \bigsqcup_{k \in \omega} D^k$ with  each $A^i = A^{i,X} \times A^{i,Y}, B^j = B^{j,X} \times B^{j,Z}, D^k = D^{k,Y} \times D^{k,Z}$ a rectangle with $A^{i,X},B^{j,X} \in \mathcal{B}^{E}_{\widetilde{X}},  A^{i,Y}, D^{k,Y} \in \mathcal{B}^{E}_{\widetilde{Y}}, B^{j,Z}, D^{k,Z} \in \mathcal{B}^{E}_{\widetilde{Z}}$ so that  for every $i,j, k \in \omega$ we have:
	\item \begin{enumerate}
 	\item[(a)] either $A^i \land B^j \subseteq^0 E$ or $\left( A^i \land B^j \right) \cap E =^0 \emptyset$;
 	\item[(b)] either $A^i \land D^k \subseteq^0 E$ or $\left(A^i \land D^k \right) \cap E =^0 \emptyset$;
 	\item[(c)] either $B^j \land D^k \subseteq^0 E$ or $\left(B^j \land D^k \right) \cap E =^0 \emptyset$.
 \end{enumerate} 
 
\noindent The following is immediate (and similarly for taking subsets of the sides of the rectangles in the other pairs of coordinates):
\begin{claim}\label{cla: subdividing keeps densities}
Given $i \in \omega$, let $A^{i,X,0} \subseteq A^{i,X}$ with $A^{i,X,0} \in \mathcal{B}_{\widetilde{X}}$ 	be arbitrary. Then for every $j,k \in \omega$, (a) and (b) are satisfied with $A^{i}$ replaced by $A^{i,0} := A^{i,X,0} \times A^{i,Y}$.
\end{claim}

Fix arbitrary $0 < \varepsilon' < \varepsilon$.
By countable additivity, we can choose $N \in \mathbb{N}$ so that 
\begin{enumerate}
	\item[(d)] $\mu_{\widetilde{X} \times \widetilde{Y}} \left ( \bigsqcup_{1 \leq i <N} A^i \right ) \geq 1 - \varepsilon' $,  $\mu_{\widetilde{X} \times \widetilde{Z}} \left ( \bigsqcup_{1 \leq j <N} B^j \right ) \geq 1 - \varepsilon' $ and  $\mu_{\widetilde{Y} \times \widetilde{Z}} \left ( \bigsqcup_{1 \leq k <N} D^k \right ) \geq 1 - \varepsilon' $.
\end{enumerate}

Let $\left\{ \bar{A}^{i} : 1 \leq i \leq N'_{X} \right\}, \bar{A}^{i} \in \mathcal{B}^{E}_{\widetilde{X}}$ be the set of atoms of the Boolean algebra generated 
by $\left\{ A^{i,X} : 1 \leq i \leq N\right\} \cup \left\{ B^{j,X} : 1 \leq i \leq N\right\} $ that have positive measure, we have $N'_{X} \leq 2^{2N}$ and $\bigsqcup_{1 \leq i \leq N'_{X}}  \bar{A}^{i} =^0 \widetilde{X}$.

Let $\left\{ \bar{B}^{j} : 1 \leq j \leq N'_{Y} \right\}, \bar{B}^{j} \in \mathcal{B}^{E}_{\widetilde{Y}}$ be the set of atoms of the Boolean algebra generated 
by $\left\{ D^{k,Y} : 1 \leq k \leq N\right\} \cup \left\{ B^{j,X} : 1 \leq j \leq N\right\}$ that have positive measure. Then $N'_{Y} \leq 2^{2N}$ and $\bigsqcup_{1 \leq j \leq N'_{Y}}  \bar{B}^{j} =^0 \widetilde{Y}$.

And let $\left\{ \bar{C}^{k} : 1 \leq k \leq N'_{Z} \right\}, \bar{C}^{k} \in \mathcal{B}^{E}_{\widetilde{Z}}$ be the set of atoms of the Boolean algebra generated 
by $\left\{ B^{j,Z} : 1 \leq j \leq N\right\}  \cup \left\{ D^{k,Z} : 1 \leq k \leq N\right\} $ that have positive measure, we have $N'_{Z} \leq 2^{2N}$ and $\bigsqcup_{1 \leq i \leq N'_{Z}}  \bar{C}^{k} =^0 \widetilde{Z}$. Let 
\begin{gather*}
	\Sigma_{X,Y} := \left\{ (i,j) \in [N'_{X}]\times [N'_{Y}] : \bar{A}^i \times \bar{B}^j  \subseteq A^{i'} \textrm{ for some } i' \in [N] \right\},\\
	\Sigma_{X,Z} := \left\{ (i,k) \in [N'_{X}]\times [N'_{Z}] : \bar{A}^i \times \bar{C}^k  \subseteq B^{j'} \textrm{ for some } j' \in [N] \right\},\\
	\Sigma_{Y,Z} := \left\{ (j,k) \in [N'_{Y}]\times [N'_{Z}] : \bar{B}^j \times \bar{C}^k  \subseteq D^{k'} \textrm{ for some } k' \in [N] \right\}.
\end{gather*}
Note that if $(i,j) \notin \Sigma_{X,Y}$, then $\bar{A}^i \times \bar{B}^j$ is disjoint from the union of all $A^{i'}$, and similarly for the other pairs of coordinates.

By construction and (d) we have 
\begin{gather*}
	\mu_{\widetilde{X} \times \widetilde{Y}} \left( \bigsqcup_{(i,j) \notin \Sigma_{X,Y}} \bar{A}^i \times \bar{B}^j \right) < \varepsilon', \mu_{\widetilde{X} \times \widetilde{Z}} \left( \bigsqcup_{(i,j) \notin \Sigma_{X,Z}} \bar{A}^i \times \bar{C}^k \right) < \varepsilon',\\
	\mu_{\widetilde{Y} \times \widetilde{Z}} \left( \bigsqcup_{(j,k) \notin \Sigma_{Y,Z}} \bar{B}^j \times \bar{C}^k \right) < \varepsilon'.
\end{gather*}
And using Claim \ref{cla: subdividing keeps densities} (and its analogs for various permutations of the coordinates), for every 
$$(i,j,k) \in \left( \Sigma_{X,Y} \land \Sigma_{X,Z} \right) \cup \left(\Sigma_{X,Y} \land \Sigma_{Y,Z}  \right) \cup \left( \Sigma_{X,Z} \land \Sigma_{Y,Z} \right) $$
 we have 
\begin{gather*}
	\mu \left( E \cap \left( \bar{A}^i \times \bar{B}^j \times \bar{C}^k  \right) \right) \in \left\{ 0,  \mu \left( \bar{A}^i \times \bar{B}^j \times \bar{C}^k \right) \right\}.
\end{gather*}

Using transfer from Section \ref{sec: transfer for stable regularities}, similarly to the proofs of (3)$\Rightarrow$(1) of Theorem \ref{thm: metastable general equivalences} and of Proposition \ref{prop: one way transfer for strong stable reg}, we get the finitary statement in (6).

\ 

\noindent	\textbf{(3) implies (4)}  Given $\varepsilon >0$, let $N$ be as given by (3) for $\varepsilon$ and $f(n) := \varepsilon$. Then, given $H = (E; X,Y,Z) \in \mathcal{H}$, let $N' \leq N$ and $A^i \subseteq X, B^j \subseteq Y, C^k \subseteq Z$ for $i,j,k \in [N']$ and $\Sigma_{X,Y}, \Sigma_{X,Z}, \Sigma_{Y,Z} \subseteq [N']^2$ satisfy (3). Let $\bar{A}^{(i,j)} := A^i \times B^j, \bar{B}^{(i,k)} := A^i \times C^k, \bar{D}^{(j,k)} := B^j \times C^k$.
Then the partitions 
\begin{gather*}
	\left\{ \bar{A}^{(i,j)} : (i,j) \in \Sigma_{X,Y} \right\} \cup \left\{ \left(X \times Y \right) \setminus   \bigsqcup_{(i,j) \in \Sigma_{X,Y}}\bar{A}^{(i,j)}\right\} \textrm{ of } X \times Y,\\
	\left\{ \bar{B}^{(i,k)} : (i,k) \in \Sigma_{X,Z} \right\} \cup \left\{ \left(X \times Z \right) \setminus   \bigsqcup_{(i,k) \in \Sigma_{X,Z}}\bar{B}^{(i,k)}\right\} \textrm{ of } X \times Z,\\
	\left\{ \bar{D}^{(j,k)} : (j,k) \in \Sigma_{Y,Z} \right\} \cup \left\{ \left(Y \times Z \right) \setminus   \bigsqcup_{(j,k) \in \Sigma_{Y,Z}}\bar{D}^{(j,k)}\right\} \textrm{ of } Y \times Z
\end{gather*}
show that $N^2 + 1$ satisfies (4) with respect to $\varepsilon$.

\

	
	\textbf{(4) implies (1)} Assume that (1) fails, but (4) holds. Let $N \in \mathbb{N}$ be arbitrary. For every $d \in \mathbb{N}$, let $H_d = \left(E_d; X_d, Y_d, Z_d \right)$ with $X_d = Y_d := [d]$ and $Z_d := [1]$ be the $3$-partite $3$-hypergraph defined by $(x,y,1) \in E_d \iff x < y$. As  (1) fails, using that $\mathcal{H}$ is hereditarily closed (and possibly permuting the coordinates $X,Y,Z$), we have $H_d \in \mathcal{H}$ for all $d \in \mathbb{N}$ (more precisely, an isomorphic copy of it). Let $\varepsilon >0$ be arbitrary, and let $N$ be as given by (4). Then we have some $B^j \subseteq X_d \times Z_d, D^k \subseteq Y_d \times Z_d$ for $ 0 \leq j, k < N' \leq N$ satisfying (a) and (b) in (4). 
	 As $Z_d = [1]$ is a singleton, we have $ B^j = B^{j,X} \times [1]$ for some $B^{j,X} \subseteq X_d$  and $ D^k = D^{k,Y} \times [1]$ for some $D^{k,Y} \subseteq Y_d$ so that:
	 \begin{itemize}
	 	\item $\mu_{X_d} \left(  \bigsqcup_{1 \leq j < N'} B^{j,X} \right) \geq 1 - \varepsilon $,
	 	\item $\mu_{Y_d} \left(  \bigsqcup_{1 \leq k < N'} D^{k,Y} \right) \geq 1 - \varepsilon$,
	 	\item for any $1 \leq j,k < N'$,
	 	\begin{gather*}
	 		\frac{\mu_{X_d \times Y_d}\left ( (E_d)_1 \cap \left( B^{j,X} \times D^{k,Y} \right) \right )}{\mu_{X_d \times Y_d}\left (  B^{j,X} \times D^{k,Y} \right ) } \in \left[0, \varepsilon \right) \cup \left(1 - \varepsilon ;1 \right],
	 	\end{gather*}
	 \end{itemize}
	 (where $\left( E_d \right)_1 \subseteq X_d \times Y_d$ is the fiber of $E_d$ at $1 \in Z_d$).
	 
 But as $\left(E_d \right)_1$ is the half-graph with both sides of size $d$, and $\varepsilon >0$ was arbitrary, using Remark  \ref{rem: relaxed strong stable reg implies stable reg} we have demonstrated that the family of half-graphs satisfies stable regularity -- contradicting 
 Lemma \ref{lem: half-graphs fail stab reg}.
\end{proof}

\begin{cor}\label{cor: slice-wise stab binary vdisc3}
	In particular, if $\mathcal{H}$ is a family of (non-partite) 3-hypergraphs, i.e.~every $H \in \mathcal{H}$ is of the form $H = (E, V)$ with symmetric and irreflexive $E \subseteq V^3$, applying Theorem \ref{thm: finitary slice-wise reg}(3) viewing $E$ as a 3-partite 3-hypergraph on $V \times V \times V$ and taking $\mathcal{P}$ to be the partition of $V$ obtained by intersecting the partitions $A^i, B^j, C^k$ of $V$ and $\Sigma$ to be the union of $\Sigma_{X,Y}, \Sigma_{X,Z}, \Sigma_{Y,Z}$, it follows that $\mathcal{H}$ satisfies binary $\vdisc_3$ error, in the terminology of \cite[Definition 2.10(2)]{terry2021irregular}, with a stronger condition on the regularity of good triples allowing the decay function $f$. On the other hand, as the proof of (3) implies (4) of Theorem \ref{thm: finitary slice-wise reg} demonstrates, binary $\vdisc_3$ error implies that $\mathcal{H}$ satisfies the condition in Theorem \ref{thm: finitary slice-wise reg}. Hence a hereditary family of 3-hypergraphs satisfies binary $\vdisc_3$ error if and only if there is some $d \in \mathbb{N}$ so that all $H \in \mathcal{H}$ are slice-wise $d$-stable.  This answers \cite[Problem 2.14(2)]{terry2021irregular}.
\end{cor}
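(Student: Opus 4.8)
The plan is to read off binary $\vdisc_3$ error (in the sense of \cite[Definition 2.10(2)]{terry2021irregular}) as the non-partite shadow of the slice-wise stable regularity of Theorem \ref{thm: finitary slice-wise reg}, and to prove the converse by the same dictionary. Recall that $\mathcal{H}$ satisfies binary $\vdisc_3$ error if for every $\varepsilon>0$ there is $N$ so that each $H=(E;V)\in\mathcal{H}$ has a partition $\mathcal{P}$ of $V$ with $|\mathcal{P}|\le N$ and a symmetric set $\Sigma$ of pairs of parts such that the pairs outside $\Sigma$ cover at most an $\varepsilon$-fraction of $V^2$ and every triad $(P,Q,R)$ with at least two of its pairs in $\Sigma$ has $E$-density in $[0,\varepsilon)\cup(1-\varepsilon,1]$; I will in fact obtain the stronger variant in which this last $\varepsilon$ is replaced by $f(|\mathcal{P}|)$ for a prescribed decay $f$.

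For the implication ``slice-wise $d$-stable $\Rightarrow$ binary $\vdisc_3$ error'', I would fix $\varepsilon,f$, regard each $H=(E;V)$ as the $3$-partite hypergraph $(E;X,Y,Z)$ with $X=Y=Z=V$ (still slice-wise $d$-stable), and run the proof of $(1)\Rightarrow(2)\Rightarrow(3)$ of Theorem \ref{thm: finitary slice-wise reg}, but pass to a common refinement \emph{inside the ultraproduct, before transferring}. Concretely: apply Theorem \ref{thm: all three parts perfect rects} to an ultraproduct $\widetilde H$ and form, as in the proof of $(2)\Rightarrow(3)$ of Theorem \ref{thm: finitary slice-wise reg}, the atom partitions $\widetilde V=\bigsqcup_i\bar A^i=\bigsqcup_j\bar B^j=\bigsqcup_k\bar C^k$ (as copies $X,Y,Z$) and sets $\Sigma_{X,Y},\Sigma_{X,Z},\Sigma_{Y,Z}$, with the non-$\Sigma$ boxes of arbitrarily small measure and each good box $\bar A^i\times\bar B^j\times\bar C^k$ either $\subseteq^0\widetilde E$ or disjoint from $\widetilde E$ (by Claim \ref{cla: subdividing keeps densities} and Theorem \ref{thm: all three parts perfect rects}(2)); crucially this last property is inherited by every sub-box. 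Let $\mathcal{P}$ be the common refinement of $\{\bar A^i\},\{\bar B^j\},\{\bar C^k\}$ as partitions of $\widetilde V$, record for $P\in\mathcal{P}$ the indices $\alpha(P),\beta(P),\gamma(P)$ with $P\subseteq\bar A^{\alpha(P)}\cap\bar B^{\beta(P)}\cap\bar C^{\gamma(P)}$, and put $\{P,Q\}\in\Sigma$ iff all six of $(\alpha(P),\beta(Q)),(\alpha(Q),\beta(P))\in\Sigma_{X,Y}$, $(\alpha(P),\gamma(Q)),(\alpha(Q),\gamma(P))\in\Sigma_{X,Z}$, $(\beta(P),\gamma(Q)),(\beta(Q),\gamma(P))\in\Sigma_{Y,Z}$ hold. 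A six-fold union bound, each summand controlled by one of the three partite small-measure estimates, bounds the non-$\Sigma$ region; and if $\{P,Q\},\{P,R\},\{Q,R\}\in\Sigma$ then $(\alpha(P),\beta(Q),\gamma(R))$ is a good triple, so $P\times Q\times R$ is a sub-box of a good $\bar A\times\bar B\times\bar C$ box and hence $\subseteq^0\widetilde E$ or disjoint from $\widetilde E$ --- and by symmetry of $E$ the same holds for every reordering. This whole configuration is of the shape handled by the transfer principle of Section \ref{sec: transfer for stable regularities} (exactly as in the proofs of $(3)\Rightarrow(1)$ of Theorem \ref{thm: metastable general equivalences} and of Proposition \ref{prop: one way transfer for strong stable reg}), so transferring back produces, for each finite $H=(E;V)\in\mathcal{H}$, a bounded-size partition $\mathcal{P}$ of $V$ and good pairs $\Sigma$ with the non-$\Sigma$ region of relative measure $<\varepsilon$ and all two-of-three-good triads of density within $f(|\mathcal{P}|)$ of $0$ or $1$: this is binary $\vdisc_3$ error with the $f$-strengthening, via the recipe in the statement ($\mathcal{P}$ the refinement, $\Sigma$ assembled from $\Sigma_{X,Y},\Sigma_{X,Z},\Sigma_{Y,Z}$).

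For the converse, given binary $\vdisc_3$ error I would fix $\varepsilon$, take $N=N(\varepsilon)$, and for $H=(E;V)\in\mathcal{H}$ with its partition $\mathcal{P}=\{V_i:i\in[M]\}$ ($M\le N$) and symmetric good-pair set $\Sigma$, view $H$ as $(E;X,Y,Z)$ with $X=Y=Z=V$ and set $A^i:=B^i:=C^i:=V_i$, $\Sigma_{X,Y}:=\Sigma_{X,Z}:=\Sigma_{Y,Z}:=\Sigma$ (as a set of ordered pairs). Then condition (a) of Theorem \ref{thm: finitary slice-wise reg}(3) is exactly the smallness of the non-$\Sigma$ region of $V\times V$, and condition (b) is exactly the regularity of two-of-three-good triads guaranteed by binary $\vdisc_3$ error; so $\mathcal{H}$ satisfies the (non-definable form of the) condition in Theorem \ref{thm: finitary slice-wise reg}(3), whence by $(3)\Rightarrow(4)\Rightarrow(1)$ of that theorem --- which do not use definability --- there is $d$ with every $H\in\mathcal{H}$ slice-wise $d$-stable. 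Combining the two directions gives the equivalence, and hence answers \cite[Problem 2.14(2)]{terry2021irregular}.

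The step I expect to be the real obstacle is the common refinement in the first implication: ``density within $\varepsilon$ of $0$ or $1$'' does not descend to sub-boxes, so one cannot simply refine the finite partitions produced by Theorem \ref{thm: finitary slice-wise reg}(3). The way around this is to do the refinement at the ultraproduct level, where the good boxes are (up to measure zero) \emph{contained in or disjoint from} $\widetilde E$ --- a property stable under subdivision by Claim \ref{cla: subdividing keeps densities} --- and only then invoke transfer to recover the finitary $f$-regularity. A secondary point to pin down is that Terry--Wolf's ``binary $\vdisc_3$ error'' is the ``two of three pairs good'' version, matching condition (3)(b) of Theorem \ref{thm: finitary slice-wise reg}; were only ``all three good'' required, the converse would need an additional refinement step, but the two-of-three form is what makes it go through directly.
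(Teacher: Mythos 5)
Your proposal is correct, and it carefully closes a gap that the paper's terse statement glosses over. The corollary as written suggests applying Theorem~\ref{thm: finitary slice-wise reg}(3) to obtain finite partitions $A^i$, $B^j$, $C^k$ of $V$ and \emph{then} passing to their common refinement $\mathcal{P}$; but, as you point out, the density bound $[0,f(N'))\cup(1-f(N'),1]$ on a box $A^i\times B^j\times C^k$ does not descend to sub-boxes $P\times Q\times R$ with $P,Q,R\in\mathcal{P}$, since the ratio $|A^i|/|P|$ (etc.) is unbounded. Your remedy---performing the common refinement inside the ultraproduct, where good boxes are \emph{exactly} $0$/$1$ up to null sets and that property is stable under subdivision (Claim~\ref{cla: subdividing keeps densities}), and only then invoking the transfer machinery of Section~\ref{sec: transfer for stable regularities}---is precisely what makes the argument work, and is the natural reading of what the proof of $(2)\Rightarrow(3)$ of Theorem~\ref{thm: finitary slice-wise reg} is already doing (the atom partitions $\bar A^i,\bar B^j,\bar C^k$ there just need to be replaced by atoms of the single Boolean algebra generated by all six sides of the rectangles). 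Your six-condition definition of $\Sigma$ on $\mathcal{P}$ and the union-bound for the bad region are both valid, and your converse direction (feed the $\vdisc_3$ partition into (3) with $A^i=B^i=C^i$ and run $(3)\Rightarrow(4)\Rightarrow(1)$, noting these do not need definability) matches the paper's. In short: same route as the paper intends, but you correctly flag that the refinement step must happen before transfer rather than after.
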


\subsection{A slice-wise stable $3$-hypergraph not satisfying the stable regularity lemma}\label{sec: counterexample}

We might hope to extract more from having all three directions of slice-wise stability---for example, we might hope this is enough to give us at least stable regularity.

In this section we provide an example of a (hereditarily closed) family of finite $3$-hypergraphs $\mathcal{H}$ so that  all $H \in \mathcal{H}$ are slice-wise $7$-stable, but $\mathcal{H}$ does not satisfy  stable regularity (see Definition \ref{def: strong stab reg}(3)). In particular, this refutes \cite[Conjecture 2.42]{terry2021irregular}.

\begin{definition}
	We let $X := \{0,1,2 \}^{\omega}$. For $x = \left(x(0), x(1), \ldots \right)$, we let $v(x) := \min \{n \in \omega : x(n) \neq 0 \}$. Define $E \subseteq X^3$ via: $(x,y,z) \in E$ if for the first $n \in \omega$ so that $\left \lvert \left\{ x(n), y(n), z(n) \right\} \right\rvert > 1$, we have $\left \lvert \left\{ x(n), y(n), z(n) \right\} \right\rvert  = 3$ (i.e.~at the first coordinate where not all three of the sequences have the same entry, all three have different entries). 
\end{definition}

\begin{remark}\label{rem: symmetries of the counterexample}
	\begin{enumerate}
		\item $E$ is symmetric under any permutation of its variables.
		\item For each $n \in \omega$, let $f_n$ be a permutation of $\{0,1,2\}$. Then $f: X \to X$ defined by $\left(x(0), x(1), \ldots \right) \mapsto \left( f_0 ( x(0)), f_1 (x(1)), \ldots \right)$ is a permutation of $X$, and for any $x,y,z \in X$ we have $(x,y,z) \in E \iff \left( f(x), f(y), f(z) \right) \in E$. In particular, for any two $x,y \in X$, there exists an automorphism of $(X,E)$ sending $x$ to $y$.
	\end{enumerate}
\end{remark}

\begin{lemma}\label{lem: the counterex is slice-wise stable}
	$E$ is slice-wise stable.
\end{lemma}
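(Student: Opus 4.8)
The plan is to show that each slice $E_w \subseteq X \times X$ (for fixed $w \in X$) is $7$-stable; by the symmetry of $E$ under permutations of its three variables (Remark \ref{rem: symmetries of the counterexample}(1)), it suffices to treat one direction, say fixing the last coordinate $z = w$. By Remark \ref{rem: symmetries of the counterexample}(2) there is an automorphism of $(X,E)$ sending $w$ to the constant sequence $\bar 0 = (0,0,\ldots)$, so we may assume $w = \bar 0$. Then $(x,y,\bar 0) \in E$ exactly when, at the first coordinate $n$ where $x(n), y(n), 0$ are not all equal, all three are distinct; equivalently, letting $n$ be the first coordinate where $x(n) \neq 0$ or $y(n) \neq 0$, we need $\{x(n), y(n), 0\}$ to have size $3$, i.e. $x(n) \neq 0$, $y(n) \neq 0$, and $x(n) \neq y(n)$ (so $\{x(n), y(n)\} = \{1,2\}$ in some order).

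The next step is to analyze the slice relation $R := E_{\bar 0}$ combinatorially in terms of the ``valuation'' $v$ from the definition. Write $v(x) = \min\{n : x(n) \neq 0\}$ (with $v(x) = \infty$ if $x = \bar 0$). Unwinding: $(x,y) \in R$ iff $v(x) = v(y) =: n < \infty$, $x(n) \neq 0 \neq y(n)$ (automatic), and $x(n) \neq y(n)$. So $R$ only ``sees'' the quantity $v(x)$ and the single digit $x(v(x)) \in \{1,2\}$; everything after the first nonzero digit, and the fact that all entries before it are $0$, is irrelevant. Thus $R$ factors through the map $x \mapsto (v(x), x(v(x))) \in (\omega \times \{1,2\}) \cup \{\infty\}$, and on the image $(x,y) \in R$ iff $v(x) = v(y)$ and $x(v(x)) \neq y(v(y))$. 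In other words, $R$ is an equivalence-like relation: partition $X \setminus \{\bar 0\}$ into blocks $X_n = \{x : v(x) = n\}$, each $X_n$ splits into $X_n^1, X_n^2$ according to the value of the first nonzero digit, and $R$ relates a point of $X_n^1$ to every point of $X_n^2$ and vice versa (and relates nothing across different $n$, and nothing involving $\bar 0$). This is a disjoint union of complete bipartite graphs.

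Finally I would run the standard ladder-counting argument to bound the stability height. Suppose $\bar a^\frown \bar b = (a_i)_{i<d}{}^\frown(b_i)_{i<d}$ is a ladder of height $d$ for $R$, so $(a_i, b_j) \in R \iff i \leq j$. From $(a_i,b_i) \in R$ we get $v(a_i) = v(b_i) =: n_i$ and $a_i$'s first nonzero digit differs from $b_i$'s. From $(a_0, b_j) \in R$ for all $j$ we get all $n_j = n_0$, so all $a_i, b_j$ lie in a single block $X_{n_0}$, and within that block $R$ is the complete bipartite graph between the digit-$1$ part and the digit-$2$ part. But a complete bipartite graph $K$ between two classes cannot contain a ladder of height $3$: if $(a_i,b_j)\in K \iff i\le j$ for $i,j<3$, then $(a_0,b_0)\in K$ forces $a_0,b_0$ on opposite sides, $(a_0,b_2)\in K$ and $(a_2,b_2)\notin K$ force $a_2$ on the same side as... tracking the side assignments quickly yields a contradiction (this is the routine check). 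So $R$ has no ladder of height $3$, hence is ladder $3$-stable. One must then also account for the vertex $\bar 0$, which is $R$-related to nothing; adding an isolated vertex to a ladder-$3$-stable graph keeps it ladder-stable with a small bump in the constant. I expect the bookkeeping to land comfortably within $d = 7$: by Fact \ref{fac: Hodges}(1), ladder $3$-stability (or a modest weakening of it to absorb the isolated point and the finitely many symmetry reductions) translates into $d$-stability in the tree sense for $d \le 2^3 = 8$, and more careful direct analysis gives $7$. The main obstacle is simply being careful about the three directions and the degenerate cases ($\bar 0$, and sequences whose relevant digit agrees) so that a single uniform bound $d = 7$ works for all slices in all three coordinate directions; the core graph-theoretic fact (a disjoint union of complete bipartite graphs plus isolated points is uniformly stable) is easy.
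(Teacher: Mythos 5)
Your proof is correct and follows essentially the same route as the paper's. Both reduce via Remark \ref{rem: symmetries of the counterexample}(2) to the slice at $\bar 0$, and both observe that $(y,z)\in E_{\bar 0}$ forces $v(y)=v(z)$ and then depends only on whether the first nonzero digits agree; you make this slightly more explicit by describing $E_{\bar 0}$ as a disjoint union of complete bipartite graphs plus an isolated point, whereas the paper unpacks the same structure on the fly inside a direct pigeonhole/ladder argument. One small slip in your final bookkeeping: the relevant direction of Fact \ref{fac: Hodges} is (2), not (1), and the translated bound would be $2^{d+1}-2$ rather than $2^d$ --- but this is moot since the lemma only asserts slice-wise stability, the paper's introduction defines $d$-stability via omitted half-graphs (i.e.\ ladders), and your structural description in fact rules out ladders of height $2$, not merely $3$.
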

\begin{proof}
	Fix $x \in X$.
By Remark \ref{rem: symmetries of the counterexample}, we may assume $x(n) = 0$ for all $n \in \omega$.
	Assume that $y_i, z_j \in X$ for $0 \leq i,j < 3$ is a ladder of height $7$  for $E_x$ (see Definition \ref{def: ladders}).
	
	Observe that for any $0 \leq i<j < 7$, since $(x,y_i,z_j)\in E$, we must have $v(y_i) = v(z_j)$. Since this holds for all $0 \leq i < j < 3$, there exists some $v \in \{0,1,2\}$ so that $v = v(y_i) = v(z_j)$ for all $0 \leq i < 5, 1 \leq j < 6$.
	
	But then for any such pair $(i,j)$, $(x,y_i, z_j) \in E \iff y_i(v) \neq  z_j(v)$. By the pigeonhole principle we can choose $ 0 \leq i_1 < i_2 < 5$ with $y_{i_1}(v) = y_{i_2}(v)$ and $i_2 - i_1 \geq 2$.
	But then for any $j$ with $i_1 < j < i_2 $ we have $E(x,y_{i_1},z_j) \iff  y_{i_1}(v) = z(v) \iff y_{i_2} = z(v) \iff E(x,y_{i_2},z_j)$ --- a contradiction.
		\end{proof}
	
	\begin{remark}
		$E$ is not partition-wise stable.
		
	\noindent 	Indeed, let $x_i := 0^i 1^{\infty}$, $y_j := 0^j 1^{\infty}$ and $z_j := 0^j 2^{\infty}$. Now if $i \leq j$, then $(x_i; y_j, z_j) \notin E$ because  all positions before $i+1$ are $(0,0,0)$ and the $(i+1)$st position is $(1,0,0)$ or $(1,1,2)$.
	And if $j<i$ then $(x_i,y_j,z_j)\in E$ because all positions before $j+1$ are $(0,0,0)$ and the $(j+1)$st position is $(0,1,2)$.
	\end{remark}
	
\begin{theorem}\label{thm: slice-wise counterexample}
		 $E$ does not satisfy the stable hypergraph regularity lemma.
	\end{theorem}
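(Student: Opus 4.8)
The strategy is to show that no bounded-size partition of the vertices $X = \{0,1,2\}^{\omega}$ (equivalently, to work with a suitable finite truncation and appeal to compactness/transfer as in Section~\ref{sec: transfer for stable regularities}) can witness stable regularity. As noted in the introduction, the key point is that a partition into sets of the special ``prefix form'' (i.e.\ unions of basic clopen sets determined by a fixed initial segment of the word) manifestly fails, because if a part $A$ is not contained in a single branch of the ternary tree then one quickly finds inside $A$ two words $x,x'$ agreeing on a long prefix but differing at the next coordinate; combined with a third part we can make the triple $(x,x',w)$ either land in $E$ or miss $E$ freely, forcing a density strictly between $0$ and $1$. So the real content is the reduction: from an \emph{arbitrary} partition of bounded size, we must extract a subconfiguration that behaves like a prefix partition.

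The main steps, in order, are as follows. First, I would set up the finitary framework: consider the family $\mathcal{H}$ of finite $3$-partite $3$-hypergraphs obtained by restricting $E$ to $X_n = \{0,1,2\}^{\leq n}$ for $n \in \omega$ (with appropriate weights so that the measure is essentially uniform on branches), together with its hereditary closure; by Lemma~\ref{lem: the counterex is slice-wise stable} and heredity this family is slice-wise $7$-stable. Second, suppose for contradiction that $\mathcal{H}$ satisfies stable regularity, so there is a bound $N=N(\varepsilon)$ on the number of parts for, say, $\varepsilon = \tfrac{1}{10}$. Passing to a large $n$ and a partition $X_n = \bigsqcup_{t} A^X_t$, $X_n = \bigsqcup_t A^Y_t$, $X_n = \bigsqcup_t A^Z_t$ of size $\leq N$, we now perform the reduction to prefix form. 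The tool here is a Ramsey / pigeonhole argument on the ternary tree: choose coordinates $0 \le m_0 < m_1 < \cdots$ one block at a time so that, within each of the $3^{m_i}$ nodes of level $m_i$, the induced coloring by ``which part does the extension belong to'' stabilizes on a positive-measure sub-branch; equivalently, using the symmetries from Remark~\ref{rem: symmetries of the counterexample} (the coordinatewise permutations of $\{0,1,2\}$), we can homogenize the partition so that on a positive measure set the part assigned to a word depends only on a bounded amount of ``tree data.'' Because the number of parts is bounded by $N$, after finitely many such refinements the partition — restricted to a positive-measure subtree — agrees up to small error with a prefix partition of bounded depth $D = D(N)$.

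Third, I would exploit the failure for prefix partitions directly. Fix the three (approximate) prefix partitions of depth $D$ coming from the reduction. Since a complete ternary tree of depth $D$ has $3^D$ leaves and the partitions have $\le N$ parts, for $D$ large (hence for $n$ large) some part $A^X_{t}$ contains two leaves $u,u'$ of the depth-$D$ tree with a common immediate parent $w$ at some level $\ell < D$, i.e.\ $u,u'$ agree up through coordinate $\ell-1$ and the words $u(\ell), u'(\ell)$ are the two remaining symbols distinct from some value. Now pick any part $A^Z_{k}$ meeting (in positive measure) the subtree below $w$: extending $u$ and $u'$ and a representative $z$ of $A^Z_k$ below $w$, for a positive-density choice of extensions the triple $(u\text{-ext},\, y,\, z\text{-ext})$ with $y$ ranging over a part $A^Y_j$ can be arranged so that the first coordinate of disagreement is exactly $\ell$ and there all three symbols are distinct — putting it in $E$ — while for another positive-density choice of $y$-side the disagreement at $\ell$ has only two distinct values, keeping it out of $E$. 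Hence the density of $E$ on $A^X_t \times A^Y_j \times A^Z_k$ is bounded away from both $0$ and $1$, contradicting $\varepsilon = \tfrac{1}{10}$-regularity. Running this through the transfer/compactness machinery of Section~\ref{sec: transfer for stable regularities} (or directly, since the configuration above is witnessed in a single finite $H_n$) yields that $\mathcal H$ cannot satisfy stable regularity.

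\textbf{Main obstacle.} The delicate step is the second one — the reduction from an arbitrary bounded-size partition to (approximately) a prefix partition. A naive pigeonhole does not immediately localize the partition to prefix form, because membership in $A^X_t$ may depend on the full tail of the word. The fix is to iterate: at each stage one controls one more level of the tree and, crucially, uses that the number of parts is \emph{fixed} so the branching of ``behaviors'' cannot grow without bound; after $N$-bounded many stages the behavior must be eventually constant along almost every branch, which is exactly prefix form up to null sets. Making the bookkeeping of the iterated Ramsey argument precise — keeping track of which subtree has positive measure and ensuring the homogenization is compatible across the three partitions — is where the bulk of the work lies, though it is essentially combinatorial. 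Everything else (the initial slice-wise stability, and the final density computation on a prefix configuration) is routine given the earlier results and Remark~\ref{rem: symmetries of the counterexample}.
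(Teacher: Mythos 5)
Your proposed reduction from an arbitrary bounded-size partition of $3^m$ to one that is approximately in prefix form is the crux of your argument, and it does not go through. The claim that boundedness of the number of parts forces eventual constancy along almost every branch after a bounded number of Ramsey refinements is simply false. Consider the three-part partition of $X_m = 3^m$ given by $P_b := \{x \in 3^m : x(m-1) = b\}$ for $b \in \{0,1,2\}$, i.e.\ partitioning by the \emph{last} coordinate. Each $P_b$ has measure $\frac{1}{3}$, yet for every node $s$ of depth $< m$ the cylinder $[s]$ meets all three parts in exactly $\frac{1}{3}\mu([s])$. This partition has bounded size (three parts, independent of $m$) and is in no useful sense close to any prefix partition of bounded depth; no iterated Ramsey argument on the tree will make it so, because membership in $P_b$ depends on data at depth $m-1$ that a depth-$D$ prefix refinement cannot see. (Incidentally this partition \emph{is} irregular — the density of $E$ on every box $P_i \times P_j \times P_k$ is approximately $\frac{1}{4}$ — but a correct proof has to detect that without ever reducing to prefix form.)

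The paper's proof avoids any such reduction. Its key observation is far weaker and correct: for every part $P_i$ of nontrivial measure there is a single node $s_i$ of the ternary tree at which $P_i$ \emph{splits}, in the sense of \eqref{eq: counterexample 0} and \eqref{eq: counterexample 00}: $P_i$ is $(1-\varepsilon)$-concentrated in $[s_i]$, but not $(1-\varepsilon)$-concentrated in any single child $[s_i^\frown b]$. (In the last-coordinate example above, $s_i = \emptyset$ for all $i$.) One then extracts an antichain among the $s_i$, finds a cylinder $[s'_j]$ mostly covered by parts concentrating below it, and uses Claim \ref{cla: counterexample 1} to locate parts $P_{j^*}, P_{k^*}$ whose masses fall into the grandchild cylinders $[s_{i^*}^\frown 01]$ and $[s_{i^*}^\frown 02]$, while the splitting property of $P_{i^*}$ at $s_{i^*}$ guarantees both a positive-measure portion of $P_{i^*}$ inside $[s_{i^*}^\frown 00]$ and a positive-measure portion outside $[s_{i^*}^\frown 0]$. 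That single box then carries a definite measure of edges and a definite measure of non-edges, contradicting regularity.

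Your final density computation is therefore also imprecise on exactly the point the paper's covering claim is there to handle: it is not enough that some $A^Z_k$ ``meets the subtree below $w$ in positive measure.'' You need the three chosen parts to concentrate \emph{compatibly} in the correct sibling cylinders one level below $w$, or a part concentrating in, say, $[w^\frown 1]$ rather than in the intended grandchild block would spoil both density estimates. Producing that compatibility from the bounded size of the partition — via the antichain and covering argument, with no reduction to prefix form — is the genuine content of the proof.
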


\begin{proof}
For $m \in \mathbb{N}$, let $X_{m} := 3^{m}$ and $E_{m} := E \restriction \left( X_m \right)^3$. Let $\mu$ be the uniform measure on $X = 3^m$. We will show that for any $T \in \mathbb{N}$, if $m$ is sufficiently large with respect to $T$, then there is no partition $\mathcal{P} = \{P_i : i \in [T] \}$ of $X_m$ so that \emph{every} box $P_i \times P_j \times P_k$ is $\left( \frac{1}{10^7} \right)$-regular with respect to $E_{m}$ (and $\mu$). As $(X_m, E_m)$ is an induced subgraph of $(X, E)$ for every $m$, this implies in particular that there is no finite partition of $X$ with all boxes  $\left( \frac{1}{10^7} \right)$-regular with respect to $E$ (and the uniform measure on $X$).

Fix $T \in \mathbb{N}$ and $\varepsilon \in \mathbb{R}_{>0}$ with $0 < \varepsilon < \frac{1}{9}$, e.g.~we could take $\varepsilon := \frac{1}{10}$ (but we prefer to keep $\varepsilon$ in order to avoid too many numerics in the computations below). Let $m = m(T, \varepsilon) \in \mathbb{N}$ be sufficiently large, to be determined later, and let $\mathcal{P} = \{P_i : i \in [T] \}$ be a partition of $X_m$. For $s \in 3^{\leq m}$, we let $[s] := \left \{ t \in 3^{m} : s \trianglelefteq t \right\}$. It is easy to verify (see below) that no partition of $X_m$ using sets of the form $[s]$ can satisfy the required regularity, and we aim to approximate an arbitrary partition by one of this form.

First we show that, in order to prove the theorem, it suffices to show that there is no partition $\mathcal{P}$  of $X_m$ satisfying 
\begin{gather}
	(1- \varepsilon) \mu(P_i) \geq \frac{1}{3^{m-2}} \textrm{ for all } i \in [T] \label{eq: counterexample -1}
\end{gather}
with all boxes $\left(\frac{1}{10^6}\right)$-regular, assuming $m = m(T,\varepsilon)$ is sufficiently large (such that $\frac{T}{3^{m-2} (1 - \varepsilon)} < \frac{1}{10^6 T}$).

For suppose we are given a partition $\mathcal{P}$ with all boxes $\frac{1}{10^7}$-regular.  Let $U := \left\{ i \in [T] : \mu(P_i) < \frac{1}{3^{m-2} (1 - \varepsilon)} \right\}$. We have $\sum_{i \in U} \mu(P_i) \leq \frac{T}{3^{m-2} (1 - \varepsilon)}$. Taking $m = m(\varepsilon, T, \frac{1}{10^7})$ sufficiently large, we can assume $\frac{T}{3^{m-2} (1 - \varepsilon)} < \frac{1/10^7}{T}$.  We must have $\mu(P_i) \geq \frac{1}{T}$ for at least one $i \in [T]$, without loss of generality for $i = 0$. Then let $P'_0 := P_0 \cup \bigcup_{i \in U} P_i$ and $P'_i := P_i$ for $i \in [T] \setminus \{0\}$, and consider the new partition $\mathcal{P}' := \left\{ P'_i : i \in [T] \setminus  U \right\}$ of $X_m$. Note that 
\begin{gather}
	\mu \left( P'_i \setminus P_i \right) \leq \frac{1}{10^7} \mu (P_i) \textrm{ for every } i \in [T] \setminus U. \label{eq: counterexample -3}
\end{gather}
Then $\mathcal{P}'$ satisfies \eqref{eq: counterexample -1}, and all boxes in $\mathcal{P}'$ are $4\frac{1}{10^7}$-regular, and therefore $\frac{1}{10^6}$-regular. Indeed, for any $i, j, k \in [T] \setminus  U $ so that $\mu \left(E \cap \left(P_i \times P_j \times P_k \right) \right) < \frac{1}{10^7} \mu(P_i) \mu(P_j) \mu(P_k)$ we have
\begin{gather*}
	\mu \left( E \cap \left( P'_i \times P'_j \times P'_k \right) \right) 
	\leq \frac{1}{10^7} \mu(P_i) \mu(P_j) \mu(P_k) + \\ 
	\mu \left( P'_i \setminus P_i \right) \mu(P'_j) \mu(P'_k) + \mu \left( P'_i \right) \mu(P'_j \setminus P_j) \mu(P'_k) +  \mu \left( P'_i \right) \mu(P'_j) \mu(P'_k  \setminus P_k)\\
	\stackrel{\eqref{eq: counterexample -3}}{\leq} \frac{1}{10^7} \mu(P_i) \mu(P_j) \mu(P_k) + 3 \frac{1}{10^7} \mu(P_i) \mu(P_j) \mu(P_k)\\
	\leq 4 \frac{1}{10^7} \mu(P'_i) \mu(P'_j) \mu(P'_k),
\end{gather*}
and similarly for any $i, j, k \in [T] \setminus  U $ so that $\mu \left(\left(P_i \times P_j \times P_k \right)  \setminus E\right) < \delta \mu(P_i) \mu(P_j) \mu(P_k)$. Therefore it suffices to show that $\mathcal{P}'$ cannot exist.

So consider $\mathcal{P}$ satisfying \eqref{eq: counterexample -1}.	For each $P_i \in \mathcal{P}$, there is some $s_i \in 3^{\leq m}$ at which it ``splits'', i.e.~so that 
	\begin{gather}
	\frac{\mu(P_i\cap [s_i]) } {\mu(P_i)} \geq 1 - \varepsilon,	\textrm{but } \label{eq: counterexample 0} \\	\frac{\mu(P_i\cap [s_i^\frown b])} {\mu(P_i)} < 1 - \varepsilon \textrm{ for each } b \in \{ 0, 1, 2\}.\label{eq: counterexample 00}
	\end{gather}
Indeed, for any $s \in 3^{\leq m }$, $\mu([s]) = \frac{1}{3^{|s|}}$, so using \eqref{eq: counterexample -1} we can find such $s_i$ satisfying
\begin{gather}
	s_i \in 3^{\leq (m-2)} \textrm{ for all }i \in [T]. \label{eq: counterexample -2}
\end{gather}	
	  As  $\mu \left( P_i \cap [s_i] \right) \geq (1-\varepsilon) \mu(P_i)$ for every $i \in [T]$ and the $P_i$'s cover $X$, we have $\mu \left( \bigcup_{i \in [T]} \left( P_i \cap [s_i] \right)  \right) \geq 1 - \varepsilon$, so in particular the $[s_i]$'s also cover $X$, up to $\varepsilon$.
		  
 However, a priori $\mathcal{S} := ([s_i] : i \in [T])$ need not be (close to) a partition since  we might have inclusions between the sets $[s_i]$ for different $i$.  We will remedy this as follows. Let $S_1 := \left\{ s_i : i \in [T] \right\}$ and let $s'_1 \in S_1$  be arbitrary so that $|s'_{1}|$ is minimal. Assuming that $S_j$ and $s'_{j}$ are already defined, we let $S_{j+1} := S_j \setminus \left\{ s \in S_j : s'_j \trianglelefteq s  \right\}  $ and, assuming $S_{j+1} \neq \emptyset$ let $s'_{j+1} \in S_{j+1}$ be arbitrary so that $|s'_{j+1}|$ is minimal among all elements of $S_{j+1}$. We must get some $T' \in [T]$ so that $S_{T' + 1}$ is empty. Then by construction:
 \begin{enumerate}
 	\item $s'_{1}, \ldots, s'_{T'} \in 3^{\leq m}$ are pairwise $\trianglelefteq$-incomparable --- hence the sets $[s'_{1}], \ldots, [s'_{T'}]$ are pairwise  disjoint;
 	\item for every $i \in [T]$ there is a unique $i' \in [T']$ so that $s'_{i'} \trianglelefteq s_i$ --- hence $[s_i] \subseteq [s'_{i'}]$.
 \end{enumerate} 
 
 By (2) we then have: for every $i \in [T]$ there is some $i' \in [T']$ so that $\mu \left( P_i \cap [s'_{i'}] \right) \geq (1-\varepsilon) \mu(P_i)$, and as $\varepsilon < \frac{1}{2}$ and the $[s'_{j}]$'s are pairwise disjoint, such an $i'$ is unique. Then we have
 \begin{gather}
 	\sum_{i \in [T]} \mu \left( P_i \cap [s'_{i'}] \right) \geq (1 - \varepsilon) \sum_{i \in [T]}\mu(P_i) = 1 - \varepsilon. \label{eq: counterex 1}
 \end{gather}
 
This tells us that $\mathcal{S}' := \left\{ [s'_j] : j \in [T'] \right\}$ is close to being a partition of $X$, from which we will now deduce that some set in $\mathcal{S}'$ is (almost) covered by those $P_i$'s that are (almost) contained in it, as follows.
For $j \in [T']$, let 
\begin{gather*}
	Q_j := \left\{ i \in [T] : i'=j \right\} =  \left\{ i \in [T] : s'_j \trianglelefteq s_i \right\} \\
	= \left\{ i \in [T] :  \mu \left( P_i \cap [s'_{j}] \right) \geq (1-\varepsilon) \mu(P_i) \right\}.
\end{gather*}
By the above, $\left( Q_j : j \in [T'] \right)$ is a partition of $[T]$. 
Assume towards contradiction that for every $j \in [T']$, 
\begin{gather}
	\mu \left( \bigsqcup_{i \in Q_j} \left( [s'_{j}] \cap P_i \right)\right) < (1 - \varepsilon) \mu \left( [s'_j] \right). \label{eq: counterex 2}
\end{gather}
Then we have
\begin{gather*}
	1 - \varepsilon  \stackrel{\eqref{eq: counterex 1}}{\leq} \sum_{i \in [T]} \mu \left( P_i \cap [s'_{i'}] \right)  \stackrel{(\textrm{as } i \in Q_{i'})}{\leq} \sum_{j \in [T']} \sum_{i \in Q_j} \mu \left( [s'_{j}] \cap P_i \right) \\\stackrel{\eqref{eq: counterex 2}}{<} \sum_{j \in [T']}(1 - \varepsilon) \mu \left( [s'_j] \right)
	 = (1 - \varepsilon) \sum_{j \in [T']} \mu \left( [s'_j] \right) \leq 1 - \varepsilon,
\end{gather*}
a contradiction. 

So from now on we fix $j \in [T']$ so that  \begin{gather}
	\sum_{i \in Q_j} \mu \left( [s'_{j}] \cap P_i \right) \geq (1 - \varepsilon) \mu \left( [s'_j] \right). \label{eq: counterex 3}
\end{gather}
 This implies that any sufficiently large subset of $[s'_j]$ must have large intersection with some $P_i \in Q_j$:

\begin{claim}\label{cla: counterexample 1}
	For every $d \in \mathbb{N}$ so that  $ \frac{1}{3^d} > \varepsilon$ and $|s'_j|+d \leq m$ and every $t \in 3^{d}$ there exists some $i \in Q_j$ so that 
	\begin{gather*}
		\mu \left( \left[\left(s'_j\right) ^\frown  t \right] \cap P_i \right) \geq \frac{1}{3^d} (1 - 3^d \varepsilon) ( 1 - \varepsilon) \mu( P_i).
	\end{gather*}
\end{claim}
\begin{claimproof}
Note that $\mu \left(\left[\left(s'_j\right) ^\frown  t \right] \right) = \frac{1}{3^d} \mu \left(\left[s'_j \right] \right)$. We have 
\begin{gather*}
	(1 - \varepsilon) \mu ([s'_j]) \stackrel{\eqref{eq: counterex 3}}{\leq} \sum_{i \in Q_j} \mu \left( [s'_{j}] \cap P_i \right) \\
	= \sum_{i \in Q_j} \mu \left( \left[ (s'_j) ^\frown t \right] \cap P_i \right) + \sum_{i \in Q_j} \mu \left( \left( \left[s'_j \right] \setminus \left[ (s'_j) ^\frown t \right] \right)  \cap P_i \right)\\
	\leq \sum_{i \in Q_j} \mu \left( \left[ (s'_j) ^\frown t \right] \cap P_i \right) +  \mu \left(  \left[s'_j \right] \setminus \left[ (s'_j) ^\frown t \right]   \right)\\
	\leq \sum_{i \in Q_j} \mu \left( \left[ (s'_j) ^\frown t \right] \cap P_i \right) + \left( 1 - \frac{1}{3^d} \right) \mu \left(\left[s'_j \right] \right),
\end{gather*}
so
\begin{gather}
	\sum_{i \in Q_j} \mu \left( \left[ (s'_j) ^\frown t \right] \cap P_i \right)  \geq \left( \frac{1}{3^d} - \varepsilon \right) \mu \left(\left[s'_j \right] \right). \label{eq: counterex 4}
\end{gather}

Assume towards contradiction that for every $i \in Q_j$, 
	\begin{gather}
		\mu \left( P_i \cap [(s'_j) ^\frown t] \right) < \frac{1}{3^d} (1 - 3^d \varepsilon) \mu \left(\left[s'_j \right]  \cap P_i \right). \label{eq: counterex 5}
	\end{gather}
Then
\begin{gather*}
	 \left( \frac{1}{3^d} - \varepsilon \right) \mu \left(\left[s'_j \right] \right) \stackrel{\eqref{eq: counterex 4}}{\leq} \sum_{i \in Q_j} \mu \left( \left[ (s'_j) ^\frown t \right] \cap P_i \right) \\
	 \stackrel{\eqref{eq: counterex 5}}{<} \sum_{i \in Q_j} \frac{1}{3^d} (1 - 3^d \varepsilon) \mu \left(\left[s'_j \right]  \cap P_i \right) \\
	 \leq   \frac{1}{3^d} (1 - 3^d \varepsilon) \mu \left(\left[s'_j \right] \right) = \left( \frac{1}{3^d} - \varepsilon \right) \mu \left(\left[s'_j \right] \right),
\end{gather*}
a contradiction.
Hence for some $i \in Q_j$ we have
\begin{gather*}
	\mu \left( P_i \cap [(s'_j) ^\frown t] \right) \geq \frac{1}{3^d} (1 - 3^d \varepsilon) \mu \left(\left[s'_j \right]  \cap P_i \right) \stackrel{(i \in Q_j)}{\geq}\frac{1}{3^d} (1 - 3^d \varepsilon) (1 - \varepsilon) \mu( P_i). 
\end{gather*}
\end{claimproof}

Let $i^* \in [T]$ be such that $s'_j = s_{i^*}$. By \eqref{eq: counterexample 0}, there exist some $b,b' \in \{0,1,2\}$ so that 
\begin{gather}
	\mu \left( [s_{i^*} ^\frown b ^\frown  b'] \cap P_{i^*} \right)  \geq \frac{1}{9} (1 - \varepsilon)  {\mu(P_{i^*})}. \label{eq: counterex 6}
\end{gather} 
 Without loss of generality let $b = b' = 0$. By Claim \ref{cla: counterexample 1} with $d=2$ (as $\varepsilon < \frac{1}{9}$ and $s_{i^*} \in 3^{\leq (m-2)}$ by \eqref{eq: counterexample -2}) we can find some $j^*,k^* \in [T]$ so that 
\begin{gather}
	\mu \left( \left[s_{i^*} ^\frown  01 \right] \cap P_{j^*} \right) \geq \frac{1}{9} (1 - 9 \varepsilon) ( 1 - \varepsilon) \mu( P_{j^*}) \textrm{ and } \label{eq: counterex 7}\\
	\mu \left( \left[s_{i^*} ^\frown  02 \right] \cap P_{k^*} \right) \geq \frac{1}{9} (1 - 9 \varepsilon) ( 1 - \varepsilon) \mu( P_{k^*}). \label{eq: counterex 8}
\end{gather}

Finally consider the box $P_{i^*} \times P_{j^*} \times P_{k^*}$. On the one hand, by definition of $E$ we have $ \left[s_{i^*} ^\frown  00 \right] \times \left[s_{i^*} ^\frown  01 \right] \times \left[s_{i^*} ^\frown  02 \right] \subseteq E$, hence  
\begin{gather}
	\mu \left(E \cap  \left( P_{i^*} \times P_{j^*} \times P_{k^*} \right) \right) \nonumber\\
	\geq \mu \left( \left( [s_{i^*} ^\frown 00] \cap P_{i^*} \right) \times \left( \left[s_{i^*} ^\frown  01 \right] \cap P_{j^*} \right)  \times \left( \left[s_{i^*} ^\frown  02 \right] \cap P_{k^*} \right) \right) \nonumber \\\stackrel{\eqref{eq: counterex 6}, \eqref{eq: counterex 7}, \eqref{eq: counterex 8}}{\geq}
 \left(  \frac{1}{9} (1 - 9 \varepsilon) (1 - \varepsilon) \right)^{3}   \mu(P_{i^*})\mu(P_{j^*}) \mu(P_{k^*}). \label{eq: counterex 10}
  \end{gather}
  
  On the other hand, let $P_{i^*}^{-} := P_{i^*} \setminus [s_{i^*} ^\frown 0]$. By \eqref{eq: counterexample 00} we have 
  \begin{gather}
  	 \mu \left( P_{i^*}^{-} \right) \geq \varepsilon \mu( P_{i^*}). \label{eq: counterex 9}
  \end{gather}
  And by definition of $E$ we have $\left( P^-_{i^*} \times [s_{i^*} ^\frown 01] \times [s_{i^*} ^\frown 02] \right) \cap E = \emptyset$. So 
\begin{gather}
	\mu \left(\left( P_{i^*} \times P_{j^*} \times P_{k^*} \right)  \setminus E \right) \nonumber  \\
	\geq \mu \left(P_{i^*}^{-} \times \left( \left[s_{i^*} ^\frown  01 \right] \cap P_{j^*} \right) \times \left( \left[s_{i^*} ^\frown  02 \right] \cap P_{k^*} \right) \right) \nonumber \\
	\stackrel{\eqref{eq: counterex 9}, \eqref{eq: counterex 7}, \eqref{eq: counterex 8}}{\geq} \varepsilon  \left(  \frac{1}{9} (1 - 9 \varepsilon) (1 - \varepsilon) \right)^{2}   \mu(P_{i^*})\mu(P_{j^*}) \mu(P_{k^*}). \label{eq: counterex 11}
\end{gather}

So if we take $\varepsilon := \frac{1}{10}$, by \eqref{eq: counterex 10} and \eqref{eq: counterex 11} we get that the box $P_{i^*} \times P_{j^*} \times P_{k^*}$ is not $\frac{1}{10^6}$-regular with respect to $E$.
%
\end{proof}

\section{One direction of partition-wise stability and the opposite of slice-wise stability}\label{sec: one dir partition-wise one slice-wise}

\begin{theorem}\label{thm: one dir stab one slice-wise stab}
	Assume that $E \in \mathcal{B}_{X \times Y \times Z}$ is $\mu$-stable  viewed as a binary relation between $X \times Y$ and $Z$, and the slices $E_z \in \mathcal{B}_{X \times Y}$ are $\mu$-stable for almost all $z \in Z$. Then for every $\varepsilon > 0$ there exist finite collections of pairwise-disjoint sets $X_i \in \mathcal{B}^{E}_{X}$ for $i \in I$, pairwise-disjoint sets $Y_j \in \mathcal{B}^{E}_{Y}$ for $j \in J$ and pairwise-disjoint sets $Z_k \in \mathcal{B}^{E}_{Z}$ for $k \in K$ so that:
	\begin{enumerate}
	\item $\mu \left(  \bigsqcup_{i \in I} X_i \right) \geq 1 - \varepsilon \mu(X_{i^*})$ for some $i^* \in I$, $\mu \left(\bigsqcup_{j \in J} Y_j \right) \geq 1 - \varepsilon \mu(Y_{j^*})$ for some $j^* \in J$, and $\mu \left( \bigsqcup_{k \in K} Z_k \right) \geq 1 - \varepsilon \mu(Z_{k^*})$ for some $k^* \in K$;
		\item $\frac{\mu \left( E \cap \left( X_i \times Y_j \times Z_k \right)\right)}{\mu \left( X_i \times Y_j \times Z_k \right) } \in \{0,1\}$ for all $(i,j,k) \in I \times J \times K$.
	\end{enumerate}
\end{theorem}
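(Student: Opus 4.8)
The strategy is to combine perfect stable regularity on the two $\mu$-stable ``directions'' with the transfer/compactness machinery. Since $E$ is $\mu$-stable as a binary relation on $(X\times Y)\times Z$, Proposition~\ref{prop: stable graph reg} applied in that direction gives countable partitions $X\times Y=\bigsqcup_{i\in\omega}A^i$ and $Z=\bigsqcup_{k\in\omega}Z_k$ into perfect sets lying in $\mathcal{B}^{E}$, with $0$-$1$ densities on each $A^i\times Z_k$. This already handles the $Z$-side: the $Z_k$ form a genuine partition of $Z$, and truncating at a finite stage captures all but $\varepsilon$ of the measure. The real work is to replace the partition $(A^i)$ of $X\times Y$ by something coming from partitions of $X$ and of $Y$ separately. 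Here is where slice-wise $\mu$-stability of the $E_z$ enters: the plan is to feed the partition $(A^i)$ (perfect for $E\subseteq (X\times Y)\times Z$) together with the partition $(Z_k)$ into the arguments of Section~\ref{sec: all three dirs of slice-wise stab}. More precisely, apply Proposition~\ref{prop: part perf sets and rects} (with the roles of $Z$ and $Y$ suitably permuted) to the relation $E\subseteq (X\times Y)\times Z$: since $E_z$ is $\mu$-stable for almost all $z$ and $E_x$ is $\mu$-stable (being a fiber of a $\mu$-stable relation, via Fubini on the hypothesis) for almost all $x$, we obtain a refinement of $(A^i)$ into \emph{rectangles} $A^i=A^{i,X}\times A^{i,Y}$ which are still perfect for $E\subseteq(X\times Y)\times Z$, and whose intersections with the $Z_k$ have $0$-$1$ density.

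\textbf{From rectangles to singleton-side partitions.} Once we have countable partitions $X\times Y=\bigsqcup_i A^i$ into perfect rectangles $A^{i,X}\times A^{i,Y}$ with $A^{i,X}\in\mathcal{B}^E_X$, $A^{i,Y}\in\mathcal{B}^E_Y$, and $Z=\bigsqcup_k Z_k$ with $Z_k\in\mathcal{B}^E_Z$, so that $\frac{\mu(E\cap(A^i\times Z_k))}{\mu(A^i\times Z_k)}\in\{0,1\}$ for all $i,k$, the next step is to extract honest partitions of $X$ and $Y$. This is where I expect the main obstacle: the rectangle sides $A^{i,X}$ need not be disjoint, as the remark following Theorem~\ref{thm: all three parts perfect rects} warns. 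However, since we only need an $\varepsilon$-approximate conclusion here (unlike the perfect-partition theorems), Claim~\ref{cla: pf of slice reg 1} is exactly the tool: it provides, for any threshold, a finite set $I$ of indices with $\mu(\bigcup_{i\in I}A^{i,X})\geq 1-\delta\max_i\mu(A^{i,X})$ and $\mu(\bigcap_{i\in I}A^{i,Y})\setminus F)>0$ for a prescribed small $F$. Iterating this (first on the $X$-sides, extracting a large finite family; then inside each, on the $Y$-sides), together with taking atoms of the finite Boolean algebras generated by the finitely many $A^{i,X}$, $A^{i,Y}$, $Z_k$ that we keep, yields pairwise-disjoint $X_i\in\mathcal{B}^E_X$, $Y_j\in\mathcal{B}^E_Y$, $Z_k\in\mathcal{B}^E_Z$ covering all but $\varepsilon\mu(X_{i^*})$, $\varepsilon\mu(Y_{j^*})$, $\varepsilon\mu(Z_{k^*})$ of the respective spaces. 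Each atom $X_i\times Y_j$ sits inside some original $A^{i'}$, and each $Z_k$ is either an original perfect $Z$-piece or a subset of one; since $0$-$1$ density on $A^{i'}\times Z_{k'}$ is inherited by measurable subsets of each factor (subsets of perfect sets are perfect, Remark~\ref{rem: basic props perf sets}, or directly by Fubini), we get $\frac{\mu(E\cap(X_i\times Y_j\times Z_k))}{\mu(X_i\times Y_j\times Z_k)}\in\{0,1\}$ as required.

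\textbf{Ordering of steps and the hard point.} So the plan, in order: (1) apply stable graph regularity in the $(X\times Y)$-vs-$Z$ direction to get perfect partitions $(A^i)$ of $X\times Y$ and $(Z_k)$ of $Z$ with $0$-$1$ densities; (2) observe $E_x$ is $\mu$-stable a.e.\ (Fubini from the binary $\mu$-stability hypothesis) and $E_z$ is $\mu$-stable a.e.\ by assumption, so Proposition~\ref{prop: part perf sets and rects}, applied with the coordinates permuted so that the ``perfect'' side is $X\times Y$, refines $(A^i)$ into perfect rectangles while keeping compatible $0$-$1$ densities against the $Z_k$; (3) use Claim~\ref{cla: pf of slice reg 1} and the atoms of a finite subalgebra to turn the rectangle sides into pairwise-disjoint families $X_i$, $Y_j$ covering all but an $\varepsilon\mu(X_{i^*})$ (resp.\ $\varepsilon\mu(Y_{j^*})$) fraction; (4) check the density condition on triple products is inherited. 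The main obstacle is step (3): arranging the approximate-partition bounds \emph{relative to the measure of a single piece} $\mu(X_{i^*})$ (rather than absolute $\varepsilon$) — this is what Claim~\ref{cla: pf of slice reg 1} is designed for, and making the quantifiers line up (choosing $\delta$ small enough, interleaving the $X$- and $Y$-extractions, and ensuring the kept $Z_k$ remain a large partition) requires care but no new ideas. A secondary subtlety is keeping everything inside the $\mathcal{B}^E$ algebras throughout, which is automatic because every construction we invoke (Proposition~\ref{prop: stable graph reg}, Proposition~\ref{prop: part perf sets and rects}, Claim~\ref{cla: pf of slice reg 1}, and passing to atoms of finite subalgebras) preserves $\mathcal{B}^E$-measurability.
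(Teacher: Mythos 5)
Your opening move (applying Proposition~\ref{prop: stable graph reg} in the $(X\times Y)$-vs-$Z$ direction to get a perfect partition $Z=\bigsqcup_k Z_k$) matches the paper, and your observation that $E_x$ and $E_y$ are $\mu$-stable almost everywhere is correct — it follows from $\mu$-stability of $E\subseteq (X\times Y)\times Z$ by a Fubini argument on the set of $\mu$-trees, though this deserves a line of justification since the paper never states it explicitly. But there is a genuine gap in your step~(3). After obtaining a partition of $X\times Y$ into perfect \emph{rectangles} $A^i=A^{i,X}\times A^{i,Y}$ (via Proposition~\ref{prop: part perf rects}, not Proposition~\ref{prop: part perf sets and rects} as you cite — the latter produces rectangles on $Y\times Z$, not $X\times Y$), you assert that Claim~\ref{cla: pf of slice reg 1} plus atoms of a finite subalgebra suffice to produce pairwise-disjoint families $X_i$, $Y_j$. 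This is not so. Claim~\ref{cla: pf of slice reg 1} gives a finite $I$ such that $\{A^{i,X}\}_{i\in I}$ almost covers $X$ (indeed the proof shows these are disjoint, coming from a tree branch), but for $Y$ it only gives that $\bigcap_{i\in I}A^{i,Y}$ has positive measure — a single common intersection, not a partition. Applying the claim again with $X$ and $Y$ swapped gives a finite $J$ covering $Y$, but then there is no reason the atoms $X_m\times Y_n$ over $I\cup J$ land inside rectangles $A^{i'}$ for controllable $i'$: the rectangle sides can overlap in the complicated way the paper warns about in the remark after Theorem~\ref{thm: all three parts perfect rects}, and your proposal gives no mechanism for discarding a small exceptional part of $X$ and of $Y$ separately so as to eliminate all unaccounted-for boxes. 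The "iterating... requires care but no new ideas" assessment understates the difficulty; this step does require a new idea.

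The new idea, and the place where the hypothesis that $E$ is $\mu$-stable as a binary relation on $(X\times Y)\times Z$ does real work beyond merely producing the $Z_k$, is the paper's Claim~\ref{cla: slice plus stab proof 1}: because each $Z_k$ is perfect, for almost every $z\in Z_k$ we have $E_z=^0E_{z'}$ for almost every $z'\in Z_k$. This collapses the problem: one picks a representative $z_k\in Z_k$ with $E_{z_k}$ $\mu$-stable (using the slice-wise hypothesis), applies Proposition~\ref{prop: stable graph reg} directly to the binary relation $E_{z_k}\subseteq X\times Y$ to get perfect partitions $X=\bigsqcup_i X^k_i$ and $Y=\bigsqcup_j Y^k_j$, and then for finitely many $k<K$ intersects these partitions and truncates by countable additivity. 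The perfect rectangles of $X\times Y$ play no role. Your detour through rectangles loses the information that, within each $Z_k$, the relation $E$ looks like a \emph{single} stable bipartite graph, which is exactly what lets one partition $X$ and $Y$ separately.
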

\begin{proof}
	As the binary relation $E \subseteq \left(X \times Y \right) \times Z$ is $\mu$-stable, by Lemma \ref{lem: perf part stable} there exists a partition $Z = \bigsqcup_{k \in \omega} Z_k$ with $Z_k \in \mathcal{B}^{E}_{Z}$ perfect. 
	
\begin{claim}\label{cla: slice plus stab proof 1}
For every $k \in \omega$, for almost every $z \in Z_k$ we have:  $E_{z} =^0 E_{z'}$ for almost every $z' \in Z_k$.	
\end{claim}
\begin{claimproof}
Fix $k \in \omega$, and	consider the set $S^k \in \mathcal{B}^{E}_{X \times Y \times Z \times Z}$ defined by
$$S^k := \left\{ \left( x,y,z,z' \right) \in  X \times Y \times Z_k \times Z_k : (x,y,z) \in E \iff (x,y,z') \in E \right\}.$$
As $Z_k$ is perfect, for almost every $(x,y) \in X \times Y$ we have either $Z_k \subseteq^0 E_{(x,y)}$ or $Z_k \cap E_{(x,y)} =^0 \emptyset$, hence in either case $S^k_{(x,y)} =^0 Z_k \times Z_k$. It follows by Fubini that $S^k =^0 X \times Y \times Z_k \times Z_k$, hence by Fubini again we have that for almost every $z \in Z_k $, 
$$Z_k =^0 \left\{z' \in Z_k : S^k_{(z,z')} =^0 X \times Y \right\} = \left\{ z' \in Z^k : E_z =^0 E_{z'}\right\}.$$
\end{claimproof}
Now let $\varepsilon >0$ be fixed. By countable additivity we can choose $K \in \omega$ large enough so that $\mu \left( \bigcup_{k < K} Z_k\right) \geq 1 - \varepsilon \mu \left(Z_0 \right)$. Given $k < K$, by assumption $E_z$ is $\mu$-stable for almost all $z \in Z_k$. Using Claim \ref{cla: slice plus stab proof 1}, we can thus choose $z_k \in Z_k$ so that $E_{z_k} \in \mathcal{B}_{X \times Y}$ is $\mu$-stable and $E_{z_k} =^0 E_{z}$ for almost every $z \in Z_k$. By Proposition \ref{prop: stable graph reg} there exist partitions $X = \bigsqcup_{i \in \omega} X^k_{i}$ with $X^k_i \in \mathcal{B}^{E}_{X}$ and $Y = \bigsqcup_{j \in \omega} Y^{k}_{j}$ with $Y^k_j \in \mathcal{B}^{E}_{Y}$ into perfect sets for $E_{z_k}$. But as $E_{z_k} =^0 E_z$ for almost all $z \in Z_k$, it follows by Fubini that for every $(i,j) \in \omega^2$, either $X^k_i \times Y^k_j \times Z_k \subseteq^0 E$ or  $\left( X^k_i \times Y^k_j \times Z_k \right) \cap  E =^0 \emptyset$. 


For every $\bar{i} = \left(i_0, \ldots, i_{K-1} \right) \in \omega^K$, let $X_{\bar{i}} := \bigcap_{k < K} X^k_{i_k} \in \mathcal{B}^{E}_{X}$ and $Y_{\bar{i}} := \bigcap_{k < K} Y^k_{i_k} \in \mathcal{B}^{E}_{Y}$. Then $\left(X_{\bar{i}} : \bar{i} \in \omega^K \right)$ is a countable partition of $X$, $\left(Y_{\bar{j}} : \bar{j} \in \omega^K \right)$  is a countable partition of $Y$, and for every $\bar{i}, \bar{j} \in \omega^k, k \in K$ we have $X_{\bar{i}} \times Y_{\bar{j}} \subseteq X_{i_k} \times Y_{j_k}$, hence either $X_{\bar{i}} \times Y_{\bar{j}} \times Z_k \subseteq^0 E$ or $\left( X_{\bar{i}} \times Y_{\bar{j}} \times Z_k \right) \cap E =^0 \emptyset$. Let $I' := \left\{\bar{i} \in \omega^K : \mu \left(X_{\bar{i}} \right) > 0 \right\}$, $J' := \left\{\bar{j} \in \omega^K : \mu \left(Y_{\bar{j}} \right) > 0 \right\}$. Fix any $\bar{i}^* \in I'$, by countable additivity we can choose finite $I \subseteq I'$ with $\bar{i}^* \in I$ and $\mu \left( \bigcup_{\bar{i} \in I} X_{\bar{i}} \right) > 1 - \varepsilon \mu \left(X_{\bar{i}^*} \right)$. Similarly, we chose $\bar{j}^* \in J'$ and finite $J \subseteq J'$ with $\bar{j}^* \in J$ and $\mu \left( \bigcup_{\bar{j} \in J} Y_{\bar{j}} \right) > 1 - \varepsilon \mu \left(Y_{\bar{j}^*} \right)$.
\end{proof}

\begin{cor}\label{cor: finitary one dir stable other slice-wise}
Given $d \in \mathbb{N}$, let $\mathcal{H}$ be the family of all finite $3$-partite $3$-hypergraphs such that for every $H = (E; X,Y,Z) \in \mathcal{H}$, $E$ viewed as a binary relation on $X\times Y$ and $Z$ is $d$-stable, and for every $z \in Z$, the binary slice $E_z \subseteq X \times Y$ is $d$-stable.
	Then $\mathcal{H}$ satisfies strong stable regularity (Definition \ref{def: strong stab reg}(2)).
\end{cor}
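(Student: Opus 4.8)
The plan is to deduce Corollary~\ref{cor: finitary one dir stable other slice-wise} from the infinitary Theorem~\ref{thm: one dir stab one slice-wise stab} together with the one-way transfer principle for strong stable regularity, Proposition~\ref{prop: one way transfer for strong stable reg}. First I would take an arbitrary ultraproduct $\widetilde{H} = (\widetilde{E}; \widetilde{X}, \widetilde{Y}, \widetilde{Z})$ of hypergraphs from $\mathcal{H}$, equipped with its underlying graded probability space (the ultralimit of the uniform counting measures). By \L os' theorem, $\widetilde{E}$ viewed as a binary relation on $\widetilde{X} \times \widetilde{Y}$ and $\widetilde{Z}$ is $d$-stable, hence $d$-$\mu$-stable, and for almost every (in fact every internal) $z \in \widetilde{Z}$ the slice $\widetilde{E}_z \subseteq \widetilde{X} \times \widetilde{Y}$ is $d$-stable, hence $d$-$\mu$-stable; so the hypotheses of Theorem~\ref{thm: one dir stab one slice-wise stab} are satisfied.

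Next I would unpack what Theorem~\ref{thm: one dir stab one slice-wise stab} gives and check that it matches the hypothesis of Proposition~\ref{prop: one way transfer for strong stable reg} for $k=3$. Given $\varepsilon > 0$, Theorem~\ref{thm: one dir stab one slice-wise stab} produces finite collections of pairwise-disjoint sets $X_i \in \mathcal{B}^E_{\widetilde X}$ ($i \in I$), $Y_j \in \mathcal{B}^E_{\widetilde Y}$ ($j \in J$), $Z_k \in \mathcal{B}^E_{\widetilde Z}$ ($k \in K$) with $\mu(\bigsqcup_{i} X_i) \geq 1 - \varepsilon \mu(X_{i^*})$ for some $i^* \in I$ (and analogously for $Y$, $Z$), and such that $\frac{\mu(\widetilde E \cap (X_i \times Y_j \times Z_k))}{\mu(X_i \times Y_j \times Z_k)} \in \{0,1\}$ for all $(i,j,k)$. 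By Lemma~\ref{lem: perfect regularity equivalents} (applied after completing each collection to a genuine partition by adding the small remainder as one extra piece), the $0$-$1$ density condition on all triples is equivalent to each $X_i$ being perfect for $\widetilde E$ viewed as a binary relation on $\widetilde X \times (\widetilde Y \times \widetilde Z)$, each $Y_j$ perfect for the corresponding splitting, and each $Z_k$ perfect for $\widetilde E$ on $(\widetilde X \times \widetilde Y) \times \widetilde Z$. Thus $\widetilde H$ satisfies precisely the hypothesis displayed in Proposition~\ref{prop: one way transfer for strong stable reg}: for every $\varepsilon > 0$ there are finitely many pairwise-disjoint perfect sets $A^t_i$ in each sort covering all but $\varepsilon \mu(A^t_1)$ of the sort. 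Since moreover all these sets lie in $\mathcal{B}^E_{\widetilde{X}}$, $\mathcal{B}^E_{\widetilde Y}$, $\mathcal{B}^E_{\widetilde Z}$ respectively, the ``moreover'' clause of Proposition~\ref{prop: one way transfer for strong stable reg} applies and yields definable strong stable regularity.

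Applying Proposition~\ref{prop: one way transfer for strong stable reg} to $\mathcal{H}$ now gives that $\mathcal{H}$ satisfies (definable) strong stable regularity in the sense of Definition~\ref{def: strong stab reg}(2), which is exactly the claim. I expect the only genuine point requiring care is the bookkeeping in the second paragraph: converting the ``finite collection of disjoint perfect sets covering almost everything'' output of Theorem~\ref{thm: one dir stab one slice-wise stab} into the exact form of the hypothesis of Proposition~\ref{prop: one way transfer for strong stable reg} (namely verifying via Lemma~\ref{lem: perfect regularity equivalents} that perfection of each piece for the appropriate binary splitting is equivalent to the stated $0$-$1$ density condition on all triples $X_i \times Y_j \times Z_k$, and matching the normalization $\mu(\bigsqcup_i A^t_i) \ge 1 - \varepsilon \mu(A^t_1)$ by choosing the distinguished index $i^*$ as $A^t_1$). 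This is routine once one notes that $\mathcal{B}^E_{X}$ is generated by the finitary fiber-algebras $\mathcal{F}^{E,N}_{X}$, so definability is preserved. There is no real obstacle beyond this matching step; the mathematical content is entirely in Theorem~\ref{thm: one dir stab one slice-wise stab} and the transfer proposition, both already available.
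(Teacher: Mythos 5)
Your overall strategy matches the paper's proof exactly: apply Theorem~\ref{thm: one dir stab one slice-wise stab} to the ultraproduct (using \L os to get the hypotheses) and then invoke Proposition~\ref{prop: one way transfer for strong stable reg}. The gap is in the bookkeeping step you flagged as ``routine,'' namely the claim that each of the sets produced by Theorem~\ref{thm: one dir stab one slice-wise stab} is \emph{perfect} for $\widetilde E$ under the corresponding partition of the variables. That claim is not true, and the route you propose for proving it does not work: to apply Lemma~\ref{lem: perfect regularity equivalents}(2)$\Rightarrow$(1) after ``completing each collection to a genuine partition by adding the small remainder as one extra piece,'' you would need the $0$--$1$ density condition for \emph{all} triples of the completed partition, including triples involving a remainder piece. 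Theorem~\ref{thm: one dir stab one slice-wise stab} only gives $0$--$1$ density for triples $(X_i,Y_j,Z_k)$ with $(i,j,k)\in I\times J\times K$; you have no control over the remainder pieces, and the Fubini/countable-additivity argument in the proof of (2)$\Rightarrow$(1) collapses without that. Indeed, inspecting the construction in Theorem~\ref{thm: one dir stab one slice-wise stab}, the sets $X_{\bar i}=\bigcap_{k<K}X^k_{i_k}$ are perfect only for the slice graphs $E_{z_k}$, $k<K$; for $z$ outside the (positive-measure) set $\bigcup_{k<K}Z_k$ there is no control, so $X_{\bar i}$ is generally \emph{not} perfect for $E$ viewed as a binary relation on $X\times(Y\times Z)$, and similarly for the $Y_{\bar j}$. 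So the hypothesis of Proposition~\ref{prop: one way transfer for strong stable reg} as literally stated is not met.

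What rescues the argument (and is presumably what the paper's terse proof intends) is that the \emph{proof} of Proposition~\ref{prop: one way transfer for strong stable reg} uses the perfection hypothesis in exactly one place: the step ``By assumption and Lemma~\ref{lem: perfect regularity equivalents}, for any $(i_1,\dots,i_k)$ we have $\frac{\mu(\widetilde E\cap\prod_t A^t_{i_t})}{\mu(\prod_t A^t_{i_t})}\in\{0,1\}$,'' which is the direction (1)$\Rightarrow$(2) of Lemma~\ref{lem: perfect regularity equivalents}. Theorem~\ref{thm: one dir stab one slice-wise stab} hands you that $0$--$1$ density condition on all triples of the finite collections \emph{directly}, so the rest of the proof of Proposition~\ref{prop: one way transfer for strong stable reg} goes through unchanged with the weaker input. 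You should therefore either note explicitly that the proposition's hypothesis may be weakened to ``the $0$--$1$ density holds on all $k$-tuples of the finite collections,'' or simply cite the fact that this is the only consequence of perfection used in its proof. With that observation in place (and with the normalization $\mu(\bigsqcup_i X_i)\geq 1-\varepsilon\mu(X_{i^*})$ matched to $\mu(\bigcup_i A^t_i)\geq 1-\varepsilon\mu(A^t_1)$ by renumbering $i^*$ to $1$), the corollary follows. The definability remark at the end of your proof is fine, since the sets produced by Theorem~\ref{thm: one dir stab one slice-wise stab} do lie in $\mathcal{B}^E_{\widetilde X}$, $\mathcal{B}^E_{\widetilde Y}$, $\mathcal{B}^E_{\widetilde Z}$.
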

\begin{proof}
As $d$ is fixed, by \L os theorem and assumption, in any ultraproduct $\widetilde{H} = \left(\widetilde{E}; \widetilde{X}, \widetilde{Y}, \widetilde{Z} \right)$ of hypergraphs from $\mathcal{H}$, $\widetilde{E}$ will be a $d$-stable viewed as a binary relation on $\widetilde{X}$ and $\widetilde{Y} \times \widetilde{Z}$, and for every $z \in \widetilde{Z}$, the binary slice $\widetilde{E}_z \subseteq \widetilde{X} \times \widetilde{Y}$ will be $d$-stable.
The result follows by Theorem \ref{thm: one dir stab one slice-wise stab} applied to the ultraproduct and Proposition \ref{prop: one way transfer for strong stable reg}.
\end{proof}

\begin{remark}
	This applies to Example \ref{ex: x=y<z} in particular.
\end{remark}

\bibliographystyle{alpha}
\bibliography{refs}
\end{document}